\numberwithin{equation}{section}
\numberwithin{figure}{section}
\newtheorem {theorem}{Theorem}[section]
\newtheorem {proposition}[theorem]{Proposition}
\newtheorem {lemma}[theorem]{Lemma}
\newtheorem {corollary}[theorem]{Corollary}
\theoremstyle{definition}
\newtheorem{definition}[theorem]{Definition}
\theoremstyle{theorem}
\newtheorem {remark}[theorem]{Remark}
\newtheorem {example}[theorem]{Example}
\newcommand{\Cov}{\operatorname{Cov}}
\newcommand{\kappad}{\kappa}
\newcommand{\overkappad}{\overline{\kappa}}
\newcommand{\overnud}{\overline{\nu_{d-1}}}
\def\ba{\begin{array}}
\def\ea{\end{array}}
\def\bea{\begin{eqnarray} \label}
\def\eea{\end{eqnarray}}
\def\be{\begin{equation} \label}
\def\ee{\end{equation}}
\def\bit{\begin{itemize}}
\def\eit{\end{itemize}}
\def\ben{\begin{enumerate}}
\def\een{\end{enumerate}}
\def\lan{\langle}
\def\ran{\rangle}
\def\EE{\mathbb{E}}
\def\KK{\mathbb{K}}
\def\MM{\mathbb{M}}
\def\NN{\mathbb{N}}
\def\PP{\mathbb{P}}
\def\RR{\mathbb{R}}
\def\RRd1{\mathbb{R}^{d+1}}
\def\SS{\mathbb{S}}
\def\SSd{\mathbb{S}^d}
\def\TT{\mathbb{T}}
\def\b{\beta}
\def\s{\sigma}
\def\Sig{\Sigma}
\def\O{\Omega}
\def\sfM{{\sf M}}
\def\sfF{{\sf F}}
\def\bE{\mathbf{E}}
\def\bF{\mathbf{F}}
\def\bM{\mathbf{M}}
\def\bP{\mathbf{P}}
\def\bQ{\mathbf{Q}}
\def\blk{\langle}
\def\brk{\rangle}
\def\Var{\mathbf{V}\textup{ar}\,}
\def\Cov{\mathbf{C}\textup{ov}}
\def\cA{\mathcal{A}}
\def\cB{\mathcal{B}}
\def\cC{\mathcal{C}}
\def\cD{\mathcal{D}}
\def\cF{\mathcal{F}}
\def\cG{\mathcal{G}}
\def\cH{\mathcal{H}}
\def\cL{\mathcal{L}}
\def\cM{\mathcal{M}}
\def\cT{\mathcal{T}}
\def\sF{\mathscr{F}}
\def\sN{\mathscr{N}}
\def\dint{\textup{d}}
\def\SO{\textup{SO}}
\def\Grass{\textup{G}}
\def\filtration{\mathcal{Y}}
\def\sigmaalgebra{\mathcal{G}}
\def\Tess{\textup{Tess}}
\begin{document}

\title{\bfseries Splitting tessellations in spherical spaces}

\author{Daniel Hug\footnotemark[1]\;\; and Christoph Th\"ale\footnotemark[2]}

\date{}
\renewcommand{\thefootnote}{\fnsymbol{footnote}}
\footnotetext[1]{Karlsruhe Institute of Technology (KIT), D-76128 Karlsruhe, Germany. Email: daniel.hug@kit.edu}

\footnotetext[2]{
Ruhr University Bochum, D-44780 Bochum, Germany. Email: christoph.thaele@rub.de}

\maketitle

\begin{abstract}
The concept of splitting tessellations and splitting tessellation processes in spherical spaces of dimension $d\geq 2$ is introduced. Expectations, variances and covariances of spherical curvature measures induced by a splitting tessellation are studied using tools from spherical integral geometry. Also the spherical pair-correlation function of the $(d-1)$-dimensional Hausdorff measure is computed explicitly and compared to its analogue for Poisson great hypersphere tessellations. Finally, the typical cell distribution and the distribution of the typical spherical maximal face of any dimension $k\in\{1,\ldots,d-1\}$ are expressed as mixtures of the related distributions of Poisson great hypersphere tessellations. This in turn is used to determine the expected length and the precise birth time distribution of the typical  maximal spherical segment of a splitting tessellation.
\bigskip
\\
{\bf Keywords}. {Blaschke-Petkantschin formula; $K$-function; Markov pro\-cess; mar\-tin\-gale; maximal face; pair-correlation function; spherical curvature measure; sphe\-rical integral geometry; sphe\-rical space; random tessellation; splitting tessellation.}\\
{\bf MSC}. Primary  52A22, 60D05; Secondary 53C65.
\end{abstract}

\tableofcontents

\section{Introduction}

Random tessellations are among the central topics considered in stochastic geometry and constitute a key model for numerous applications, see \cite{CSKM,SW} as well as the references cited therein. While random tessellations in Euclidean spaces have been explored in great detail, their non-Euclidean counterparts are much less intensively studied in the literature. On the other hand, in recent years  non-Euclidean models in stochastic geometry have attracted increasing attention, in particular in random geometric structures on manifolds, see e.g.\ \cite{ArbeiterZaehleMosaics,BaranyHugReitznerSchneider,BrauchartEtAl,DeussHoerrmannThaele,HugReichenbacher,HugSchneider2016,
KabluchkoMarynychTemesThaele,MaeharaMartini17,MaeharaMartini18,MarinucciPeccati,MarinucciRossi,PenroseYukichMf,ReddyVadlamaniYog}. More specifically, the papers \cite{BrauchartEtAl,MaeharaMartini17,MaeharaMartini18} deal with spherical convex hulls, \cite{BaranyHugReitznerSchneider,KabluchkoMarynychTemesThaele} are concerned with spherical convex hulls of random points on half-spheres, the work \cite{PenroseYukichMf} considers central limit theorems for point process statistics of point processes on manifolds, \cite{ReddyVadlamaniYog} is devoted to the study of random systems on Cayley graphs, and \cite{MarinucciPeccati,MarinucciRossi} explores  geometric aspects of random fields on the sphere. Further results for tessellations of the $d$-dimensional unit sphere by great hyperspheres have been obtained in \cite{ArbeiterZaehleMosaics} and, more recently, in \cite{HugReichenbacher,HugSchneider2016}, which generalize at the same time some mean value computations for tessellations generated by great circles on the $2$-dimensional sphere $\SS^2$ in \cite{MilesSphere}. On the other hand, in \cite{Calkaetc}  an analysis of Voronoi tessellations in rather general Riemannian manifolds is initiated. Various other recent contributions combine spherical (or conical) integral geometry and probabilistic reasoning (see, for instance, \cite{ALMT,GNP,MT}) or contribute to point processes \cite{MNPR} and statistics in spherical space \cite{Baddeley,JMRu}.

\medskip

In the recent paper \cite{DeussHoerrmannThaele} a new tessellation model of the two-dimensional unit sphere has been introduced. It arises as the result of a recursive cell splitting scheme and can be regarded as the spherical analogue of the STIT-tessellation model that has  been studied intensively in Euclidean stochastic geometry in the last decade. While the work \cite{DeussHoerrmannThaele} focuses on the isotropic two-dimensional case and on metric and combinatorial parameters of individual cells, the present paper has a much broader scope and deals with so-called splitting tessellations in higher dimensional spherical spaces and with more general direction distributions. Our focus lies on first- and second-order properties of cumulative functionals that are induced by the family of spherical curvature measures. In addition, we shall describe precise distributions such as the one of the typical cell and the typical maximal spherical  segment.

\medskip

Let us briefly indicate the random recursive construction of the splitting tessellation $Y_t$ in the $d$-dimensio\-nal unit sphere $\SSd$. The random
construction starts with $\SSd$ as the unique cell. After an exponential waiting time with parameter 1 a random great hypersphere with  distribution $\kappad$ divides $\SSd$ into two cells. Now, this branching mechanism continues recursively and independently in both newly created cells. Let us describe this cell splitting scheme more formally:
\begin{description}
\item[1.\ Initiation] At time zero we put $Y_0:=\{\SSd\}$, $\tau_0:=0$, and we set a counter $n$ to be equal to $1$. (More generally, we may start with an arbitrary fixed tessellation $Y_0=\{T\}$ of $\SS^d$).
\item[2.\ Recursion] Suppose that the counter is $n\geq 1$ and that a random time $\tau_{n-1}$ and a random tessellation $Y_{\tau_{n-1}}$ have been realized. Generate a random time $\tau_n$ such that the holding time $\tau_n-\tau_{n-1}$ has the same distribution as $\min_{c\in Y_{\tau_{n-1}}}E_c$ with independent exponentially distributed random variables $E_c$ with parameter $\kappad(\SS_{d-1}[c])$, which in turn has the same distribution as an exponential random variable with parameter $\sum_{c\in Y_{\tau_{n-1}}}\kappad(\SS_{d-1}[c])$. Here we write $\SS_{d-1}[c]$ for the set of all great hyperspheres hitting the interior of $c$ and hence the parameter $\kappad(\SS_{d-1}[c])$ is equal to the probability that a  random great hypersphere with distribution $\kappad$ hits the interior of the cell $c$.

      If $\tau_n\leq t$, we
\begin{itemize}
\item[-] randomly pick  a cell $c_n\in Y_{\tau_{n-1}}$, where each cell $c\in Y_{\tau_{n-1}}$ available at time $\tau_{n-1}$  is selected with probability   $\kappad(\SS_{d-1}[c])/\sum_{{c}'\in Y_{\tau_{n-1}}}\kappa(\SS_{d-1}[{c}'])$,
\item[-] choose a great hypersphere $S_n$ with distribution $$\frac{\kappad(\SS_{d-1}[c_n]\cap \,\cdot\,)}{ \kappad(\SS_{d-1}[c_n])},$$
\item[-] put $$Y_{\tau_n}:=\oslash(c_n,S_n,Y_{\tau_{n-1}})\,,$$ that is, the cell $c_n$ is split by $S_n$ into two subcells, all other cells remain unchanged,
\item[-] increase the counter $n$ by one and repeat the recursion step.
\end{itemize}
If $\tau_n>t$, output the random tessellation $Y_{\tau_{n-1}}$.
\end{description}

\medskip

As a consequence of the probabilistic construction, the cells behave independently of each other, a cell $c$ has an exponentially distributed  lifetime with parameter $\kappad(\SS_{d-1}[c])$
and finally is split into two parts by a random great hypersphere with distribution $\kappad(\SS_{d-1}[c]\cap \,\cdot\,)/\kappad(\SS_{d-1}[c])$, as described above.
In Euclidean space,  a corresponding dynamic description, in a bounded observation window, has been the starting point
for various investigations of stationary iteration stable (STIT) or more general branching random tessellations. We refer to \cite{GST} for a detailed analysis of a more general model in Euclidean space (see, in particular, \cite[Example 2.9]{GST}). For the present contribution, the framework and the main foundational results of \cite{GST} can be adjusted to $\SSd$, in particular, the observation window in the Euclidean case can be replaced by the whole space $\SSd$ and the steering division kernel (cf.~\cite[Definition 2.6]{GST}) is chosen as the map which assigns to a tessellation $T$ of $\SSd$ and a cell $c\in T$ the restriction of $\kappad$ to $\SS_{d-1}[c]$. Moreover, we also take advantage of another description of the continuous time evolution of the random tessellation $Y_t$ for $t\ge 0$, in terms of martingale properties of the piecewise constant Markov jump process defined by $(Y_t)_{t\ge 0}$ and taking values in the space of tessellations of the $d$-dimensional unit sphere. The connection between the dynamic description given above and the characterization via its generator is studied in detail in \cite{GST} and will be summarized in Section \ref{Sec2.3}.

\medskip

Let us briefly present a rough overview of the content of this paper. We start in Sections \ref{subsec21:BasicSphericalGeometry} and \ref{subsec:22SphericalIntGeo} by recalling some background material from spherical geometry and spherical integral geometry. Tessellations on the sphere are introduced in Section \ref{Sec2.3}, where we also formally define the splitting tessellation process by its generator and by using the general theory of pure jump Markov processes. Our key technical devices are the content of Section \ref{subsec:24Martingales}. Here, we construct several classes of martingales connected to the splitting tessellation process. A first application of the theory developed there is the computation of the capacity functional of the random set arising as the union of the cell boundaries of a splitting tessellation $Y_t$ in Section \ref{sec:3CapacityFunctional}. The capacity functional is one of the most important characteristics associated with a random set. We also use martingale methods and tools from spherical integral geometry, most notably the valuation property and the spherical Crofton formula, to compute the expected sum over all (localized) spherical curvature measures in Section \ref{sec:4Expectations}. These first-order characteristics are used throughout the present work.

\medskip

Spherical integral-geometric transformation formulas are developed in Section \ref{subsec:IntegralGeometricTrafo} and combined in Section \ref{subsec:51VSA} with further martingale tools to determine second-order properties of splitting tessellations. In particular, the variance of the total $(d-1)$-dimensional Hausdorff measure of the union of all cell boundaries of cells of $Y_t$ is computed explicitly. Moreover, in Section \ref{subsec:52FurtherVariancesCovariances} we determine the covariance structure of all spherical curvature measures and provide fully explicit formulas for isotropic splitting tessellations on the $2$-dimensional unit sphere. One further second-order parameter that might be associated with a  random measure on $\SSd$ is its spherical pair-correlation function. We formally introduce this concept in Section \ref{sec:PCF} using Palm calculus for random measures on homogeneous spaces. A comparison of the second-order parameters of a splitting tessellation on $\SSd$ with the corresponding parameters of a Poisson great hypersphere tessellation is the content of Section \ref{sec:Comparison}. For the study of the pair-correlation function, we mainly focus on
the case of isotropic splitting tessellations, which arise precisely if $\kappad$ is rotation invariant, but we also provide general formulas. In particular, we compute the pair-correlation function of the $(d-1)$-dimensional Hausdorff measure of the union of the cell boundaries.

\medskip

The final Section \ref{sec:7TypicalCellsFacesAndDistributions} is devoted to distributional properties of the cells and the so-called maximal spherical  faces of a splitting tessellation. Again, no isotropy assumption is required. As a technical tool we introduce in Section \ref{sec:7.1} a continuous-time dynamic version of Poisson great hypersphere tessellations on $\SSd$, again by using the general theory of Markov jump processes. This point of view is used in Section \ref{subsec:72Relationships} to establish first a relationship between the cell intensity measure of a splitting tessellation and that of a Poisson great hypersphere tessellation. Furthermore, this result is used to express the intensity measure of the maximal spherical $k$-dimensional faces of a splitting tessellation at time $t\ge 0$ as a mixture of intensity measures of spherical $k$-dimensional faces of Poisson great hypersphere tessellations up to time $t$. This crucial representation is the key tool in Section \ref{subsec73:sphericalmaxfaces}, which leads to a representation of the distribution of the typical maximal  spherical $k$-dimensional  face of a splitting tessellation $Y_t$ as a mixture of the corresponding distributions of typical spherical $k$-dimensional  faces in  Poisson great hypersphere tessellations at time $s\in (0,t)$. Finally, this allows us to determine precisely the expected length as well as the birth time distribution of the typical maximal spherical segment of a splitting tessellation on $\SSd$.

\medskip

Whenever possible we compare the results we obtain on the sphere with those for STIT-tessel\-la\-tions in $\RR^d$ that are available in the literature. This makes transparent in which situations the results in the curved space $\SSd$ are similar to corresponding results in the flat case $\RR^d$ and allows us to highlight where significant differences can be observed. Most of our results can be stated for general (regular) great hypersphere distributions $\kappad$, but we always highlight the case of isotropic splitting tessellations, which arises precisely when $\kappad$ is rotation invariant, and we specialize to the case of $\SS^2$ to illustrate more general results.

\section{Preliminaries}

The study of random tessellations requires a number of results and tools from stochastic geometry, including
point processes of particles, random closed sets, and general methods from the theory of  stochastic processes. In this section, we recall or introduce the relevant concepts and explain what is needed from spherical convexity and integral geometry.

\subsection{Basic notions from spherical geometry}\label{subsec21:BasicSphericalGeometry}

We fix $d\geq 2$ and consider the $d$-dimensional unit sphere
$$
\SSd:=\{x\in\RR^{d+1}:\|x\|=1\}\subset\RR^{d+1}\,,
$$
where $\|\cdot\|$ stands for the usual Euclidean norm in $\RR^{d+1}$. The origin in $\RR^{d+1}$ is denoted by $o$. On $\SSd$ we use the spherical (geodesic) distance $\ell(x,y):=\arccos(\lan x,y\ran)$, for $x,y\in \SSd$, where we write $\lan x,y\ran$ for  the Euclidean scalar product of vectors $x$ and $y$. As usual, we denote the induced Borel $\s$-field by $\cB(\SSd)$. For $s\geq 0$ we write $\cH^s$ for the $s$-dimensional Hausdorff measure (normalized as in
\cite[p.~171]{Federer69})
and put
$$
\b_{d}:=\cH^d(\SSd)=\frac{2\pi^{\frac{d+1}{ 2}}}{\Gamma\big(\frac{d+1}{2}\big)}\,.
$$

\medskip

By a {\em spherically convex set} we understand the intersection of $\SSd$ with a non-empty Euclidean convex cone in $\RRd1$. Let $\KK^d$ denote the collection of all non-empty, compact, spherically convex sets of $\SSd$, whose elements are called spherically convex bodies. We equip $\KK^d$ with the (spherical) {\em Hausdorff distance} and write
$\cB(\KK^d)$ for the induced Borel $\s$-field. Note that the induced topology on $\KK^d$ is the same as the
subspace topology induced by the {\em Fell topology} (see \cite[Section 12.2]{SW}) on the space $\cF(\SSd)$ of closed subsets of $\SSd$. Thus $\KK^d$ is a compact topological space with countable base. These statements can be proved as in the Euclidean setting (cf. \cite[Sections 12.2 and 12.3]{SW}).  For $K\in\KK^d$ we define the dual (polar) set $K^\ast$ of $K$ by $K^\ast:=\{x\in\SSd:\ell(x,K)\geq\pi/2\},$ where  $\ell(x,K):=\min\{\ell(x,y):y\in K\}$ is the spherical distance from $x$ to $K$ (see \cite[Chapter 6.5]{SW} and \cite{HugSchneider2016} for background information and further references).

\medskip

Next we introduce a particular class of spherically convex sets. A {\em spherical polytope} is defined as the intersection of $\SSd$ with a Euclidean polyhedral cone in $\RRd1$. The latter is defined as the intersection of finitely many closed half-spaces in $\RRd1$ that contain the origin in their boundaries. It is convenient for our purposes to consider also the space $\RRd1$ as a (degenerate) Euclidean polyhedral cone arising from an empty intersection of half-spaces. This way $\SSd$, closed half-spheres, and also $k$-dimensional subspheres (see the next paragraph)  become (degenerate) spherical polytopes. By $\PP^d\subset\KK^d$ we denote the space of spherical polytopes and equip $\PP^d$ with the trace $\sigma$-field $\cB(\PP^d)$ of $\cB(\KK^d)$ on $\PP^d$. In what follows, we shall call the $d$-dimensional elements of $\PP^d$ cells. For a spherical polytope $c\in\PP^d$, which arises as the intersection of $\SSd$ with a Euclidean polyhedral cone $\hat c$, we say that a subset $F\subset c$ is a {\em $j$-face} of $c$ if $F$ is obtained as the intersection of $c$ with a $(j+1)$-dimensional Euclidean face of the polyhedral cone $\hat c$, $j\in\{0,\ldots,d\}$. By $\cF_j(c)$ we denote the collection of all $j$-faces of $c$.

\medskip

By a {\em subsphere} of $\SSd$ of dimension $k\in\{0,\ldots,d-1\}$ we understand the intersection of $\SSd$ with a $(k+1)$-dimensional linear subspace of $\RRd1$. We denote by $\SS_k$ the space of all $k$-dimensional subspheres of $\SSd$, which is equipped with the subspace topology and the natural Borel $\s$-field $\cB(\SS_k)$. We also put $\SS_d:=\{\SS^d\}$. Together with the action of the rotation group $\SO(d+1)$ on $\SS_k$, the space $\SS_k$ becomes a compact homogeneous space and as such it carries a unique Haar probability measure, which is denoted by $\nu_k$. For example, the measure $\nu_{d-1}$ has the representation
$$
\nu_{d-1}(\,\cdot\,)=\frac{1}{ \b_{d}}\int_{\SSd}{\bf 1}(u^\perp\cap\SSd\in\,\cdot\,)\,\cH^d(\dint u)\,.
$$
From now on we call elements of $\SS_{d-1}$ great hyperspheres  of $\SSd$.
For a set $B\subset\SSd$, let us denote by
$$
\SS_{d-1}[B]:=\{S\in\SS_{d-1}:S\cap{\rm int}(B)\neq\emptyset\}
$$
the set of all great hyperspheres  of $\SSd$ which have non-empty intersection with the interior ${\rm int}(B)$ of $B$ (here the interior refers to the topology of $\SSd$).

\begin{definition}
{\rm A Borel measure $\kappad$ on $\SS_{d-1}$ is called regular if $\kappad(\{S\in \SS_{d-1}: e\in S\})=0$ for $e\in \SS^d$.
}
\end{definition}

Clearly, $\nu_{d-1}$ and any measure
which is absolutely continuous with respect to $\nu_{d-1}$ is regular. The assertion of the following lemma will be used repeatedly. For $S\in\SS_{d-1}$ we denote by $S^+,S^-$ the two closed half-spheres bounded by $S$. Since we shall always work with statements symmetric in $S^+,S^-$, we do not have to specify this further.

\begin{lemma}\label{setzero}
If $\kappad$ is a regular Borel measure on $\SS_{d-1}$ and $c\in\PP^d$, then
$$
\kappa(\{S\in \SS_{d-1}:S\cap c\neq\emptyset, c\subset S^+ \text{ \rm  or }c\subset S^-\})=0\,.
$$
This remains true for an arbitrary compact set $C\subset\SS^d$ in place of $c$ if $\kappad$ is absolutely continuous with respect to $\nu_{d-1}$.
\end{lemma}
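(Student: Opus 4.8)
The plan is to prove the two statements separately, handling first the case of a spherical polytope $c\in\PP^d$ and then extending to an arbitrary compact set under the stronger absolute continuity hypothesis.

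\medskip

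\textbf{Step 1: The polytope case.} For a fixed $c\in\PP^d$, consider the set
$$
A:=\{S\in\SS_{d-1}:S\cap c\neq\emptyset,\ c\subset S^+\text{ or }c\subset S^-\}
$$
of great hyperspheres that meet $c$ but do not cut into its interior. I first want to observe that any such $S$ must be a \emph{supporting} great hypersphere of $c$: since $c$ lies entirely in one of the closed half-spheres bounded by $S$ and yet touches $S$, the intersection $S\cap c$ is contained in the relative boundary of $c$. Because $c$ is a spherical polytope, its boundary is the finite union of its proper faces $F\in\cF_j(c)$ for $j\in\{0,\ldots,d-1\}$. My plan is to show that a supporting great hypersphere $S$ of $c$ passes through at least one point $e$ of $c$ on its boundary; more usefully, I would argue that $S$ must contain a whole face of $c$ of dimension $\le d-1$, or at least touch $c$ along such a face. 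In terms of the polyhedral cone $\hat c\subset\RRd1$, a supporting great hypersphere corresponds to a linear hyperplane $H=u^\perp$ supporting $\hat c$, i.e.\ $u$ lies in the polar cone $\hat c^\ast$, and the contact set is a face of $\hat c$.

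\medskip

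\textbf{Step 2: Reduction to a null set via regularity.} The key reduction is that $A$ is contained in the union, over the finitely many proper faces $F$ of $c$, of the sets
$$
A_F:=\{S\in\SS_{d-1}:F\subset S\}
$$
of great hyperspheres containing a fixed face $F$, together with sets of hyperspheres passing through a fixed relative-boundary point. Each face $F$ contains at least one point $e\in\SS^d$, so $A_F\subset\{S\in\SS_{d-1}:e\in S\}$, which has $\kappad$-measure zero by the regularity assumption in the Definition preceding the lemma. Since there are only finitely many faces, a finite union of $\kappad$-null sets is $\kappad$-null, giving $\kappad(A)=0$. The main obstacle here is the geometric claim in Step~1 that every $S\in A$ actually passes through a boundary point lying in a fixed finite collection: I need to verify that ``touching without cutting the interior'' forces $S$ to contain a point of the boundary $\partial c$, and then to cover $\partial c$ by finitely many points or faces whose spanning subspheres trap $S$. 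The cleanest route is to fix, for each proper face $F$, a point $e_F\in\operatorname{relint}F$ and show $A\subset\bigcup_F\{S:e_F\in S\}$ is \emph{not} quite right (a supporting $S$ need not pass through the relative interior of the contact face); instead I would use that the contact set $S\cap c$ is itself a face, pick a vertex of that face, and note the finitely many vertices of $c$ already cover all possible contact vertices, so $A\subset\bigcup_{v\in\cF_0(c)}\{S:v\in S\}$, each summand null by regularity.

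\medskip

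\textbf{Step 3: The compact case under absolute continuity.} For an arbitrary compact $C\subset\SS^d$ with $\kappad\ll\nu_{d-1}$, the finiteness-of-faces argument is unavailable, so I switch to the explicit representation
$$
\nu_{d-1}(\,\cdot\,)=\frac{1}{\b_d}\int_{\SSd}{\bf 1}(u^\perp\cap\SSd\in\,\cdot\,)\,\cH^d(\dint u)
$$
given in the text. Since $\kappad\ll\nu_{d-1}$, it suffices to show the analogous set $A_C$ is $\nu_{d-1}$-null, i.e.\ that for $\cH^d$-almost every $u\in\SSd$ the great hypersphere $u^\perp\cap\SSd$ does not support $C$ in the offending way. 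A great hypersphere $u^\perp\cap\SSd$ has $C$ on one side precisely when $u$ or $-u$ is an outer normal supporting direction, equivalently when $\langle u,x\rangle$ has constant sign for all $x\in C$ with equality somewhere; parametrising by the support function $h(u)=\max_{x\in C}\langle u,x\rangle$, the condition ``supports but does not cut'' corresponds to $u$ lying in the set where the supremum is attained but the hyperplane is tangential, and I would argue this exceptional set of directions has $\cH^d$-measure zero because the set of $u$ for which $u^\perp$ is a supporting (non-cutting) hyperplane of a fixed compact set is a lower-dimensional ``boundary'' set in direction space. The cleanest argument is to note that for each $u$, at most the two great hyperspheres $\pm u^\perp$ are candidates, and the set of $u$ with $u^\perp\cap C\neq\emptyset$ while $C$ stays in one closed half-sphere is contained in the set of normals realising the support function, which for $\cH^d$-a.e.\ direction meets $C$ transversally. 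Making this last measure-zero statement rigorous — that the non-transversal supporting directions form an $\cH^d$-null set — is where the real work lies, and I expect to invoke a Fubini/coarea argument or the fact that a bounded support-type function is differentiable almost everywhere so that tangential (degenerate) contact occurs only on a null set of directions.
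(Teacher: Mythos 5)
Your Step 2 goes wrong at exactly the point where you second-guess yourself. The worry that ``a supporting $S$ need not pass through the relative interior of the contact face'' is unfounded: if $S\cap c\neq\emptyset$ and $c\subset S^+$ or $c\subset S^-$, then $\mathrm{lin}(S)$ supports (or contains) the polyhedral cone $\hat c$, so $\mathrm{lin}(S)\cap\hat c$ is a nonempty face of $\hat c$, and hence $S$ contains the \emph{entire} corresponding spherical face $F$ of $c$ --- in particular any point $c_F\in\mathrm{relint}\,F$ fixed in advance. This finite covering of the bad set by the null sets $\{S\in\SS_{d-1}:c_F\in S\}$, one for each face $F$ of $c$, is precisely the paper's proof. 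The vertex covering $A\subset\bigcup_{v\in\cF_0(c)}\{S:v\in S\}$ that you substitute for it is false in general, because the paper's $\PP^d$ explicitly includes degenerate polytopes whose cone $\hat c$ has nontrivial lineality and which therefore have no vertices: for $c$ a closed half-sphere (and such cells occur in $Y_t$ right after the first split) the only offending great hypersphere is the bounding one $S_0$, while $\cF_0(c)=\emptyset$, so your union is empty and misses $S_0$; the same failure occurs for subspheres and whenever $c\subset S$. The extreme-ray fact you implicitly invoke (``every nonzero face of the cone contains an extreme ray'') is valid only for pointed cones.

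In Step 3 you correctly reduce, via $\kappad\ll\nu_{d-1}$ and the integral representation of $\nu_{d-1}$, to showing that the offending directions form an $\cH^d$-null subset of $\SSd$, but the tool you propose does not close the gap you yourself flag. Almost-everywhere differentiability of $h=h_{\mathrm{conv}(C)}$ is of no direct use, because ``touching without cutting'' does not force non-differentiability: for $C=\{x_0\}$ the support function is linear, hence differentiable everywhere, yet every $u\in x_0^\perp\cap\SSd$ is a bad direction (the bad set is null there for an entirely different reason). The missing idea --- which is the paper's --- is to adjoin the origin: set $\widehat C:=\mathrm{conv}(\{o\}\cup C)$. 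Since $\mathrm{lin}(S)$ is a \emph{linear} hyperplane, $o\in\widehat C\cap\mathrm{lin}(S)$, so any offending $S$ yields a hyperplane supporting $\widehat C$ whose contact set contains the non-degenerate segment $[o,x]$ with $x\in S\cap C$. With this modification your differentiability idea does work: a non-singleton support set of $\widehat C$ at $u$ makes $h_{\widehat C}$ non-differentiable at $u$, and the non-differentiability set of a convex $1$-homogeneous function is a cone of Lebesgue measure zero in $\RRd1$, hence $\cH^d$-null on $\SSd$. The paper instead quotes Schneider's Corollary 2.3.11, which states directly that the rotations $\varrho$ for which a fixed hyperplane $\varrho U$ supports $\widehat C$ in a non-degenerate segment form a Haar-null set. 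Without the passage to $\widehat C$ (or an equivalent device) your sketch does not yield the second assertion of the lemma.
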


\begin{proof} First, suppose that $c\in\PP^d$.
For each face $F$ of $c$, we choose a point $c_F$ in the relative interior of $F$. If $S\in \SS_{d-1}$ satisfies $S\cap c\neq\emptyset$ and $ S\cap \text{int}(c)=\emptyset$, then there is a  face $F$ of $c$ (with $\text{dim}(F)\le d-1$)) such that $c_F\in S$. Since $F$ has only finitely many faces, $\kappa$ is subadditive and regular,
that is, $\kappad(\{S\in \SS_{d-1}: c_F\in S\})=0$,
the assertion follows. \medskip

 Now let $C\subset\SS^d$ be an arbitrary compact set. If $S\cap C\neq\emptyset$
and $C\subset S^+$ or $C\subset S^-$, then $\widehat{C}:= \text{conv}(\{o\}\cup C)$ (the Euclidean convex hull of $C$ and the origin)
 intersects the linear hull $\text{lin}(S)$ of $S$ in a non-degenerate segment  and $\text{lin}(S)$ supports  $\widehat{C}$. Let $U\subset\RR^{d+1}$ be an arbitrary fixed $d$-dimensional linear subspace. Then
\cite[Corollary 2.3.11]{Schneider} implies that
\begin{align*}
&\nu_{d-1}(\{S\in \SS_{d-1}:S\cap C\neq\emptyset, C\subset S^+ \text{ or }C\subset S^-\})\\
&\qquad \le \nu(\{\varrho\in \SO(d+1):\varrho U \text{ supports }\widehat{C} \text{ in a non-degenerate segment}\})=0\,.
\end{align*}
This also proves the assertion if  $\kappad$ is absolutely continuous with respect to $\nu_{d-1}$.
\end{proof}

For a regular Borel measure $\kappad$ on $\SS_{d-1}$, there is a uniquely determined symmetric Borel measure $\kappad^\circ$ on $\SSd$ such that
\begin{equation}\label{relme}
\kappad( \ \cdot \ )=\int_{\SSd}\mathbf{1}(u^\perp\cap \SSd\in\ \cdot\ )\, \kappad^\circ (\dint u)
\end{equation}
and $\kappad^\circ$ is regular in the sense that $\kappad^\circ(e^\perp\cap \SSd)=0$ for $e\in\SSd$. If $A\subset\SSd$ is a symmetric Borel set,
then $\kappad^\circ(A)=\kappad(\{S\in\SS_{d-1}:S^\perp\cap A\neq\emptyset\})$. Conversely, any regular Borel measure $\kappad^\circ$ on $\SSd$
leads to a regular Borel measure $\kappad$ on $\SS_{d-1}$ via \eqref{relme}.

\subsection{Elements of spherical integral geometry}\label{subsec:22SphericalIntGeo}

Let $K\in\KK^d$ be a non-empty spherically convex set and fix $0<r<\pi/2$. By $K_r$ we denote the spherical $r$-parallel set of $K$, that is,
$$
K_r=\{x\in\SSd:\ell(x,K)\leq r\}\,.
$$
The spherical version of Steiner's formula \cite[Theorem 6.5.1]{SW} says that $\cH^d(K_r\setminus K)$ can be expressed as
$$
\cH^d(K_r\setminus K)=\sum_{j=0}^{d-1}\b_{j}\b_{d-j-1}\,V_j(K)\int_0^r (\cos t)^j(\sin t)^{d-j-1}\,\dint t\,.
$$
The coefficients $V_0(K),\ldots,V_{d-1}(K)$ are the \textit{spherical intrinsic volumes} of $K$. In addition, we
put $V_i(\emptyset):=0$ for $i\in\{0,\ldots,d-1\}$.  It is often convenient to extend this list of functionals by putting
$V_d(K):=\cH^d(K)/\b_{d}$. The spherical intrinsic volumes are additive, rotation invariant, continuous with respect to the Hausdorff distance and bounded by $1$. For a spherical polytope $c\in\PP^d$, the intrinsic volumes have the explicit representation
$$
V_j(c)=\frac{1}{\b_{j}}\sum_{F\in\cF_j(c)}\gamma(F,c)\,\cH^j(F)\,,\qquad j\in\{0,\ldots,d-1\}\,,
$$
where $\gamma(F,c)$ is the external angle of $c$ at $F$. In terms of the polar body $c^\ast\in\PP^d$ of $c$
and the set $N(c,F):=\{y\in c^\ast:\lan x,y\ran=0\}$ with an arbitrary point $x$ in the relative interior of $F$,
$\gamma(F,c)$ can be written as
$$
\gamma(F,c)=\frac{1}{\b_{d-1-j}}\cH^{d-1-j}(N(c,F))\,.
$$
For later purposes, we need the following values. First, we have
$$
V_j(S_k)=\delta_{jk}\,,\qquad S_k\in\SS_{k}\,,\quad j,k\in\{0,\ldots,d\}\,,
$$
see \cite[(9)]{HugSchneider2016} (or \cite[(6.46)]{SW}). Here, $\delta_{jk}$ stands for the Kronecker symbol, which is equal to $1$ if $j=k$ and $0$ otherwise.
Moreover, if $K=\overline{xy}$ with $\ell(x,y)<\pi$ is the unique spherical segment connecting two points $x,y\in\SSd$, then we have  $V_0(K)=1/2$ and $V_1(K)=\ell(x,y)/(2\pi)$, while $V_2(K)=\ldots=V_{d-1}(K)=0$. In particular, it should be noted that $V_0(K)$ does not coincide with the Euler characteristic of $K$, which is in contrast to the Euclidean case. By continuity, these relations extend to the case that $\ell(x,y)=\pi$, but then the connecting geodesic is no longer uniquely determined. Moreover, the intrinsic volume of order $d-1$ always has the representation
\begin{equation*}
V_{d-1}(K)=\frac{\cH^{d-1}(\partial K)}{ 2\b_{d-1}}\,,
\end{equation*}
where $\partial K$ denotes the boundary of $K$, provided that $K$ has non-empty interior (for a $d$-dimen\-sio\-nal spherical polytope, the boundary is the union of all of its $(d-1)$-dimensional faces). If in addition $S\in\SS_{d-1}$, then
$$
V_{d-1}(K\cap S)=\frac{\cH^{d-1}(K\cap S)}{\b_{d-1}}\,.
$$

\medskip

We can now rephrase one of our crucial devices, namely \textit{Crofton's formula} for spherical intrinsic volumes. For $k\in\{0,\ldots,d\}$ and $j\in\{0,\ldots,k\}$, it states that
$$
\int_{\SS_k}V_j(K\cap S)\,\nu_k(\dint S)=V_{d-k+j}(K)\,,
$$
see \cite[p.~261]{SW} (note that the case $k=d$ is trivial).

\medskip

Finally, we recall from \cite[Equation (6.63)]{SW} that the invariant probability measure of all great hyperspheres hitting a spherically convex set $K\in\KK^d\setminus\bigcup_{k=0}^d\SS_k$ can be expressed as a sum of spherical intrinsic volumes. For this, we define
$$
\SS_{d-1}\blk B\brk:=\{S\in\SS_{d-1}:S\cap B\neq\emptyset\}
$$
for a set $B\subset\SSd$. Then we get
\begin{equation}\label{nonloc}
\nu_{d-1}(\SS_{d-1}\blk K\brk )=2\sum_{j=0}^{\left\lfloor\frac{d-1}{ 2}\right\rfloor}V_{2j+1}(K)\,.
\end{equation}
Especially for a spherical segment $\overline{xy}$ with length $\ell(x,y)\leq\pi$ we get
\begin{equation}\label{eq:InvMeasureLineSegment}
\nu_{d-1}(\SS_{d-1}\blk\overline{xy}\brk)=2V_1(\overline{xy})=\frac{1}{\pi}\ell(x,y)\,.
\end{equation}

In the following, we also consider local extensions of the spherical intrinsic volumes. To introduce these,
for $K\in \KK^d$ and $x\in \SS^d$ with $\ell(x,K)<\pi/2$, let $p(K,x)$ denote the unique point in $K$
closest to $x$. For a Borel set $A\subset\SS^d$ and $0\le r<\pi/2$, let
$$
K_r(A):=\{x\in K_r\setminus K:p(K,x)\in A\}
$$
be the local parallel set of $K$ determined by $A$ and $r$. Then \cite[Theorem 6.5.1]{SW} yields that
$$
\mathcal{H}^d(K_r(A))=
\sum_{j=0}^{d-1}\b_{j}\b_{d-j-1}\,\phi_j(K,A)\int_0^r (\cos t)^j(\sin t)^{d-j-1}\,\dint t\,,
$$
where
$$
\phi_j(K,A)=\frac{1}{\beta_j}\sum_{F\in \mathcal{F}_j(K)}\gamma(F,K)\mathcal{H}^{j}(F\cap A)
$$
if $K\in\PP^d$ is a spherical polytope. In particular, we have
\begin{equation}\label{eq:PhiD-1}
\phi_{d-1}(K,\,\cdot\,) = \frac{\cH^{d-1}\llcorner K}{\b_{d-1}}
\end{equation}
if $K\in\KK^d$ with $K\subset  S$ for some $S\in\SS_{d-1}$. Here and in what follows, $\cH^{d-1}\llcorner K:=\cH^{d-1}(\,\cdot\,\cap K)$ stands for the restriction of the measure $\cH^{d-1}$ to $K$. (The symbol $\llcorner$ is generally used to denote the restriction of a measure to a subset.)
Again we define $\phi_j(\emptyset,A):=0$ for $j\in\{0,\ldots,d-1\}$ and
$\phi_d(K,A):=\mathcal{H}^d(K\cap A)/\beta_d$. For each $K\in\KK^d$, $\phi_j(K,\cdot)$ is a finite Borel
measure on $\SS^d$,  the \textit{$j$th (spherical) curvature measure} of $K$. For any fixed Borel set $A\subset\SS^d$,
the map $K\mapsto \phi_j(K,A)$ is measurable and has the valuation property, that is,
\begin{equation}\label{eqvaluation}
\phi_j(K_1\cup K_2,A) = \phi_j(K_1,A)+\phi_j(K_2,A)-\phi_j(K_1\cap K_2,A)\,,
\end{equation}
whenever $ K_1,K_2,K_1\cup K_2\in\KK^d$.
The measure-valued map $K\mapsto \phi_j(K,\cdot)$ is weakly continuous, and $\phi_j$ is rotation covariant in the sense that $\phi_j(\vartheta K,\vartheta A)=\phi_j(K,A)$ for all $K\in\KK^d$, $A\in\mathcal{B}(\SS^d)$ and $\vartheta\in\SO(d+1)$, cf.\ \cite[Theorem 6.5.2]{SW}. Clearly, we have $\phi_j(K,\SS^d)=V_j(K)$ and
\begin{equation}\label{locbdmeasure}
\phi_{d-1}(K,A)=\frac{1}{2\beta_{d-1}}\mathcal{H}^{d-1}(\partial K\cap A)
\end{equation}
if $K\in\KK^d$ has non-empty interior. While \eqref{nonloc} does not have a local analogue, the spherical
Crofton formula extends to curvature measures in the form
\begin{equation}\label{eq:CroftonOnSphere2}
\int_{\SS_k}\phi_j(K\cap S,A\cap S)\, \nu_k(\dint S)=\phi_{d-k+j}(K,A)
\end{equation}
for all $K\in\KK^d$, Borel sets $A\subset \SS^d$, $k\in\{0,\ldots,d\}$ and $j\in\{0,\ldots,k\}$, see \cite[p.~261]{SW}.
Since $\phi_j(K\cap S,\cdot)$ is concentrated on $K\cap S$, the integrand in \eqref{eq:CroftonOnSphere2} can be replaced by $\phi_j(K\cap S,A)$ without changing the integral.

We point out that for all formulas (with the exception of \eqref{nonloc}) in this section, there exist Euclidean counterparts involving different normalizations and in the case of the Crofton formula a motion invariant measure which is infinite.

\subsection{Spherical tessellations and spherical splitting tessellations}\label{Sec2.3}

Spherical tessellations partition the unit sphere into finitely many non-overlapping $d$-dimen\-sio\-nal  spherically convex bodies. As in the Euclidean setting, these are  necessarily  spherical polytopes. For this reason, in the following definition  we can equivalently  consider finite collections of spherically convex bodies or spherical polytopes (which will be called cells in the sequel).

\begin{definition}{\rm
By a tessellation $T$ of $\SSd$ we understand a finite collection $T\subset\PP^d$ of $d$-dimensional spherical polytopes such that
\begin{itemize}
\item[{(i)}] $\bigcup\{c: c\in T\}= \SSd$,
\item[{(ii)}] any two elements of $T$ have disjoint interiors.
\end{itemize}
The set of all tessellations of $\SSd$ is denoted by $\TT^d$.}
\end{definition}

We now introduce a $\sigma$-field on $\TT^d$.
 Recall that $E=\KK^d$ is a compact Hausdorff space with countable base.
Let $\sN_s(E)$ denote the set of simple counting measures on $E$, and let $\sF_{lf}(E)$ denote the set of locally finite (hence finite) subsets of $E$. We can identify these spaces via the map $i_s:\sN_s(E)\to  \sF_{lf}(E)$, $\eta\mapsto i_s(\eta)=\text{supp}(\eta)$, where $\text{supp}(\mu)$ denotes the support of a measure $\mu$.  On $\sF_{lf}(E)$ we have the subspace topology
$\cT_{lf}$  of the Fell topology on $\cF(E)$, and on $\sN_s(E)$ we consider the vague topology $\cT_{vg}$, that is, the
coarsest topology such that all evaluation maps $\eta\mapsto \int_E g\, \dint \eta$ are continuous, whenever $g:E\to  \RR$ is a non-negative continuous function. Let us recall from \cite[p.~23]{DavisMarkovModels} that by a \textit{Borel space}  we understand a topological space which is homeomorphic to a Borel subset of a complete separable metric space.

\begin{lemma}\label{lemmess1}
Let $\cB_{lf}$ and $\cB_{vg}$ denote the $\s$-fields generated by $\cT_{lf}$ and $\cT_{vg}$,
respectively. Then $\cB_{vg}= i_s^{-1}(\cB_{lf})$ and $i_s(\cB_{vg})= \cB_{lf}$. In particular,
the $\sigma$-fields induced on $\TT^d$  by the vague topology and  by the Fell topology coincide, and
$\TT^d$ is a Borel space.
\end{lemma}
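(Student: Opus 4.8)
The plan is to exploit that $i_s\colon\sN_s(E)\to\sF_{lf}(E)$ is a bijection, so that the two displayed identities are equivalent: applying the bijection $i_s$ to $\cB_{vg}=i_s^{-1}(\cB_{lf})$ gives $i_s(\cB_{vg})=\cB_{lf}$, and conversely. Hence it suffices to establish $\cB_{vg}=i_s^{-1}(\cB_{lf})$, i.e.\ that $i_s$ is a Borel isomorphism. Since $E=\KK^d$ is compact Hausdorff with countable base, it is compact metrizable; I would fix a countable base $\{U_k\}_{k\in\NN}$ and a compatible metric. First I would record the two relevant descriptions of the generating $\sigma$-fields. On the one hand, $\cB_{lf}$ is the trace on $\sF_{lf}(E)$ of the Fell--Borel $\sigma$-field, generated by the hitting sets $\cF_G=\{F:F\cap G\neq\emptyset\}$ ($G$ open) and the missing sets $\cF^K=\{F:F\cap K=\emptyset\}$ ($K$ compact). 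On the other hand, a standard approximation argument (approximating $\mathbf 1_G$ from below and $\mathbf 1_K$ from above by continuous functions and using that every $\eta$ is a \emph{finite} measure on the compact space $E$) shows that $\cB_{vg}=\sigma(\eta\mapsto\eta(G):G\text{ open})$ and that each evaluation $\eta\mapsto\eta(B)$, with $B$ open or compact, is $\cB_{vg}$-measurable.

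The inclusion $i_s^{-1}(\cB_{lf})\subseteq\cB_{vg}$ is then routine: under the identification $F=\operatorname{supp}(\eta)$ one has $i_s^{-1}(\cF_G)=\{\eta:\eta(G)\ge 1\}$ and $i_s^{-1}(\cF^K)=\{\eta:\eta(K)=0\}$, both of which lie in $\cB_{vg}$ by the measurability of the evaluations just noted.

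The substance of the lemma lies in the reverse inclusion $\cB_{vg}\subseteq i_s^{-1}(\cB_{lf})$, which amounts to recovering the atom-counting functionals from the closed-set (support) structure in a Fell-measurable way. By the generation statement for $\cB_{vg}$, it suffices to show that for every open $G$ and every $n\ge 1$ the set $\{A\in\sF_{lf}(E):\#(A\cap G)\ge n\}$ belongs to $\cB_{lf}$. Here I would use the countable base together with the metric: a finite set $A$ satisfies $\#(A\cap G)\ge n$ \emph{if and only if} there are indices $k_1,\dots,k_n$ with pairwise disjoint closures $\overline{U_{k_i}}\subset G$ and $A\cap U_{k_i}\neq\emptyset$ for every $i$. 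The ``if'' direction is immediate (disjointness forces $n$ distinct points of $A$, all inside $G$), and the ``only if'' direction follows by choosing, around $n$ distinct points of $A\cap G$, disjoint metric balls with closures inside $G$ and then shrinking to basic neighbourhoods. Consequently
\[
\{A:\#(A\cap G)\ge n\}=\bigcup_{\substack{k_1<\dots<k_n\\ \overline{U_{k_i}}\subset G,\ \overline{U_{k_i}}\cap \overline{U_{k_j}}=\emptyset}}\ \bigcap_{i=1}^{n}\big(\cF_{U_{k_i}}\cap\sF_{lf}(E)\big),
\]
a countable union of finite intersections of hitting events, hence an element of $\cB_{lf}$. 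This is the step I expect to be the main obstacle, since it is where the point-separation properties of the countable base are genuinely needed; everything else is formal. Combining the two inclusions yields $\cB_{vg}=i_s^{-1}(\cB_{lf})$ and therefore $i_s(\cB_{vg})=\cB_{lf}$.

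Finally, for the two consequences: since $\TT^d$ sits inside $\sN_s(E)$ (a tessellation being the simple counting measure supported on its finite cell collection) and $i_s$ identifies it with $i_s(\TT^d)\subseteq\sF_{lf}(E)$, the equality of $\sigma$-fields restricts to traces, so the $\sigma$-fields induced on $\TT^d$ by $\cT_{vg}$ and by $\cT_{lf}$ coincide. For the Borel-space assertion I would argue that $\cF(E)$ with the Fell topology is compact metrizable, hence a complete separable metric space and thus a Borel space; that $\sF_{lf}(E)=\bigcup_{n}\{F:\#F\le n\}$ is Borel in $\cF(E)$ (each $\{F:\#F\ge n\}$ is even Fell-open, being the union of the open sets $\cF_{G_1}\cap\dots\cap\cF_{G_n}$ over $n$-tuples of pairwise disjoint open sets, so its complement is closed); and that the tessellation conditions (i) and (ii) single out a Borel subset $i_s(\TT^d)$ of $\sF_{lf}(E)$. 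A Borel subset of a Polish space is a Borel space in the sense recalled above, which gives the claim.
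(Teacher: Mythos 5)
Your proof is correct, and while it has the same skeleton as the paper's (prove the two inclusions $i_s^{-1}(\cB_{lf})\subseteq\cB_{vg}$ and $\cB_{vg}\subseteq i_s^{-1}(\cB_{lf})$, then pass to traces on $\TT^d$ and invoke Borelness of $\TT^d$ in $\cF(E)$), you replace both of the paper's key ingredients by direct arguments. For the easy inclusion the paper works at the level of topologies: the Portmanteau theorem shows $i_s$ is continuous, so $i_s^{-1}(\cT_{lf})\subset\cT_{vg}$ and a fortiori $i_s^{-1}(\cB_{lf})\subset\cB_{vg}$; you instead argue purely measure-theoretically with the hitting/missing generators of $\cB_{lf}$ and the evaluation maps $\eta\mapsto\eta(G)$, $\eta\mapsto\eta(K)$. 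For the substantive reverse inclusion the paper cites \cite[Lemma 3.1.4]{SW} to get that $\eta\mapsto\int_E g\,\dint\eta$ is $i_s^{-1}(\cB_{lf})$-measurable; you effectively inline a proof of that lemma, recovering the atom-counting functionals $A\mapsto\#(A\cap G)$ from the Fell structure via pairwise disjoint basic neighbourhoods with closures in $G$. Your combinatorial characterisation of $\{\#(A\cap G)\ge n\}$ is sound (the ``only if'' direction uses regularity of the compact metric space to shrink to basic sets with disjoint closures), so the displayed countable union does lie in $\cB_{lf}$. What your route buys is self-containedness and an explicit view of where second countability and metrizability of $\KK^d$ enter; what the paper's route buys is brevity and the amusing side observation that the topologies themselves do \emph{not} coincide even though the Borel $\sigma$-fields do. On the final point both arguments are at the same level: you, like the paper, leave the verification that the tessellation conditions (i) and (ii) cut out a Borel subset of $\sF_{lf}(E)$ to the spherical analogue of \cite[Lemma 10.1.2]{SW}.
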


\begin{proof}
The Portmanteau theorem for vague convergence implies that $i_s$ is continuous. Hence, $i_s^{-1}(\cT_{lf})\subset \cT_{vg}$.
It is easy to see that this inclusion
is strict  (think, for example, of two sequences of distinct points $(x_n)_{n\in\NN}$ and $(y_n)_{n\in\NN}$ which converge to the same limit point $x\in E$ and the sets $\{x_n,y_n\}$, which converge to the set $\{x\}$ in the Fell topology, but for which the sequence of the associated counting measures does not converge in the vague topology). Then we deduce that $i_s^{-1}(\cB_{lf})\subset \cB_{vg}$, and the same is true for the induced
subspace $\s$-fields on $\TT^d$ and $\sN_s(\TT^d):=i_s^{-1}(\TT^d)$. On the other hand, if $g:E\to [0,\infty)$ is continuous and $O\subset\RR$ is open, then $\{\eta\in \sN_s(E): \int_Eg\, \dint \eta\in O\}
\in i_s^{-1}(\cB_{lf})$, which follows from \cite[Lemma 3.1.4]{SW} (this remains true if $g$ is measurable). Therefore, we also get the reverse inclusion  $\cB_{vg}\subset i_s^{-1}(\cB_{lf})$, whence the first assertion.
 This equality extends to the intersection with $\TT^d$ so that the second assertion also follows.

 The spherical analogue of \cite[Lemma 10.1.2]{SW} states that $\TT^d$ is a Borel subset of the Polish
 space $\cF(E)$ (with the Fell topology), and hence it is a Borel space.
\end{proof}

In the following, it will be sufficient to know that $\TT^d$ is a Borel space. Since $\TT^d$ is a subspace of the Polish space $\cF(E)$, but not apparently a countable intersection of open sets in $\cF(E)$, it is not clear whether it is even a Polish space.

\medskip

The $\sigma$-field on $\TT^d$ can now be used to define a random tessellation as a measurable map $Y:\Omega\to\TT^d$ from an underlying probability space $(\Omega,\sigmaalgebra,\bP)$ into the measurable space $(\TT^d,\cB_{lf})=(\TT^d,\cB_{vg})$. Here and in what follows we shall assume that the probability space $(\Omega,\sigmaalgebra,\bP)$ is rich enough to carry all the random objects we consider in this paper.

\begin{definition}{\rm
For $T\in\TT^d$, $c\in \PP^d$ and $S\in\SS_{d-1}$, we define $\oslash:\PP^d\times \SS_{d-1}\times \TT^d\to\TT^d$ by
$$
\oslash(c,S,T):= 
(T\setminus\{c\})\cup\{c\cap S^+,c\cap S^-\}\in \TT^d\,,
$$
if $c\in T$ and $S\in \SS_{d-1}[c]$,
where $S^\pm$ are the two closed half-spheres determined by $S$,
and otherwise we define $\oslash(c,S,T):=T$.}
\end{definition}

In other words, if $c\in T$ and $S\in \SS_{d-1}[c]$ then $\oslash(c,S,T)$ is the tessellation arising from $T$ when the cell $c$ is split by the great hypersphere $S$.

\begin{lemma}\label{lemmess2}
The map  $\oslash:\PP^d\times \SS_{d-1}\times \TT^d\to\TT^d$ is Borel measurable (with respect to the induced subspace topologies).
\end{lemma}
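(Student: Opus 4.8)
The plan is to use that the target space $\TT^d$ carries the vague $\sigma$-field $\cB_{vg}$, which (being the Borel $\sigma$-field of the coarsest topology making all maps $\eta\mapsto\int_E g\,\dint\eta$ continuous for non-negative continuous $g\colon E\to\RR$, $E=\KK^d$) is generated by these evaluation maps. Hence a map into $(\TT^d,\cB_{vg})$ is measurable as soon as each such composition is, so it suffices to show that for every non-negative continuous $g$ the function
$$
F_g(c,S,T):=\int_E g\,\dint\oslash(c,S,T)
$$
is measurable on $\PP^d\times\SS_{d-1}\times\TT^d$. Writing $A:=\{(c,S,T): c\in T,\ S\in\SS_{d-1}[c]\}$, the definition of $\oslash$ gives $F_g=\int_E g\,\dint T$ off $A$, while on $A$ one has
$$
F_g(c,S,T)=\int_E g\,\dint T-g(c)+\big(g(c\cap S^+)+g(c\cap S^-)\big).
$$
Thus I would reduce the claim to three points: (i) $A$ is measurable; (ii) $(c,S,T)\mapsto\int_E g\,\dint T$ and $(c,S,T)\mapsto g(c)$ are measurable; and (iii) the symmetric quantity $\Phi_g(c,S):=g(c\cap S^+)+g(c\cap S^-)$ is measurable on $\{(c,S):S\in\SS_{d-1}[c]\}$. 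Item (ii) is immediate, being the evaluation against $g$ composed with the projection to $T$, respectively $g$ (continuous) composed with the projection to $c$.

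For (i) I would first check that $\{(c,S):S\in\SS_{d-1}[c]\}$ is open in $\PP^d\times\SS_{d-1}$. If $S_0$ meets $\operatorname{int}(c_0)$, pick $x_0\in S_0\cap\operatorname{int}(c_0)$ and a geodesic ball $B(x_0,\rho)\subset c_0$; the stability of interiors of (spherically) convex bodies under Hausdorff convergence -- if $c\to c_0$ then any fixed compact subset of $\operatorname{int}(c_0)$ lies in $\operatorname{int}(c)$ eventually -- together with the fact that great hyperspheres close to $S_0$ pass arbitrarily near $x_0$, shows that $S$ still meets $\operatorname{int}(c)$ for all $(c,S)$ near $(c_0,S_0)$. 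For the membership relation $\{(c,T):c\in T\}$ I would fix a countable base $\{U_i\}_i$ of the (second countable) space $\PP^d$ and use that, since every $T\in\TT^d$ is a finite simple collection, $c\in T$ holds if and only if $T(U_i)\ge 1$ for every $i$ with $c\in U_i$ (one direction because $c$ is then an atom of $T$; the other by separating $c$ from the finitely many cells of $T$ by a basic neighbourhood). This yields
$$
\{(c,T):c\in T\}=\bigcap_i\Big(\big((\PP^d\setminus U_i)\times\TT^d\big)\cup\{(c,T):T(U_i)\ge1\}\Big),
$$
and each set on the right is measurable because $T\mapsto T(U_i)$ is $\cB_{vg}$-measurable (as in the proof of Lemma~\ref{lemmess1}, via \cite[Lemma 3.1.4]{SW}). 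Intersecting with the open set from the interior condition gives that $A$ is measurable.

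The geometric core is (iii), the measurability of $(c,S)\mapsto c\cap S^{\pm}$. Here I would pass to the cone picture: associating with a spherically convex body $K$ its positive hull $\widehat K=\{\lambda x:\lambda\ge0,\ x\in K\}$ identifies $\KK^d$ with the closed convex cones in $\RRd1$, and spherical Hausdorff convergence with Euclidean Hausdorff convergence of the compact convex sets $\widehat K\cap B^{d+1}$. Under this identification a closed half-sphere $S^{+}$ corresponds to a half-space $H_u^{+}=\{x:\lan x,u\ran\ge0\}$ with $u\in\SSd$ a unit normal of $S$, and $c\cap S^{+}$ corresponds to $\widehat c\cap H_u^{+}$. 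When $S\in\SS_{d-1}[c]$ the interiors of $\widehat c$ and of both half-spaces $H_u^{\pm}$ meet, so the Euclidean continuity of the intersection map for convex bodies whose interiors intersect (see \cite[Theorem 1.8.10]{Schneider}) makes $(c,u)\mapsto c\cap H_u^{\pm}$ continuous on the relevant set. Consequently $(c,u)\mapsto g(c\cap H_u^{+})+g(c\cap H_u^{-})$ is continuous; being invariant under $u\mapsto-u$, and since the two-to-one map $q\colon\SSd\to\SS_{d-1}$, $u\mapsto u^\perp\cap\SSd$, admits a Borel section, the induced function $\Phi_g$ on $\{(c,S):S\in\SS_{d-1}[c]\}$ is Borel measurable. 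Combining (i)--(iii) shows that $F_g$ is measurable for every non-negative continuous $g$, and the claim follows.

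I expect the main obstacle to be exactly step (iii): the continuity of the intersection map and the care needed to transfer the Euclidean statement to the sphere via the cone correspondence, in particular verifying that the interior condition encoded by $S\in\SS_{d-1}[c]$ is precisely what guarantees a regular (continuity-preserving) intersection, and handling the unordered labelling of $S^+,S^-$. By contrast, the measure-theoretic bookkeeping in (i) and (ii) is routine once the countable base and the interior-stability of convex bodies are in place.
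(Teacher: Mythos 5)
Your argument is correct, and it reaches the conclusion by a genuinely different route from the paper's. The paper works directly at the level of counting measures: it shows that $\{(c,T):c\in T\}$ is measurable via the representation $\mu_T(\{c\})=\int\mathbf{1}(c=c')\,\mu_T(\dint c')$ and a functional monotone-class argument, that $(c,T)\mapsto \mu_T-\delta_c$ is measurable on this set, that $\{(c,S):\operatorname{int}(c)\cap S\neq\emptyset\}$ is open, and that $(K,S)\mapsto\delta_{K\cap S^+}+\delta_{K\cap S^-}$ is measurable by invoking the measurability of the intersection map in the Fell topology (\cite[Theorem 12.2.6\,(a)]{SW}); the map $\oslash$ is then the sum of these measurable pieces. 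You instead test against the generating evaluation functionals $\eta\mapsto\int g\,\dint\eta$, which reduces everything to scalar measurability, replace the paper's monotone-class argument for $\{(c,T):c\in T\}$ by a countable-base characterization of atoms (equally valid, since $\KK^d$ is second countable and Hausdorff and $T$ is finite and simple), and — this is the real divergence — prove the measurability of $(c,S)\mapsto g(c\cap S^{\pm})$ from scratch: you lift to polyhedral cones, apply the Euclidean continuity of the intersection of non-separated convex bodies (\cite[Theorem 1.8.10]{Schneider}) to the truncations $\widehat c\cap B^{d+1}$ and $H_u^{\pm}\cap B^{d+1}$, and descend through the two-to-one cover $u\mapsto u^\perp\cap\SSd$ via a Borel section. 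Your route is longer but yields a stronger intermediate fact (continuity, not just measurability, of the splitting on the open set where $S$ meets $\operatorname{int}(c)$), and it correctly identifies that the condition $S\in\SS_{d-1}[c]$ is exactly what makes the intersection map well behaved; the paper's route is shorter because it outsources the geometric step to the Fell-topology measurability of intersections. The only points you should still write out carefully are the identification of the spherical Hausdorff topology with Euclidean Hausdorff convergence of the truncated cones, and the fact (used implicitly when you pass from the counting-measure identity to the integral formula) that $c\cap S^+$, $c\cap S^-$ and the remaining cells of $T$ are pairwise distinct when $S$ meets $\operatorname{int}(c)$ — but these are routine and do not affect the validity of the argument.
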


\begin{proof}
 To verify this in detail, the following observations are useful. First, the set $\{(c,T)\in \PP^d\times \TT^d:c\in T\}$ is measurable.
This follows from the preceding identification and from the general fact that $\{(x,\eta)\in E\times \sN_s(E):\eta(\{x\})>0\}$ is measurable. To see this,
we use $\eta(\{x\})=\int_E\mathbf{1}(x=y)\, \eta(\dint y)$ and that $(x,\eta)\mapsto \int_E f(x,y)\, \eta(\dint y)$ is measurable for  measurable functions  $f:E^2\to [0,\infty]$, since this is true for measurable functions of the form $f(x,y)=h(x)g(y)$. On this measurable set, the map $(c,T)\mapsto T\setminus\{c\}$ is measurable, since the corresponding map $(x,\eta)\mapsto \eta-\delta_c$ is measurable ($\delta_c$ denotes the point mass located at $c$). Next we observe that  the set
$\{(c,S)\in \KK^d\times \SS_{d-1}:\text{int}(c)\cap S\neq\emptyset\}$ is open (with respect to the corresponding subspace topologies) and hence measurable.
Then, on this set, the map $\KK^d\times \SS_{d-1}\to\sN_s(E)$, $(K,S)\mapsto \delta_{K\cap S^+}+\delta_{K\cap S^-}$, is measurable (here one can use \cite[Theorem 12.2.6 (a)]{SW}).
\end{proof}

The cell-splitting operation can be used to define a spherical splitting tessellation as a continuous time pure jump Markov process in the space
$ \TT^d$ via its generator.
This point of view had previously been adopted in \cite{GST,STSTITPlane, STSTITHigher, STSecondOrder, STBernoulli, STITLimits}, in a Euclidean space.
 For a general introduction to (time-homogeneous, pure) jump processes, we refer to \cite[Chapter 15]{Breiman}, \cite{DavisMarkovModels}, \cite[Chapter 12]{Kallenberg}, \cite[p.~19, Chapter 2.5]{LastBrandt}, or to the lecture notes \cite[Chapter 4]{Eberle}. In these texts, for instance, the existence (by explicit construction) \cite[Chapter 15, Section 6]{Breiman} and uniqueness \cite[Proposition 15.38]{Breiman} of pure jump processes with a given  generator
are established. Moreover, in \cite{GST} the explicit form of the generator and the distribution of the splitting tessellation up to time $t$ are derived from the algorithmic construction which is determined by a steering division kernel (see Remark \ref{rem:OtherInitialTessellation} below). These arguments transfer to the spherical setting without any changes (in fact, there is no need to use a bounded observation window). In our applications, we shall exclusively consider time-homogeneous and  non-explosive pure jump  processes.

\begin{figure}[t]
\begin{center}
\includegraphics[trim = 45mm 167mm 50mm 15mm, clip, width=0.6\columnwidth]{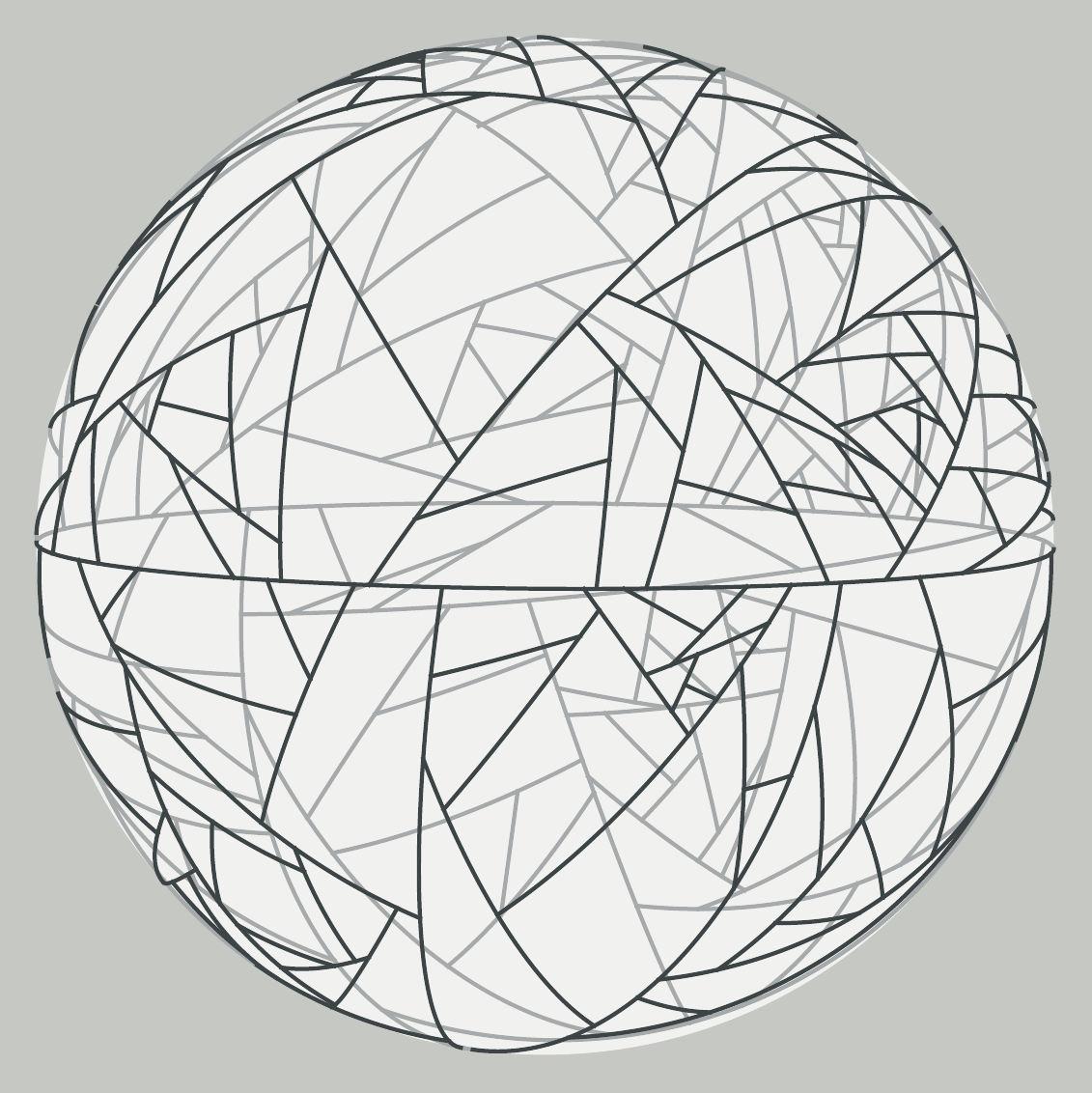}
\end{center}
\caption{Illustration of a splitting tessellation.}
\label{fig:ST}
\end{figure}

\begin{definition}\label{def:GeneratorA}{\rm
Let $\kappad$ be a regular probability measure on $\SS_{d-1}$.
By the splitting process $(Y_t)_{t\geq 0}$ with initial tessellation $Y_0:=\{\SSd\}$ and direction distribution $\kappad$ we understand the continuous time pure jump Markov process on $\TT^d$ whose generator $\cA$ is given by
\begin{align*}
(\cA f)(T) &:=\sum_{c\in T}\int_{\SS_{d-1}[c]}\big[f(\oslash(c,S,T))-f(T)\big]\,\kappad(\dint S)\,,\qquad T\in\TT^d\,,
\end{align*}
where $f:\TT^d\to\RR$ is bounded and measurable. For $t>0$ we call $Y_t$ a splitting tessellation with time parameter $t$.}
\end{definition}

Here and in the following, the underlying direction distribution $\kappad$ on $\SS_{d-1}$ will always be clear from the context. For this
reason we do not indicate the dependence of the splitting process or its generator on $\kappad$ in our notation.

\begin{remark}\label{rem:OtherInitialTessellation}
Similarly as above one can define a splitting process $(Y_t^{(T)})_{t\geq a}$ with $Y_a=T$ and $a\ge 0$, where $T\in\TT^d$ is an arbitrary (fixed) initial tessellation and the starting time is $a$. In most cases we shall work with $T=\{\SSd\}$ and $a=0$ in this paper, which was the motivation in Definition \ref{def:GeneratorA}. However, the generalization to arbitrary (fixed) initial tessellations is needed in Section \ref{sec:3CapacityFunctional}. There we shall
also use an explicit description of the distribution of a splitting tessellation process over a time interval $[a,b]$ starting with an initial tessellation $T$ at time $a$.

To describe this distribution,  we adjust the formalism introduced in \cite{GST} for so-called branching random tessellations and define, for $T\in\TT^d$, a measure $\phi(T;\,\cdot\,)$ on $\PP^d\times\SS_{d-1}$ by
\begin{align*}
\phi(T;\,\cdot\,) := \sum_{c\in T}\delta_c\otimes\kappa\llcorner\SS_{d-1}[c]
\end{align*}
whose total mass is denoted by $\phi(T):=\phi(T;\PP^d\times\SS_{d-1})$. For $T_a\in\TT^d$, $0\leq a\leq b$, $n\in\NN$, $a\leq s_1<\ldots<s_n\le b$, $c_1,\ldots,c_n\in\PP^d$, and $S_1\in\SS_{d-1}[c_1],\ldots,S_n\in\SS_{d-1}[c_n]$, we denote by $\cD(T_a;[a,b];(s_j,c_j,S_j)_{1\leq j\leq n})$ the space of c\`adl\`ag functions $(\Upsilon^{(T_a)}_u)_{u\in[a,b]}$ on $\TT^d$ (that is, $(\Upsilon^{(T_a)}_u)_{u\in[a,b]}$ is a $\TT^d$-valued function on $[a,b]$ that is right-continuous and with left limits) such that $\Upsilon_a^{(T_a)}=T_a$, $u\mapsto\Upsilon_u^{(T_a)}$ is piecewise constant  with jumps only at times $s_1,\ldots,s_n$ so that $\Upsilon_{s_{j}}^{(T_a)}=\oslash(c_{j-1},S_{j-1},\Upsilon_{s_{j-1}}^{(T_a)})$ for some $c_{j-1}\in\Upsilon_{s_{j-1}}^{(T_a)}$ and $S_{j-1}\in\SS_{d-1}[c_{j-1}]$, for $j=1,\ldots,n$,  where $s_0=a$. This notation allows us to translate especially \cite[Equation (4.2)]{GST} to the present (spherical) framework, which for the splitting process $(Y_u^{(T_a)})_{u\in[a,b]}$ with initial tessellation $T_a$ says that
\begin{align*}
\PP((Y_u^{(T_a)})_{u\in[a,b]}\in\,\cdot\,) &= \sum_{n=0}^\infty\hspace{0.7cm}\int\cdots\hspace{-0.8cm}\int\limits_{\hspace{-0.7cm}\{a=s_0<s_1<\ldots<s_n<b\}}\dint s_1\ldots\dint s_n\prod_{j=1}^n\int\phi(\Upsilon_{s_{j-1}}^{(T_a)};\dint(c_{j-1},S_{j-1}))\\
&\qquad\times e^{-\int_a^b\phi(\Upsilon_u^{(T_a)})\,\dint u}\,{\bf 1}((\Upsilon_u^{(T_a)})_{u\in[a,b]}\in\cD(T_a;[a,b];(s_j,c_j,S_j)_{1\leq j\leq n}))\\
&\qquad\times{\bf 1}((\Upsilon_u^{(T_a)})_{u\in[a,b]}\in\,\cdot\,)\,;
\end{align*}
in order to improve the readability of the formulas here (and where appropriate) we write $\int\mu(\dint x)\,f(x)$ instead of $\int f(x)\,\mu (\dint x)$ for the integral of a function $f$ with respect to a measure $\mu$.

As pointed out before, the equivalence of this description of the splitting process via its explicit distribution to the description in terms of the generator of the splitting process is shown and discussed in \cite{GST} in greater generality in Euclidean space and we remark that all arguments can be transferred to our spherical set-up.
\end{remark}

\medskip

Recall that $\sN_s(\TT^d)=i_s^{-1}(\TT^d)$ and put $\mu_T:=i_s^{-1}(T)$ for $T\in\TT^d$. Then we can express  the generator $\cA$
in the form
\begin{align*}
(\cA f)(T)&=\int_{\SS_{d-1}} \int_{ \PP^d} \big[ f(\oslash(c,S,T))-f(T)\big]\,\mu_T(\dint c)\, \kappad(\dint S)\\
&= \int_{\PP^d} \int_{\SS_{d-1}} \big[ f(\oslash(c,S,T))-f(T)\big]\, \kappad(\dint S)
\,\mu_T(\dint c)\,,
\end{align*}
which shows that the map $T\mapsto (\cA f)(T)$ is measurable.
Moreover,
$ f(\oslash(c,S,T))-f(T)=0$ unless $c\in T$ and $S\in \SS_{d-1}([c])$. Writing $\lambda(T):=|T|$, $T\in\TT^d$, for the number of cells in $T$ and defining the  transition kernel
$$
q(T,\,\cdot\,):= \int_{\PP^d} \int_{\SS_{d-1}} \mathbf{1}(\oslash(c,S,T)\in\,\cdot)\, \kappad(\dint S)\,\mu_T(\dint c)\,,
$$
we obtain a probability (rate) kernel $\pi(T,\,\cdot\,):=\lambda(T)^{-1} q(T,\,\cdot\,)$ and interpret $\lambda$ as
an intensity (rate) function. Then we also have $\cA(\,\cdot\,)(T)=q(T,\,\cdot\,)-\lambda(T)\delta_T$, that is,
$$
(\cA f)(T)=(\lambda(\pi-I)f)(T)=\lambda(T)((\pi f)(T)-f(T))\,,
$$
where $If:=f$. It is important to emphasize that in the present setting the  intensity function $\lambda$ is unbounded.

\begin{figure}[t]
\begin{center}
\includegraphics[trim = 45mm 175mm 50mm 15mm, clip, width=0.3\columnwidth]{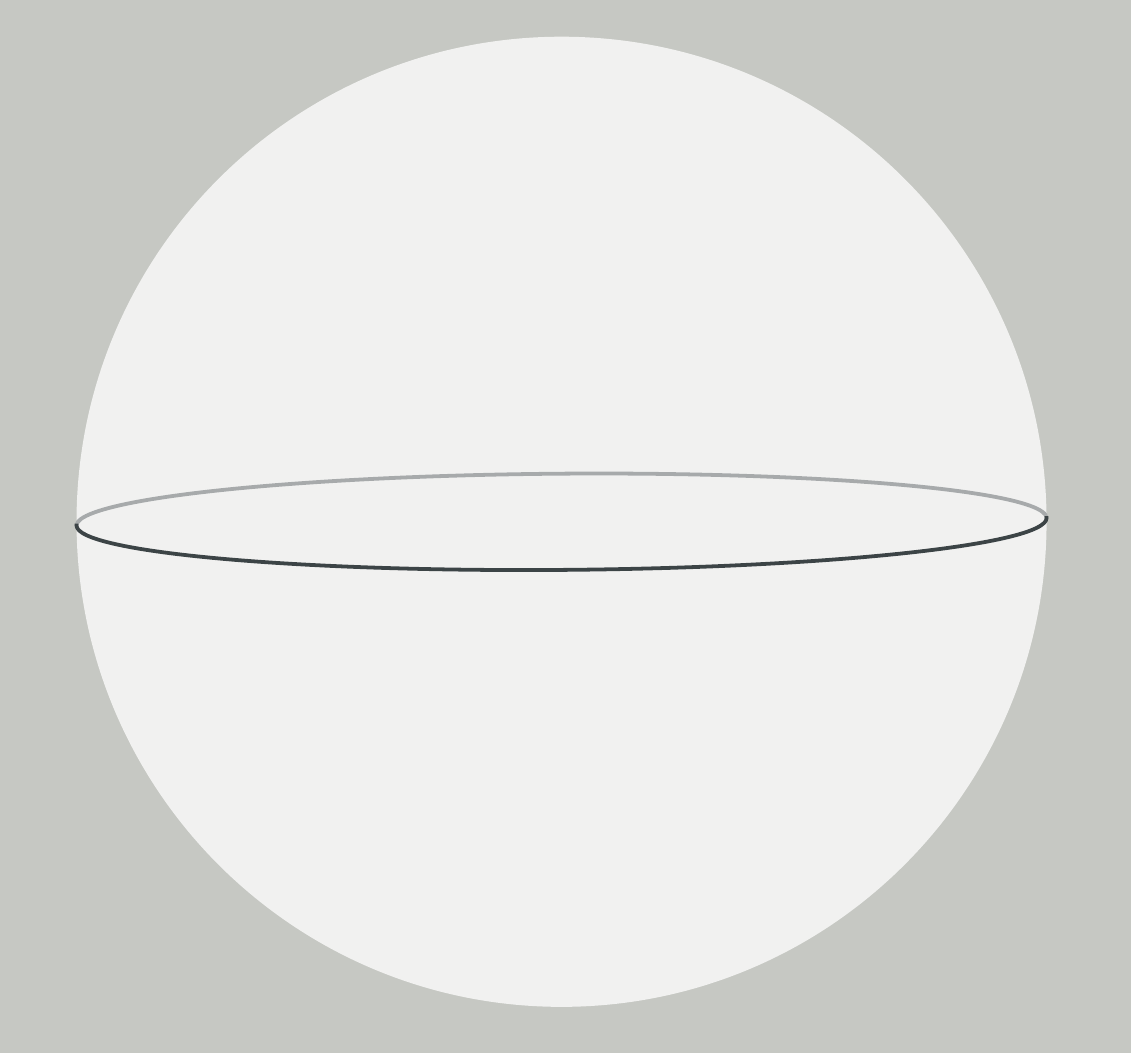}
\includegraphics[trim = 45mm 175mm 50mm 15mm, clip, width=0.3\columnwidth]{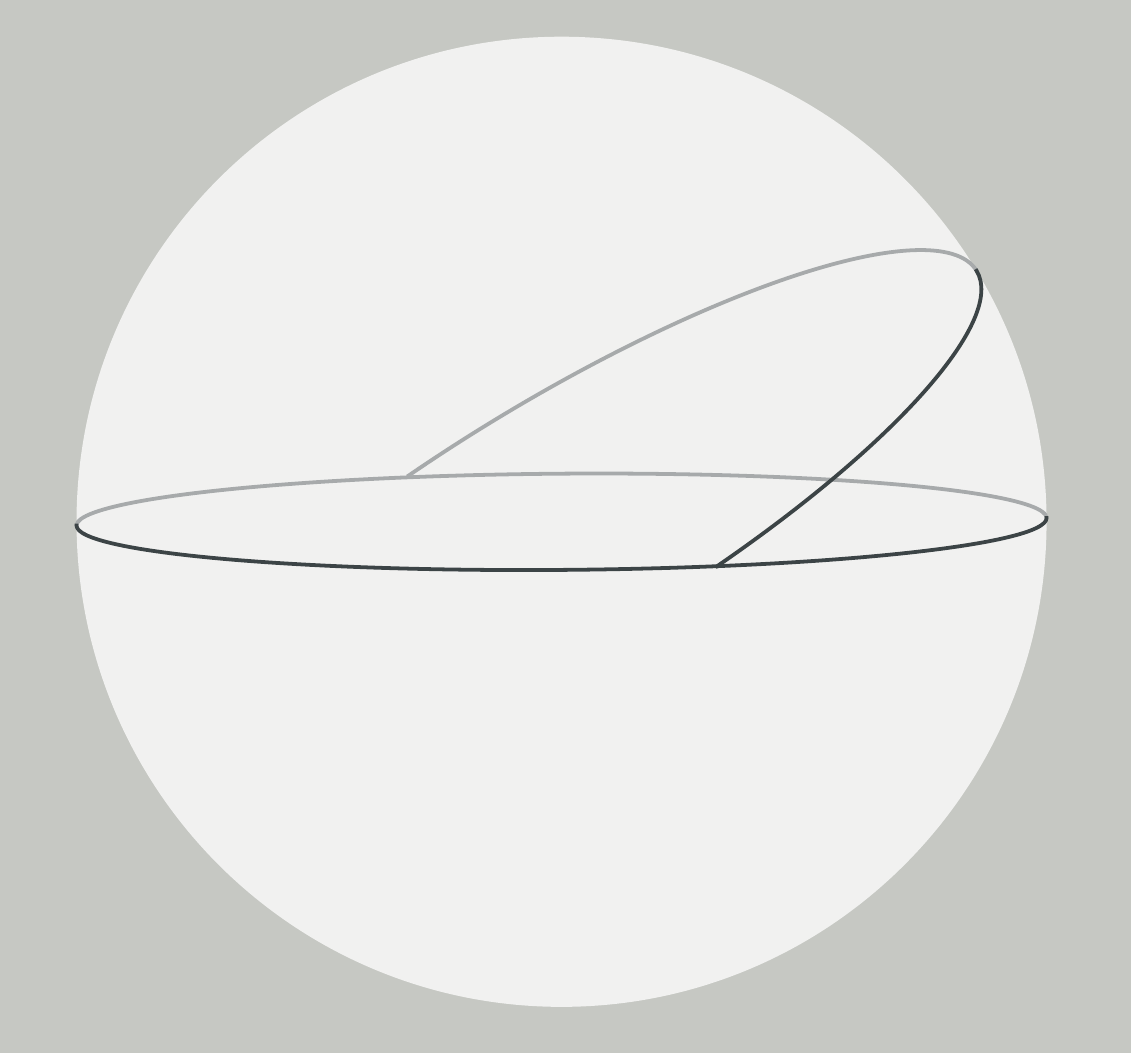}
\includegraphics[trim = 45mm 175mm 50mm 15mm, clip, width=0.3\columnwidth]{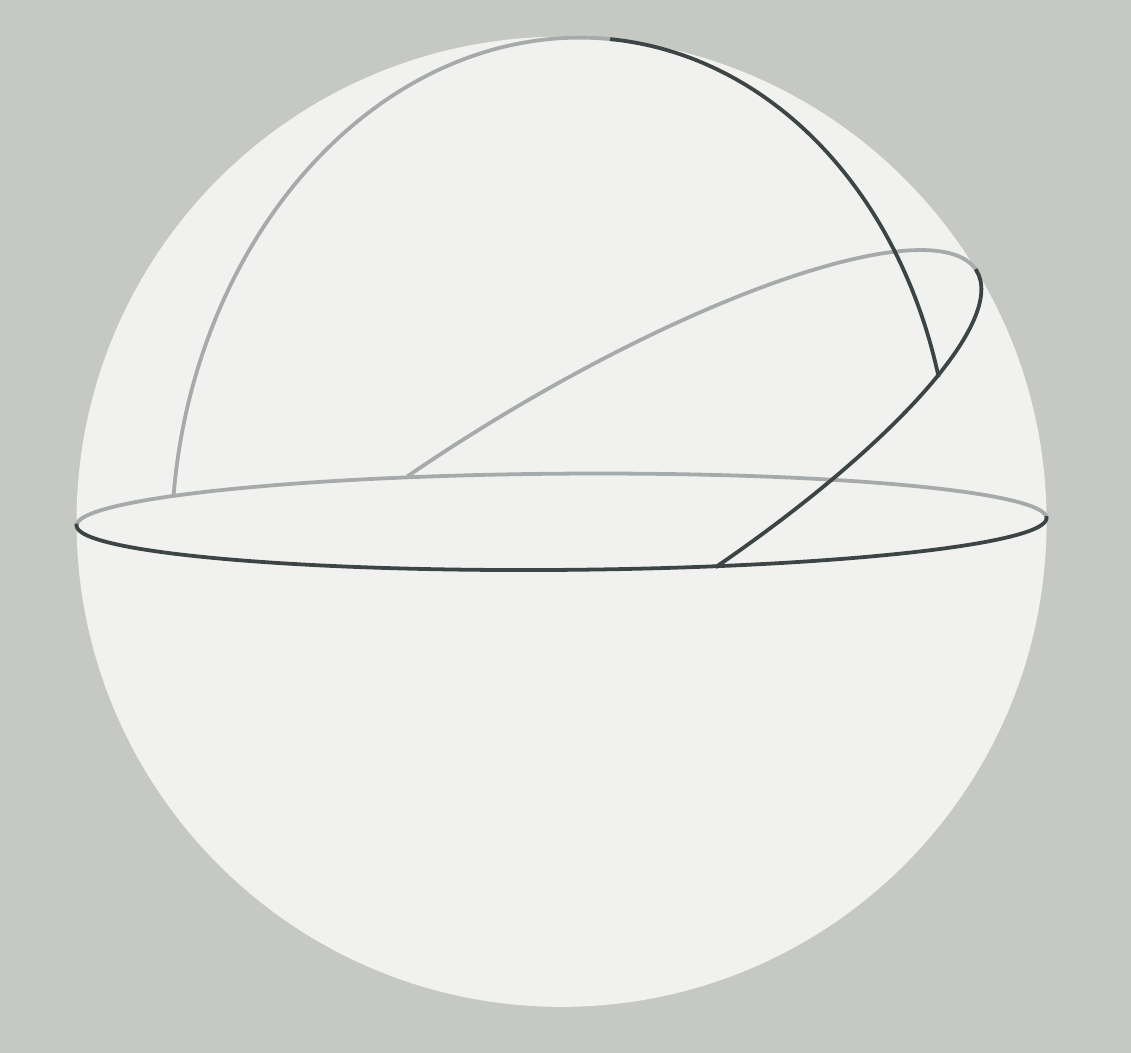}\\
\includegraphics[trim = 45mm 175mm 50mm 15mm, clip, width=0.3\columnwidth]{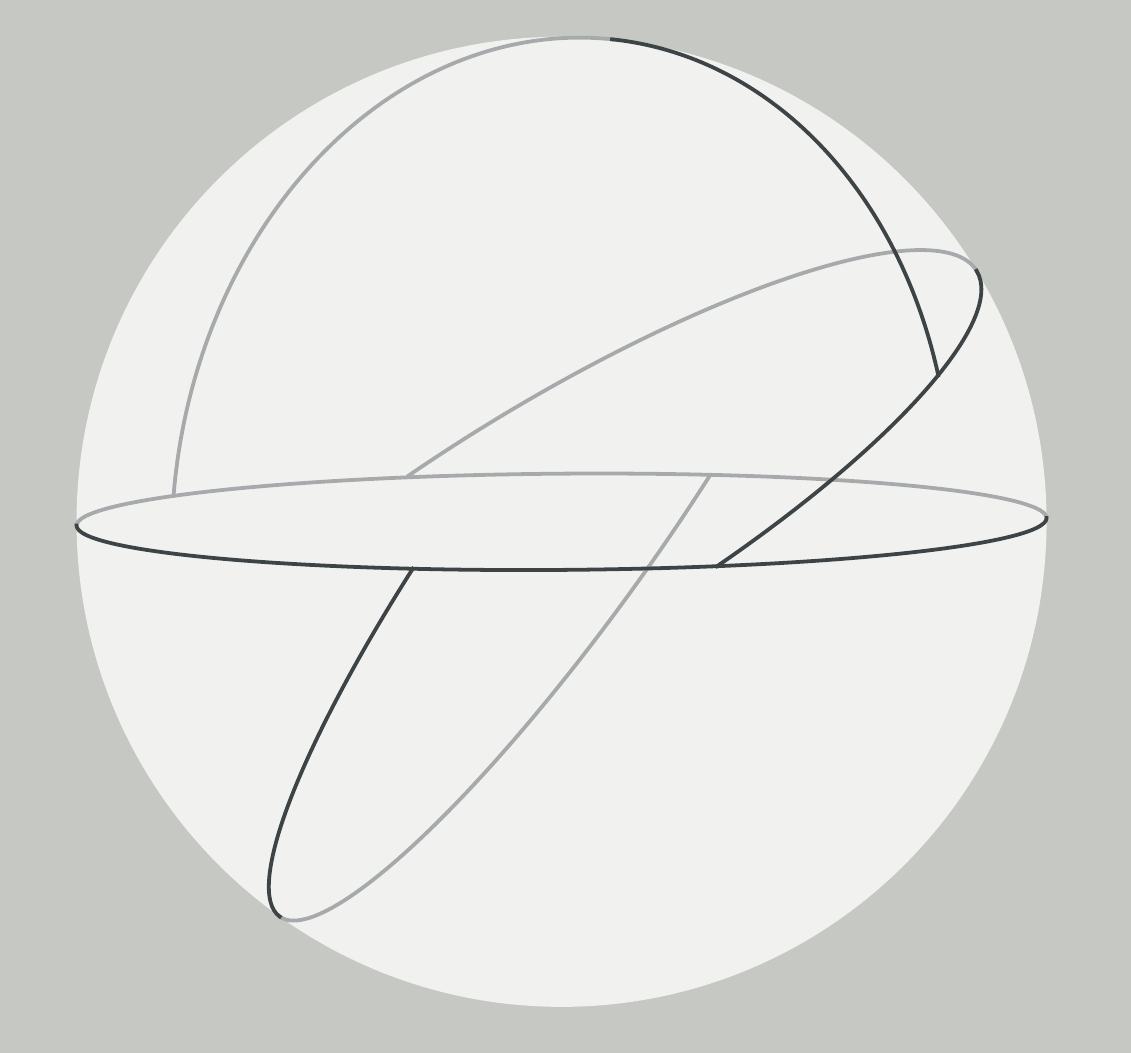}
\includegraphics[trim = 45mm 175mm 50mm 15mm, clip, width=0.3\columnwidth]{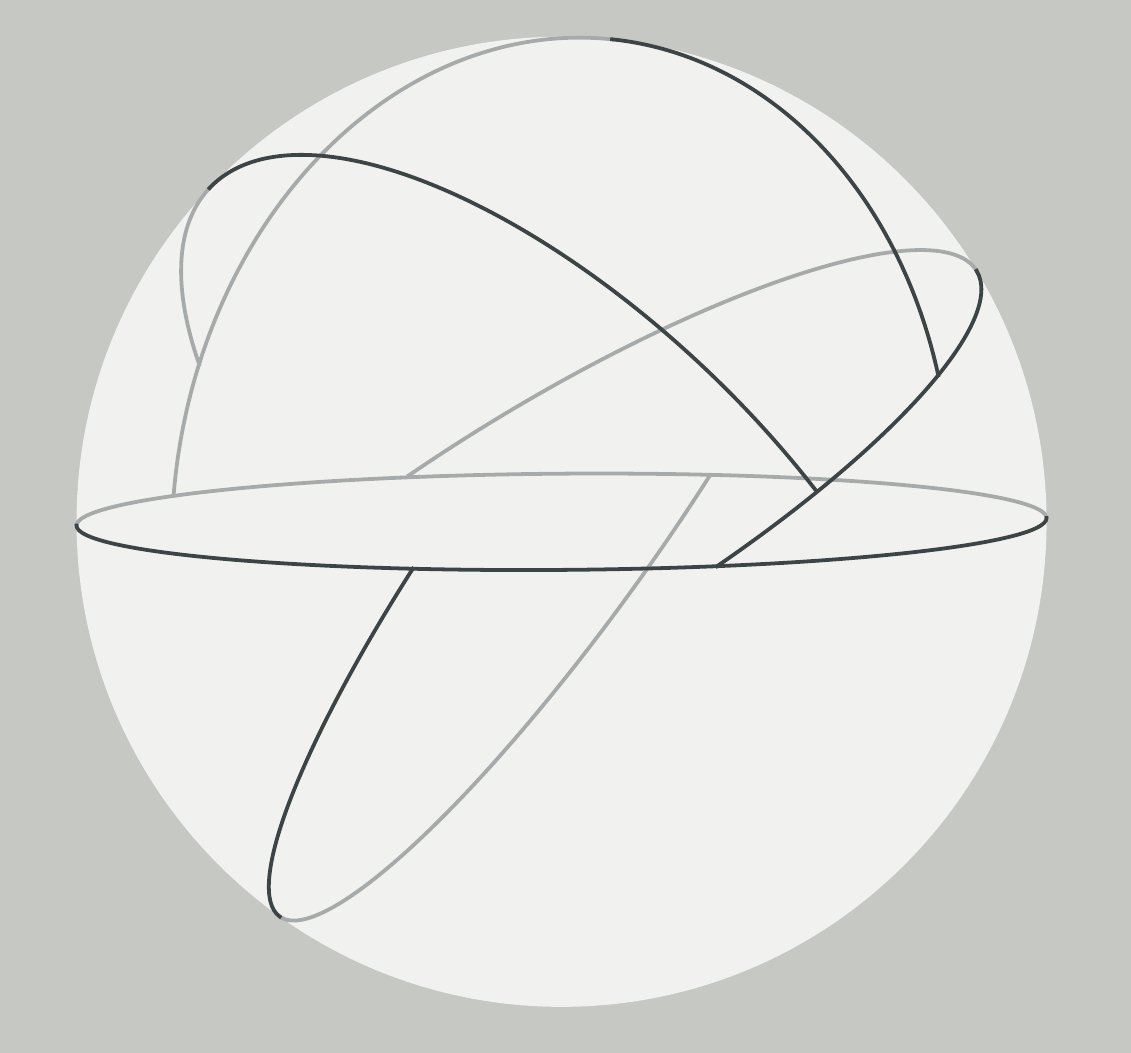}
\includegraphics[trim = 45mm 175mm 50mm 15mm, clip, width=0.3\columnwidth]{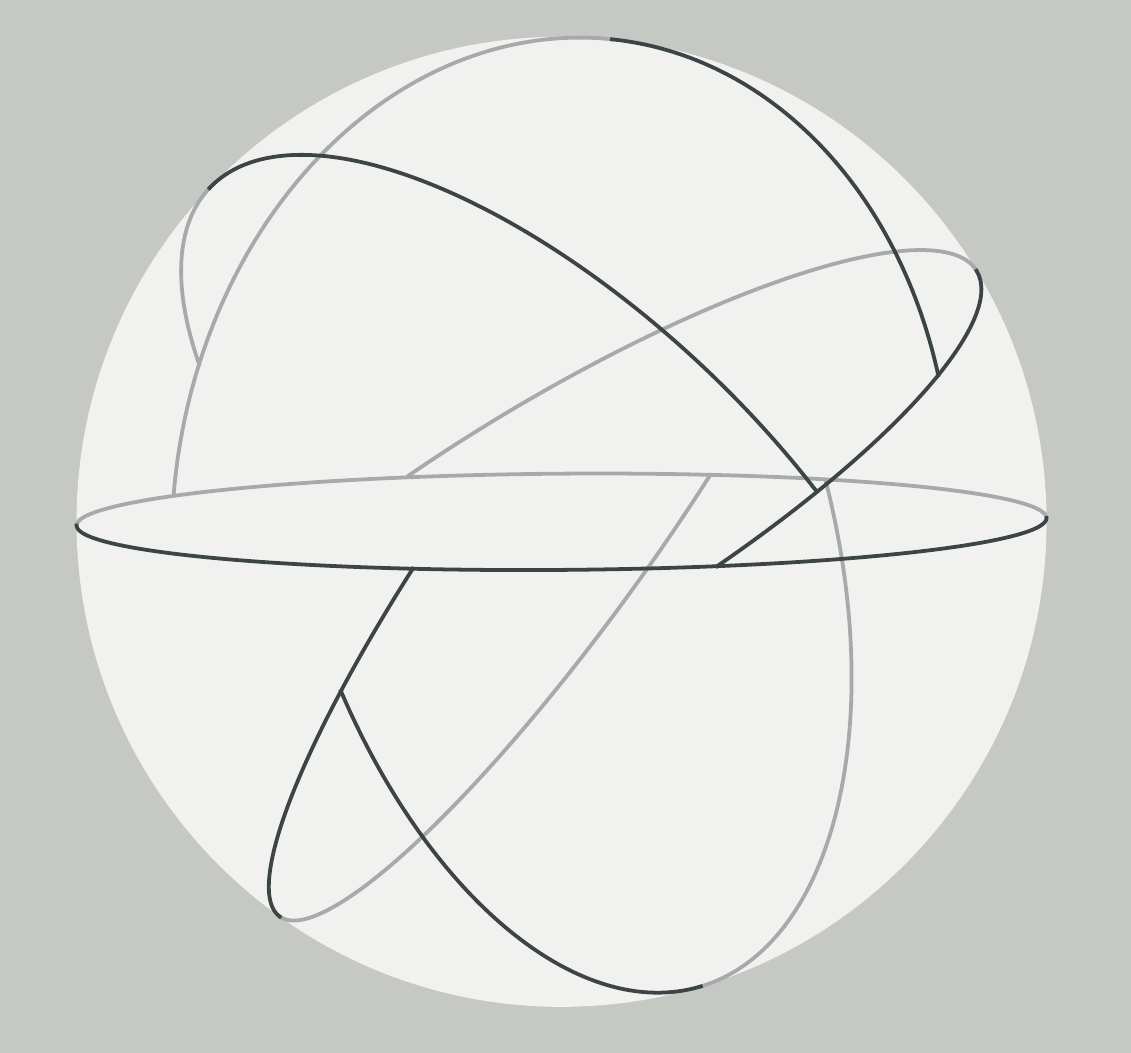}
\end{center}
\caption{Illustration of the splitting tessellation process on $\SS^2$ at different time instants. It starts with the first split and the sphere is rotated in such a way that the first great circle coincides with the equator.}
\label{fig:Process}
\end{figure}

\begin{remark}\label{rembound}\rm
 The transition probabilities $p_h$ of the Markov process $(Y_t)_{t\ge 0}$ generated by $\cA$ are related to $\lambda$ and $q$ by
$p_h(T,B)=q(T,B)h+o(h)$, as $h\downarrow 0$, whenever $B\subset \TT^d$ is measurable and such that $T\notin B$. More generally, if $T\in\TT^d$ and $f\in \cF_b(\TT^d)$ (the set of bounded, measurable functions on $\TT^d$), then $(p_hf)(T)=(1-h\lambda(T))f(T)+h(qf)(T)+o(h)$, see \cite[proof of Theorem 4.3]{Eberle} or
\cite[Corollary 15.29, Proposition 15.30]{Breiman}. Here it is important to note that $o(h)$ depends on $T$ if $\lambda$ is unbounded. If $\lambda$ is bounded, then $o(h)$ is independent of $T$ and we obtain
\begin{equation}\label{DAclass}
\|h^{-1}(p_hf-f)-\cA f\|\to 0\,,\qquad \text{ as $h\downarrow 0$}\,,
\end{equation}
for all $f\in\cF_b(\TT^d)$, where $\|\cdot\|$ is the sup-norm on $\cF_b(\TT^d)$. Without the assumption of a bounded intensity function, the set of all $f\in  \cF_b(\TT^d)$ for which \eqref{DAclass} holds, will be a subset of $\cF_b(\TT^d)$ which is denoted by $D(\cA)$ and is called the domain of the generator $\cA$.
These statements hold for any jump process $(X_t)_{t\ge 0}$ with generator $\cL$ taking values in a Borel space $E$.
While $D(\cA)=\cF_b(\TT^d)$ for bounded $\lambda$, in our applications the intensity function will be unbounded, and therefore we only know the inclusion $D(\cA)\subset\cF_b(\TT^d)$.
This is the reason why we  consider a localization with respect to the values of the intensity function $\lambda$ in the following.
\end{remark}

\begin{remark}\label{rem:Constants}\rm
Splitting tessellations in Euclidean spaces have been introduced in \cite{NW05}, where instead of the probability measure $\kappa$ on $\SS_{d-1}$ a non-normalized
measure on the space of hyperplanes is used for splitting a chosen cell in a bounded observation window. In the current spherical setting, rotation invariance might be considered the appropriate substitute for the translation invariance which is usually assumed in the Euclidean framework. However, the definition of the splitting
process itself, the description in terms of its generator and the martingale properties can be extended to arbitrary regular probability measures $\kappad$ on $\SS_{d-1}$ (instead of the rotation invariant probability measure $\nu_{d-1}$) without additional effort. In the following, in most cases we shall first provide results for
general direction distributions and then specialize these results further.
\end{remark}

\subsection{Auxiliary martingales}\label{subsec:24Martingales}

We use the theory of Markov processes to introduce some classes of martingales associated with the splitting process $(Y_t)_{t\geq 0}$ which is based on a general regular direction distribution $\kappad$ on $\SS_{d-1}$. We start with a preparatory lemma, which is taken from \cite[Proposition (14.13), p.~31]{DavisMarkovModels} to which we also refer for the general definition of a generator  of a Markov process and its domain (see  \cite[Lemma 19.21]{Kallenberg} for the same result under more restrictive assumptions on the state space and the process). The additional assertion concerning jump processes with bounded intensity functions follows from Remark
\ref{rembound}.

\begin{lemma}\label{lem:BasicMartingale}
Let $E$ be a Borel space and let $(X_t)_{t\geq 0}$ be a Markov process with values in $E$ and with generator $\cL$ whose domain is $D(\cL)$. Further, let $f\in D(\cL)$. Then the random process
$$
f(X_t)-f(X_0)-\int_0^t (\cL f)(X_s)\,\dint s\,,\qquad t\geq 0\,,
$$
is a martingale with respect to the filtration induced by $(X_t)_{t\geq 0}$. If $(X_t)_{t\ge 0}$ is a
jump process with bounded intensity function, then $\cF_b(E)= D(\cL)$.
\end{lemma}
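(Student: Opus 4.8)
The plan is to recognize that the first assertion is the classical Dynkin martingale identity, precisely the content of \cite[Proposition (14.13)]{DavisMarkovModels}, and to reconstruct the short semigroup argument underlying it. Write $(P_t)_{t\ge 0}$ for the transition semigroup of $(X_t)_{t\ge 0}$ and let $(\cF_t)_{t\ge 0}$ be its induced filtration. Membership $f\in D(\cL)$ encodes the integral relation
$$
P_r f - f = \int_0^r P_v(\cL f)\,\dint v\,,\qquad r\ge 0\,,
$$
which I would take as the starting point. Setting $M_t := f(X_t)-f(X_0)-\int_0^t(\cL f)(X_s)\,\dint s$, the goal is to verify $\EE[M_t\mid\cF_s]=M_s$ for all $0\le s\le t$, and since $M_s$ is $\cF_s$-measurable this reduces to showing $\EE[M_t-M_s\mid\cF_s]=0$.

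First I would use the Markov property to evaluate the two conditional expectations separately. The semigroup property gives $\EE[f(X_t)\mid\cF_s]=(P_{t-s}f)(X_s)$, while Fubini (justified by the integrability built into $f\in D(\cL)$) together with the Markov property yields
$$
\EE\Big[\int_s^t(\cL f)(X_u)\,\dint u\,\Big|\,\cF_s\Big]=\int_s^t (P_{u-s}(\cL f))(X_s)\,\dint u=\int_0^{t-s}(P_v(\cL f))(X_s)\,\dint v\,.
$$
Invoking the displayed integral relation with $r=t-s$, evaluated at $X_s$, shows that the two conditional expectations coincide, whence $\EE[M_t-M_s\mid\cF_s]=0$, which is the martingale property.

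For the addendum I would appeal directly to Remark \ref{rembound}. When the intensity function is bounded, \eqref{DAclass} (stated there for $\cA$, but valid verbatim for any such generator $\cL$ on a Borel space $E$) asserts that $\|h^{-1}(p_hf-f)-\cL f\|\to 0$ in the sup-norm \emph{for every} $f\in\cF_b(E)$. This uniform convergence is exactly the defining requirement for $f$ to lie in the domain of the generator, so $\cF_b(E)\subset D(\cL)$, while the reverse inclusion $D(\cL)\subset\cF_b(E)$ is part of the standing setup; hence $\cF_b(E)=D(\cL)$.

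The main obstacle I anticipate is not the algebra of the cancellation but matching conventions: one must pin down precisely which notion of generator and domain is intended (Davis's extended generator versus the strong generator obtained from \eqref{DAclass}) and, since in the intended application the intensity function $\lambda$ is \emph{unbounded}, confirm the integrability needed to promote a local martingale to a genuine martingale, i.e.\ that $t\mapsto(\cL f)(X_t)$ is integrable along paths. This is exactly the point that forces the localization of $\lambda$ flagged at the end of Remark \ref{rembound}, and it explains why the clean equality $\cF_b(E)=D(\cL)$ is claimed only in the bounded-intensity case.
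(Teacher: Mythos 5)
Your proposal is correct and matches the paper's treatment: the paper proves this lemma purely by citing \cite[Proposition (14.13)]{DavisMarkovModels} for the Dynkin martingale identity and by deferring the bounded-intensity addendum to Remark \ref{rembound}, and your semigroup computation (the integral relation $P_rf-f=\int_0^rP_v(\cL f)\,\dint v$ combined with the Markov property and Fubini) is exactly the standard argument behind that citation. Your closing caveat about which notion of generator is intended and about integrability along paths is well placed, since it is precisely the issue the authors handle later by localization in Proposition \ref{prop:Dynkin}.
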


In our first application of Lemma \ref{lem:BasicMartingale}, the space $E$ is the space $\TT^d$ of tessellations of $\SSd$. From Lemma \ref{lemmess1} we know that $\TT^d$ is a Borel space as a Borel subset of the Polish space $\cF(\KK^d)$  with the Fell topology, but we do not know whether $\TT^d$ is also a Polish space.
 In view of this, it is important that Lemma \ref{lem:BasicMartingale} is available for a general Borel space $E$ and not just for Polish spaces (which seems to be the common assumption in the literature).  Further,
 $\cL$ will be the generator $\cA$ from Definition \ref{def:GeneratorA}, and $(X_t)_{t\geq 0}$ will be the splitting tessellation process $(Y_t)_{t\geq 0}$. Since we do not know whether $D(\cA)=\cF_b(\TT^d)$ in this setting and since we consider functionals $f=\Sigma_\phi$ which are unbounded, some localization seems to be unavoidable.

\medskip

The next result is (for instance) an  analogue to \cite[Proposition 2]{STBernoulli}. We present a detailed proof in order to fix some inaccuracies in previous proofs. In what follows, we shall write $\filtration_t:=\sigma(Y_s:0\leq s\leq t)$,  $t\geq 0$, for the $\sigma$-field generated by the splitting process until time $t$, and $\filtration:=(\filtration_t)_{t\geq 0}$ for the corresponding filtration. We work with a general regular direction distribution $\kappad$ on $\SS_{d-1}$ which determines the splitting tessellation.

\begin{proposition}\label{prop:Dynkin}
Let $\phi:\PP^d\to\RR$ be bounded and measurable, and define
$$
\Sigma_\phi(T) := \sum_{c\in T}\phi(c)=\int_{\PP^d} \phi \, \dint \mu_T\,,\qquad T\in\TT^d\,.
$$
Then the stochastic process
$$
M_t(\phi):=\Sig_\phi(Y_t)-\Sig_\phi(Y_0)-\int_0^t(\cA\Sig_\phi)(Y_s)\,\dint s\,,\qquad t\geq 0\,,
$$
is a martingale with respect to $\filtration$.
\end{proposition}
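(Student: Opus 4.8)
The plan is to deduce the statement from the Dynkin-type formula of Lemma \ref{lem:BasicMartingale}, applied with the Borel space $E=\TT^d$, the generator $\cL=\cA$ and the function $f=\Sig_\phi$. As a first step I would make the action of $\cA$ on $\Sig_\phi$ explicit. Since splitting a cell $c$ by $S\in\SS_{d-1}[c]$ only replaces $c$ by the two pieces $c\cap S^+$ and $c\cap S^-$, one has $\Sig_\phi(\oslash(c,S,T))-\Sig_\phi(T)=\phi(c\cap S^+)+\phi(c\cap S^-)-\phi(c)$, so that
$$
(\cA\Sig_\phi)(T)=\sum_{c\in T}\int_{\SS_{d-1}[c]}\big[\phi(c\cap S^+)+\phi(c\cap S^-)-\phi(c)\big]\,\kappa(\dint S).
$$
As $\phi$ is bounded and $\kappa$ is a probability measure, the integrand is bounded by $3\|\phi\|_\infty$, and hence $|(\cA\Sig_\phi)(T)|\le 3\|\phi\|_\infty\,|T|$, i.e. $\cA\Sig_\phi$ grows at most linearly in the number of cells. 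This is the quantity entering the compensator, so it is reassuring that it is finite.

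The main obstacle is that the intensity function $\lambda(T)=|T|$ is unbounded and $\Sig_\phi$ is itself unbounded; consequently I cannot take for granted that $\Sig_\phi\in D(\cA)$, and the bounded-intensity part of Lemma \ref{lem:BasicMartingale} does not apply directly. I would therefore localise. Set $\tau_N:=\inf\{t\ge 0:|Y_t|\ge N\}$; since the cell count increases by one at each split, $|Y_s|<N$ for $s<\tau_N$, so the process stopped at $\tau_N$ is a jump process with intensity bounded by $N$ whose reachable tessellations have at most $N$ cells. On this stopped process $\Sig_\phi$ is bounded, hence lies in the domain of the corresponding (bounded-intensity) generator, and the second part of Lemma \ref{lem:BasicMartingale} yields that $M_{t\wedge\tau_N}(\phi)$ is an $\filtration$-martingale for every $N$.

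It remains to pass to the limit $N\to\infty$, for which two ingredients are needed. First, non-explosion: because each cell splits at rate $\kappa(\SS_{d-1}[c])\le\kappa(\SS_{d-1})=1$, the total rate in a state with $k$ cells is at most $k$, so the cell count is dominated by a Yule process and $\tau_N\to\infty$ almost surely; in particular $M_{t\wedge\tau_N}(\phi)\to M_t(\phi)$ a.s. Second, an integrable bound uniform in $N$. Applying the already established $N$-localised identity to the special case $\phi\equiv 1$ gives $\mathbb{E}\,|Y_{t\wedge\tau_N}|\le 1+\int_0^t\mathbb{E}\,|Y_{s\wedge\tau_N}|\,\dint s$, whence $\mathbb{E}\,|Y_{t\wedge\tau_N}|\le e^t$ by Gronwall's inequality; letting $N\to\infty$ and using monotone convergence, $\mathbb{E}\,|Y_t|\le e^t<\infty$.

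Since the cell count is non-decreasing in time, one then obtains the domination
$$
|M_{t\wedge\tau_N}(\phi)|\le \|\phi\|_\infty\big(|Y_t|+1\big)+3\|\phi\|_\infty\,t\,|Y_t|=:G_t,\qquad \mathbb{E}[G_t]<\infty,
$$
uniformly in $N$. Dominated convergence for conditional expectations then allows the martingale identity $\mathbb{E}[M_{t\wedge\tau_N}(\phi)\mid\filtration_s]=M_{s\wedge\tau_N}(\phi)$ to be carried to the limit, giving $\mathbb{E}[M_t(\phi)\mid\filtration_s]=M_s(\phi)$ for $0\le s\le t$, which is the assertion. The only genuinely delicate point in this scheme is the justification of the limit, and the Yule-type control of the cell count (non-explosion together with the bound $\mathbb{E}\,|Y_t|\le e^t$) is precisely what makes it work.
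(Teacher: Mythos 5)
Your proposal is correct and follows essentially the same route as the paper's proof: localisation at the stopping times where the cell count reaches $N$, application of the bounded-intensity case of Lemma \ref{lem:BasicMartingale} to the truncated process (the paper makes this precise by also truncating the functional to $\Sigma_\phi^N$, which is the clean way to justify your claim that ``$\Sigma_\phi$ lies in the domain'' of the stopped generator), and an integrable domination of $|Y_t|$ to upgrade the local martingale to a true martingale. The only cosmetic differences are that you obtain $\bE|Y_t|\le e^t$ via Gronwall rather than the paper's explicit geometric-law (Yule process) domination, and you close the argument by dominated convergence of conditional expectations rather than by verifying the class-DL criterion; both substitutions are harmless.
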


\begin{remark}\rm
 In this paper all equalities or inequalities involving random variables are implicitly meant to hold almost surely, referring thereby to the common underlying probability space $(\Omega,\cG,\bP)$ on which all our random objects are defined.
\end{remark}

\begin{proof}[Proof of Proposition \ref{prop:Dynkin}]
To see that $\Sigma_\phi$ is measurable, we extend $\phi$ to $\KK^d$ by setting $\phi(c):= 0$ for $c\in\KK^d\setminus\PP^d$. Then the extension remains bounded and measurable. Let $B\in\cB(\RR)$ and observe that
$$
\{T\in\TT^d:\Sigma_\phi(T)\in B\}=\TT^d\cap i_s\left(  \{\eta\in\sN_s(\KK^d):\int \phi\, \dint\eta\in B\}\right)
\in \TT^d\cap \cB_{lf}.
$$
Since $\phi$ is assumed to be bounded, $\alpha:=\sup\{|\phi(c)|:c\in\PP^d\}<\infty$. For $N\in\NN$, we consider the truncated functional
$$
\Sig_\phi^N(T) := (\Sig_\phi(T)\wedge (N \alpha))\vee(-(N \alpha))\,,\qquad T\in\TT^d\,,
$$
which is measurable and bounded.
In addition to $(Y_t)_{t\ge 0}$, for each $N\in\NN$ we introduce a jump process $(Y_t^N)_{t\ge 0}$ with bounded (truncated) intensity function $\lambda\wedge N$, that is, with transition kernel  $q^N(T,\cdot):=(\lambda(T)\wedge N)\pi(T,\cdot)$ and generator
$\cA^N:=(\lambda\wedge N)(\pi-I)=(\lambda\wedge N)\lambda^{-1}\, \cA$. The processes $(Y_t)_{t\ge 0}$ and $(Y^N_t)_{t\ge 0}$ can be constructed on the same probability space (see \cite[Section 2]{GST}, \cite[Chapter 4]{Eberle}, \cite[Chapter 12]{Kallenberg}).  Let $J_k$, $k\in\NN$, be the time of the $k$th jump of $(Y_t)_{t\ge 0}$ with the convention that $J_0:=0$. By construction we have $Y^N_s=Y_s$ for $s\in [0,J_{N})$.
Let $\filtration^N=(\filtration^N_t)_{t\ge 0}$ denote the filtration  generated by $(Y^N_t)_{t\ge 0}$. By Lemma \ref{lem:BasicMartingale}, since $(Y^N_t)_{t\ge 0}$ has a bounded intensity function and $\Sigma^N_\phi$ is also bounded,
the process
$$
M_t^N(\phi):=\Sig^N_\phi(Y^N_t)-\Sig^N_\phi(Y^N_0)-\int_0^t(\cA^N \Sig^N_\phi)(Y^N_s)\,\dint s\,,\qquad t\geq 0\,,
$$
is a $\filtration^N$-martingale.

\medskip

To relate $M_t^N(\phi)$ to $M_t(\phi)$, for $t\ge 0$, we introduce the almost surely finite random variable
$$
\tau_N:=\inf\{t\ge 0:\lambda(Y_t)\ge N\}.
$$
Then, clearly we have $\tau_N=J_{N-1}$. Since $\{\tau_N>t\}=\{|Y_t|\le N-1 \}=\{|Y_t^N|\le N-1  \}\in\filtration_t\cap\filtration^N_t$, for $t\ge 0$, $\tau_N$ is a stopping time with respect to both filtrations, $\filtration$ and $\filtration^N$. Moreover,
by the optional stopping theorem, $\widetilde M^N_t(\phi):=M^N_{\tau_N\wedge t}(\phi)$, for $t\ge 0$,  also defines a martingale with respect to $\filtration^N$ (see \cite[Theorem 7.1.15]{Strook}).

\medskip

Next, we observe that $\tau_N\wedge t\le J_{N-1}<J_{N}$ and
$$
|\Sigma_\phi(Y_{ \tau_N\wedge t})|\le |Y_{ \tau_N\wedge t}|\alpha\le N\alpha\,,
$$
hence
$$
\Sigma^N_\phi(Y^N_{ \tau_N\wedge t})=\Sigma_\phi(Y_{ \tau_N\wedge t})\,.
$$
If $s<\tau_N$, then $s<J_{N-1}$ and hence $|Y_s|\le N-1$. This yields
$$
|\Sigma_\phi(\oslash(c,S,Y_s^N))|=|\Sigma_\phi(\oslash(c,S,Y_s))|\le N\alpha,\qquad S\in\mathbb{S}_{d-1}\,,
$$
and therefore
$$
\Sigma_\phi^N(\oslash(c,S,Y_s^N))=\Sigma_\phi(\oslash(c,S,Y_s)),\qquad s<\tau_N\,.
$$
Moreover, for $s<\tau_N\wedge t$ we have
\begin{align*}
(\cA^N\Sigma^N_\phi)(Y^N_s)&=\frac{\lambda(Y^N_s)}{\lambda(Y_s)}\int_{\sN_s(\TT^d)} \int_{\SS_{d-1}}
 \left[\Sigma_\phi^N(\oslash(c,S,Y_s^N))-\Sigma_\phi^N(Y^N_s)\right]\, \kappad(\dint S)\,
\mu_{Y^N_s}(\dint c)\\
&=(\cA\Sigma_\phi)(Y_s)\,.
\end{align*}
Thus we obtain
$$\widetilde M^N_t(\phi)=M^N_{\tau_N\wedge t}(\phi)=M_{\tau_N\wedge t}(\phi)=M^{\tau_N}_t(\phi),\qquad t\ge 0\,.
$$
This shows that $(M^{\tau_N}_t(\phi))_{t\ge 0}$ is an $\filtration^N$-martingale. Using that $\tau_N$ is an $\filtration^N$-stopping time and that $Y^N_s=Y_s$ for $s<\tau_N$, it is easy to check that then $(M^{\tau_N}_t(\phi))_{t\ge 0}$ is also a $\filtration$-martingale. Since $\tau_N\to\infty$, as $N\to\infty$, we conclude that $(M_t(\phi))_{t\ge 0}$ is a local $\filtration$-martingale with respect to the localizing sequence $(\tau_N)_{N\in\NN}$.

\medskip

We next argue that the local $\filtration$-martingale $(M_t(\phi))_{t\geq 0}$ is in fact a (proper) martingale by showing that it is of class DL (see \cite[Definition 4.8 and Problem 5.19(i)]{KaratzasShreve}), that is, for each $a>0$ the family
$$
(M_\tau(\phi):\tau\text{ is a stopping time with }\tau\leq a\text{ almost surely})
$$
is uniformly integrable. For this it is sufficient to prove that for each $a>0$,
\begin{equation}\label{eq:SupEstimate}
\bE\sup_{0\leq t\leq a}|M_t(\phi)|<\infty\,,
\end{equation}
since  $|M_\tau(\phi)|\leq\sup_{0\leq t\leq a}|M_t(\phi)|$ almost surely for each stopping time $\tau$ with $\tau\leq a$. To verify \eqref{eq:SupEstimate} we note that, almost surely,
$$
|\Sig_\phi(Y_t)| \leq \alpha|Y_t|\leq\alpha|Y_a|
$$
and, since $\kappad$ is a probability measure,
\begin{align*}
\Big|\int_0^t(\cA\Sig_\phi)(Y_s)\,\dint s\Big|
&\leq\int_0^t \sum_{c\in Y_s}\int_{\SS_{d-1}[c]}|\Sig_\phi(\oslash(c,S,Y_s))-\Sig_\phi(Y_s)|\,\kappad(\dint S) \dint s\\
&\leq\int_0^t\sum_{c\in Y_s}3\alpha\,\dint s\leq 3\alpha a|Y_a|\,.
\end{align*}
During the splitting process all cells divide with a rate less than or equal to one. As a consequence, for all $a\geq 0$ the number of cells of $Y_a$ is stochastically dominated by the number of individuals in a linear pure birth process with birth rate $1$. However, the latter random variable $G_a$ has a geometric distribution with parameter $e^{-a}$, see \cite[Example 5.1.1 and Section 5.11]{ResnickAdventures}. Thus, we have that $|Y_a|\leq G_a$ almost surely, which implies that $\bE|Y_a|^m<\infty$ for all $m\geq 0$ and $a\geq 0$ (see also \cite[Lemma 1]{NW05} for a similar argument in the Euclidean case). Thus, using the triangle inequality we find that
\begin{align*}
\bE\sup_{0\leq t\leq a} |M_t(\phi)| &= \bE\sup_{0\leq t\leq a}\Big|\Sig_\phi(Y_t)-\Sig_\phi(Y_0)-\int_0^t(\cA\Sig_\phi)(Y_s)\,\dint s\Big|\\
&\leq \bE[\alpha|Y_a|+\alpha+3\alpha a|Y_a|]<\infty\,.
\end{align*}
This completes the argument.
\end{proof}

To deal below also with second-order properties of splitting tessellations, we  first note another consequence
of Lemma \ref{lem:BasicMartingale} which can be proved in essentially the same way.

\begin{proposition}\label{prop:Dynkinextended}
Let $\phi_i:\PP^d\to\RR$ for $i\in\{1,2\}$ be bounded and measurable, and define
$$
\Sigma_{\phi_1,\phi_2}(T) := \Sigma_{\phi_1}(T)\,\Sigma_{ \phi_2}(T)\,,\qquad T\in\TT^d\,.
$$
Then the stochastic process
$$
M_t(\phi_1,\phi_2):=\Sig_{\phi_1,\phi_2}(Y_t)-\Sig_{\phi_1,\phi_2}(Y_0)-\int_0^t(\cA\Sig_{\phi_1,\phi_2})(Y_s)\,\dint s\,,\qquad t\geq 0\,,
$$
is a martingale with respect to $\filtration$.
\end{proposition}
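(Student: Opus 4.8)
The plan is to mirror the proof of Proposition~\ref{prop:Dynkin} almost verbatim, exploiting that $\Sig_{\phi_1,\phi_2}$ is again a (measurable) functional of the tessellation which is controlled by a polynomial in the cell count $|Y_t|$. First I would record measurability: since $\Sig_{\phi_1}$ and $\Sig_{\phi_2}$ are measurable by the argument already given in Proposition~\ref{prop:Dynkin}, their product $\Sig_{\phi_1,\phi_2}$ is measurable as well. Setting $\alpha_i:=\sup\{|\phi_i(c)|:c\in\PP^d\}<\infty$ for $i\in\{1,2\}$, we have the pointwise bound $|\Sig_{\phi_i}(T)|\le \alpha_i|T|$, and hence
\[
|\Sig_{\phi_1,\phi_2}(T)|=|\Sig_{\phi_1}(T)|\,|\Sig_{\phi_2}(T)|\le \alpha_1\alpha_2|T|^2\,.
\]
This quadratic (rather than linear) growth is the only structural difference from the first-order case, and it is harmless because the domination by the pure birth process established in the proof of Proposition~\ref{prop:Dynkin} already gives $\bE|Y_a|^m<\infty$ for every $m\ge 0$ and every $a\ge 0$.

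Next I would run the same truncation and localization scheme. For $N\in\NN$ introduce the truncated functional $\Sig_{\phi_1,\phi_2}^N$ (bounded by $N^2\alpha_1\alpha_2$) and the auxiliary jump process $(Y_t^N)_{t\ge 0}$ with bounded intensity $\lambda\wedge N$ and generator $\cA^N=(\lambda\wedge N)\lambda^{-1}\cA$, constructed on the same probability space so that $Y_s^N=Y_s$ for $s\in[0,J_N)$. Since $(Y_t^N)_{t\ge 0}$ has bounded intensity and $\Sig_{\phi_1,\phi_2}^N\in\cF_b(\TT^d)$, Lemma~\ref{lem:BasicMartingale} (whose domain equals $\cF_b(\TT^d)$ in the bounded-intensity case) shows that
\[
M_t^N(\phi_1,\phi_2):=\Sig^N_{\phi_1,\phi_2}(Y^N_t)-\Sig^N_{\phi_1,\phi_2}(Y^N_0)-\int_0^t(\cA^N\Sig^N_{\phi_1,\phi_2})(Y^N_s)\,\dint s
\]
is an $\filtration^N$-martingale. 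Using the stopping time $\tau_N=\inf\{t\ge 0:\lambda(Y_t)\ge N\}=J_{N-1}$ exactly as before, I would verify on $\{s<\tau_N\}$ the identities $\Sig^N_{\phi_1,\phi_2}(Y^N_{\tau_N\wedge t})=\Sig_{\phi_1,\phi_2}(Y_{\tau_N\wedge t})$ and $(\cA^N\Sig^N_{\phi_1,\phi_2})(Y^N_s)=(\cA\Sig_{\phi_1,\phi_2})(Y_s)$; the latter follows from $|Y_s|\le N-1$, which forces the split tessellation $\oslash(c,S,Y_s)$ to have at most $N$ cells, so that the truncation is inactive and $\Sig^N_{\phi_1,\phi_2}(\oslash(c,S,Y_s^N))=\Sig_{\phi_1,\phi_2}(\oslash(c,S,Y_s))$. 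Optional stopping then yields that $(M^{\tau_N}_t(\phi_1,\phi_2))_{t\ge 0}$ is an $\filtration^N$-martingale, hence an $\filtration$-martingale, and since $\tau_N\to\infty$ the process $(M_t(\phi_1,\phi_2))_{t\ge 0}$ is a local $\filtration$-martingale.

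Finally I would upgrade the local martingale to a genuine martingale by the class~DL criterion, i.e.\ by proving $\bE\sup_{0\le t\le a}|M_t(\phi_1,\phi_2)|<\infty$ for each $a>0$. Here the estimates are the obvious quadratic analogues of those in Proposition~\ref{prop:Dynkin}: almost surely $|\Sig_{\phi_1,\phi_2}(Y_t)|\le \alpha_1\alpha_2|Y_a|^2$, and for the drift integral one bounds the increment $|\Sig_{\phi_1,\phi_2}(\oslash(c,S,Y_s))-\Sig_{\phi_1,\phi_2}(Y_s)|$ by a constant times $|Y_s|$, because splitting a single cell changes each factor $\Sig_{\phi_i}$ by at most $2\alpha_i$ and the product rule $a_1a_2-b_1b_2=(a_1-b_1)a_2+b_1(a_2-b_2)$ leaves one surviving factor of size at most $\alpha_i|Y_s|\le\alpha_i|Y_a|$; integrating over $\SS_{d-1}[c]$ (with $\kappad$ a probability measure), summing over the at most $|Y_a|$ cells, and integrating in $s$ over $[0,a]$ gives a bound of the form $C\,a\,|Y_a|^2$. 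Altogether $\bE\sup_{0\le t\le a}|M_t(\phi_1,\phi_2)|\le C'\,\bE|Y_a|^2<\infty$. The only point requiring any care is the per-split increment estimate leading to quadratic rather than cubic growth; since this is where the product structure must be handled correctly, I expect it to be the main (though still routine) obstacle, and everything else is a transcription of the first-order argument.
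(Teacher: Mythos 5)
Your proposal is correct and follows essentially the same route as the paper: the same truncation of $\Sig_{\phi_1,\phi_2}$ at level $N^2\alpha_1\alpha_2$, the same localization via the bounded-intensity processes $(Y_t^N)_{t\ge 0}$ and the stopping times $\tau_N$, and the same class-DL upgrade using $\bE|Y_a|^2<\infty$ together with a per-split increment bound that is linear in $|Y_s|$ (obtained, as you do, from the product decomposition $a_1a_2-b_1b_2=(a_1-b_1)a_2+b_1(a_2-b_2)$; note only that a single split changes each factor $\Sig_{\phi_i}$ by at most $3\alpha_i$, not $2\alpha_i$, which does not affect the argument).
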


\begin{proof} The argument is analogous to the one for Proposition \ref{prop:Dynkin}. Therefore we merely point out
the relevant modifications.
Since $\phi_i$ is bounded, $\alpha_i:=\sup\{|\phi_i(c)|:c\in\PP^d\}<\infty$ for $i\in\{1,2\}$. For $N\in\NN$, we consider the truncated functional
$$
\Sig_{\phi_1,\phi_2}^N(T) := (\Sig_{\phi_1,\phi_2}(T)\wedge (N^2 \alpha_1\alpha_2))\vee(-(N^2 \alpha_1\alpha_2))\,,\qquad T\in\TT^d\,,
$$
which is measurable and bounded. The truncated jump process $(Y_t^N)_{t\ge 0}$ and related quantities (filtrations, stopping times) are defined as before. Then
$$
M_t^N(\phi_1,\phi_2):=\Sig^N_{\phi_1,\phi_2}(Y^N_t)-\Sig^N_{\phi_1,\phi_2}(Y^N_0)-\int_0^t(\cA^N \Sig^N_{\phi_1,\phi_2})(Y^N_s)\,\dint s\,,\qquad t\geq 0\,,
$$
is an $\filtration^N$-martingale, which can be related to $M_t(\phi_1,\phi_2)$ as in the preceding proof. When showing that
$(M_t(\phi_1,\phi_2))_{t\ge 0}$ is indeed a proper martingale (not just a local martingale), we use that $\bE |Y_a|^2<\infty$ and the bounds
$$
|\Sigma_{\phi_1,\phi_2}(Y_t)|\le \alpha_1\alpha_2|Y_a|^2\,,\qquad t\in[0,a]\,,
$$
and
$$
|\Sigma_{\phi_1,\phi_2}(\oslash(c,S,Y_s))-\Sigma_{\phi_1,\phi_2}(Y_s)|\le 6\alpha_1\alpha_2|Y_s|+9\alpha_1\alpha_2\,,
$$
from which we conclude that the moment condition corresponding to \eqref{eq:SupEstimate} is satisfied.
\end{proof}

It is clear that the same approach yields corresponding martingale properties of a variety of functionals.
In order to deal also with covariances of functionals of splitting tessellations, we next consider the family of time-augmented martingales. We write $C^1([0,\infty))$ for the set of all real-valued continuously differentiable functions on $[0,\infty)$, and $C^1_0([0,\infty))\subset C^1([0,\infty))$ for the subset of functions with compact support.

\begin{lemma}\label{lem:BasicMartingale2}
Let $F$ be a Borel space and consider $E:=F\times[0,\infty)$. Let $(X_t)_{t\geq 0}$ be a Markov process with values in $F$ and generator $\cL$ whose domain is $D(\cL)$. Then the random process $(\widehat{X}_t)_{t\geq 0}$ with $\widehat{X}_t=(X_t,t)$ is a Markov process in $E$. Its generator $\widehat{\cL}$ is such that
\begin{equation}\label{eq:ExtendedGenerator}
(\widehat{\cL}g)(x,s) = (\cL g(\,\cdot\,,s))(x)+\frac{\partial g}{\partial s}(x,s)\,,\qquad (x,s)\in E\,,
\end{equation}
for all functions $g\in D(\cL)\otimes C^1_0([0,\infty))$.
 Moreover, the stochastic process
\begin{equation}\label{eq:BasicMartingale2}
N_t(g):=g(X_t,t)-g(X_0,0)-\int_0^t(\cL g(\,\cdot\,,s))(X_s)+\frac{\partial g}{\partial s} (\,\cdot\,,s)(X_s)\,\dint s
\end{equation}
is a martingale with respect to the filtration induced by $(X_t)_{t\geq 0}$ for all functions $g\in D(\cL)\otimes C^1([0,\infty))$.
\end{lemma}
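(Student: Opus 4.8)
The plan is to realise $(\widehat X_t)_{t\ge0}$ as a genuine space-time Markov process, to prove the martingale \eqref{eq:BasicMartingale2} by reducing it to the Dynkin martingale of Lemma \ref{lem:BasicMartingale} for the spatial part, and finally to read off the generator formula \eqref{eq:ExtendedGenerator}. First I would note that $E=F\times[0,\infty)$ is a Borel space, being the product of the Borel space $F$ with the Borel subset $[0,\infty)\subset\RR$. Writing $(p_h)_{h\ge0}$ for the transition semigroup of $(X_t)_{t\ge0}$, the process $\widehat X_t=(X_t,t)$ has transition kernels $\widehat p_h((x,s),\,\cdot\,)=p_h(x,\,\cdot\,)\otimes\delta_{s+h}$, and the Chapman--Kolmogorov relations for $\widehat p$ follow from those for $p$ together with the additivity of the deterministic time shift; hence $\widehat X$ is a time-homogeneous Markov process on $E$. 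Since its second coordinate is deterministic, the filtration generated by $\widehat X$ coincides with the one generated by $X$, so every $\widehat X$-martingale is a martingale for the filtration induced by $(X_t)_{t\ge0}$.

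By linearity it suffices to prove \eqref{eq:BasicMartingale2} for a product $g=f\otimes\psi$ with $f\in D(\cL)$ and $\psi\in C^1([0,\infty))$. Let $M_t:=f(X_t)-f(X_0)-\int_0^t(\cL f)(X_s)\,\dint s$ be the martingale furnished by Lemma \ref{lem:BasicMartingale}. For a pure jump process $t\mapsto f(X_t)$ is piecewise constant, so $M$ has finite-variation paths, and a pathwise integration by parts against $\psi\in C^1$ yields $N_t(f\otimes\psi)=\int_0^t\psi(s)\,\dint M_s$. To see that the latter is a martingale, I would write $\psi(s)=\psi(0)+\int_0^s\psi'(u)\,\dint u$ and interchange the order of integration to obtain
$$
N_t(f\otimes\psi)=\psi(0)\,M_t+\int_0^t\psi'(u)\,(M_t-M_u)\,\dint u\,.
$$
Conditioning on $\sigma(X_v:v\le r)$ for $r\le t$ and using $\EE[M_t\mid\sigma(X_v:v\le r)]=M_r$ together with Fubini for conditional expectations shows at once that the right-hand side is a martingale; this needs only the integrability of $M_t$, which is already part of Lemma \ref{lem:BasicMartingale}. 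As this computation never uses compact support of $\psi$, \eqref{eq:BasicMartingale2} holds for every $g\in D(\cL)\otimes C^1([0,\infty))$.

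It remains to identify the generator. For $g\in D(\cL)\otimes C^1_0([0,\infty))$ the function $g=f\otimes\psi$ is bounded, because $f\in D(\cL)$ is bounded and $\psi$ has compact support; this boundedness is in fact the only role of the stronger hypothesis here, namely to place $g$ in the domain $D(\widehat{\cL})$. The martingale just established exhibits
$$
g(\widehat X_t)-g(\widehat X_0)-\int_0^t\big[(\cL g(\,\cdot\,,s))(X_s)+\tfrac{\partial g}{\partial s}(X_s,s)\big]\,\dint s
$$
as a martingale, so by the defining (martingale) characterisation of the extended generator underlying Lemma \ref{lem:BasicMartingale} (see \cite[Proposition (14.13)]{DavisMarkovModels}), $g\in D(\widehat{\cL})$ and $\widehat{\cL}$ acts as in \eqref{eq:ExtendedGenerator}.

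The conceptual obstacle, and the reason I would route the proof through the martingale identity rather than compute $\widehat{\cL}$ directly, is that the intensity function is in general unbounded: then $p_h$ converges to the identity only pointwise and not in sup-norm, and $\tfrac{\partial g}{\partial s}(\,\cdot\,,s)$ need not lie in $D(\cL)$, so the difference quotient $h^{-1}(\widehat p_hg-g)$ cannot be evaluated naively. The integration by parts sidesteps this entirely, at the mild cost of invoking that the jump-process paths of $M$ have finite variation; I expect the verification that $\int_0^\cdot\psi\,\dint M$ is a martingale (the Fubini-and-conditioning step above) to be the one point deserving genuine care, since everything else is either standard space-time bookkeeping or ordinary calculus.
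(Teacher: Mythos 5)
Your proof is correct, and for the martingale part it takes a genuinely different route from the paper. The paper's own proof is essentially a one-liner: it verifies the generator formula for $g\in D(\cL)\otimes C^1_0([0,\infty))$ directly from Davis's definition of the (extended) generator of the space-time process and then simply applies Lemma \ref{lem:BasicMartingale} to $(\widehat{X}_t)_{t\geq 0}$; the passage from $C^1_0$ to $C^1$ in the martingale assertion is left implicit (over a finite horizon $[0,T]$ one may replace $\psi$ by a compactly supported $C^1$ function agreeing with it on $[0,T]$). You instead bypass the space-time Dynkin formula and build the martingale out of the spatial one, via $N_t(f\otimes\psi)=\int_0^t\psi(s)\,\dint M_s=\psi(t)M_t-\int_0^t\psi'(u)M_u\,\dint u$ followed by a direct conditioning-plus-Fubini computation; this is more self-contained, makes transparent why $C^1$ rather than $C^1_0$ suffices for the martingale claim, and your closing identification of $\widehat{\cL}$ from the exhibited martingale is exactly Davis's martingale characterisation of the extended generator, so it matches what the paper invokes. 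Two small caveats. First, your justification of the integration by parts (finite variation of $M$ via piecewise constancy of $f(X_t)$) quietly assumes that $X$ is a non-explosive pure jump process, whereas the lemma is stated for a general Markov process; this covers every application in the paper, and the restriction is removable anyway, because the identity $N_t(f\otimes\psi)=\psi(t)M_t-\int_0^t\psi'(u)M_u\,\dint u$ can be verified by pure algebra together with ordinary integration by parts applied only to the absolutely continuous term $\int_0^{\cdot}(\cL f)(X_s)\,\dint s$, after which your conditioning argument runs unchanged. Second, the interchange of conditional expectation and the $\dint u$-integral requires $\sup_{u\leq t}\EE|M_u|=\EE|M_t|<\infty$, which holds by Jensen's inequality for the martingale $M$; you correctly identify this as the step needing care.
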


\begin{proof}
Since $F$ and $[0,\infty)$ are Borel spaces, $E$ is also a Borel space. It is clear that $(\widehat{X}_t)_{t\geq 0}$ is a Markov process. Its generator $\widehat{\cL}$ is given by \eqref{eq:ExtendedGenerator}. This can easily be confirmed for functions $g\in D(\cL)\otimes C^1_0([0,\infty))$, using the definition (see \cite[p.~28]{DavisMarkovModels}) of a generator (see also \cite[Section (31.5)]{DavisMarkovModels}). Finally, we apply Lemma \ref{lem:BasicMartingale} to $(\widehat{X}_t)_{t\geq 0}$ to conclude the martingale property of $(N_t(g))_{t\geq 0}$.
\end{proof}

\begin{remark}\rm
The set $ D(\cL)\otimes C^1([0,\infty))$ consists of all functions on $F=E\times[0,\infty)$ which are finite linear combinations of functions $(x,t)\mapsto g_1(x)g_2(t)$, where $g_1\in D(\cL)$  and $g_2\in C^1([0,\infty))$.
A version of Lemma \ref{lem:BasicMartingale2} for jump processes with bounded intensity function is   contained in \cite[Theorem 4.4]{Eberle}. The state space there can be extended to a general Borel space. Moreover, minor adjustments of the arguments there  show that the result indeed holds for all functions $g$ which are measurable and bounded and such that $\frac{\partial g}{\partial s}$ is also bounded. However, this will not be needed in the following.
\end{remark}

From the previous lemma we shall derive martingale properties which are adjusted to the
  subsequent applications involving certain geometric functionals of the cells of the tessellations. The following proposition is the spherical analogue of   \cite[Equation (7.2)]{STSecondOrder}. Again we provide an argument, since previous arguments require some corrections.

\begin{proposition}\label{prop:Dynkin3}
Let $\phi_1,\phi_2:\PP^d\to\RR$ be bounded and measurable, let $b_1,b_2\in C^1([0,\infty))$, and define
$$
\Psi_{\phi_1,\phi_2}(T,t):=(\Sig_{\phi_1}(T)-b_1(t))(\Sig_{\phi_2}(T)-b_2(t))\,,\qquad T\in\TT^d,t\geq 0\,.
$$
Then the random process $N_t(\Psi_{\phi_1,\phi_2})$ given by \eqref{eq:BasicMartingale2} with $(X_t)_{t\ge 0}$ replaced
by $(Y_t)_{t\ge 0}$ and $\cL$ by $\cA$ is a $\filtration$-martingale.
\end{proposition}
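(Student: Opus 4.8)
The plan is to deduce Proposition~\ref{prop:Dynkin3} from Lemma~\ref{lem:BasicMartingale2} applied to the splitting process, exactly as the statement suggests, but with the same localization care that was needed in the proofs of Propositions~\ref{prop:Dynkin} and~\ref{prop:Dynkinextended}. The obstacle is that Lemma~\ref{lem:BasicMartingale2} produces the martingale~\eqref{eq:BasicMartingale2} only for functions $g\in D(\cA)\otimes C^1([0,\infty))$, and we do not know that $\Sig_{\phi_1}$, $\Sig_{\phi_2}$, or their product lies in $D(\cA)$, since the intensity function $\lambda$ is unbounded. Hence one cannot simply invoke the lemma directly; the function $\Psi_{\phi_1,\phi_2}$ must first be truncated, the martingale property established for the truncated (bounded-intensity) process, and then the truncation removed by a stopping-time argument together with a class~DL (uniform integrability) estimate.

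Concretely, I would first observe that $\Psi_{\phi_1,\phi_2}(T,t)$ expands as a finite linear combination
$$
\Sig_{\phi_1}(T)\Sig_{\phi_2}(T)-b_2(t)\Sig_{\phi_1}(T)-b_1(t)\Sig_{\phi_2}(T)+b_1(t)b_2(t)\,,
$$
i.e.\ as $\Sig_{\phi_1,\phi_2}(T)\cdot 1 + \Sig_{\phi_1}(T)\cdot(-b_2) + \Sig_{\phi_2}(T)\cdot(-b_1) + 1\cdot(b_1b_2)$, which has exactly the tensor form $D(\cA)\otimes C^1([0,\infty))$ requires in its space variable — provided the spatial factors $\Sig_{\phi_1}$, $\Sig_{\phi_2}$, $\Sig_{\phi_1,\phi_2}$ were in $D(\cA)$. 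So the natural route is to mimic the earlier proofs: for $N\in\NN$ introduce the truncated jump process $(Y_t^N)_{t\ge 0}$ with bounded intensity $\lambda\wedge N$ and generator $\cA^N=(\lambda\wedge N)\lambda^{-1}\cA$, together with the truncated functionals $\Sig_{\phi_i}^N$ and $\Sig_{\phi_1,\phi_2}^N$ already defined there, and set $\Psi^N_{\phi_1,\phi_2}(T,t):=(\Sig^N_{\phi_1}(T)-b_1(t))(\Sig^N_{\phi_2}(T)-b_2(t))$. For the bounded-intensity process these truncated factors are bounded and measurable, so $\Psi^N_{\phi_1,\phi_2}\in\cF_b(\TT^d)\otimes C^1([0,\infty))=D(\cA^N)\otimes C^1([0,\infty))$ (using that $\lambda\wedge N$ is bounded, whence $D(\cA^N)=\cF_b(\TT^d)$). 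Lemma~\ref{lem:BasicMartingale2} then yields that $N_t(\Psi^N_{\phi_1,\phi_2})$, built from $\cA^N$ and $(Y_t^N)_{t\ge 0}$, is an $\filtration^N$-martingale.

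The second step is to relate the truncated martingale to the desired one. Using the stopping time $\tau_N:=\inf\{t\ge 0:\lambda(Y_t)\ge N\}=J_{N-1}$ and the fact that $Y^N_s=Y_s$ for $s<\tau_N$, I would verify on the event $\{s<\tau_N\wedge t\}$ that the truncations are inactive: $\Sig^N_{\phi_i}(Y^N_s)=\Sig_{\phi_i}(Y_s)$, $\Sig^N_{\phi_i}(\oslash(c,S,Y^N_s))=\Sig_{\phi_i}(\oslash(c,S,Y_s))$ (since $|Y_s|\le N-1$ makes the relevant quantities bounded by the truncation threshold), and consequently $\Psi^N_{\phi_1,\phi_2}(Y^N_s,s)=\Psi_{\phi_1,\phi_2}(Y_s,s)$ and $(\cA^N\Psi^N_{\phi_1,\phi_2}(\,\cdot\,,s))(Y^N_s)=(\cA\Psi_{\phi_1,\phi_2}(\,\cdot\,,s))(Y_s)$. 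The time-derivative term $\partial\Psi/\partial s$ is a polynomial in $\Sig_{\phi_i}$ with $C^0$ coefficients $b_i',b_i$ and matches as well. Stopping at $\tau_N\wedge t$ via the optional stopping theorem then gives $N_{\tau_N\wedge t}(\Psi^N_{\phi_1,\phi_2})=N_{\tau_N\wedge t}(\Psi_{\phi_1,\phi_2})=:N^{\tau_N}_t(\Psi_{\phi_1,\phi_2})$, and by the same reasoning as before this stopped process is both an $\filtration^N$- and (since $\tau_N$ is an $\filtration$-stopping time and the two processes agree before $\tau_N$) an $\filtration$-martingale. As $\tau_N\to\infty$, $N_t(\Psi_{\phi_1,\phi_2})$ is a local $\filtration$-martingale.

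Finally, I would upgrade ``local martingale'' to ``martingale'' by the class~DL criterion, reusing the moment bounds from the earlier proofs. The key inputs are: $|Y_a|$ has all moments (it is dominated by a geometric random variable $G_a$ with parameter $e^{-a}$), so $\bE|Y_a|^2<\infty$; on $[0,a]$ the functions $b_1,b_2$ are bounded by some constant $\beta$, so $|\Psi_{\phi_1,\phi_2}(Y_t,t)|\le(\alpha_1|Y_a|+\beta)(\alpha_2|Y_a|+\beta)$ is $L^1$; and, as in Proposition~\ref{prop:Dynkinextended}, the jump increment $|\Psi_{\phi_1,\phi_2}(\oslash(c,S,Y_s),s)-\Psi_{\phi_1,\phi_2}(Y_s,s)|$ is bounded by a linear expression in $|Y_s|$ (a single split changes each $\Sig_{\phi_i}$ by at most $3\alpha_i$), while $\partial\Psi/\partial s$ is controlled by $(\alpha_1|Y_a|+\beta)(|b_1'|+|b_2'|)$ and similar terms. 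Integrating against $\kappad$ (a probability measure) over the at most $|Y_s|$ cells and over $s\in[0,a]$ yields $\bE\sup_{0\le t\le a}|N_t(\Psi_{\phi_1,\phi_2})|<\infty$, which gives uniform integrability of $\{N_\tau(\Psi_{\phi_1,\phi_2}):\tau\le a\}$ and hence the martingale property. The main obstacle throughout is purely the unbounded intensity $\lambda$, which forces this truncate–stop–uniformly-integrate scheme rather than a one-line appeal to Lemma~\ref{lem:BasicMartingale2}; once the bookkeeping for the bilinear functional is arranged, every individual estimate is a routine variant of what was already done for $\Sig_\phi$ and $\Sig_{\phi_1,\phi_2}$.
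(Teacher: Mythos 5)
Your proposal is correct and follows essentially the same route as the paper: expand the bilinear functional, truncate to the bounded-intensity process $(Y_t^N)_{t\ge 0}$ so that Lemma \ref{lem:BasicMartingale2} applies, match the stopped processes via $\tau_N$, and upgrade the resulting local $\filtration$-martingale to a genuine martingale through the class DL moment estimate. The only (immaterial) organizational difference is that the paper uses the expansion to offload the purely quadratic term onto Proposition \ref{prop:Dynkinextended} and then localizes only the cross terms $\Sig_\phi(Y_t)b(t)$, whereas you run the truncate--stop--uniformly-integrate scheme on the full product at once.
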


\begin{proof}
Note that, for $T\in\TT^d$ and $t\geq 0$,
$$
\Psi_{\phi_1,\phi_2}(T,t)=\Sigma_{\phi_1}(T)\Sigma_{\phi_2}(T)-\Sigma_{\phi_1}(T)b_2(t)-\Sigma_{\phi_2}(T)b_1(t)
+b_1(t)b_2(t)\,.
$$
Using this, the fact that linear combinations of martingales are martingales, the linearity of the generator and Proposition \ref{prop:Dynkinextended}, it remains to show that if $\phi:\PP^d\to\RR$ is bounded and measurable and $b\in C^1([0,\infty))$, then
$$
K_t(\Sigma_\phi):=\Sigma_\phi(Y_t)b(t)-\Sigma_\phi(Y_0)b(0)-\int_0^t  (\cA \Sigma_\phi)(Y_s)b(s)+\Sigma_\phi(Y_s)b'(s)
 \, \dint s\,, \qquad t\ge 0\,,
$$
is a $\filtration$-martingale.

\medskip

Localizing $\Sigma_\phi$ and $(Y_t)_{t\ge 0}$ as in the proof of Proposition \ref{prop:Dynkin}, we can apply Lemma
\ref{lem:BasicMartingale2} to the jump process $(Y_t^N)_{t\ge 0}$, $N\in\NN$, which has bounded intensity function
and hence its generator has the full domain. Proceeding further as in the proof of Proposition \ref{prop:Dynkin}, we first obtain that $(K_t(\Sigma_\phi))_{t\ge 0}$ is a local martingale and then a martingale.
\end{proof}

The following is a special case (confer \cite[Proposition 3.2]{STSecondOrder} for a Euclidean counterpart).

\begin{corollary}\label{prop:Dynkin2}
Let $\phi:\PP^d\to\RR$ be bounded and measurable, let $b\in C^1([0,\infty))$, and define
$$
\Psi_\phi(T,t):=(\Sig_\phi(T)-b(t))^2\,,\qquad T\in\TT^d,t\geq 0\,.
$$
Then the random process $N_t(\Psi_\phi)$ given by \eqref{eq:BasicMartingale2} with $(X_t)_{t\ge 0}$ replaced
by $(Y_t)_{t\ge 0}$ and $\cL$ by $\cA$ is a $\filtration$-martingale.
\end{corollary}

\begin{remark}\rm
The results presented in this section do actually not use that the splitting tessellation process takes values in the space of tessellations of the \textit{sphere} and as such they carry over to the Euclidean set-up as well. In this form the proofs presented here fix a number of technical inaccuracies in the earlier works \cite{STSecondOrder,STBernoulli} about iteration stable (STIT-) tessellations.
\end{remark}

\section{The capacity functional}\label{sec:3CapacityFunctional}

\subsection{Splitting tessellations as random closed sets}

We fix $t\geq 0$ and consider the splitting tessellation $Y_t$ of $\SSd$ with time parameter $t$ which is derived from a general regular direction distribution $\kappad$ on $\SS_{d-1}$. It is convenient for us to consider the random set
\begin{equation}\label{eq:DefZt}
Z_t:=\bigcup_{c\in Y_t}\partial c\,,
\end{equation}
which consists of the union of all cell boundaries $\partial c$ of cells $c$ in $Y_t$. In particular, $Z_0=\emptyset$. We shall show next that $Z_t$ is a random closed subset of $\SSd$ in the usual sense of stochastic geometry. We recall that a random closed set in $\SSd$ is  a measurable map from an underlying probability space $(\Omega,\sigmaalgebra,\bP)$ into the measurable space $(\cF(\SSd),\cB(\cF(\SSd)))$ (see \cite[Definition 3.1.2]{SW}), where the Borel $\s$-field is based on the Fell topology on $\cF(\SSd)$. Moreover, we show the crucial property that $Z_t$ is isotropic if and only if $\kappad=\nu_{d-1}$, that is, $Z_t$ has the same distribution as the rotated random set $\varrho Z_t$ for all $\varrho\in\SO(d+1)$, in this case.

\begin{lemma}\label{lem:RACSandIsotropic}
For every $t\geq 0$, $Z_t$ is a random closed set in $\SSd$. Moreover, $Z_t$ is isotropic if and only if
$\kappad=\nu_{d-1}$.
\end{lemma}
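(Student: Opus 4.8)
The plan is to prove two claims: first that $Z_t$ is a random closed set, and second the equivalence between isotropy of $Z_t$ and $\kappad=\nu_{d-1}$. For the measurability, I would note that $Z_t$ is a finite union of cell boundaries and that measurability follows from the measurable construction of the splitting process already encoded in the description of Section \ref{Sec2.3}. Concretely, I would argue that the map $T\mapsto\bigcup_{c\in T}\partial c$ from $\TT^d$ (with the $\sigma$-field $\cB_{lf}=\cB_{vg}$ established in Lemma \ref{lemmess1}) into $\cF(\SSd)$ equipped with the Fell topology is measurable, using that on $\sN_s(\KK^d)$ the relevant boundary-union operation is built from measurable maps (as in the arguments of Lemma \ref{lemmess2}, via \cite[Theorem 12.2.6]{SW} and standard facts about the Fell topology on closed sets). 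Composing with the measurable map $\omega\mapsto Y_t(\omega)$ then yields that $\omega\mapsto Z_t(\omega)$ is a measurable map into $(\cF(\SSd),\cB(\cF(\SSd)))$, i.e.\ a random closed set.

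For the isotropy equivalence, the easy direction is $\kappad=\nu_{d-1}\Rightarrow$ isotropy. Here I would exploit the rotation invariance of $\nu_{d-1}$ together with the explicit distributional description of the process given in Remark \ref{rem:OtherInitialTessellation}. For any $\varrho\in\SO(d+1)$, applying $\varrho$ to the whole construction sends a splitting process with direction distribution $\kappad$ and initial tessellation $\{\SSd\}$ to a splitting process with direction distribution $\varrho_*\kappad$ and initial tessellation $\{\varrho\SSd\}=\{\SSd\}$; since the split operator $\oslash$ and the measure $\phi(T;\cdot)$ are rotation covariant and $\varrho_*\nu_{d-1}=\nu_{d-1}$, the law of $(Y_u)$ is rotation invariant, hence so is the law of $Z_t=\bigcup_{c\in Y_t}\partial c$.

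The harder direction, and the main obstacle, is the converse: isotropy of $Z_t$ for some fixed $t>0$ forces $\kappad=\nu_{d-1}$. The plan is to extract the direction distribution from $Z_t$ in a rotation-equivariant way and compare. The cleanest route uses the first split: conditioning on exactly one split having occurred by time $t$ (an event of positive probability), the single great hypersphere $S_1$ present in $Z_t$ has distribution $\kappad$, since the first cell is all of $\SSd$ and $\SS_{d-1}[\SSd]=\SS_{d-1}$. More robustly, and to avoid conditioning subtleties, I would use a linear first-order functional: applying the expectation formula for $\EE[\Sigma_\phi(Y_t)]$ (or directly the capacity functional computed in Section \ref{sec:3CapacityFunctional}) to test functions $\phi$ that localize the directional behaviour of cell boundaries, and observe that the directional content of $\EE[\phi_{d-1}(Z_t,\cdot)]$ is governed at leading order in $t$ by $\kappad$. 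Isotropy of $Z_t$ means all these directional functionals are rotation invariant; since they determine $\kappad$ up to the rotation-invariant measure, and $\kappad$ is a probability measure on $\SS_{d-1}$, this forces $\kappad$ to be rotation invariant, i.e.\ $\kappad=\nu_{d-1}$ by uniqueness of the Haar probability measure. The delicate point requiring care is ensuring that the chosen functional genuinely separates non-rotation-invariant $\kappad$ from $\nu_{d-1}$ (so that isotropy of $Z_t$ truly pins down $\kappad$, rather than only some symmetrized version of it); isolating the contribution of the very first great hypersphere, whose law is exactly $\kappad$, is what makes this separation transparent.
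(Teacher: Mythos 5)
Your measurability argument is the paper's own (boundary map, induced map on counting measures, union map, composition with $Y_t$), and your treatment of the converse via the first split is also essentially the paper's: the authors observe that $Z_{\tau_1}=S_1$ has law $\kappad$ and conclude from isotropy that $\kappad$ is rotation invariant, hence equals $\nu_{d-1}$ by uniqueness of the Haar probability measure. For the forward direction you deviate: the paper identifies the generators of $(Y_t)_{t\ge0}$ and $(\varrho Y_t)_{t\ge0}$ and invokes uniqueness of the jump process with a given generator, whereas you push the rotation through the explicit path-space distribution of Remark \ref{rem:OtherInitialTessellation}. That is a legitimate alternative (the paper itself points to a construction/induction variant in Remark \ref{Rem3.2}); the generator route is shorter once the martingale machinery is set up, while yours is more hands-on and makes the rotation covariance of $\oslash$ and of $\phi(T;\cdot)$ do all the work.

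Two remarks on the converse. First, the ``conditioning subtlety'' you flag actually resolves cleanly: since $\kappad$ is regular, $\kappad(\SS_{d-1}[S_1^{\pm}])=1$ for every $S_1$, so the total splitting rate after the first split equals $2$ independently of $S_1$; hence the event of exactly one split by time $t$ is independent of $S_1$, and it coincides almost surely with the rotation-invariant event $\{Z_t\in\SS_{d-1}\}$, so the conditional law of $Z_t$ given this event is both $\kappad$ and rotation invariant. You should make this explicit rather than route around it. Second, your ``more robust'' fallback via $\bE[\phi_{d-1}(Z_t,\cdot)]$ or the capacity functional does \emph{not} close as stated: isotropy only tells you that $\overkappad$ (equivalently, a spherical Radon/cosine-type transform of $\kappad^{\circ}$) is rotation invariant, and recovering $\kappad$ from this requires injectivity of that transform on even measures --- a nontrivial input nowhere established in the paper. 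Since your primary (first-split) argument suffices, drop the fallback or supply that injectivity.
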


\begin{proof}
 By construction and by the definition of the required $\s$-fields in Section \ref{Sec2.3}, we know that the map $Y_t:(\Omega,\sigmaalgebra,\bP)\to(\sN_s(\KK^d),\cB_{vg})$ is
measurable. Since the map $(\KK^d,\cB(\KK^d))\to
(\cF(\SSd),\cB(\cF(\SSd)))$, $c\mapsto \partial c$, is measurable (see \cite[Theorem 12.2.6]{SW}), the induced map $\partial: (\sN_s(\KK^d),\cB_{vg})\to (\sN_s(\cF(\SSd)),\cB_{vg}^*)$, $\sum_{c}\delta_c\mapsto \sum_{c}\delta_{\partial c}$,
is also measurable (recall that $\delta_c$ denotes the point mass in $c\in\KK^d$, the sum extends over a finite set of spherically convex bodies, and $\cB_{vg}^*$ denotes the Borel $\s$-field induced by the vague topology on $\sN_s(\cF(\SSd))$). This shows that
$\partial \circ Y_t$ is measurable. Moreover, the union map $(\sN_s(\cF(\SSd)),\cB_{vg}^*)\to
 (\cF(\SSd),\cB(\cF(\SSd)))$, $\sum_{F}\delta_{F}\mapsto\bigcup_{F} F$ is measurable,
where the sum and the union extend over the same finite set of $F\in\cF(\SSd)$ (in fact, the proof of \cite[Theorem 3.6.2]{SW} carries over to the sphere). Composing these measurable maps yields the assertion.

\medskip

Next, we show that $Z_t$ is isotropic if $\kappad=\nu_{d-1}$. To verify this, more generally we prove that $Y_t$ is isotropic, which means that $\varrho Y_t$ has the same distribution as $Y_t$ for all $\varrho\in\SO(d+1)$. Here, we write $\varrho T=\{\varrho c:c\in T\}$  for the rotated tessellation $T\in\TT^d$. Recall the definition of the generator $\cA$ of the splitting tessellation process $(Y_t)_{t\geq 0}$ from the previous section. For a bounded and measurable map $f:\TT^d\to\RR$, the rotation invariance of the measure $\nu_{d-1}$ on $\SS_{d-1}$ implies that
\begin{align}
\nonumber (\cA f)(\varrho T) &= \sum_{c\in\varrho T}\int_{\SS_{d-1}[c]}\big[f(\oslash(c,S,\varrho T))-f(\varrho T)\big]\,\nu_{d-1}(\dint S)\\
\nonumber &= \sum_{\varrho^{-1}c\in T}\int_{\SS_{d-1}[c]}\big[f(\oslash(c,S,\varrho T))-f(\varrho T)\big]\,\nu_{d-1}(\dint S)\\
&=\sum_{\bar c\in T}\int_{\SS_{d-1}[\bar c]}\big[
f(\oslash(\varrho \bar c,\varrho S,\varrho T))-f(\varrho T)\big]\,\nu_{d-1}(\dint S) = (\cA (f\circ\varrho))(T)\,.\label{eq:GeneratorEquality}
\end{align}

On the other hand, if $\cA^\varrho$ denotes the generator of the jump process
$(\varrho Y_t)_{t\ge 0}$, the usual definition of the generator involves a uniform limit in $T\in \TT^d$. However, for the following analysis a pointwise limit as considered in \cite[Equation (15.21)]{Breiman} is sufficient. In this sense, we have
\begin{align}
(\cA^\varrho f)(T)&=\lim_{t\downarrow 0}\frac{1}{t}\left(\bE[f(\varrho Y_t)\mid \varrho Y_0=T]-f(T)\right)\nonumber\\
&=\lim_{t\downarrow 0}\frac{1}{t}\left(\bE[(f\circ\varrho) (Y_t)\mid  Y_0=\varrho^{-1}(T)]-(f\circ\varrho)(\varrho^{-1}(T))\right)\nonumber\\
&=(\cA(f\circ\varrho))(\varrho^{-1}(T))\,.\label{eqgeneq2}
\end{align}
Hence, combining \eqref{eq:GeneratorEquality} and \eqref{eqgeneq2} we conclude that $(\cA f)(T)=(\cA^\varrho f)(T)$ for all functions $f$ and all $T\in\TT^d$, which shows that the generators (as defined in \cite{Breiman}) of $(Y_t)_{t\ge 0}$ and $(\varrho Y_t)_{t\ge 0}$
coincide. The assertion now follows from \cite[Proposition 15.38]{Breiman}.

If $(Z_t)_{t\ge 0}$ is derived from a splitting process with direction distribution $\kappad$, then after an exponential waiting time $\tau_1$
with parameter $1$, we have exactly two cells and $Z_{\tau_1}=S$ is the separating great hypersphere with direction distribution $\kappad$. Since by construction and assumption $Z_{\tau_1}$ has a rotation invariant distribution, the assertion follows.
\end{proof}

\begin{remark}\label{Rem3.2}\rm
If $\kappad=\nu_{d-1}$, then the isotropy of $Z_t$ could also be proved by using \cite[Proposition 3.39]{Breiman} and
induction (over $N$). In any case, the rotation invariance of
$\nu_{d-1}$ and the rotation covariance of the construction are the crucial points. However, the preceding proof more generally shows that the distribution
of $Y_t$ is rotation invariant for each $t\ge 0$ provided that $\kappad=\nu_{d-1}$.
\end{remark}

\subsection{Capacity functional for connected sets}

The most basic quantity associated with a random closed set is its capacity functional. We are interested in the capacity functional of $Z_t$ defined by
$$
T_t(C):=\bP(Z_t\cap C\neq\emptyset)\,,\qquad C\in\cC(\SSd)\,,
$$
where $\cC(\SSd)$ is the system of closed subsets of $\SSd$. In other words $T_t(C)$ is the probability that the compact test set $C$ is hit by the random set $Z_t$. We shall first compute the value of
$$
U_t(C):=1-T_t(C) = \bP(Z_t\cap C=\emptyset)
$$
in the case that the set $C$ is connected. This constitutes a direct generalization of \cite[Theorem 3.5]{DeussHoerrmannThaele}, but some adjustments of technical details are necessary.

In the proof of Theorem \ref{prop:Capacity2} below it will be crucial to deal with a splitting tessellation arising as a result after time $t$ of the splitting process starting with a general initial tessellation $T\in\SSd$, recall Remark \ref{rem:OtherInitialTessellation}. We denote such a tessellation by $Y_t^{(T)}$ and by $Z_t^{(T)}$ the corresponding union set. Note that $Z_0^{(T)}=\bigcup_{c\in T}\partial c\neq\emptyset$ unless $T=\{\SSd\}$. As in Lemma \ref{lem:RACSandIsotropic} one shows that $Z_t^{(T)}$ is indeed a random closed subset of $\SSd$ and it is clear that $Z_t^{(T)}$ can only be isotropic if $T=\{\SSd\}$ and $\kappa=\nu_{d-1}$. In analogy to $U_t$ we define
$$
U_t^{(T)}:=\bP(Z_t^{(T)}\cap C=\emptyset)\,,\qquad C\in\cC(\SSd)\,.
$$
In particular, if $T=\{\SSd\}$, then $Y_t^{(T)}=Y_t$, $Z_t^{(T)}=Z_t$ and $U_t^{(T)}(\,\cdot\,)=U_t(\,\cdot\,)$.

\begin{theorem}\label{prop:Capacity}
Let $T\in\TT^d$ and let $C\in\PP^d$ be such that $C$ is contained in the interior of precisely one cell of $ T$. Then
$$U_t^{(T)}(C)=\exp\big(-\kappad(\SS_{d-1}\blk C\brk)\,t\big)\,,\qquad t\ge 0\,,$$
independently of $T$. If  $\kappad$ is absolutely continuous with respect to $\nu_{d-1}$, then this relation holds for all connected sets $C\in\cC(\SSd)$ satisfying $C\subset{\rm int}(c)$ for some $c\in T$. In particular, if $C=\overline{xy}$ is a spherical segment connecting $x,y\in\SSd$ with $\ell(x,y)\leq\pi$ and $\overline{xy}\subset{\rm int}(c)$ for some $c\in T$ and if $\kappad=\nu_{d-1}$, then
$$U_t^{(T)}(\overline{xy})=\exp\Big(-\frac{1}{\pi}\ell(x,y)\,t\Big)\,.$$
\end{theorem}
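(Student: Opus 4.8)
The plan is to compute $U_t^{(T)}(C) = \bP(Z_t^{(T)} \cap C = \emptyset)$ by exploiting the Markovian cell-splitting dynamics and deriving a differential equation in the time parameter $t$. The key observation is that the event $\{Z_t^{(T)} \cap C = \emptyset\}$ is equivalent to the event that $C$ is still contained in the interior of a single cell at time $t$, i.e.\ that no great hypersphere separating points of $C$ has yet been thrown into the cell containing $C$. Since $C \subset \operatorname{int}(c)$ for some cell $c$ and $C$ is connected, any hypersphere $S$ that splits the cell containing $C$ without meeting $C$ leaves $C$ entirely on one side, so $C$ remains in the interior of one of the two new cells; only a hypersphere $S \in \SS_{d-1}\blk C\brk$ destroys the event. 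By the independent-cell evolution described in the introduction and the structure of the generator in Definition \ref{def:GeneratorA}, the dynamics relevant to $C$ reduce to a single cell and its descendants, so without loss of generality one may take $T = \{c_0\}$ with $C \subset \operatorname{int}(c_0)$.

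First I would set up the first-step decomposition over the time of the initial split of the cell containing $C$. Conditioning on the first jump time $\tau$ (which, for the cell containing $C$, is exponential with parameter $\kappad(\SS_{d-1}[c_0])$) and on the splitting hypersphere $S$, I distinguish two cases: either $S \in \SS_{d-1}\blk C\brk$, in which case $S$ hits $C$ and the event $\{Z_t^{(T)} \cap C = \emptyset\}$ fails; or $S \in \SS_{d-1}[c_0] \setminus \SS_{d-1}\blk C\brk$, in which case $C$ survives in the interior of exactly one of the two new cells $c_0 \cap S^+$ or $c_0 \cap S^-$, and the splitting process restarts from that cell by the Markov property. This yields a renewal-type integral equation
\begin{align*}
U_t^{(T)}(C) &= e^{-\kappad(\SS_{d-1}[c_0])\,t} \\
&\quad + \int_0^t \kappad(\SS_{d-1}[c_0])\,e^{-\kappad(\SS_{d-1}[c_0])\,s}\,\frac{1}{\kappad(\SS_{d-1}[c_0])}\int_{\SS_{d-1}[c_0]\setminus \SS_{d-1}\blk C\brk} U_{t-s}^{(c')}(C)\,\kappad(\dint S)\,\dint s,
\end{align*}
where $c'$ denotes whichever of $c_0 \cap S^\pm$ contains $C$ in its interior. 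The crucial point is that by the statement's own hypothesis $U_{t-s}^{(c')}(C)$ is again of the asserted form and, by the claimed independence of $T$, equals $U_{t-s}^{(T)}(C)$; substituting the ansatz $U_t^{(T)}(C) = \exp(-\kappad(\SS_{d-1}\blk C\brk)\,t)$ and using $\kappad(\SS_{d-1}[c_0]) - \kappad(\SS_{d-1}[c_0]\setminus\SS_{d-1}\blk C\brk) = \kappad(\SS_{d-1}\blk C\brk)$ verifies the identity. Equivalently, differentiating in $t$ gives the ODE $\frac{\dint}{\dint t} U_t^{(T)}(C) = -\kappad(\SS_{d-1}\blk C\brk)\,U_t^{(T)}(C)$ with $U_0^{(T)}(C) = 1$, whose solution is the claimed exponential. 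I expect the main obstacle to lie in making the reduction to a single cell rigorous and in justifying the independence from $T$ --- namely that whichever descendant cell $c'$ inherits $C$, the future hitting probability depends only on $\SS_{d-1}\blk C\brk$ and not on the shape of $c'$, because $\kappad(\SS_{d-1}\blk C\brk)$ is intrinsic to $C$. Here the regularity of $\kappad$ and Lemma \ref{setzero} are needed to ensure that the boundary cases (where $S$ meets $C$ only on its relative boundary, or touches without separating) have $\kappad$-measure zero and hence do not contribute.

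For the final two assertions I would simply specialise. When $\kappad$ is absolutely continuous with respect to $\nu_{d-1}$, the extension from polytopal $C$ to arbitrary connected closed $C \subset \operatorname{int}(c)$ follows because the separating-but-non-hitting hyperspheres again form a $\kappad$-null set by the second part of Lemma \ref{setzero}, and the same first-step argument applies verbatim with $\kappad(\SS_{d-1}\blk C\brk)$ in place of the polytopal quantity. Finally, for the isotropic case $\kappad = \nu_{d-1}$ and $C = \overline{xy}$ a spherical segment of length $\ell(x,y) \le \pi$, I invoke the explicit value of the invariant hitting measure of a segment recorded in \eqref{eq:InvMeasureLineSegment}, namely $\nu_{d-1}(\SS_{d-1}\blk\overline{xy}\brk) = \tfrac{1}{\pi}\ell(x,y)$, and substitute it into the general formula to obtain $U_t^{(T)}(\overline{xy}) = \exp\!\big(-\tfrac{1}{\pi}\ell(x,y)\,t\big)$. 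No further work is required for this last step beyond citing \eqref{eq:InvMeasureLineSegment}.
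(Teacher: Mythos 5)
Your route is genuinely different from the paper's. The paper applies the martingale of Proposition \ref{prop:Dynkin} to $\phi(c)=\mathbf{1}(C\subset c)$ and uses Lemma \ref{lem:ClaimForCapacity} together with Lemma \ref{setzero} to collapse the generator term to $-\kappad(\SS_{d-1}\blk C\brk)\,\mathbf{1}(Z_s^{(T)}\cap C=\emptyset)$, which yields the \emph{scalar} integral equation $U_t^{(T)}(C)=1-\kappad(\SS_{d-1}\blk C\brk)\int_0^t U_s^{(T)}(C)\,\dint s$ in one stroke, so that uniqueness of the solution is immediate. Your first-step (renewal) decomposition at the first split of the cell containing $C$ is instead the technique the paper reserves for the multi-component case (Theorem \ref{prop:Capacity2}), and it is viable here; it even has the small advantage that the dichotomy ``$S\cap C=\emptyset$, hence $C$ survives in the interior of exactly one daughter cell by connectedness, versus $S\cap C\neq\emptyset$, hence the trace $c_0\cap S\subset Z_t^{(T)}$ already meets $C$'' is exact, so Lemma \ref{setzero} is not really needed at that point.

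The genuine gap is in how you close the renewal equation. As written, the equation involves $U^{(c')}_{t-s}(C)$ for the random daughter cell $c'$, i.e.\ a whole family of unknown functions $u(t,c)$ indexed by the cells $c$ with $C\subset\mathrm{int}(c)$, and you reduce it to a single unknown by invoking ``the claimed independence of $T$'' --- that is, by assuming the conclusion. The alternative you offer, substituting the ansatz, only shows that $u(t,c)=\exp(-\kappad(\SS_{d-1}\blk C\brk)\,t)$ \emph{is a} solution of the system (using $\SS_{d-1}\blk C\brk\subset\SS_{d-1}[c]$); without a uniqueness statement for that system this proves nothing, and the ``equivalent'' ODE suffers from the same circularity, since differentiating the renewal equation does not produce $\frac{\dint}{\dint t}U_t^{(T)}(C)=-\kappad(\SS_{d-1}\blk C\brk)U_t^{(T)}(C)$ unless you already know that $u(t-s,c_S)$ does not depend on $c_S$. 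The fix is standard but must be stated: if $u,v$ are two $[0,1]$-valued solutions of the system and $W(t):=\sup_c|u(t,c)-v(t,c)|$, then, since $\kappad$ is a probability measure, the kernel masses are at most $1$ and $W(t)\le\int_0^t W(r)\,\dint r$, whence $W\equiv 0$ by Gronwall's lemma or Picard iteration. (Alternatively, thin the exponential clock of the current cell into a $C$-hitting clock of rate $\kappad(\SS_{d-1}\blk C\brk)$ and a complementary clock; this shows directly that the first $C$-hitting split occurs at an exponential time with parameter $\kappad(\SS_{d-1}\blk C\brk)$, independently of the lineage of cells.) With that supplement, and with the reduction to the single cell $c_0$ justified by the independent evolution of cells as encoded in Remark \ref{rem:OtherInitialTessellation}, your argument is complete; the two specializations at the end are handled exactly as in the paper.
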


Before presenting the details of the proof of Theorem \ref{prop:Capacity}, let us define, for fixed $T\in\TT^d$, the random variable
\begin{equation}\label{eq:DefXit}
\xi_t:=\sum_{c\in Y_t^{(T)}}\mathbf{1}(C\subset c),\qquad t\geq 0,\quad C\in\cC(\SSd)\,,
\end{equation}
which takes values in $\NN_0$ (since we consider $T\in\TT^d$ being fixed, we suppress the dependency of $\xi_t$ on $T$ in our notation). The following fact will be needed below.

\begin{lemma}\label{lem:ClaimForCapacity}
Let $\kappad$ and $C$ be as in the statement of Theorem \ref{prop:Capacity}, but $C \neq \emptyset$. Then,
$\bP$-almost surely $\xi_t=\mathbf{1}(Z_t^{(T)}\cap C=\emptyset)$. In particular, $\xi_t\in\{0,1\}$ holds $\bP$-almost surely. If $\xi_t=1$, then $\bP$-almost surely $C$ is contained in the interior of a unique cell of $Y_t^{(T)}$.
\end{lemma}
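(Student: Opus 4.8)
The plan is to show that $\xi_t$ and $\mathbf{1}(Z_t^{(T)}\cap C=\emptyset)$ agree almost surely by a pointwise (pathwise) argument on the realizations of the splitting process, together with one measure-zero exclusion coming from \Cref{setzero}. Recall that $C$ is connected (in the general case) or a spherical polytope, and is contained in the interior of a unique cell of $T$, hence in particular $C\neq\emptyset$ by hypothesis.

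First I would establish the deterministic combinatorial core. Fix a realization of $Y_t^{(T)}$, whose cells have pairwise disjoint interiors and cover $\SSd$. The starting observation is that a connected set $C$ with $Z_t^{(T)}\cap C=\emptyset$ must lie entirely in the interior of a single cell: indeed $C$ is disjoint from every cell boundary, so $C$ is contained in $\SSd\setminus Z_t^{(T)}=\bigcup_{c\in Y_t^{(T)}}\operatorname{int}(c)$, which is a disjoint union of relatively open sets; by connectedness $C$ meets (hence lies in) exactly one of them. This gives, on the event $\{Z_t^{(T)}\cap C=\emptyset\}$, that $\xi_t=\sum_{c}\mathbf 1(C\subset c)\ge \mathbf 1(C\subset\operatorname{int}(c_0))=1$, and in fact equals $1$ because $C\subset\operatorname{int}(c_0)\subset c_0$ forces $C\not\subset c$ for the other cells $c$ (whose interiors are disjoint from $\operatorname{int}(c_0)$, and $C$ cannot sit inside a lower-dimensional boundary piece as it has nonempty relative interior avoiding $Z_t^{(T)}$). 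This simultaneously yields the final sentence of the lemma: on $\{\xi_t=1\}$ the unique cell containing $C$ contains it in its interior.

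Conversely, on the complementary event $\{Z_t^{(T)}\cap C\neq\emptyset\}$ I want $\xi_t=0$, and this is exactly where a null set must be discarded. If $C$ meets $Z_t^{(T)}$ but is still contained in some cell $c$, then $C$ must touch $\partial c$ while lying in $c$; writing $c$ as an intersection of closed half-spheres, $C$ then lies on one side of some great hypersphere $S$ bounding a facet of $c$ and meets $S$, i.e.\ $S\cap C\neq\emptyset$ with $C\subset S^+$ or $C\subset S^-$. Each cell $c$ of the process is cut out by finitely many great hyperspheres drawn according to $\kappad$ during the construction (\Cref{rem:OtherInitialTessellation}), and \Cref{setzero} guarantees that, for $\kappad$ regular (polytopal $C$) or $\kappad\ll\nu_{d-1}$ (general compact connected $C$), the set of great hyperspheres $S$ with $S\cap C\neq\emptyset$ and $C\subset S^\pm$ is $\kappad$-null. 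Summing over the finitely (almost surely) many splitting hyperspheres used up to time $t$, the event that any of them is tangent to $C$ in this degenerate way has probability zero. I would make this rigorous by conditioning on the (almost surely finite) number of jumps and the cells chosen, and integrating the $\kappad$-null estimate against the explicit distribution of the process recorded in \Cref{rem:OtherInitialTessellation}. Off this null set, $C\subset c$ together with $C\cap\partial c\neq\emptyset$ is impossible, so $\xi_t=0$ on $\{Z_t^{(T)}\cap C\neq\emptyset\}$.

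Combining the two cases gives $\xi_t=\mathbf 1(Z_t^{(T)}\cap C=\emptyset)$ almost surely, whence $\xi_t\in\{0,1\}$ and the uniqueness-and-interior statement follow. The main obstacle is the converse direction: ensuring that the degenerate ``tangential'' configurations, in which $C$ grazes a cell boundary yet remains inside the cell, really form a null event. This is precisely the role of \Cref{setzero}, and the care needed is to verify that the splitting hyperspheres are drawn from measures absolutely continuous with respect to (restrictions of) $\kappad$, so that the null estimate propagates through the finitely many splits; the distinction between the polytopal case (regularity of $\kappad$ suffices) and the general connected case (requiring $\kappad\ll\nu_{d-1}$) exactly mirrors the two parts of \Cref{setzero}.
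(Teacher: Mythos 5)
Your proof is correct and follows essentially the same route as the paper's: the deterministic direction rests on the connectedness of $C$ and the disjointness of the cell interiors, while the converse excludes the degenerate tangential configurations via Lemma \ref{setzero} together with the explicit construction of the splitting process. The only cosmetic difference is that the paper treats the case $\xi_t\ge 2$ separately by noting that then $C\subset Z_t^{(T)}$, so a single fixed point of $C$ would have to lie on a splitting great hypersphere, which has probability zero by regularity of $\kappad$ alone, whereas you fold this case into the general tangency argument.
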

\begin{proof}
If $Y_t^{(T)}=T$, then $Z_t^{(T)}=\bigcup_{c\in T}\partial c$ and $\xi_t=1=\mathbf{1}(Z_t^{(T)}\cap C=\emptyset)$. So we restrict ourselves to the cases where $Y_t^{(T)}\neq T$ and
therefore $|Y_t^{(T)}|\ge |T|+1\ge 2$.

\medskip

\textit{Step 1:} If $\xi_t\ge 2$, then there are $c_1,c_2\in Y_t^{(T)}$, $c_1\neq c_2$, with
$C\subset c_1$, $C\subset c_2$, hence $C\subset c_1\cap c_2$. This implies that
$C\subset Z_t^{(T)}$. Let $z_0\in C\neq\emptyset$ be arbitrarily fixed. Then we get
$\bP(\xi_t\ge 2)\le \bP(z_0\in Z_t^{(T)})$.
Considering the algorithmic construction of $(Y_t^{(T)})_{t\ge 0}$ (or the distributional description provided in Remark \ref{rem:OtherInitialTessellation}) it is clear that
there is a first instance $s\le t$ for which $z_0\in Z_s^{(T)}$ and $z_0$ lies
on the newly introduced random separating great hypersphere having distribution $\kappad(\SS_{d-1}[c]\cap \ \cdot \ )/\kappad(\SS_{d-1}[c])$.
Since $\kappad$ is regular, this happens with probability zero. This shows that $\bP(\xi_t\ge 2)\le \bP(z_0\in Z_t^{(T)})=0$, see
\cite[Lemma 4.1]{GST}.

\medskip

\textit{Step 2:} If $\xi_t=0$, then $Z_t^{(T)}\cap C\neq\emptyset$, since otherwise $\{C\cap \text{int}(c):c\in Y_t^{(T)}\}$ yields a decomposition of $C$ into two non-empty  relatively open subsets of $C$, which contradicts the assumption that $C$ is connected. Hence, in this case we conclude that $\mathbf{1}(Z_t^{(T)}\cap C=\emptyset)=0$.

\medskip

\textit{Step 3:} If $\xi_t=1$, then there is precisely one cell $c\in Y_t^{(T)}$ such that $C\subset c$.
If $C\not\subset\text{int}(c)$, then there is a first instance $s\le t$  such that
 the newly introduced random separating great hypersphere $S$ having distribution $\kappad(\SS_{d-1}[c]\cap \ \cdot \ )/\kappad(\SS_{d-1}[c])$ at time $s$
 hits $C$, but $C$ is not contained in one of the two open half-spheres determined by $S$. By Lemma \ref{setzero}, this event has measure zero (cf.~the argument in Step 1). Hence, we conclude that
 $C\subset\text{int}(c)$ with probability one. This finally shows that if $\xi_t=1$, then $Z_t^{(T)}\cap C=\emptyset$ is satisfied
$\bP$-almost surely, and then the unique cell which contains $C$ already contains $C$ in its interior.

\medskip

\textit{Step 4:} Conversely, if $Z_t^{(T)}\cap C=\emptyset$ (and $C\neq\emptyset$, $Y_t^{(T)}\neq T$), then $C\subset \bigcup_{c\in Y_t^{(T)}}\text{int}(c)$. Since $C$ is connected, this implies that $C\subset\text{int}(c)$ for exactly one of the cells $c\in Y_t^{(T)}$, in particular,  $\xi_t=1$.
\end{proof}

\begin{proof}[Proof of Theorem \ref{prop:Capacity}] Let $t\ge 0$ be fixed.  Let $T,C$ and $\kappad$ be as stated in the assumptions of the theorem. The assertion is apparently true if $C=\emptyset$. Hence we assume
$C\neq \emptyset$ in the following. \medskip

The map $\phi:\PP^d\to\RR$ given by $\phi(c):={\bf 1}(C\subset c)$ for $c\in\PP^d$ is measurable and bounded. Hence Proposition \ref{prop:Dynkin} and the fact that $Y_0^{(T)}=T$ show that the random process
\begin{equation*}
\begin{split}
\sum_{c\in Y_t^{(T)}}\phi(c)&-\sum_{c\in T}\phi(c)\\
&-\int_0^t\sum_{c\in Y_s^{(T)}}\int_{\SS_{d-1}[c]}[\phi(c\cap S^+)+\phi(c\cap S^-)-\phi(c)]\,\kappad(\dint S)\, \dint s\,,\qquad t\geq 0\,,
\end{split}
\end{equation*}
is a martingale with respect to the natural filtration $\filtration^T$ induced by the random process $(Y_t^{(T)})_{t\geq 0}$.
 Let $\xi_t$ be the $\NN_0$-valued random variable defined in \eqref{eq:DefXit}.
Since $C$ is contained in the interior of precisely one cell of the initial tessellation $T$, we have that $\sum_{c\in T}\phi(c) = 1$ almost surely and we deduce that
\begin{equation*}
\begin{split}
U_t^{(T)}(C)=1+\int_0^t\bE\sum_{c\in Y_s^{(T)}}\int_{\SS_{d-1}[c]}[{\bf 1}(C\subset c\cap S^+)&+{\bf 1}(C\subset c\cap S^-)\\
&-{\bf 1}(C\subset c)]\,\kappad(\dint S)\,\dint s\,.
\end{split}
\end{equation*}
Fix $s\in[0,t]$ and observe that if $\xi_s=0$, that is, if there is no cell $c\in Y_s^{(T)}$ satisfying $C\subset c$, then the integrand of the inner integral is equal to zero. Lemma \ref{lem:ClaimForCapacity}  then implies that if the expression under the expectation is multiplied with $\xi_s$, then the expectation does not change. Moreover, again by Lemma \ref{lem:ClaimForCapacity}, if $\xi_s=1$, then almost surely
 there is a unique cell $c_0\in Y_s^{(T)}$ with $C\subset \text{int}(c_0)$. Hence,  almost surely the expression under the expectation is equal to
\begin{align*}
&\xi_s\sum_{c\in Y_s^{(T)}}\int_{\SS_{d-1}[c]}[{\bf 1}(C\subset c\cap S^+)+{\bf 1}(C\subset c\cap S^-)-{\bf 1}(C\subset c)]\,\kappad(\dint S)\\
&\qquad =\xi_s\int_{\SS_{d-1}[c_0]}[{\bf 1}(C\subset c_0\cap S^+)+{\bf 1}(C\subset c_0\cap S^-)-{\bf 1}(C\subset c_0)]\,\kappad(\dint S)\\
&\qquad =-\xi_s\int_{\SS_{d-1}[c_0]}{\bf 1}(C\cap S\neq\emptyset)\,\kappad(\dint S)\\
&\qquad =-\kappad(\SS_{d-1}\blk C\brk)\, \mathbf{1}(Z_s^{(T)}\cap C=\emptyset)\,.
\end{align*}
To justify the second equality, we distinguish two cases. If $S\cap C=\emptyset$, then either $C\subset\text{int}(S^+)$ or $C\subset\text{int}(S^-)$, which yields the required equality of the integrands.
On the other hand, if $S\cap C\neq\emptyset$, excluding a set of $S\in \SS_{d-1}[c_0]$ of $\kappad$-measure zero, we deduce that $C\not\subset S^+$ and
$C\not\subset S^-$,  by Lemma \ref{setzero}. This again yields the equality of the integrands for
$\kappad$-almost all $S \in \SS_{d-1}[c_0]$.

\medskip

So, we find that
\begin{align*}
&\int_0^t\bE\sum_{c\in Y_s^{(T)}}\int_{\SS_{d-1}[c]}[{\bf 1}(C\subset c\cap S^+)+{\bf 1}(C\subset c\cap S^-)-{\bf 1}(C\subset c)]\,\kappad(\dint S)\,\dint s\\
&\qquad =-\kappad(\SS_{d-1} \blk C\brk)\int_0^t \bP(Z_s^{(T)}\cap C=\emptyset)\,\dint s\\
&\qquad =-\kappad(\SS_{d-1}\blk C\brk)\,\int_0^t U_s^{(T)}(C)\,\dint s\,,
\end{align*}
and hence
$$
U_t^{(T)}(C)=1-\kappad(\SS_{d-1}\blk C\brk)\,\int_0^t U_s^{(T)}(C)\,\dint s\,.
$$
Together with the initial condition $U_0^{(T)}(C)=\bP(Z_0^{(T)}\cap C=\emptyset)=1$, this equation is easily seen to have the unique solution
$$
U_t^{(T)}(C)=\exp\big(-\kappad(\SS_{d-1}\blk C\brk)\,t\big)\,,
$$
independently of $T$. This concludes the first and second part of the proof. The final assertion is a direct consequence of \eqref{eq:InvMeasureLineSegment}.
\end{proof}

\subsection{Capacity functional for sets with more than one connected component}

Let us now turn to the case where the set $C$ has more than one connected component. In this situation one can find a recursion formula for $U_t^{(T)}(C)$. For $d=2$, $T=\{\SSd\}$ and under the assumption of isotropy, this has been shown in \cite[Theorem 3.5]{DeussHoerrmannThaele}. A similar proof carries over to higher dimensional spherical spaces, but we shall add a number of details that were omitted in \cite{DeussHoerrmannThaele}. In particular, our proof is based on a description of the splitting process as developed in \cite{GST} (and adjusted to the present setting in Remark \ref{rem:OtherInitialTessellation}) and is different from the argument for the STIT-model in the $d$-dimensional Euclidean space $\RR^d$ presented in \cite[Lemma 4]{NW05}.

To state the result we need to introduce some further notation. The {\em closed spherically convex hull} $\overline{C}$ of a set $C\subset \SSd$ is defined as $\overline{C}:=\text{cl}\,\{t_1c_1+\cdots+t_mc_m\in\SSd:t_i\ge 0, c_i\in C,m\in\NN\}$ (which is also equal to the intersection of the closure of the convex cone spanned by $C$ with  $\SSd$). Moreover, for two sets $B_1,B_2\subset \SSd$ we let
$$
\SS_{d-1}\blk B_1|B_2\brk  = \{S\in\SS_{d-1}:B_1\cap S=\emptyset=B_2\cap S,\overline{B_1\cup B_2}\cap S\neq\emptyset\}
$$
be the set of great hyperspheres that separate $B_1$ and $B_2$. Finally, if $C=C_1\cup\ldots\cup C_m\in\cC(\SSd)$ is a union of $m\in\NN$ disjoint non-empty connected subsets $C_1,\ldots,C_m\in\cC(\SSd)$, we shall write $\Pi(C)$ for the set of all proper (unordered) partitions $\{P,\widehat{P}\}$ of $C$. That is, $\{P,\widehat{P}\}\in\Pi(C)$ if and only if there exists some proper subset $\emptyset\neq I\subset\{1,\ldots,m\}$ such that $P=\bigcup_{i\in I}C_i$ and $\widehat{P}=\bigcup_{i\in\{1,\ldots,m\}\setminus I}C_i$. Let $\TT^d_0$
denote the set of all tessellations $T$ of $\SS^d$, where either $T=\{\SS^d\}$ or each cell of $T$ is contained in a hemisphere.

\begin{theorem}\label{prop:Capacity2}
Fix $T\in\TT^d_0$ and suppose that $\kappad$ is absolutely continuous with respect to $\nu_{d-1}$.
Let $C\in\cC(\SSd)$ be such that, for some $m\in\NN$, $C=C_1\cup\ldots\cup C_m$ with pairwise disjoint non-empty connected subsets $C_1,\ldots,C_m\in\cC(\SSd)$. Also assume that $C$ is contained in the interior of a single cell of $T$. If $m=2$ with $C_1=\{x\}$ and $C_2=\{-x\}$, for some $x\in\SSd$, then $U_t^{(T)}(C)=1$ for all $t\geq 1$. In all other cases,
\begin{align*}
U_t^{(T)}(C) = e^{-t\kappad(\SS_{d-1}\blk \overline{C}\brk)}+\sum_{\{P,\widehat{P}\}\in\Pi(C)}\kappad(\SS_{d-1}\blk P|\widehat{P} \brk)\int_0^t e^{-s\kappad(\SS_{d-1}\blk \overline{C}\brk)}U_{t-s}(P)U_{t-s}(\widehat{P})\,\dint s
\end{align*}
for $t\geq 0$, independently of $T$.
\end{theorem}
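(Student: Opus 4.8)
The plan is to argue by induction on the number $m$ of connected components of $C$, the base case $m=1$ being exactly Theorem \ref{prop:Capacity}, and to realise the inductive step by a first-split decomposition of the splitting process. Throughout, the standing hypothesis $C\subset\text{int}(c_0)$ for a single cell $c_0\in T$ forces $\overline{C}\subset\text{int}(c_0)$ by spherical convexity, hence $\SS_{d-1}\blk\overline{C}\brk\subset\SS_{d-1}[c_0]$, since a great hypersphere through an interior point meets the interior. First I would dispose of the exceptional antipodal configuration: if $m=2$ and $C=\{x,-x\}$, then the spherical convex hull $\overline{C}$ degenerates to the two points themselves, so $\kappad(\SS_{d-1}\blk\overline{C}\brk)=\kappad(\{S:x\in S\})=0$ by regularity; since each separating hypersphere of the process a.s.\ avoids $x$ and $-x$, these points never lie on $Z_t^{(T)}$ and $U_t^{(T)}(C)=1$. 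This configuration can only occur for $T=\{\SSd\}$, because every other $T\in\TT^d_0$ has all cells inside a hemisphere, whose interior contains no antipodal pair; this is precisely why it is excluded from the general formula, which here would produce the impossible value $1+t$.

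For the main formula I would track the descendant cell $c$ containing $\overline{C}$ in its interior. Only splits of this cell by a hypersphere $S\in\SS_{d-1}\blk\overline{C}\brk$ can place boundary on $\overline{C}$, while any split of $c$ missing $\overline{C}$ replaces it by a sub-cell still containing $\overline{C}$ in its interior (being connected and disjoint from $S$, $\overline{C}$ lies in one open half-sphere, by Lemma \ref{setzero}). Since $\SS_{d-1}\blk\overline{C}\brk$ is available with the same mass $\kappad(\SS_{d-1}\blk\overline{C}\brk)$ in every such descendant, a competing-exponentials (thinning) argument for the pure jump process shows that the first time $\sigma$ at which a split meets $\overline{C}$ is exponential with rate $\kappad(\SS_{d-1}\blk\overline{C}\brk)$, and that conditionally on $\sigma=s$ the offending hypersphere has law $\kappad(\,\cdot\,\cap\SS_{d-1}\blk\overline{C}\brk)/\kappad(\SS_{d-1}\blk\overline{C}\brk)$, independently of the shape of $c$ and hence of $T$. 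On $\{\sigma>t\}$, of probability $e^{-t\kappad(\SS_{d-1}\blk\overline{C}\brk)}$, all of $C$ stays in the open interior of a single cell, so $Z_t^{(T)}\cap C=\emptyset$; this gives the first summand.

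On the event $\{\sigma=s\le t\}$ I would decompose the conditional law of the splitting hypersphere. A hypersphere $S\in\SS_{d-1}\blk\overline{C}\brk$ either meets some component $C_i$, whence $C_i\subset Z_s^{(T)}$ and the event $\{Z_t^{(T)}\cap C=\emptyset\}$ fails, or it avoids every $C_i$; in the latter case each connected $C_i$ lies in one open half-sphere, and since $S$ meets $\overline{C}$ they are not all on the same side, so $S$ realises a proper partition $\{P,\widehat{P}\}\in\Pi(C)$. This yields the disjoint decomposition
$$
\SS_{d-1}\blk\overline{C}\brk=\Big(\bigsqcup_{\{P,\widehat{P}\}\in\Pi(C)}\SS_{d-1}\blk P|\widehat{P}\brk\Big)\ \sqcup\ \{S\in\SS_{d-1}\blk\overline{C}\brk:\ S\cap C_i\ne\emptyset\text{ for some }i\}\,,
$$
the tangential boundary cases being $\kappad$-null by the absolutely continuous version of Lemma \ref{setzero}. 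When $S$ realises $\{P,\widehat{P}\}$ it splits $c$ into $c\cap S^+\supset\widehat{P}$ and $c\cap S^-\supset P$, with $P\subset\text{int}(c\cap S^-)$ and $\widehat{P}\subset\text{int}(c\cap S^+)$, both sub-cells lying in a hemisphere so that the new tessellation is again in $\TT^d_0$. By the independent evolution of distinct cells, $\{Z_t^{(T)}\cap P=\emptyset\}$ and $\{Z_t^{(T)}\cap\widehat{P}=\emptyset\}$ are conditionally independent, and since $P$ and $\widehat{P}$ each have fewer than $m$ components (and, lying in open hemispheres, never form an antipodal pair), the induction hypothesis, including its ``independently of $T$'' conclusion, identifies their probabilities as $U_{t-s}(P)$ and $U_{t-s}(\widehat{P})$. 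Integrating the conditional product against the exponential first-hit density and cancelling the factor $\kappad(\SS_{d-1}\blk\overline{C}\brk)$ produces exactly the claimed recursion, manifestly independent of $T$.

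The step I expect to be most delicate is the equivalence ``$S$ separates $C$ into two non-empty parts $\iff$ $S$ meets $\overline{C}$ and avoids every component''. The reverse direction is immediate from convexity of the open half-spheres, while the forward direction rests on a geodesic-crossing argument inside $\overline{C}$: choosing $p\in P$, $q\in\widehat{P}$ with $\ell(p,q)<\pi$, the connecting segment lies in $\overline{C}$ and crosses $S$. This can only fail when every such cross-pair is antipodal, which forces $P=\{x\}$, $\widehat{P}=\{-x\}$ — the degenerate hull contains no connecting segment — and is the structural reason for the exceptional case. Making the first-hit reduction rigorous in the presence of many simultaneously evolving cells, and verifying that Lemma \ref{setzero} covers all tangential configurations for general (non-polytopal) compact components $C_i$, are the points where absolute continuity of $\kappad$ is genuinely used; both can be handled by appealing to the explicit path-space description recalled in Remark \ref{rem:OtherInitialTessellation}.
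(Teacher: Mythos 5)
Your proposal follows essentially the same route as the paper: induction on the number of components, decomposition at the first time a separating great hypersphere meets $\overline{C}$ (with survival probability $e^{-t\kappad(\SS_{d-1}\blk\overline{C}\brk)}$ from Theorem \ref{prop:Capacity}), the split of the conditional hypersphere law into component-hitting versus partition-realising directions, the independence of the evolution in the two daughter cells, and the $T$-independence supplied by the induction hypothesis --- with the rigorous first-hit reduction deferred, exactly as in the paper, to the path-space description of Remark \ref{rem:OtherInitialTessellation}. The only (inconsequential) slip is the aside that the general formula would yield $1+t$ in the antipodal case: there $\overline{C}=\{x,-x\}$, so $\SS_{d-1}\blk \{x\}|\{-x\}\brk=\emptyset$ and the formula would in fact also return $1$; that case is excluded because $\overline{C}$ fails to be connected (so the first-hit decomposition has no meaning), not because the formula produces a wrong value.
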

\begin{proof}
We shall prove the recursion formula for $U_t^{(T)}(C)$ and at the same time that $U_t^{(T)}(C)$ is independent of $T$ (as long as $C$ is contained in the interior of a single cell of $T$) by induction over the number $m$ of connected components of $C$. We start with the case that $m=2$ and $C=C_1\cup C_2$ with $C_1=\{x\}$ and $C_2=\{-x\}$ for some $x\in\SSd$. Since $\kappa$ is regular, we conclude that in this case $U_t^{(T)}(C)=1$ for all $t\geq 0$. Next, we assume that $C$ is not of this particular form. Then $C$ contains at least three points, which implies that $\overline{C}$ is connected (since it is pathwise connected). If $t=0$ there is nothing to prove and so we assume that $t>0$. We start by writing
\begin{align}
\nonumber U_t^{(T)}(C) &= \bP(Z_t^{(T)}\cap C = \emptyset)\\
&=\bP(Z_t^{(T)}\cap\overline{C}=\emptyset)+\bP(Z_t^{(T)}\cap C = \emptyset,Z_t^{(T)}\cap\overline{C}\neq\emptyset)\,.\label{eq:CapacityDecomposition}
\end{align}
Since $\overline{C}$ is compact and connected and with $C$ also $\overline{C}$ is contained in the interior of a cell of $T$, Theorem \ref{prop:Capacity} yields that
\begin{equation}\label{eq:Capacity1stTerm}
\bP(Z_t^{(T)}\cap\overline{C}=\emptyset) = e^{-t\kappa(\SS_{d-1}\blk\overline{C}\brk)}\,,
\end{equation}
which is independent of $T$.

To compute the remaining probability in \eqref{eq:CapacityDecomposition}, we now use the explicit description of the distribution of $(Y_u^{(T_a)})_{u\in[a,b]}$ given in Remark \ref{rem:OtherInitialTessellation} to determine the second term in \eqref{eq:CapacityDecomposition}. For this we notice first that the event that $Z_t^{(T)}\cap C=\emptyset$ and $Z_t^{(T)}\cap\overline{C}\neq\emptyset$ can only occur if up to time $t$ there was at least one jump in the splitting process. So, using the 
the distributional description in Remark \ref{rem:OtherInitialTessellation} with $a=0$, $b=t$ and $T_0=T$, writing $Z(Y):=\bigcup_{c\in Y}\partial c$ for the random closed set induced by a tessellation $Y\in\TT^d$,  using a decomposition according to the first time $s_i$ when the set $\overline{C}$ gets hit by a splitting great hypersphere, and applying Fubini's theorem, we deduce that
\begin{align*}
&\bP(Z_t^{(T)}\cap C = \emptyset,Z_t^{(T)}\cap\overline{C}\neq\emptyset) \\
&=\sum_{i=1}^\infty\int\limits_{0}^t\dint s_i\hspace{0.4cm}\int\cdots\hspace{-0.5cm}\int\limits_{\hspace{-0.8cm}\{0=s_0<s_1<\ldots<s_i\}}\dint s_1\ldots\dint s_{i-1}\,\prod_{j=1}^{i-1}\int\phi(\Upsilon_{s_{j-1}}^{(T)};\dint(c_{j-1},S_{j-1}))\,e^{-\int_0^{s_i}\phi(\Upsilon_u^{(T)})\,\dint u}\\
&\qquad\qquad\times{\bf 1}((\Upsilon_u^{(T)})_{u\in[0,s_i]}\in\cD(T;[0,s_i];(s_j,c_j,S_j)_{1\leq j\leq i-1}))\,{\bf 1}(Z(\Upsilon_{s_{i-1}}^{(T)})\cap\overline{C}=\emptyset)\\
&\quad\times\int\phi(\Upsilon_{s_{i-1}}^{(T)};\dint(c_{i-1},S_{i-1}))\,{\bf 1}(Z(\Upsilon_{s_i}^{(T)})\cap\overline{C}\neq\emptyset,Z(\Upsilon_{s_i}^{(T)})\cap C=\emptyset)\\
&\quad\times\sum_{n=i}^\infty\hspace{0.7cm}\int\cdots\hspace{-0.5cm}\int\limits_{\hspace{-0.8cm}\{s_i<s_{i+1}<\ldots<s_n<t\}}\dint s_{i+1}\ldots\dint s_n\prod_{j=i}^n\int\phi(\Upsilon_{s_j}^{(T)};\dint(c_j,S_j))\,e^{-\int_{s_i}^t\phi(\Upsilon_u^{(T)})\,\dint u}\\
&\qquad\qquad\times{\bf 1}(Z(\Upsilon_t^{(T)})\cap C=\emptyset)\,{\bf 1}((\Upsilon_u^{(\Upsilon_{s_i}^{(T)})})_{u\in[s_i,t]}\in\cD(\Upsilon_{s_i}^{(T)};[s_i,t];(s_j,c_j,S_j)_{i\leq j\leq n}))\,.
\end{align*}
Here, we use the convention that for $i=1$ the integration over $\{0=s_0<s_1<\ldots<s_i\}$ is omitted and that the empty product is interpreted as $1$. Similarly, if $i=n$ we understand that the integration over $\{s_i<s_{i+1}<\ldots<s_n<t\}$ is omitted. The idea behind this decomposition is that $s_i$ is the first time when the set $\overline{C}$ gets hit by a splitting great hypersphere, while the intersection with $C$ is still empty. This means that at time $s_i$ the connected components of $C$ are partitioned by the separating great hypersphere $S_i$ according to a partition in $\Pi(C)$. Thereby, the terms before the sum over $n$ describe the evolution of the splitting process before (the $i-1$ integral terms involving $s_1,\ldots,s_{i-1}$) and at time $s_i$ (the integral term in the third line), while the terms thereafter give a description of the splitting process after time $s_i$ and up to time $t$ (the $n-i$ integral terms involving $s_{i+1},\ldots,s_n$).

To simplify the above expression for the probability $\bP(Z_t^{(T)}\cap C = \emptyset,Z_t^{(T)}\cap\overline{C}\neq\emptyset)$, we start by observing that
\begin{align*}
{\bf 1}(Z(\Upsilon_{s_{i-1}}^{(T)})\cap\overline{C}=\emptyset) = {\bf 1}(\overline{C}\subset c\text{ for precisely one }c\in\Upsilon_{s_{i-1}}^{(T)})\,.
\end{align*}
Also note that according to the definition of the measure $\phi(\Upsilon_{s_{i-1}}^{(T)};\,\cdot\,)$, the integral term describing the cell splitting at time $s_i$ can be rewritten as
\begin{align*}
&\int\phi(\Upsilon_{s_{i-1}}^{(T)};\dint(c_{i-1},S_{i-1}))\,{\bf 1}(Z(\Upsilon_{s_i}^{(T)})\cap\overline{C}\neq\emptyset,Z(\Upsilon_{s_i}^{(T)})\cap C=\emptyset)\\
&=\sum_{c\in\Upsilon_{s_{i-1}}^{(T)}}\sum_{\{P,\widehat{P}\}\in\Pi(C)}{\bf 1}(\overline{C}\subset{\rm int}(c))\int_{\SS_{d-1}[c]}{\bf 1}(P\subset \text{int}(c\cap S^+),\widehat{P}\subset \text{int}(c\cap S^-))\,\kappa(\dint S)\\
&=\sum_{c\in\Upsilon_{s_{i-1}}^{(T)}}{\bf 1}(\overline{C}\subset{\rm int}(c))\sum_{(P,\widehat{P})\in\Pi(C)}\kappa(\SS_{d-1}\blk P|\widehat{P}\brk)\,,
\end{align*}
where $S^+$ and $S^-$ denote the two closed half-spheres determined by the great hypersphere $S$. Interchanging summation and integration (as we may, since the integrand is non-negative) and calling $s_i=s$, we thus conclude that $\bP(Z_t^{(T)}\cap C = \emptyset,Z_t^{(T)}\cap\overline{C}\neq\emptyset)$ is equal to
\begin{align*}
&\int_0^t\dint s\sum_{i=1}^\infty\hspace{0.7cm}\int\cdots\hspace{-0.7cm}\int\limits_{\hspace{-0.8cm}\{0=s_0<s_1<\ldots<s_i=s\}}\dint s_1\ldots\dint s_i\,\prod_{j=1}^{i-1}\int\phi(\Upsilon_{s_{j-1}}^{(T)};\dint(c_{j-1},S_{j-1}))\,e^{-\int_0^{s}\phi(\Upsilon_u^{(T_0)})\,\dint u}\\
&\qquad\qquad\times{\bf 1}(\overline{C}\subset c\text{ for precisely one }c\in\Upsilon_{s_{i-1}}^{(T)})\\
&\qquad\qquad\times{\bf 1}((\Upsilon_u^{(T)})_{u\in[0,s]}\in\cD(T;[0,s];(s_j,c_j,S_j)_{1\leq j\leq i-1}))\\
&\qquad\times\sum_{c\in\Upsilon_{s_{i-1}}^{(T)}}{\bf 1}(\overline{C}\subset{\rm int}(c))\sum_{\{P,\widehat{P}\}\in\Pi(C)}\kappa(\SS_{d-1}\blk P|\widehat{P}\brk)\\
&\qquad\times\sum_{n=i}^\infty\hspace{0.7cm}\int\cdots\hspace{-0.7cm}\int\limits_{\hspace{-0.8cm}\{s<s_{i+1}<\ldots<s_n<t\}}\dint s_{i+1}\ldots\dint s_n\prod_{j=i}^n\int\phi(\Upsilon_{s_j}^{(\Upsilon_s^{(T)})};\dint(c_j,S_j))\,e^{-\int_{s}^t\phi(\Upsilon_u^{(\Upsilon_s^{(T)})})\,\dint u}\\
&\qquad\qquad\times{\bf 1}(Z(\Upsilon_t^{({\Upsilon_{s}^{(T)}})})\cap C=\emptyset)\,{\bf 1}((\Upsilon_u^{({\Upsilon_{s}^{(T)}})})_{u\in[s,t]}\in\cD(\Upsilon_{s}^{(T)};[s,t];(s_j,c_j,S_j)_{i\leq j\leq n}))\\
&\qquad\qquad\times{\bf 1}(P\text{ and }\widehat{P}\text{ are contained in the interiors of two different cells of }\Upsilon_t^{({\Upsilon_{s}^{(T)}})})\,.
\end{align*}
Next, we observe that
\begin{align*}
{\bf 1}(Z(\Upsilon_t^{({\Upsilon_{s}^{(T)}})})\cap C=\emptyset) = {\bf 1}(Z(\Upsilon_t^{({\Upsilon_{s}^{(T)}})})\cap P=\emptyset){\bf 1}(Z(\Upsilon_t^{({\Upsilon_{s}^{(T)}})})\cap \widehat{P}=\emptyset)\,.
\end{align*}
We also notice that starting from time $s$ and up to time $t$ the evolution of the splitting process in the two different cells containing $P$ and $\widehat{P}$ is independent and coincides with the evolution of a splitting process whose initial tessellation is equal to the tessellation at time $s$. Thus, after a time shift by $-s$ in each of the integrals with respect to $s_{i+1},\ldots,s_n$ we see that the sum over $n$ from $i$ to $\infty$ in the last term is equal to
\begin{align*}
&\bP(Z_{t-s}^{({\Upsilon_{s}^{(T)}})}\cap P=\emptyset,Z_{t-s}^{({\Upsilon_{s}^{(T)}})}\cap\widehat{P}=\emptyset)\\
&\qquad\times {\bf 1}(P\text{ and }\widehat{P}\text{ are contained in the interiors of two different cells of }\Upsilon_{s}^{(T)})\\
&=\bP(Z_{t-s}^{({\Upsilon_{s}^{(T)}})}\cap P=\emptyset)\bP(Z_{t-s}^{({\Upsilon_{s}^{(T)}})}\cap\widehat{P}=\emptyset)\\
&\qquad\times {\bf 1}(P\text{ and }\widehat{P}\text{ are contained in the interiors of two different cells of }\Upsilon_{s}^{(T)})\\
&=U_{t-s}^{({\Upsilon_{s}^{(T)}})}(P)U_{t-s}^{({\Upsilon_{s}^{(T)}})}(\widehat{P})\\
&\qquad\times {\bf 1}(P\text{ and }\widehat{P}\text{ are contained in the interiors of two different cells of }\Upsilon_{s}^{(T)})\,.
\end{align*}
By construction, the number of connected components of the two sets $P$ and $\widehat{P}$ is strictly less than the number of connected components of $C$. So, by the induction hypothesis $U_{t-s}^{({\Upsilon_{s}^{(T)}})}(P)$ and $U_{t-s}^{({\Upsilon_{s}^{(T)}})}(\widehat{P})$ are independent of ${\Upsilon_{s}^{(T)}}$, as long as $P$ and $\widehat{P}$ are contained in the interiors of two different cells of ${\Upsilon_{s}^{(T)}}$. Plugging this into the last expression for the probability $\bP(Z_t^{(T)}\cap C = \emptyset,Z_t^{(T)}\cap\overline{C}\neq\emptyset)$, we see that
\begin{align*}
&\bP(Z_t^{(T)}\cap C = \emptyset,Z_t^{(T)}\cap\overline{C}\neq\emptyset) \\
&\qquad= \int_0^t\dint s\,\bP(Z_u^{(T)}\cap\overline{C}=\emptyset\text{ for }u\leq s)\sum_{\{P,\widehat{P}\}\in\Pi(C)}\kappa(\SS_{d-1}\blk P|\widehat{P}\brk)\,U_{t-s}(P)U_{t-s}(\widehat{P})\\
&\qquad= \int_0^t\dint s\,\bP(Z_s^{(T)}\cap\overline{C}=\emptyset)\sum_{\{P,\widehat{P}\}\in\Pi(C)}\kappa(\SS_{d-1}\blk P|\widehat{P}\brk)\,U_{t-s}(P)U_{t-s}(\widehat{P})\\
&\qquad= \int_0^t\dint s\,e^{-s\kappa(\SS_{d-1}\blk\overline{C}\brk)}\sum_{\{P,\widehat{P}\}\in\Pi(C)}\kappa(\SS_{d-1}\blk P|\widehat{P}\brk)\,U_{t-s}(P)U_{t-s}(\widehat{P})\,.
\end{align*}
In combination with \eqref{eq:CapacityDecomposition} and \eqref{eq:Capacity1stTerm} this proves the formula and at the same time the claim that $U_t^{(T)}(C)$ is independent of $C$ if $C$ is contained in the interior of a single cell of the initial tessellation $T$. The proof of the theorem is thus complete.
\end{proof}

In particular, Theorem \ref{prop:Capacity2} allows a recursive computation of $U_t(C)$ if $C\in\cC(\SSd)$ is a union of $m\in\NN$ disjoint and connected closed subsets of $\SSd$. However, the resulting formulas become quickly rather involved when $m$ is large. We illustrate the method by carrying out the first step of the recursion, that is, by taking $m=2$.

\begin{example}{\rm
Let us assume that $C=C_1\cup C_2\in\cC(\SSd)$ is a union of two disjoint non-empty connected components $C_1,C_2\in\cC(\SSd)$ and $C$ contains at least three points. Then in Theorem \ref{prop:Capacity2} the only possibility for the partition $P_1$ and $P_2$ is $P_1=C_1$ and $P_2=C_2$. So, combined with Theorem \ref{prop:Capacity} we see that, for $t\geq 0$,
\begin{align*}
U_t(C) &= e^{-t\kappa(\SS_{d-1}\blk\overline{C}\brk)}+\kappa(\SS_{d-1}\blk C_1|C_2\brk)\int_0^t e^{-s\kappa(\SS_{d-1}\blk\overline{C}\brk)}U_{t-s}(C_1)U_{t-s}(C_2)\,\dint s\\
&= e^{-t\kappa(\SS_{d-1}\blk\overline{C}\brk)}+\kappa(\SS_{d-1}\blk C_1|C_2\brk)\int_0^t e^{-s\kappa(\SS_{d-1}\blk\overline{C}\brk)}\,e^{-(t-s)\kappa(\SS_{d-1}\blk C_1\brk)}\\
&\hspace{8cm}\times e^{-(t-s)\kappa(\SS_{d-1}\blk C_2\brk)}\,\dint s\\
&= e^{-t\kappa(\SS_{d-1}\blk\overline{C}\brk)}+\kappa(\SS_{d-1}\blk C_1|C_2\brk)\,e^{-t(\kappa(\SS_{d-1}\blk C_1\brk)+\kappa(\SS_{d-1}\blk C_2\brk))}\\
&\hspace{4cm}\times\int_0^t e^{s(\kappa(\SS_{d-1}\blk C_1\brk)+\kappa(\SS_{d-1}\blk C_2\brk)-\kappa(\SS_{d-1}\blk\overline{C}\brk))}\,\dint s\\
&=e^{-t\kappa(\SS_{d-1}\blk\overline{C}\brk)}+\kappa(\SS_{d-1}\blk C_1|C_2\brk)\frac{e^{-t\kappa(\SS_{d-1}\blk\overline{C}\brk)}-e^{-t(\kappa(\SS_{d-1}\blk C_1\brk)+\kappa(\SS_{d-1}\blk C_2\brk))}}{ \kappa(\SS_{d-1}\blk C_1\brk)+\kappa(\SS_{d-1}\blk C_2\brk)-\kappa(\SS_{d-1}\blk\overline{C}\brk)}\,,
\end{align*}
provided that $\kappa(\SS_{d-1}\blk C_1\brk)+\kappa(\SS_{d-1}\blk C_2\brk)-\kappa(\SS_{d-1}\blk\overline{C}\brk)\neq 0$. On the other hand, if $\kappa(\SS_{d-1}\blk C_1\brk)+\kappa(\SS_{d-1}\blk C_2\brk)-\kappa(\SS_{d-1}\blk\overline{C}\brk)=0$, then the computation above shows that
\begin{align*}
U_t(C) &= e^{-t\kappa(\SS_{d-1}\blk\overline{C}\brk)}+t\kappa(\SS_{d-1}\blk C_1|C_2\brk)e^{-t(\kappa(\SS_{d-1}\blk C_1\brk)+\kappa(\SS_{d-1}\blk C_2\brk))}
\end{align*}
for $t\geq 0$.}
\end{example}

Taking the initial tessellation $T$ to be equal to $\{\SSd\}$, Theorem \ref{prop:Capacity} and Theorem \ref{prop:Capacity2} especially provide a description of the capacity functional $T_t$ (respectively\ $U_t$) of the random closed set $Z_t$ on the class of sets consisting of finite unions of pairwise disjoint connected subsets of $\SSd$. We remark that this class of subsets of $\SSd$ is in fact a separating class, that is to say, it is rich enough to determine the capacity functional $T_t(C)$ uniquely for all $C\in\cC(\SSd)$. Indeed, the sets consisting of finite unions of pairwise disjoint connected open subsets of $\SSd$ form a base of the standard topology on $\SSd$ and so the claim follows from \cite[Proposition 1.1.53]{Molchanov} (in the statement of this result,
$\mathcal{K}_0$ should be called a pre-separating class, as is clear from the discussion in \cite[Section 1.1.5]{Molchanov}).

\begin{remark}\rm
Theorem \ref{prop:Capacity} and Theorem \ref{prop:Capacity2} together imply that if $\kappad=\nu_{d-1}$, then the capacity functional  $T_t$ of $Z_t$  satisfies $T_t(\varrho C)=T_t(C)$, $C\in\cC(\SSd)$, for all $\varrho\in\SO(d+1)$.  This again yields the isotropy of $Z_t$, which has already been proved in Lemma \ref{lem:RACSandIsotropic} in a more direct way.
\end{remark}

\section{Expected spherical curvature measures}\label{sec:4Expectations}

In this section we consider the expectation of the sum of all localized spherical intrinsic volumes, where the sum runs over all cells of a splitting tessellation with time parameter $t\geq 0$. Formally, we define for $t\geq 0$, $j\in\{0,\ldots,d\}$ and $A\in\cB(\SSd)$ the random variables
$$
\Sig_j(t;A) := \sum_{c\in Y_t}\phi_j(c,A)\,,
$$
where $(Y_t)_{t\ge 0}$  is a splitting tessellation process based on a regular direction distribution $\kappad$ on $\SS_{d-1}$.
The next theorem provides an exact formula for the expectation of $\Sig_j(t;A)$. More generally, we will consider the following set-up. Let $h:\SSd\to\RR$ be bounded and Borel measurable. For a finite Borel measure $\mu$ on $\SSd$, we write
$$
\mu(h):=\int_{\SSd}h\, \dint \mu\,.
$$
In particular, this notation will be applied in writing $\phi_j(c,h)$, $\Sigma_j(t;h)$ and $\cH^d(h)$. We notice that $\Sig_j(t;h)$ reduces to $\Sig_j(t;A)$ for the special choice $h={\bf 1}_A$ with $A\in\cB(\SSd)$.

\begin{theorem}\label{thm:Expectation}
Let $t\geq 0$ and $j\in\{0,\ldots,d\}$. If $\kappad=\nu_{d-1}$, then
$$
\bE\Sig_j(t;h)=\frac{t^{d-j}}{ (d-j)!}\,\frac{\cH^d(h)}{\b_d}\,,
$$
where  $h:\SS^d\to\RR$ is bounded and measurable. For  $j=d$ the result holds for a general
regular direction distribution $\kappad$.
\end{theorem}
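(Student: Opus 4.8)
The plan is to apply the Dynkin-type martingale of Proposition~\ref{prop:Dynkin} to the functional $\phi(c):=\phi_j(c,h)$ and to show that the generator $\cA$ sends the cumulative functional $\Sigma_j(\,\cdot\,;h)$ to $\Sigma_{j+1}(\,\cdot\,;h)$; this reduces the claim to solving a triangular system of integral equations. First I would fix $j$ and $h$, set $\phi(c):=\phi_j(c,h)$, and record that $|\phi(c)|\le\|h\|_\infty\,V_j(c)\le\|h\|_\infty$, so $\phi$ is bounded, while its measurability follows from the measurability of $K\mapsto\phi_j(K,A)$ for Borel $A$ together with a simple-function approximation of $h$. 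With $\Sigma_\phi=\Sigma_j(\,\cdot\,;h)$ in the notation of Proposition~\ref{prop:Dynkin}, the process $M_t(\phi)$ is a martingale vanishing at $t=0$; taking expectations and writing $f_j(t):=\bE\,\Sigma_j(t;h)$ gives
$$
f_j(t)=\phi_j(\SSd,h)+\int_0^t\bE\,(\cA\Sigma_\phi)(Y_s)\,\dint s\,,
$$
where interchanging expectation and time integral is justified by $|(\cA\Sigma_\phi)(Y_s)|\le 3\|h\|_\infty|Y_s|$ (as in the proof of Proposition~\ref{prop:Dynkin}) and $\bE\int_0^t|Y_s|\,\dint s<\infty$, which follows from the geometric domination of $|Y_s|$.

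The heart of the argument is the evaluation of the generator. Splitting a cell $c$ by $S$ replaces $c$ by $c\cap S^+$ and $c\cap S^-$, so the increment under $\cA$ is $\phi_j(c\cap S^+,h)+\phi_j(c\cap S^-,h)-\phi_j(c,h)$. Since $c=(c\cap S^+)\cup(c\cap S^-)$ and $(c\cap S^+)\cap(c\cap S^-)=c\cap S$, the valuation property \eqref{eqvaluation} collapses this increment to $\phi_j(c\cap S,h)$, giving
$$
(\cA\Sigma_\phi)(T)=\sum_{c\in T}\int_{\SS_{d-1}[c]}\phi_j(c\cap S,h)\,\kappad(\dint S)\,.
$$
Here I would specialise to $\kappad=\nu_{d-1}$. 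Because $S\mapsto\phi_j(c\cap S,h)$ is bounded and vanishes when $c\cap S=\emptyset$, while those great hyperspheres in $\SS_{d-1}\setminus\SS_{d-1}[c]$ that still meet $c$ form a $\nu_{d-1}$-null set by Lemma~\ref{setzero}, the domain of integration may be enlarged to all of $\SS_{d-1}$ without changing the value. The spherical Crofton formula \eqref{eq:CroftonOnSphere2} with $k=d-1$ (in its measure-valued form, extended to the bounded measurable $h$ by approximation) then yields $\int_{\SS_{d-1}}\phi_j(c\cap S,h)\,\nu_{d-1}(\dint S)=\phi_{j+1}(c,h)$ for $j\le d-1$, so that $(\cA\Sigma_\phi)(T)=\Sigma_{j+1}(T;h)$. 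For $j=d$ the integrand $\phi_d(c\cap S,h)$ vanishes identically, since $c\cap S$ lies in the $(d-1)$-dimensional subsphere $S$ and hence is $\cH^d$-null; thus $(\cA\Sigma_\phi)(T)=0$, a conclusion needing neither isotropy nor the Crofton formula and therefore valid for arbitrary regular $\kappad$.

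Substituting back produces the triangular system $f_d(t)=\phi_d(\SSd,h)=\cH^d(h)/\b_d$ and $f_j(t)=\phi_j(\SSd,h)+\int_0^t f_{j+1}(s)\,\dint s$ for $j<d$, where $\phi_j(\SSd,h)=0$ because $V_j(\SSd)=\d_{jd}$ forces the non-negative finite measure $\phi_j(\SSd,\,\cdot\,)$ to vanish for $j<d$. Solving by backward induction on $j$ gives $f_j(t)=\frac{t^{d-j}}{(d-j)!}\frac{\cH^d(h)}{\b_d}$, as claimed. For the final $j=d$ statement under general $\kappad$ one may bypass the martingale entirely: since the cells of any tessellation $T$ tile $\SSd$ with $\cH^d$-null overlaps, $\Sigma_d(T;h)=\b_d^{-1}\sum_{c\in T}\int_c h\,\dint\cH^d=\cH^d(h)/\b_d$ holds deterministically, which is the cleanest route to that case. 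I expect the main obstacle to be the generator computation, specifically the combined use of the valuation identity and the passage from $\SS_{d-1}[c]$ to $\SS_{d-1}$ that enables the Crofton formula; this is also precisely the step where the assumption $\kappad=\nu_{d-1}$ is indispensable for $j<d$, as the Crofton formula holds only for the invariant measure.
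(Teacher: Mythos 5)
Your proposal is correct and follows essentially the same route as the paper: the Dynkin martingale of Proposition~\ref{prop:Dynkin} applied to $\phi_j(\cdot,h)$, the valuation property to collapse the split increment to $\phi_j(c\cap S,h)$, the spherical Crofton formula to step from $j$ to $j+1$, and backward induction terminating at the deterministic identity $\Sigma_d(T;h)=\cH^d(h)/\b_d$. Your explicit appeal to Lemma~\ref{setzero} to enlarge the integration domain from $\SS_{d-1}[c]$ to $\SS_{d-1}$ before invoking Crofton is a detail the paper leaves implicit here (it is spelled out only in the proof of Theorem~\ref{thmspaet1}), and is a welcome addition rather than a deviation.
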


\begin{proof}
The case $j=d$ is obviously true for a general regular direction distribution $\kappad$. Hence,
let $j\in\{0,\ldots,d-1\}$ and $A\in\cB(\SSd)$. Using that $V_j(\SSd)=\phi_j(\SSd,\cdot)=0$ for $j\in\{0,\ldots,d-1\}$
and the martingale property stated in Proposition \ref{prop:Dynkin}, with the bounded  and measurable functional
$\phi(c) = \phi_j(c,h)$, $ c\in\PP^d$,
we see that the random process
\begin{equation}\label{eq:ExpectationStart}
\Sig_j(t;h) - \int_0^t\sum_{c\in Y_s}\int_{\SS_{d-1}[c]}[\phi_j(c\cap S^+,h)+
\phi_j(c\cap S^-,h)-\phi_j(c,h)]\,\kappad(\dint S)\,\dint s\,,\quad t\geq 0\,,
\end{equation}
is a $\filtration$-martingale (that is, a martingale with respect to the filtration induced by the splitting process $(Y_t)_{t\geq 0}$). The valuation property \eqref{eqvaluation} of the localized spherical intrinsic volumes yields that
$$
\phi_j(c\cap S^+,h)+\phi_j(c\cap S^-,h)-\phi_j(c,h) = \phi_j((c\cap S^+)\cap(c\cap S^-),h) = \phi_j(c\cap S,h )\,.
$$
Thus, taking expectations in \eqref{eq:ExpectationStart}, we deduce that
\begin{equation}\label{stillgeneral}
\bE\Sig_j(t;h) = \bE\int_0^t\sum_{c\in Y_s}\int_{\SS_{d-1}[c]}\phi_j(c\cap S,h)\,\kappad(\dint S)\,\dint s\,.
\end{equation}
While \eqref{stillgeneral} holds for a general regular direction distribution $\kappad$, the subsequent application of the Crofton formula requires that we specify $\kappad=\nu_{d-1}$. Then, an application of the local spherical Crofton formula \eqref{eq:CroftonOnSphere2} yields
$$
\bE\Sig_j(t;h) = \bE\int_0^t\sum_{c\in Y_s}\phi_{j+1}(c,h)\,\dint s = \bE\int_0^t\Sig_{j+1}(s;h)\,\dint s\,.\nonumber
$$
Continuing this recursion  we eventually reach the functional $\Sig_{d}$, and using Fubini's theorem, we arrive at
\begin{align*}
\bE\Sig_j(t;h) = \int_0^t\int_0^{s_1}\cdots\int_0^{s_{d-j-1}}\bE\Sig_{d}(s_{d-j};h)\,\dint s_{d-j}\ldots\dint s_1\,.
\end{align*}
Thus it remains to compute $\bE\Sig_{d}(s;h)$. However, with probability one,
$$
\Sig_{d}(s;h)=\sum_{c\in Y_s}\phi_d(c,h)=\sum_{c\in Y_s}\frac{\cH^{d}(h\mathbf{1}_c)}{\b_d}=\frac{\cH^{d}(h)}{\b_d}\,,
$$
since $Y_s$ is almost surely a tessellation for each $s$. This immediately implies that
$$
\bE\Sig_j(t;h) = \frac{\cH^{d}(h)}{\b_d}\int_0^t\int_0^{s_1}\cdots\int_0^{s_{d-j-1}}\,
\dint s_{d-j}\ldots\dint s_1 = \frac{t^{d-j}}{ (d-j)!}\,\frac{\cH^{d}(h)}{\b_d}\,,
$$
which completes the proof.
\end{proof}

For $j\in\{d-1,d\}$, the preceding theorem has an extension to general regular direction distributions. This has
already been remarked for $j=d$ in  Theorem \ref{thm:Expectation}. To treat the case $j=d-1$, for
a regular probability measure $\kappad$ on $\SS_{d-1}$ and a bounded measurable function $h:\SSd\to\RR$, we define
\begin{equation}\label{defoverkappad}
 {\overkappad}(h):=\frac{1}{\beta_{d-1}}\int_{\SS_{d-1}}\int_S h(u)\, \cH^{d-1}(\dint u)\, \kappad(\dint S)\,.
\end{equation}
Since
\begin{equation}\label{eq:IntegrationsformelBeweis}
\frac{1}{\beta_{d-1}}\int_{\SS_{d-1}}\int_S h(u)\, \cH^{d-1}(\dint u)\, \nu_{d-1}(\dint S)
=\frac{1}{\beta_d}\int_{\SSd}h(u)\, \cH^d(\dint u)\,,
\end{equation}
as both sides of the equation define rotation invariant probability measures on $\SSd$, we obtain
${\overnud}=\beta_d^{-1}\cH^d$, which shows that the following theorem is consistent with Theorem \ref{thm:Expectation}
if $j=d-1$.

\begin{theorem}\label{thmspaet1}
If $\kappad$ is a regular direction distribution, and $h:\SSd\to\RR$ is bounded and measurable, then $\EE\Sigma_{d-1}(t;h)=t\,\overkappad(h)$  for $t\ge 0$.
\end{theorem}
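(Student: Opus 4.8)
The plan is to sidestep the spherical Crofton formula, which is what forced the restriction $\kappad=\nu_{d-1}$ in Theorem \ref{thm:Expectation}, and instead evaluate directly the representation that is already valid for an arbitrary regular direction distribution. Concretely, I would start from \eqref{stillgeneral} with $j=d-1$, namely
\[
\bE\Sig_{d-1}(t;h) = \bE\int_0^t\sum_{c\in Y_s}\int_{\SS_{d-1}[c]}\phi_{d-1}(c\cap S,h)\,\kappad(\dint S)\,\dint s\,,
\]
and recall that this identity rests only on the martingale property of Proposition \ref{prop:Dynkin} together with the valuation property \eqref{eqvaluation}, both of which hold for a general regular $\kappad$. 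The task therefore reduces to computing the inner double integral for a fixed realization of $Y_s$.

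For the integrand I would use that $c\cap S\subset S\in\SS_{d-1}$, so that \eqref{eq:PhiD-1} applies and gives $\phi_{d-1}(c\cap S,h)=\b_{d-1}^{-1}\int_S \mathbf{1}_c(u)\,h(u)\,\cH^{d-1}(\dint u)$. Since each cell $c\in Y_s$ is a spherical polytope, Lemma \ref{setzero} shows that the set of $S$ with $S\cap c\neq\emptyset$ but $c\subset S^+$ or $c\subset S^-$ is a $\kappad$-null set; hence the domain $\SS_{d-1}[c]$ may be enlarged to all of $\SS_{d-1}$ without changing the value. Interchanging the finite sum over cells with the integrals then yields
\[
\sum_{c\in Y_s}\int_{\SS_{d-1}[c]}\phi_{d-1}(c\cap S,h)\,\kappad(\dint S)
=\frac{1}{\b_{d-1}}\int_{\SS_{d-1}}\int_S\Big(\sum_{c\in Y_s}\mathbf{1}_c(u)\Big)h(u)\,\cH^{d-1}(\dint u)\,\kappad(\dint S)\,.
\]
The remaining point is to replace the multiplicity $\sum_{c\in Y_s}\mathbf{1}_c(u)$ by $1$. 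As $Y_s$ is a tessellation with pairwise disjoint cell interiors, this multiplicity equals $1$ for every $u\notin Z_s=\bigcup_{c\in Y_s}\partial c$, and the excess $\sum_c\mathbf{1}_c-1\ge 0$ is supported on $Z_s$. Now $Z_s$ is contained in the (almost surely finite) union of the great hyperspheres used as splitting hyperspheres up to time $s$; any great hypersphere $S$ distinct from all of these meets this union in a set of dimension at most $d-2$, hence of vanishing $\cH^{d-1}$-measure, while the exceptional $S$ form a finite set to which regularity of $\kappad$ assigns mass zero (for fixed $S_0$ one has $\{S_0\}\subset\{S:e\in S\}$ for any $e\in S_0$). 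Thus the contribution of the excess multiplicity vanishes $\bP$-almost surely, and the inner double integral equals the deterministic constant $\overkappad(h)$ from \eqref{defoverkappad}, independently of $s$. Substituting this back and integrating over $s\in[0,t]$ gives $\bE\Sig_{d-1}(t;h)=t\,\overkappad(h)$.

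The main obstacle is precisely this replacement of the multiplicity by $1$: one must argue that, although the cell boundaries $Z_s$ may carry positive $(d-1)$-dimensional Hausdorff measure and may even meet individual great hyperspheres in $(d-1)$-dimensional pieces, this happens only for a $\kappad$-negligible family of hyperspheres. The resolution is to combine the geometric fact that $Z_s$ lies in finitely many great hyperspheres with the regularity of $\kappad$. Once this is established, the inner double integral is almost surely equal to the constant $\overkappad(h)$, so that integrating over $s$ and taking the expectation is immediate and no further integrability discussion is needed beyond the almost sure finiteness of the number of cells already recorded in the proof of Proposition \ref{prop:Dynkin}.
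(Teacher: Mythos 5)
Your proposal is correct and follows essentially the same route as the paper: both start from \eqref{stillgeneral} with $j=d-1$, use Lemma \ref{setzero} to extend the integration from $\SS_{d-1}[c]$ to all of $\SS_{d-1}$, show that the overlap of the sets $S\cap c$ is $\cH^{d-1}$-negligible for $\kappad$-almost all $S$, and conclude that the inner integral equals the constant $\overkappad(h)$ before integrating over $s$. The only (immaterial) difference is how you justify the null-overlap step --- you argue via the multiplicity function and the fact that $Z_s$ lies in finitely many great hyperspheres, whereas the paper invokes Lemma \ref{setzero} a second time to see that the nonempty sets $S\cap c$ partition $S$ up to $\cH^{d-1}$-null sets.
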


\begin{proof}
By \eqref{stillgeneral}, we have
$$
\bE\Sig_{d-1}(t;h) = \bE\int_0^t\sum_{c\in Y_s}\int_{\SS_{d-1}[c]}\phi_{d-1}(c\cap S,h)\,\kappad(\dint S)\,\dint s\,.
$$
Since $\kappad(\{S\in\SS_{d-1}:S\cap c\neq\emptyset, S\cap\text{int}(c)=\emptyset\})=0$ for $c\in\PP^d$, by
Lemma \ref{setzero}, the integration over $\SS_{d-1}[c]$, for all $c\in Y_s$ (for each $s$ and each realization of $Y_s$), can be extended over the larger domain $\SS_{d-1}$ without changing
the integral, and then integration and summation can be interchanged. Using again Lemma \ref{setzero}, it follows
that for $\kappad$-almost all $S\in\SS_{d-1}$, $S\cap c=\emptyset$ or $S\cap\text{int}(c)\neq\emptyset$, so that the
non-empty sets $S\cap c$, $c\in Y_s$, partition $S$ in the sense that for any two such sets the intersection has $\cH^{d-1}$-measure zero.
Hence,
\begin{align}
\int_{\SS_{d-1}}\sum_{c\in Y_s}\phi_{d-1}(c\cap S,h)\, \kappad(\dint S)&=\int_{\SS_{d-1}}\sum_{c\in Y_s}\frac{1}{\beta_{d-1}}\int_{c\cap S}h(u)\, \cH^{d-1}(\dint u)\, \kappad(\dint S)\nonumber\\
&=\frac{1}{\beta_{d-1}}\int_{\SS_{d-1}}\int_{S}h(u)\, \cH^{d-1}(\dint u)\, \kappad(\dint S)=\overkappad(h)\,.\label{refhilf}
\end{align}
Combining these arguments, we finally obtain
$$
\bE\Sig_{d-1}(t;h)=\frac{1}{\beta_{d-1}}\int_0^t\int_{\SS_{d-1}}\int_{S}h(u)\, \cH^{d-1}(\dint u)\, \kappad(\dint S)\, \dint s
=t\,\overkappad(h)\,,
$$
which proves the assertion.
\end{proof}

A quantity of particular interest is the total $(d-1)$-dimensional
Hausdorff measure of all great hyperspherical pieces that have been constructed
by the splitting process up to time $t$ within a set $A\in\cB(\SSd)$. Formally, we define
\begin{equation}\label{eq:DefTotalSurfaceArea}
\cH^{d-1}(t;A):=\frac{1}{ 2}\sum_{c\in Y_t}\cH^{d-1}(\partial c\cap A)=\cH^{d-1}(Z_t\cap A)\,,
\end{equation}
where we recall from \eqref{eq:DefZt} that $Z_t$ is the random closed set induced by the splitting tessellation $Y_t$ which is based on a regular
direction distribution $\kappad$.
Using Theorem \ref{thmspaet1}, we can easily compute the expectation of $\cH^{d-1}(t;A)$. More generally, we compute the expected $h$-weighted total Hausdorff measure of $Z_t$.

\begin{corollary}\label{cor:HausdorffMeasure}
Let $\kappad$ be a regular direction distribution and $t\ge 0$. Let $h:\SSd\to\RR$ be bounded and measurable. Then
$$
\bE\cH^{d-1}(t;h)=t\beta_{d-1}\overkappad(h)=t\int_{\SS_{d-1}}\int_{S}h(u)\, \cH^{d-1}(\dint u)\, \kappad(\dint S)\,.
$$
In particular,  $\bE \cH^{d-1}(Z_t)=\beta_{d-1}t$ for $t\ge 0$.
If $\kappad=\nu_{d-1}$, then
$$
\bE\cH^{d-1}(t;h)=\frac{\beta_{d-1}}{\b_d}\cH^d(h)\,t\,.
$$
\end{corollary}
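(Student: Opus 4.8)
The plan is to reduce the statement directly to Theorem~\ref{thmspaet1} via the local boundary representation \eqref{locbdmeasure} of the $(d-1)$st curvature measure. First I would observe that each cell $c\in Y_t$ is $d$-dimensional and hence has non-empty interior, so \eqref{locbdmeasure} gives $\phi_{d-1}(c,\,\cdot\,)=\frac{1}{2\beta_{d-1}}\cH^{d-1}\llcorner\partial c$ and therefore $\phi_{d-1}(c,h)=\frac{1}{2\beta_{d-1}}\int_{\partial c}h\,\dint\cH^{d-1}$. Summing over all cells and comparing with the definition \eqref{eq:DefTotalSurfaceArea}, in which the factor $\tfrac12$ compensates for the fact that each great-hyperspherical piece of $Z_t$ lies on the boundary of exactly two adjacent cells, yields the ($\bP$-almost sure) pointwise identity $\cH^{d-1}(t;h)=\beta_{d-1}\Sig_{d-1}(t;h)$.

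Next I would take expectations on both sides and invoke Theorem~\ref{thmspaet1}, which gives $\bE\Sig_{d-1}(t;h)=t\,\overkappad(h)$. This produces $\bE\cH^{d-1}(t;h)=t\beta_{d-1}\overkappad(h)$, and unfolding the definition \eqref{defoverkappad} of $\overkappad(h)$ immediately rewrites the factor $\beta_{d-1}\overkappad(h)$ as the double integral $\int_{\SS_{d-1}}\int_S h\,\dint\cH^{d-1}\,\kappad(\dint S)$, which is precisely the asserted formula.

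The two special cases then follow by evaluating $\overkappad$. For $h\equiv 1$ I would use that every great hypersphere $S\in\SS_{d-1}$ is an isometric copy of $\SS^{d-1}$, so $\cH^{d-1}(S)=\beta_{d-1}$, whence $\overkappad(1)=\beta_{d-1}^{-1}\int_{\SS_{d-1}}\cH^{d-1}(S)\,\kappad(\dint S)=1$ because $\kappad$ is a probability measure; this gives $\bE\cH^{d-1}(Z_t)=\beta_{d-1}t$. For $\kappad=\nu_{d-1}$ I would invoke the integral-geometric identity \eqref{eq:IntegrationsformelBeweis}, which states exactly that $\overnud(h)=\beta_d^{-1}\cH^d(h)$, so that $\bE\cH^{d-1}(t;h)=t\beta_{d-1}\overnud(h)=\frac{\beta_{d-1}}{\beta_d}\cH^d(h)\,t$.

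There is no real obstacle here, as this is a genuine corollary of Theorem~\ref{thmspaet1}. The only point requiring a moment of care is the bookkeeping of the factor $\tfrac12$ in passing between $\cH^{d-1}(t;h)$ and $\Sig_{d-1}(t;h)$: one must check that summing $\cH^{d-1}\llcorner\partial c$ over all cells counts each $(d-1)$-dimensional boundary face exactly twice (lower-dimensional faces being $\cH^{d-1}$-null) and that this matches the normalization $\frac{1}{2\beta_{d-1}}$ in \eqref{locbdmeasure}, so that the two factors of $\tfrac12$ cancel consistently for a general, not necessarily isotropic, direction distribution $\kappad$.
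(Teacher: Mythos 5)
Your proof is correct and follows essentially the same route as the paper: the identity $\cH^{d-1}(t;h)=\beta_{d-1}\Sigma_{d-1}(t;h)$ obtained from \eqref{locbdmeasure} and \eqref{eq:DefTotalSurfaceArea}, followed by Theorem \ref{thmspaet1} and the evaluation of $\overkappad$ via \eqref{defoverkappad} and \eqref{eq:IntegrationsformelBeweis}. Your extra care with the two compensating factors of $\tfrac12$ is exactly the bookkeeping the paper leaves implicit.
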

\begin{proof}
Relations \eqref{locbdmeasure} and  \eqref{eq:DefTotalSurfaceArea}  imply that
\begin{equation}\label{basicrelation}
\cH^{d-1}(t;\,\cdot\,)=(\cH^{d-1}\llcorner Z_t )(\,\cdot\,) =\beta_{d-1}\Sigma_{d-1}(t;\,\cdot\,)\,.
\end{equation}
In particular, we get
$\bE\cH^{d-1}(t;h)=\beta_{d-1}\bE\Sig_{d-1}(t;h)$. Hence the assertion follows from
Theorems \ref{thm:Expectation} and \ref{thmspaet1}.
\end{proof}

First-order properties, that is expectations, of Euclidean intrinsic volumes asso\-ciated with STIT-tessellation in a bounded window in $\RR^d$ have been studied in \cite{STSTITPlane,STSTITHigher}. While \cite{STSTITPlane} treats the planar case, but general direction distributions, the paper  \cite{STSTITHigher} is restricted to translation and rotation invariant direction distributions in general dimensions.   The more general case of localized intrinsic volumes has not been investigated in the Euclidean setting. A comparison of these results with Theorems \ref{thm:Expectation} and \ref{thmspaet1} and Corollary \ref{cor:HausdorffMeasure} shows that -- up to dimension dependent constants (see Remark \ref{rem:Constants}) -- the results for STIT-tessellations in $\RR^d$ and splitting tessellations of $\SSd$ are the same. This means that first-order properties are not sensitive enough to `feel' the curvature of the underlying space. This will change with the analysis of second-order parameters in the next section.

\section{Variances and covariances}

After having investigated the expectation of the functionals $\Sig_{j}(t;h)$, for $t\ge 0$ and a bounded measurable
function $h:\SSd\to\RR$, our next goal is to analyse their variances as well as the covariances of $\Sig_i(t;h)$ and $\Sig_j(t;h)$ for $i\neq j$. We shall start with $\Sig_{d-1}(t;h)$ and then turn to the general case.

\subsection{A spherical integral-geometric transformation formula}\label{subsec:IntegralGeometricTrafo}

As described above, our goal is to establish a formula for the variance of $\Sig_{d-1}(t;h)$ for regular direction distributions $\kappad$ on $\SS_{d-1}$. In the special case $\kappad=\nu_{d-1}$, the result has a simpler form, which is due to the following spherical integral-geometric transformation formula of Blaschke-Petkantschin type. In principle, such a result could be derived from the very general kinematic formulas in \cite{ArbeiterZaehle,JensenKieu} that have been obtained using tools from geometric measure theory. However, we prefer to give an elementary and direct proof, which is  based on the linear Blaschke-Petkantschin formula in Euclidean spaces (for which an elementary proof is available).

\begin{proposition}\label{prop:Trafo}
Let $d\ge 2$.
If $g:\SSd\times\SSd\to[0,\infty]$ is  measurable, then
\begin{equation}\label{eq:BlaPetSphere}
\begin{split}
& \int_{\SS_{d-1}}\int_S\int_S g(x,y)\,\cH^{d-1}(\dint x)\,\cH^{d-1}(\dint y)\,\nu_{d-1}(\dint S)\\
&\qquad=\frac{\b_{d-2}}{\b_d}\int_{\SSd}\int_{\SSd}g(x,y)\,\sin(\ell(x,y))^{-1}\,\cH^d(\dint x)\,\cH^d(\dint y)\,.
\end{split}
\end{equation}
\end{proposition}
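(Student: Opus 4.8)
The plan is to deduce \eqref{eq:BlaPetSphere} from the linear Blaschke--Petkantschin formula in $\RR^{d+1}$ (see \cite[Section~7.2]{SW}) by opening up $\nu_{d-1}$ via its pole representation and coning the two inner integrations off to Euclidean space. Since $g\ge 0$, every interchange below is legitimate by Tonelli's theorem, and the pairs with $x=\pm y$ (respectively the linearly dependent pairs $(w_1,w_2)$) are null and may be discarded; on their complement $\sin(\ell(x,y))=\sqrt{1-\langle x,y\rangle^2}=\nabla_2(x,y)>0$, where $\nabla_2(w_1,w_2):=|w_1|\,|w_2|\,\sin\angle(w_1,w_2)$ is the area of the parallelogram spanned by $w_1,w_2$. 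Writing $S(u):=u^\perp\cap\SSd$ and using the pole representation of $\nu_{d-1}$ from Section~\ref{subsec21:BasicSphericalGeometry}, the left-hand side of \eqref{eq:BlaPetSphere} equals
$$
\frac{1}{\b_d}\int_{\SSd}\int_{S(u)}\int_{S(u)} g(x,y)\,\cH^{d-1}(\dint x)\,\cH^{d-1}(\dint y)\,\cH^d(\dint u).
$$

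Fixing $u$, I would cone the two inner integrals off to the $d$-dimensional space $u^\perp$: choosing a nonnegative radial weight $\alpha$ with $\int_0^\infty\alpha(r)r^{d-1}\,\dint r=1$ and writing $\widehat w:=w/|w|$, polar coordinates give $\int_{S(u)}\Phi\,\dint\cH^{d-1}=\int_{u^\perp}\Phi(\widehat w)\,\alpha(|w|)\,\lambda_{u^\perp}(\dint w)$, with $\lambda_{u^\perp}$ Lebesgue measure on $u^\perp$. Applying the linear Blaschke--Petkantschin formula inside $u^\perp\cong\RR^d$ with two points rewrites the resulting double integral over $u^\perp$ as an integral over the $2$-planes $P\subseteq u^\perp$, carrying the Jacobian $\nabla_2^{\,d-2}$; here the inner bracket depends on $P$ only. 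The remaining $(u,P)$-integration then runs over the incidence set $\{(u,P):u\perp P\}$, and pushing it forward under $(u,P)\mapsto P$ yields, by $\SO(d+1)$-invariance and a mass computation ($\Phi\equiv1$), the identity $\int_{\SSd}\int_{G(u^\perp,2)}\Phi(P)\,\nu_2^{u^\perp}(\dint P)\,\cH^d(\dint u)=\b_d\int_{G(d+1,2)}\Phi\,\dint\nu_2$, where $\nu_2^{u^\perp}$ and $\nu_2$ are the invariant probability measures on the Grassmannians $G(u^\perp,2)$ and $G(d+1,2)$. This cancels the prefactor $\b_d^{-1}$ and leaves
$$
b_{d,2}\int_{G(d+1,2)}\int_P\int_P g(\widehat w_1,\widehat w_2)\,\alpha(|w_1|)\,\alpha(|w_2|)\,\nabla_2(w_1,w_2)^{d-2}\,\lambda_P(\dint w_1)\,\lambda_P(\dint w_2)\,\nu_2(\dint P),
$$
where $b_{d,2}$ is the constant in the linear Blaschke--Petkantschin formula in $\RR^d$.

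Finally I would absorb one factor of $\nabla_2$ and read the integral as the linear Blaschke--Petkantschin formula in $\RR^{d+1}$ (again with two points) run backwards. Setting $\widetilde F(w_1,w_2):=g(\widehat w_1,\widehat w_2)\,\alpha(|w_1|)\,\alpha(|w_2|)\,\nabla_2(w_1,w_2)^{-1}$, the integrand becomes $\widetilde F\,\nabla_2^{\,d-1}$, so the formula gives
$$
\frac{b_{d,2}}{b_{d+1,2}}\int_{\RR^{d+1}}\int_{\RR^{d+1}} g(\widehat w_1,\widehat w_2)\,\alpha(|w_1|)\,\alpha(|w_2|)\,\nabla_2(w_1,w_2)^{-1}\,\lambda(\dint w_1)\,\lambda(\dint w_2).
$$
Passing to polar coordinates $w_i=r_i x_i$, so that $\lambda(\dint w_i)=r_i^d\,\dint r_i\,\cH^d(\dint x_i)$ and $\nabla_2(w_1,w_2)=r_1 r_2\sin(\ell(x_1,x_2))$, each radial integral collapses to $\int_0^\infty\alpha(r)r^{d-1}\,\dint r=1$, and what remains is exactly $\tfrac{b_{d,2}}{b_{d+1,2}}\int_{\SSd}\int_{\SSd}g(x,y)\sin(\ell(x,y))^{-1}\,\cH^d(\dint x)\,\cH^d(\dint y)$. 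Using $b_{n,2}=\omega_{n-1}\omega_n/(\omega_1\omega_2)$ with $\omega_i=\b_{i-1}$, the constant simplifies to $b_{d,2}/b_{d+1,2}=\omega_{d-1}/\omega_{d+1}=\b_{d-2}/\b_d$, which is the asserted factor.

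The step I expect to be the main obstacle is the bookkeeping in the middle: justifying the $(u,P)$ interchange as an invariant-measure pushforward on the incidence set, and tracking the two Blaschke--Petkantschin constants together with the powers of $\nabla_2$ so that the $\b_d^{-1}$ prefactor, the Grassmannian mass, and the Jacobians combine to precisely $\b_{d-2}/\b_d$. The only genuinely spherical input is the elementary observation that the Euclidean angle between $w_1$ and $w_2$ equals the geodesic distance $\ell(\widehat w_1,\widehat w_2)$, whence $\nabla_2=r_1 r_2\sin\ell$; this is what converts the positive Jacobian power of the Euclidean formula into the weight $\sin(\ell)^{-1}$ on the sphere.
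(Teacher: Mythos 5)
Your argument is correct and is in substance the paper's own proof: both sides are reduced to a common integral over $2$-dimensional linear subspaces (equivalently, great circles) by two applications of the Euclidean linear Blaschke--Petkantschin formula with $q=2$ --- once inside the $d$-dimensional space $u^\perp$ and once in $\RR^{d+1}$, run backwards --- linked by the invariance/flag identity for the Grassmannians, with the radial weights and the ratio of the two Blaschke--Petkantschin constants combining to $\beta_{d-2}/\beta_d$ exactly as you describe. The paper merely packages the coning-off step into a standalone spherical Blaschke--Petkantschin lemma and phrases the intermediate object as great circles $T\in\SS_1$ carrying the weight $\sin(\ell(x,y))^{d-2}$ rather than planes $P\in G(d+1,2)$ carrying $\nabla_2^{\,d-2}$.
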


\begin{remark} \rm
Observe that the integrals on both sides of \eqref{eq:BlaPetSphere} are well defined, but possibly they are both infinite. However, since the left-hand side is finite if $g$ is bounded, the same is true for the integral on the right-hand side (in spite of the unbounded integrand $\sin(\ell(x,y))^{-1}$, which remains undefined on a set of measure zero).
\end{remark}

We prepare the proof of Proposition \ref{prop:Trafo} with the following spherical version of the linear Blasch\-ke-Petkantschin formula. For $d$-fold integrals over $\SSd$ this has been proved in \cite[Lemma 3.2]{BaranyHugReitznerSchneider} by similar arguments. In what follows, $\Grass(d+1,q)$ denotes the space of $q$-dimensional linear subspaces of $\RRd1$ together  with the rotation invariant Haar probability measure whose infinitesimal element is simply denoted by $\dint L$. Moreover,  $\nabla_q(x_1,\ldots,x_q)$ stands for the $q$-volume of the parallelepiped spanned by $q$ vectors $x_1,\ldots,x_q\in \RR^{d+1}$.

\begin{lemma}\label{lem:SphericalBlaschkePetkantschin}
Fix $q\in\{1,\ldots,d\}$. If $f:(\SSd)^q\to [0,\infty]$ is measurable, then
\begin{align*}
&\int_{(\SSd)^q}f(u_1,\ldots,u_q)\,\cH^{qd}(\dint(u_1,\ldots,u_q))\\
&\quad= \frac{\b_{d+1-q}\cdots\b_d}{\b_0\cdots\b_{q-1}}\int_{\Grass(d+1,q)}
\int_{(L\cap\SSd)^q}f(u_1,\ldots,u_q)\\[2mm]
&\hspace{4cm}\times\nabla_q(u_1,\ldots,u_q)^{d-q+1}\,
\cH^{q(q-1)}(\dint(u_1,\ldots,u_q))\,\dint L\,.
\end{align*}
\end{lemma}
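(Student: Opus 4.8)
The plan is to reduce the spherical identity to the classical linear Blaschke--Petkantschin formula in $\RRd1$ by writing each point in polar coordinates, first in the ambient space and then inside the spanning subspace. Recall that the Euclidean linear Blaschke--Petkantschin formula (see \cite[Theorem~7.2.1]{SW}) asserts that for measurable $F:(\RRd1)^q\to[0,\infty]$,
\begin{equation*}
\int_{(\RRd1)^q} F(x_1,\ldots,x_q)\,\cH^{(d+1)q}(\dint(x_1,\ldots,x_q)) = b_{d+1,q}\int_{\Grass(d+1,q)}\int_{L^q} F(x_1,\ldots,x_q)\,\nabla_q(x_1,\ldots,x_q)^{d+1-q}\,\cH^{q^2}(\dint(x_1,\ldots,x_q))\,\dint L\,,
\end{equation*}
where the inner integration runs over $q$-tuples in the $q$-dimensional subspace $L$, and the constant equals $b_{d+1,q}=\frac{\b_{d+1-q}\cdots\b_d}{\b_0\cdots\b_{q-1}}$ (here I use that the surface area $\omega_k$ of the unit sphere in $\RR^k$ equals $\b_{k-1}$, which turns the constant appearing in \cite{SW} into the product of $\b$'s claimed in the lemma). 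Note that the exponent $d+1-q=d-q+1$ already matches the one in the target formula.

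I would then insert into both sides a separable test function. Fix a measurable $\psi:(0,\infty)\to[0,\infty)$ normalised so that $\int_0^\infty \psi(r)\,r^d\,\dint r=1$, and for a given measurable $g:(\SSd)^q\to[0,\infty]$ set
\begin{equation*}
F(x_1,\ldots,x_q):=g\!\left(\tfrac{x_1}{\|x_1\|},\ldots,\tfrac{x_q}{\|x_q\|}\right)\prod_{i=1}^q\psi(\|x_i\|)\,.
\end{equation*}
On the left-hand side, polar coordinates $x_i=r_iu_i$ with $u_i\in\SSd$ and the Jacobian $\cH^{d+1}(\dint x_i)=r_i^{\,d}\,\dint r_i\,\cH^d(\dint u_i)$ factor the radial and angular parts; since $\int_0^\infty\psi(r)r^d\,\dint r=1$, the $q$ radial integrals contribute the factor $1$, leaving exactly $\int_{(\SSd)^q} g\,\cH^{qd}(\dint(u_1,\ldots,u_q))$.

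For the right-hand side I would use polar coordinates inside each subspace $L$: writing $x_i=r_iu_i$ with $u_i\in L\cap\SSd$ gives $\cH^q(\dint x_i)=r_i^{\,q-1}\,\dint r_i\,\cH^{q-1}(\dint u_i)$, while the homogeneity of the parallelepiped volume yields $\nabla_q(x_1,\ldots,x_q)=\big(\textstyle\prod_i r_i\big)\,\nabla_q(u_1,\ldots,u_q)$. Collecting the powers of $r_i$ produces $r_i^{(d+1-q)+(q-1)}=r_i^{\,d}$, so after inserting $F$ the radial integrals again factor out as $\int_0^\infty\psi(r)r^d\,\dint r=1$. What survives is
\begin{equation*}
b_{d+1,q}\int_{\Grass(d+1,q)}\int_{(L\cap\SSd)^q} g(u_1,\ldots,u_q)\,\nabla_q(u_1,\ldots,u_q)^{d-q+1}\,\cH^{q(q-1)}(\dint(u_1,\ldots,u_q))\,\dint L\,.
\end{equation*}
Equating the two sides gives the lemma for all measurable $g\ge 0$, with the constant $b_{d+1,q}=\frac{\b_{d+1-q}\cdots\b_d}{\b_0\cdots\b_{q-1}}$.

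The step I expect to require the most care is the bookkeeping of the constant: one must translate the normalisation in the Euclidean formula (products of sphere surface areas $\omega_k$) into the $\b_j$ used here via $\omega_k=\b_{k-1}$, and verify that the exponent $n-q$ with $n=d+1$ matches. A minor point to address is that the configurations with linearly dependent $u_1,\ldots,u_q$ (where the spanned subspace fails to be $q$-dimensional and $\nabla_q=0$) form a set of measure zero on both sides, so they do not affect the identity and the application of the Euclidean formula remains legitimate throughout.
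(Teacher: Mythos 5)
Your proposal is correct and follows essentially the same route as the paper: both reduce the statement to the Euclidean linear Blaschke--Petkantschin formula of \cite[Theorem 7.2.1]{SW} by inserting a separable test function with normalised radial factors and passing to polar coordinates in $\RRd1$ and in each subspace $L$, using the homogeneity $\nabla_q(r_1u_1,\ldots,r_qu_q)=r_1\cdots r_q\,\nabla_q(u_1,\ldots,u_q)$ to recombine the radial powers into $r_i^d$ on both sides. The only (immaterial) difference is that the paper allows $q$ possibly distinct radial weights $h_1,\ldots,h_q$ where you use a single $\psi$.
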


\begin{proof}
We use the linear Blaschke-Petkantschin formula in $\RRd1$ from \cite[Theorem 7.2.1]{SW} with $d$ replaced by $d+1$ there. In our notation it says that
\begin{equation}\label{eq:BlPetKlassisch}
\begin{split}
&\int_{(\RRd1)^q}g(x_1,\ldots,x_q)\,\cH^{q(d+1)}(\dint(x_1,\ldots,x_q))\\
&\qquad= \frac{\b_{d+1-q}\cdots\b_d}{\b_0\cdots\b_{q-1}}\int_{\Grass(d+1,q)}\int_{L^q}g(x_1,\ldots,x_q)\\[2mm]
&\hspace{4cm}\times\nabla_q(x_1,\ldots,x_q)^{d-q+1}\,\cH^{q^2}(\dint(x_1,\ldots,x_q))\,\dint L
\end{split}
\end{equation}
whenever $g:(\RRd1)^q\to[0,\infty]$ is  measurable. Let $h_1,\ldots,h_q:[0,\infty)\to[0,\infty)$ be measurable functions which satisfy
$$
\int_0^\infty h_j(r)\,r^d\,\dint r =1\,,\qquad j\in\{1,\ldots,q\}\,,
$$
and let $f:(\SSd)^q\to [0,\infty)$ be measurable. Now, apply \eqref{eq:BlPetKlassisch} to
$$
g(x_1,\ldots,x_q)=f\Big(\frac{x_1}{ \|x_1\|},\ldots,\frac{x_q}{ \|x_q\|}\Big)\,\prod_{j=1}^qh_j(\|x_j\|)\,,\qquad x_1,\ldots,x_q\in\RRd1\setminus\{o\}\,,
$$
and if $x_i=o$ for some $i\in\{x_1,\ldots,q\}$, then we define $g$ as zero (say).
By our assumption on $h_1,\ldots,h_q$ and using spherical coordinates in $\RRd1$ and $L\in \Grass(d+1,q)$, respectively, for the left-hand side of \eqref{eq:BlPetKlassisch} we get
\begin{align*}
&\int_{(\RRd1)^q}g(x_1,\ldots,x_q)\,\cH^{q(d+1)}(\dint(x_1,\ldots,x_q))\\
&= \int_{(\SSd)^q} f(u_1,\ldots,u_q) \bigg[\int_{(0,\infty)^q} \prod_{j=1}^qh_j(s_j)\,s_j^d\,\cH^{q}(\dint(s_1,\ldots,s_q))\bigg]\,\cH^{qd}(\dint(u_1,\ldots,u_q))\\
&=\int_{(\SSd)^q}f(u_1,\ldots,u_q)\,\cH^{qd}(\dint(u_1,\ldots,u_q))\,,
\end{align*}
while for the right-hand side we obtain
\begin{align*}
&\frac{\b_{d+1-q}\cdots\b_d}{\b_0\cdots\b_{q-1}}\int_{\Grass(d+1,q)}\int_{L^q}g(x_1,\ldots,x_q)\,
\nabla_q(x_1,\ldots,x_q)^{d-q+1}\,\cH^{q(d+1)}(\dint(x_1,\ldots,x_q))\,\dint L\\
&=\frac{\b_{d+1-q}\cdots\b_d}{\b_0\cdots\b_{q-1}}\int_{\Grass(d+1,q)}\int_{(L\cap\SSd)^q}\int_{(0,\infty)^q} f(u_1,\ldots,u_q)\,\prod_{j=1}^q h_j(s_j)\,s_j^{q-1}\\
&\qquad\qquad\times\nabla_q(s_1u_1,\ldots,s_qu_q)^{d-q+1}\,
\cH^{q}(\dint(s_1,\ldots,s_j))\,\cH^{q(q-1)}(\dint(u_1,\ldots,u_q))\,\dint L\\
&=\frac{\b_{d+1-q}\cdots\b_d}{\b_0\cdots\b_{q-1}}\int_{\Grass(d+1,q)}\int_{(L\cap\SSd)^q} f(u_1,\ldots,u_q)\,\nabla_q(u_1,\ldots,u_q)^{d-q+1}\\
&\qquad\qquad\times\bigg[\int_{(0,\infty)^q} \prod_{j=1}^q h_j(s_j) s_j^d\,\cH^{q}(\dint(s_1,\ldots,s_q))\bigg]\cH^{q(q-1)}(\dint(u_1,\ldots,u_q))\,\dint L\\
&=\frac{\b_{d+1-q}\cdots\b_d}{\b_0\cdots\b_{q-1}}\int_{\Grass(d+1,q)}
\int_{(L\cap\SSd)^q}f(u_1,\ldots,u_q)\,\nabla_q(u_1,\ldots,u_q)^{d-q+1}\,\cH^{q(q-1)}(\dint u)\,\dint L\,,
\end{align*}
since $\nabla_q(s_1u,\ldots,s_qu_q)=s_1\cdots s_q \,\nabla_q(u_1,\ldots,u_q)$. This proves the formula.
\end{proof}

\begin{proof}[Proof of Proposition \ref{prop:Trafo}]
To derive \eqref{eq:BlaPetSphere}, we apply the spherical Blaschke-Petkantschin formula from Lemma \ref{lem:SphericalBlaschkePetkantschin} with $q=2$ to the function
$$
f(x,y):=\sin(\ell(x,y))^k\,g(x,y)\,,
$$
where $g:\SSd\times\SSd\to[0,\infty]$ is measurable  and $k\ge -d+1$. Since $\SSd\cap L$ is an element of $\SS_1$, this gives
\begin{equation}\label{eq:Trafo1}
\begin{split}
& \int_{\SSd}\int_{\SSd}g(x,y)\,\sin(\ell(x,y))^k\,\cH^d(\dint x)\,\cH^d(\dint y)\\
&\qquad=\frac{\b_d\b_{d-1}}{ 4\pi}\int_{\SS_1}\int_{T}\int_{T}g(x,y)\,\sin(\ell(x,y))^{d+k-1}
\,\cH^1(\dint x)\,\cH^1(\dint y)\,\nu_1(\dint T)\,,
\end{split}
\end{equation}
since $\nabla_2(x,y)=\sin(\ell(x,y))$. Especially for $k=-1$ we find that
\begin{equation}\label{eq:Trafo1a}
\begin{split}
& \int_{\SSd}\int_{\SSd}g(x,y)\,\sin(\ell(x,y))^{-1}\,\cH^d(\dint x)\,\cH^d(\dint y)\\
&\qquad=\frac{\b_d\b_{d-1}}{ 4\pi}\int_{\SS_1}\int_{T}\int_{T}g(x,y)\,\sin(\ell(x,y))^{d-2}\,
\cH^1(\dint x)\,\cH^1(\dint y)\,\nu_1(\dint T)\,.
\end{split}
\end{equation}
We now apply \eqref{eq:Trafo1} once again, but this time with $k=0$, with $d$ replaced by $d-1$ and with
$\SSd$ replaced by some fixed great hypersphere $S\in\SS_{d-1}$. This gives
\begin{align*}
\begin{split}
& \int_{S}\int_{S}g(x,y)\,\cH^{d-1}(\dint x)\,\cH^{d-1}(\dint y)\\
&\qquad=\frac{\b_{d-1}\b_{d-2}}{ 4\pi}\int_{\SS_1[S]}\int_{T}\int_{T}g(x,y)\,
\sin(\ell(x,y))^{d-2}\,\cH^1(\dint x)\,\cH^1(\dint y)\,\nu_1^S(\dint T)\,,
\end{split}
\end{align*}
where $\SS_1[S]$ denotes the set of all elements $T\in\SS_1$ satisfying $T\subset S$ and $\nu_1^S$ is the corresponding invariant Haar probability measure. Note that for $d=2$
this holds trivially and for $d\ge 3$ we have $d-1\ge 2$ so that \eqref{eq:Trafo1} can indeed be applied.
Integrating over all $S\in\SS_{d-1}$, we thus get
\begin{align}
\nonumber & \int_{\SS_{d-1}}\int_S\int_S g(x,y)\,\cH^{d-1}(\dint x)\,\cH^{d-1}(\dint y)\,\nu_{d-1}(\dint S)\\
\nonumber &\qquad=\frac{\b_{d-1}\b_{d-2}}{ 4\pi}\int_{\SS_{d-1}}\int_{\SS_1[S]}\int_{T}\int_{T}g(x,y)\\[2mm]
\nonumber &\hspace{3cm}\times\sin(\ell(x,y))^{d-2}\,\cH^1(\dint x)\,\cH^1(\dint y)\,\nu_1^S(\dint T)\,\nu_{d-1}(\dint S)\\
&\qquad =\frac{\b_{d-1}\b_{d-2}}{ 4\pi}\int_{\SS_1}\int_{T}\int_{T}g(x,y)\,\sin(\ell(x,y))^{d-2}\,\cH^1(\dint x)\,\cH^1(\dint y)\,
\nu_1(\dint T)\,,\label{eq:Trafo2}
\end{align}
where we used the relation
$$
\int_{\SS_{d-1}}\int_{\SS_1[S]}f(T)\,\nu_1^S(\dint T)\nu_{d-1}(\dint S) = \int_{\SS_1}f(T)\,\nu_1(\dint T)\,,
$$
valid for measurable functions $f:\SS_1\to[0,\infty]$, which follows from \cite[Theorem 7.1.1]{SW}.
A comparison of \eqref{eq:Trafo1a} with \eqref{eq:Trafo2} finally yields the spherical Blaschke-Petkantschin-type identity
\begin{equation*}
\begin{split}
& \int_{\SS_{d-1}}\int_S\int_S g(x,y)\,\cH^{d-1}(\dint x)\,\cH^{d-1}(\dint y)\,\nu_{d-1}(\dint S)\\
&\qquad=\frac{\b_{d-2}}{\b_d}\int_{\SSd}\int_{\SSd}g(x,y)\,\sin(\ell(x,y))^{-1}\,\cH^d(\dint x)\,\cH^d(\dint y)\,,
\end{split}
\end{equation*}
which completes the proof.
\end{proof}

\subsection{The variance of the cell surface measure}
\label{subsec:51VSA}

After the integral-geometric preparations presented in the previous section, we are now in the position to derive a formula for the variance of
the cell surface measure $\Sig_{d-1}(t;h)$, and thus especially for the total $h$-weighted Hausdorff measure $\cH^{d-1}(t;h)$, of a  splitting tessellation $Y_t$ with regular direction distribution $\kappad$ on $\SS_{d-1}$. The following result is the spherical counterpart
of  \cite[Theorem 5.1]{STSecondOrder} (derived there under the assumption of stationarity and isotropy). We replace the bounded observation window  and the Euclidean distance in \cite{STSecondOrder} by the unit sphere and the geodesic distance on $\SSd$, respectively. Comparing \cite[Theorem 5.1]{STSecondOrder} with Theorem \ref{varformula} below for $\kappad=\nu_{d-1}$, we first observe that different constants are  due to different normalizations of
underlying integral-geometric formulas. However, instead of the numerator $\|x-y\|^2$, which appears in the integral representation
in the corresponding Euclidean formula, the result below involves the expression $\ell(x,y)\sin(\ell(x,y))$. The subsequent proof will show that this is due to the application of the spherical integral-geometric transformation formula in Proposition \ref{prop:Trafo}. In addition, we obtain
 a more general version for general direction distributions $\kappad$. To state the general result, we define $\kappad(x,y):=\kappad(\SS_{d-1}\blk \overline{xy} \brk )$ for $x,y\in\SSd$.

\begin{theorem}\label{varformula}
Let  $h:\SSd\to\RR$ be bounded and measurable, and let $t\ge 0$. If $\kappad$ is a regular direction distribution, then
\begin{align*}
\Var\Sig_{d-1}(t;h)&=\frac{1}{\beta_{d-1}^2}\int_{\SS_{d-1}}\int_S\int_S\frac{1-\exp\left(-\kappad(x,y)t\right)}{\kappad(x,y)}\\[2mm]
&\hspace{3cm}\times
h(x)h(y)\,\cH^{d-1}(\dint x)\, \cH^{d-1}(\dint y)\, \kappad(\dint S)<\infty\,.
\end{align*}
In particular,  $\Var\cH^{d-1}(t;h)=\beta_{d-1}^2\Var\Sig_{d-1}(t;h)$.

If $\kappad=\nu_{d-1}$, then
\begin{align*}
\Var\Sig_{d-1}(t;h) &= \frac{\pi \b_{d-2}}{\b_d\b_{d-1}^2}\int_{\SSd}\int_{\SSd}\frac{1-\exp\big(-\frac{1}{\pi}
\ell(x,y)t\big)}{\ell(x,y)\sin(\ell(x,y))}\\[2mm]
&\hspace{3cm}\times h(x)h(y)\,\cH^d(\dint x)\,\cH^d(\dint y)<\infty\,.
\end{align*}
\end{theorem}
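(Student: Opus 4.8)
The plan is to extract the variance from the second-order time-augmented martingale of Corollary \ref{prop:Dynkin2}, choosing the centering function $b$ precisely so that the first-order drift contribution cancels, leaving only a quadratic term that can be read off as a pair-correlation integral. First I would apply Corollary \ref{prop:Dynkin2} to the functional $\phi(c):=\phi_{d-1}(c,h)$, which is bounded and measurable since $|\phi_{d-1}(c,h)|\le\|h\|_\infty\,V_{d-1}(c)\le\|h\|_\infty$, and to $b(t):=t\,\overkappad(h)$, which by Theorem \ref{thmspaet1} equals $\bE\Sig_{d-1}(t;h)$. The associated process $N_t(\Psi_\phi)$ vanishes at $t=0$ and is a martingale, so taking expectations gives $\bE(\Sig_{d-1}(t;h)-b(t))^2=\Var\Sig_{d-1}(t;h)$ on the left and $\bE\int_0^t[(\cA\Psi_\phi(\,\cdot\,,s))(Y_s)+\partial_s\Psi_\phi(Y_s,s)]\,\dint s$ on the right. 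Writing the effect of splitting $c$ by $S$ as the increment $\Sig_\phi(\oslash(c,S,Y_s))-\Sig_\phi(Y_s)=\phi_{d-1}(c\cap S,h)$ (valuation property \eqref{eqvaluation}) and expanding $(a+\Delta)^2-a^2=2a\Delta+\Delta^2$, the linear part integrates, by the computation in the proof of Theorem \ref{thmspaet1} (see \eqref{refhilf}) together with Lemma \ref{setzero}, to $2(\Sig_\phi(Y_s)-b(s))\,\overkappad(h)$; this exactly cancels $\partial_s\Psi_\phi(Y_s,s)=-2b'(s)(\Sig_\phi(Y_s)-b(s))=-2\overkappad(h)(\Sig_\phi(Y_s)-b(s))$, leaving
\[
\Var\Sig_{d-1}(t;h)=\bE\int_0^t\sum_{c\in Y_s}\int_{\SS_{d-1}[c]}\phi_{d-1}(c\cap S,h)^2\,\kappad(\dint S)\,\dint s.
\]

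Next I would turn the quadratic term into a pair integral. Using \eqref{eq:PhiD-1}, $\phi_{d-1}(c\cap S,h)^2=\b_{d-1}^{-2}\int_{c\cap S}\int_{c\cap S}h(x)h(y)\,\cH^{d-1}(\dint x)\,\cH^{d-1}(\dint y)$; summing over $c$ and extending each $S$-integration from $\SS_{d-1}[c]$ to all of $\SS_{d-1}$ (legitimate by Lemma \ref{setzero}), the factor $\sum_{c\in Y_s}\mathbf 1(x,y\in c)$ reduces, for $\cH^{d-1}\otimes\cH^{d-1}\otimes\kappad$-almost every $(x,y,S)$, to the indicator that $x,y$ lie in a common cell of $Y_s$, that is, that $\overline{xy}\subset\mathrm{int}(c)$ for some $c\in Y_s$. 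Since $\overline{xy}\in\PP^d$ is connected, its expectation is $U_s(\overline{xy})=\exp(-\kappad(x,y)\,s)$ by Theorem \ref{prop:Capacity}, valid for every regular $\kappad$. After interchanging expectation and integration by Fubini and integrating in $s$ via $\int_0^t e^{-\kappad(x,y)s}\,\dint s=(1-e^{-\kappad(x,y)t})/\kappad(x,y)$, I obtain the general formula. Finiteness is immediate from $(1-e^{-\kappad(x,y)t})/\kappad(x,y)\le t$ combined with $\cH^{d-1}(S)=\b_{d-1}$ and $\kappad(\SS_{d-1})=1$.

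Finally, for $\kappad=\nu_{d-1}$ I would substitute $\kappad(x,y)=\pi^{-1}\ell(x,y)$ from \eqref{eq:InvMeasureLineSegment} and apply the Blaschke--Petkantschin identity of Proposition \ref{prop:Trafo} to the bounded kernel $g(x,y)=\frac{1-e^{-\pi^{-1}\ell(x,y)t}}{\ell(x,y)}h(x)h(y)$ (splitting $h$ into positive and negative parts since Proposition \ref{prop:Trafo} is stated for nonnegative integrands), which converts the triple integral $\int_{\SS_{d-1}}\int_S\int_S g\,\cH^{d-1}\,\cH^{d-1}\,\nu_{d-1}$ into $\frac{\b_{d-2}}{\b_d}\int_{\SSd}\int_{\SSd}g(x,y)\sin(\ell(x,y))^{-1}\,\cH^d\,\cH^d$, giving the stated closed form; the identity $\Var\cH^{d-1}(t;h)=\b_{d-1}^2\Var\Sig_{d-1}(t;h)$ is just \eqref{basicrelation}. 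The main obstacle is the reduction in the second paragraph: justifying that the random sum $\sum_{c\in Y_s}\mathbf 1(x,y\in c)$ collapses almost everywhere to the common-cell indicator (so that boundary coincidences, where $x$ or $y$ lies on $Z_s$, are negligible) and identifying its mean with the segment capacity functional. The generator computation and the drift cancellation in the first paragraph are routine once $b$ is chosen correctly.
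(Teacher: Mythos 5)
Your proposal is correct and follows essentially the same route as the paper's proof: Corollary \ref{prop:Dynkin2} with $b(t)=t\,\overkappad(h)$, the valuation property to reduce the splitting increment to $\phi_{d-1}(c\cap S,h)$, cancellation of the linear drift against $\partial_s\Psi_\phi$, identification of the resulting quadratic term with the segment capacity functional via Theorem \ref{prop:Capacity}, and Proposition \ref{prop:Trafo} in the isotropic case. The step you flag as the main obstacle is handled in the paper exactly as you describe, via Lemma \ref{setzero} and the identity $\bE\sum_{c\in Y_s}\mathbf 1(x,y\in \mathrm{int}(c))=\bP(\overline{xy}\cap Z_s=\emptyset)$.
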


\begin{proof}
We define
$$
\bar\Sig_{d-1}(t;h):=\Sig_{d-1}(t;h)-\bE\Sig_{d-1}(t;h)=\sum_{c\in Y_t}\phi_{d-1}(c,h)-t\,\overkappad(h)\,,
$$
where Theorem \ref{thmspaet1} was used. For $T\in \TT^d$ and $t\ge 0$ we put
$$
g_h(T,t):=\left(\sum_{c\in T}\phi_{d-1}(c,h)-t\,\overkappad(h)\right)^2
$$
so that $\bar\Sig_{d-1}(t;h)^2=g_h(Y_t,t)$ and $g_h(Y_0,0)=0$. We have
$$
\frac{\partial g_h}{\partial s}(T,s)=-2\left(\sum_{c\in T}\phi_{d-1}(c,h)-s\,\overkappad(h)\right)
\,\overkappad(h)\,,
$$
hence
$$
\frac{\partial g_h}{\partial s}(\,\cdot\,,s)(Y_s)=-2\bar\Sig_{d-1}(s;h)\overkappad(h)\,.
$$
Now, Corollary \ref{prop:Dynkin2} with $b(t)=\overkappad(h)t$  shows that
\begin{align*}
&\bar\Sig_{d-1}(t;h)^2-0-\int_0^t\Bigg\{\sum_{c\in Y_s}\int_{\SS_{d-1}[c]}\Big[
\Big(\sum_{\bar c\in Y_s\setminus \{c\}\cup\{c\cap S^+, c\cap S^-\}}\phi_{d-1}(\bar c,h)-s\,\overkappad(h)
\Big)^2\\
&\qquad -\Big(\sum_{\bar c\in Y_s}\phi_{d-1}(\bar c,h)-s\,\overkappad(h)\Big)^2\Big]\, \kappad
(\dint S)\Bigg\}-2\,\overkappad(h)\bar\Sig_{d-1}(s;h)\, \dint s\,,\qquad t\geq 0\,,
\end{align*}
is a $\filtration$-martingale. The expression in brackets $[\ldots]$ inside the integral equals
\begin{align*}
&\left(\bar\Sig_{d-1}(s;h)+\phi_{d-1}(c\cap S^+,h)+\phi_{d-1}(c\cap S^-,h)-\phi_{d-1}(c ,h)\right)^2-
\bar\Sig_{d-1}(s;h)^2\\
&=\left(\bar\Sig_{d-1}(s;h)+\phi_{d-1}(c\cap S,h)\right)^2-
\bar\Sig_{d-1}(s;h)^2\\
&=\phi_{d-1}(c\cap S,h)^2+2\phi_{d-1}(c\cap S,h)\bar\Sig_{d-1}(s;h)\,,
\end{align*}
where the valuation property \eqref{eqvaluation} of $\phi_{d-1}$ is used for the first equality.
Using \eqref{refhilf} (as before, the integration over $\SS_{d-1}[c]$ can be replaced by an integration over
$\SS_{d-1}$ without changing the integral), we get
$$
\sum_{c\in Y_s}\int_{\SS_{d-1}}\phi_{d-1}(c\cap S,h)\, \kappad(\dint S)=\overkappad(h)
$$
and conclude that the random process
\begin{equation}\label{eq:VarMartingale}
\bar\Sig_{d-1}(t;h)^2-
\int_0^t\sum_{c\in Y_s}\int_{\SS_{d-1}}\phi_{d-1}(c\cap S,h)^2\,\kappad(\dint S)\,\dint s\,,\qquad t\geq 0\,,
\end{equation}
is a $\filtration$-martingale.
Taking expectations in \eqref{eq:VarMartingale} yields that
\begin{align}\label{eq:VarianceZwischenschritt}
\Var\Sig_{d-1}(t;h) = \int_0^t\bE\sum_{c\in Y_s}\int_{\SS_{d-1}}\phi_{d-1}(c\cap S,h)^2\,\kappad(\dint S)\,\dint s\,.
\end{align}
For $c\in Y_s$, it follows from Lemma \ref{setzero} that for $\kappad$-almost all $S\in\SS_{d-1}$ with $S\cap c\neq \emptyset$, we have
$\cH^{d-1}(\partial c\cap S)=0$. Using this fact and applying \eqref{eq:PhiD-1} together with Fubini's theorem, we get
\begin{align*}
\Var\Sig_{d-1}(t;h) &=\int_0^t\bE\sum_{c\in Y_s}\int_{\SS_{d-1}}\frac{1}{\b_{d-1}^2}\int_S\int_S {\bf 1}(x,y\in \text{int}(c))\\[2mm]
&\hspace{3cm}\times
h(x)h(y)\,\cH^{d-1}(\dint x)\,\cH^{d-1}(\dint y)\,\kappad(\dint S)\,\dint s\\
 &=\frac{1}{\b_{d-1}^2}\int_0^t\int_{\SS_{d-1}}\int_{S}\int_{S}\bE\sum_{c\in Y_s}
{\bf 1}(x,y\in \text{int}(c), S\cap c\neq \emptyset)\\[2mm]
&\hspace{3cm}\times
h(x)h(y)\,\cH^{d-1}(\dint x)\,\cH^{d-1}(\dint y)\,\kappad(\dint S)\,\dint s\\
&=\frac{1}{\b_{d-1}^2}\int_0^t\int_{\SS_{d-1}}\int_{S}\int_{S}
\bP(\overline{xy}\cap Z_s=\emptyset)\\[2mm]
&\hspace{3cm}\times h(x)h(y)\,\cH^{d-1}(\dint x)\,\cH^{d-1}(\dint y)\,\kappad(\dint S)\,\dint s\,,
\end{align*}
where  $Z_s$ is the random closed set induced by the random tessellation $Y_s$ (see \eqref{eq:DefZt}). By Theorem \ref{prop:Capacity}, we have
$\bP(\overline{xy}\cap Z_s=\emptyset)=\exp\big(-\kappad(x,y)s\big)$, and hence
\begin{align*}
\Var\Sig_{d-1}(t;h)&=\frac{1}{\b_{d-1}^2}\int_0^t\int_{\SS_{d-1}}\int_{S}\int_{S}\exp\Big(-\kappad(x,y)s\Big)\\
&\qquad\qquad\times h(x)h(y)\,\cH^{d-1}(\dint x)\,\cH^{d-1}(\dint y)\,\kappad(\dint S)\,\dint s\,.
\end{align*}
Using Fubini's theorem and carrying out the integration with respect to $s$, we obtain the first part of the theorem. \medskip

The second relation easily follows from the first one and from \eqref{basicrelation}.

\medskip

Now we assume that $\kappad=\nu_{d-1}$. Then \eqref{eq:InvMeasureLineSegment} shows that $\kappad(x,y)=\ell(x,y)/\pi$.
Thus we obtain
\begin{align*}
\Var\Sig_{d-1}(t;h) &= \frac{\pi}{\b_{d-1}^2}\int_{\SS_{d-1}}\int_{S}\int_{S}\frac{1-\exp\big(-\frac{1}{\pi}\ell(x,y)t\big)}{\ell(x,y)}\\
&\qquad\qquad\qquad\times h(x)h(y)\,\cH^{d-1}(\dint x)\,\cH^{d-1}(\dint y)\,\nu_{d-1}(\dint S)\,.
\end{align*}
Note that the right-hand side is finite for non-negative and bounded functions $h$. This implies the integrability needed for applying Fubini's theorem in the preceding argument.
This expression is now transformed by means of  \eqref{eq:BlaPetSphere}, which  shows that
\begin{equation*}
\Var\Sig_{d-1}(t;h) = \frac{\pi\b_{d-2}}{\b_d\b_{d-1}^2}\int_{\SSd}\int_{\SSd}\frac{1-\exp\big(-\frac{1}{\pi}
\ell(x,y)t\big)}{\ell(x,y)\sin(\ell(x,y))}h(x)h(y)\,\cH^d(\dint x)\,\cH^d(\dint y)
\end{equation*}
and completes the proof of the theorem.
\end{proof}

As a simple consequence, we get the following corollary, which will be used subsequently.

\begin{corollary}\label{covcor}
Let $t\ge 0$, and let $h_1,h_2:\SSd\to\RR$ be bounded and measurable. Suppose that $\kappad$
is a regular direction distribution. Then
\begin{align*}
&\Cov(\Sig_{d-1}(t;h_1),\Sig_{d-1}(t;h_2))\\
&\qquad = \frac{1}{\beta_{d-1}^2}\int_{\SS_{d-1}}\int_S\int_S\frac{1-\exp\left(-\kappad(x,y)t\right)}{\kappad(x,y)}
h_1(x)h_2(y)\,\cH^{d-1}(\dint x)\, \cH^{d-1}(\dint y)\, \kappad(\dint S)<\infty\,.
\end{align*}
In particular, $\Cov(\cH^{d-1}(t;h_1),\cH^{d-1}(t;h_2))=\beta_{d-1}^2\Cov(\Sig_{d-1}(t;h_1),\Sig_{d-1}(t;h_2))$.

\noindent If $\kappad=\nu_{d-1}$, then
\begin{align*}
&\Cov(\Sig_{d-1}(t;h_1),\Sig_{d-1}(t;h_2))\\
&\qquad = \pi\frac{\b_{d-2}}{\b_d\beta_{d-1}^2}\int_{\SSd}
\int_{\SSd}\frac{1-\exp\big(-\frac{1}{\pi}\ell(x,y)t\big)
}{\ell(x,y)\sin(\ell(x,y))}h_1(x)h_2(y)\,\cH^d(\dint x)\,\cH^d(\dint y)<\infty\,.
\end{align*}
\end{corollary}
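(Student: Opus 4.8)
The plan is to deduce the covariance formula from the variance formula of Theorem \ref{varformula} by a straightforward polarization argument, using the linearity of $h\mapsto\Sig_{d-1}(t;h)$ together with the symmetry of the integral kernel. An alternative would be to rerun the martingale computation directly for the mixed functional via Proposition \ref{prop:Dynkin3}, but since the quadratic form has already been evaluated in Theorem \ref{varformula}, polarization is the economical route.

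First I would record that $\Sig_{d-1}(t;\,\cdot\,)$ is linear in its functional argument: for each cell $c$ the map $h\mapsto\phi_{d-1}(c,h)$ is a measure, so $\phi_{d-1}(c,h_1+h_2)=\phi_{d-1}(c,h_1)+\phi_{d-1}(c,h_2)$, and summing over $c\in Y_t$ gives $\Sig_{d-1}(t;h_1+h_2)=\Sig_{d-1}(t;h_1)+\Sig_{d-1}(t;h_2)$ almost surely. Theorem \ref{varformula} guarantees that each of $\Var\Sig_{d-1}(t;h_1)$, $\Var\Sig_{d-1}(t;h_2)$ and $\Var\Sig_{d-1}(t;h_1+h_2)$ is finite, so all second moments involved exist and the covariance is well defined and finite. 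I would then invoke the elementary identity $\Cov(X,Y)=\frac12[\Var(X+Y)-\Var X-\Var Y]$ with $X=\Sig_{d-1}(t;h_1)$ and $Y=\Sig_{d-1}(t;h_2)$, substituting $X+Y=\Sig_{d-1}(t;h_1+h_2)$.

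Inserting the formula of Theorem \ref{varformula} for each of the three variances, the quadratic integrand $(h_1+h_2)(x)(h_1+h_2)(y)$ expands into four terms; the diagonal contributions $h_1(x)h_1(y)$ and $h_2(x)h_2(y)$ cancel against $\Var\Sig_{d-1}(t;h_1)$ and $\Var\Sig_{d-1}(t;h_2)$, leaving the two cross terms $h_1(x)h_2(y)+h_2(x)h_1(y)$. The one point that deserves an explicit remark is the symmetry of the kernel $[1-\exp(-\kappad(x,y)t)]/\kappad(x,y)$ in $x$ and $y$ (matched by the symmetric domain $\int_S\int_S$): after renaming $x\leftrightarrow y$ in one cross term the two contributions coincide, so that the prefactor $\frac12$ produces exactly $h_1(x)h_2(y)$ and yields the claimed expression. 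The identity $\Cov(\cH^{d-1}(t;h_1),\cH^{d-1}(t;h_2))=\beta_{d-1}^2\,\Cov(\Sig_{d-1}(t;h_1),\Sig_{d-1}(t;h_2))$ then follows from \eqref{basicrelation}, exactly as in the variance case.

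The specialization to $\kappad=\nu_{d-1}$ follows identically from the second display of Theorem \ref{varformula}, whose kernel $1/(\ell(x,y)\sin(\ell(x,y)))$ is likewise symmetric, so the same cancellation and symmetrization apply verbatim. I do not anticipate a genuine obstacle: the entire content is the polarization of an already-established symmetric quadratic form, and the substantive work (the martingale identity \eqref{eq:VarMartingale}, the capacity functional of Theorem \ref{prop:Capacity}, and the Blaschke--Petkantschin transformation of Proposition \ref{prop:Trafo}) was carried out in the proof of Theorem \ref{varformula}. The only care needed is to confirm at the outset that all variances are finite so that the polarization identity is legitimate, which Theorem \ref{varformula} already provides.
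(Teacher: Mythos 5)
Your proposal is correct and follows essentially the same route as the paper: the published proof also applies Theorem \ref{varformula} to $h_1+h_2$, $h_1$ and $h_2$, uses the linearity $\Sig_{d-1}(t;h_1+h_2)=\Sig_{d-1}(t;h_1)+\Sig_{d-1}(t;h_2)$, and expands $(h_1+h_2)(x)(h_1+h_2)(y)$ to isolate the cross term via polarization. Your explicit remarks on kernel symmetry and on the finiteness of the variances are sound and merely make precise what the paper leaves implicit.
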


\begin{proof}
We apply Theorem \ref{varformula} with  $h=h_1+h_2$, $h=h_1$ and $h=h_2$, observe that $\Sig_{d-1}(t;h)=\Sig_{d-1}(t;h_1)+\Sig_{d-1}(t;h_2)$ so that
\begin{align*}
\Var\Sig_{d-1}(t;h_1+h_2)&=\Var\Sig_{d-1}(t;h_1 )+\Var\Sig_{d-1}(t; h_2)\\
&\hspace{4cm}+2\Cov(\Sig_{d-1}(t;h_1),\Sig_{d-1}(t;h_2))
\end{align*}
and then expand also $(h_1+h_2)^2$ on the left-hand side.
\end{proof}

\begin{remark}\label{Remna}{\rm  For $\kappad=\nu_{d-1}$, Theorem \ref{varformula} and Corollary \ref{covcor} can be reformulated by introducing a spherical analogue of
the isotropized set-covariance function of the observation window which was used in the Euclidean setting in \cite[Theorem 5.1]{STSecondOrder}. Let $g:\SSd\times \SSd\to [0,\infty]$
be measurable. For $x,e\in\SSd$ we then define
$$
\gamma(g;x,e):=\int_{{\rm SO}(d+1)}g(\varrho x,\varrho e)\, \nu(\dint \varrho)\,.
$$
Fix $e\in\SSd$. Then, for a measurable function $f:[0,\pi]\to[0,\infty]$, applying  the invariance properties of $\ell$ and of the
spherical Hausdorff measure and Fubini's theorem,  we get
\begin{align*}
&\int_{\SSd}\int_{\SSd} f(\ell(x,y))g(x,y)\,\cH^{d}(\dint y)\,\cH^{d}(\dint x)\\
&\qquad =\beta_d\int_{{\rm SO}(d+1)}\int_{\SSd}f(\ell(x,\varrho e))g(x,\varrho e)\, \cH^{d}(\dint x)\, \nu(\dint \varrho)\\
&\qquad =\beta_d\int_{{\rm SO}(d+1)}\int_{\SSd}f(\ell(\varrho x,\varrho e))g(\varrho x,\varrho e)\, \cH^{d}(\dint x)\, \nu(\dint \varrho)\\
&\qquad =\beta_d \int_{\SSd}f(\ell(x,e)) \int_{{\rm SO}(d+1)}g(\varrho x,\varrho e)\, \nu(\dint \varrho)\, \cH^{d}(\dint x)\\
&\qquad =\beta_d \int_{\SSd}f(\ell(x,e)) \gamma(g;x,e)\, \cH^{d}(\dint x)\\
&\qquad =\beta_d \int_{\SS^d\cap e^\perp}\int_0^\pi f(\ell(\varphi))\gamma(g;\cos(\varphi) \, e+\sin(\varphi)\, u,e)(\sin\varphi)^{d-1}\, \dint\varphi\, \cH^{d-1}(\dint u)\,,
\end{align*}
where we used that the  transformation $(\SSd\cap e^\perp)\times (0,\pi)\to\SSd$, $(u,\varphi)\mapsto \cos(\varphi) \, e+\sin(\varphi) \,u$ has the Jacobian  $(\sin\varphi)^{d-1}$. In the special case $g\equiv 1$, we have $\gamma(g;x,e)=1$ for $x,e\in\SSd$, hence the formulas simplify further.
}
\end{remark}

\begin{example}\label{rem:d=2Variance}{\rm
To illustrate Theorem \ref{varformula}, we discuss the special case $\kappad=\nu_{d-1}$ and $g\equiv 1$. Fix $e\in\SSd$ arbitrarily.  Using Remark \ref{Remna},
 we obtain
\begin{align}
\Var\cH^{d-1}(Z_t)&= \pi \b_{d-2}  \int_{\SSd}\frac{1-\exp\big(-\frac{1}{\pi}\ell(x,e)t\big)
}{\ell(x,e)\sin(\ell(x,e))} \,\cH^d(\dint x)\nonumber\\
&=\pi \b_{d-2} \beta_{d-1}\int_0^\pi (\sin\varphi)^{d-2}\,\frac{1-\exp\left(-\frac{\varphi  t}{\pi}\right)}{\varphi}\, \dint \varphi\nonumber\\
&=\frac{(2\pi)^d}{(d-2)!}\int_0^1 \sin(\pi z)^{d-2}\,\frac{1-\exp\left(-zt\right)}{ z}\, \dint z\label{eq5.6neu}\\
&=\frac{(2\pi)^d}{(d-2)!}\sum_{i=0}^\infty \frac{(-t)^{i+1}}{(i+1)!}\int_0^1z^i\sin(\pi z)^{d-2}\,\dint z\,.\nonumber
\end{align}
For $d=2$ and $t>0$, this yields
$$
\Var\cH^{1}(Z_t)=4\pi^2\int_0^1 \frac{1-e^{-tz}}{ z}\, \dint z=4\pi^2 \sum_{j=1}^\infty\frac{(-1)^{j-1}}{j}\frac{t^j}{j!}=4\pi^2\big(\gamma+\ln t+E_1(t)\big)\,,
$$
where $\gamma\approx 0.5772$ is the Euler-Mascheroni constant and $E_1(t):=\int_t^\infty s^{-1}e^{-s}\, \dint s$ is the exponential integral (see \cite[Chapter 6]{OlverEtAl}).
For instance, we thus obtain $\Var\cH^{1}(Z_1)\approx 4\pi^2 \cdot 0.7566\approx 31.4485$. For two functions $f,g:[0,\infty)\to\RR$ let us write $f(t)\sim g(t)$ whenever $f(t)/g(t)\to 1$ as $t\to\infty$. Using that $E_1(t)\sim \frac{e^{-t}}{ t}$ by
\cite[(6.12.1)]{OlverEtAl}, we thus conclude  for $d=2$ that
$$
\Var\cH^{1}(Z_t )\sim 4\pi^2\ln t\to\infty\,,\qquad\text{as }t\to\infty\,.
$$
In contrast, for $d\ge 3$ we have
$$
\Var\cH^{d-1}(Z_t)\le \frac{(2\pi)^d}{(d-2)!}\int_0^1 \pi\sin(\pi z)^{d-3}\, \dint z<\infty
$$
for any $t\ge 0$. On the other hand, for fixed $t\ge 0$,  $\Var\cH^{d-1}(Z_t)\to 0$, as $d\to\infty$, at an exponential rate. Using \eqref{eq5.6neu} and numerical integration (Mathematica), the numerical values of $\Var\cH^{d-1}(Z_t)$ can be determined for $d\ge 2$ and $t>0$, see Figure \ref{fig:VarianceD}.

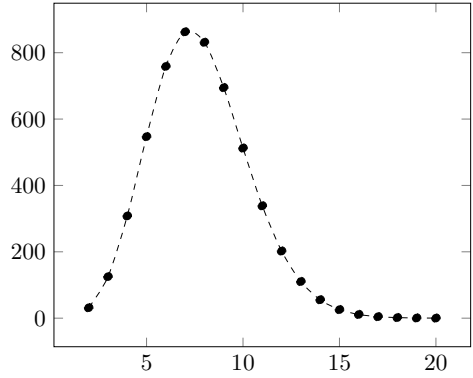
\begin{figure}[t]\label{fig:VarianceD}
\begin{center}
\begin{tikzpicture}[scale=0.8]
\begin{axis}
    \addplot[smooth,mark=*,black,dashed] plot coordinates {
        (2,31.449)
        (3,125.133)
        (4,308.091)
        (5,547.089)
        (6,758.774)
        (7,862.902)
        (8,831.403)
        (9,694.804)
        (10,512.613)
        (11,338.515)
        (12,202.312)
        (13,110.419)
        (14,55.453)
        (15,25.789)
        (16,11.168)
        (17,4.524)
        (18,1.772)
        (19,0.618)
        (20,0.209)
    };
\end{axis}
\end{tikzpicture}
\end{center}
\caption{The variance $\Var\cH^{d-1}(Z_1)$ of the total surface area of all cells of the splitting tessellation $Y_1$ as a function of $d$ for $d\in\{2,3,\ldots,20\}$.}
\end{figure}}
\end{example}

As above, let us denote by $\cH^{d-1}(t;h)$, for a bounded and measurable function $h:\SSd\to\RR$, the $h$-weighted total Hausdorff measure of all great hyperspherical pieces of a splitting tessellation constructed up to time $t$ with underlying direction distribution $\nu_{d-1}$. Then the random process $\widetilde{\cH^{d-1}}(t;h):=\cH^{d-1}(t;h)-\bE\cH^{d-1}(t;h)$, $t\geq 0$, is a martingale with respect to the natural filtration $\filtration$ induced by the splitting tessellation process. Since, by the previous theorem,
$$
\sup_{t\geq 0}\bE\widetilde{\cH^{d-1}}(t;h)^2 = \sup_{t\geq 0}\Var\cH^{d-1}(t;h)<\infty\,,
$$
provided that $d\ge 3$,
we can apply the martingale convergence theorem \cite[Theorem II.3.1]{RevusYor} to conclude that there exists a centred and square-integrable random variable $h_Z$ for which
$$
\lim_{t\to\infty}\widetilde{\cH^{d-1}}(t;h) = h_Z
$$
almost surely and in the $L^2$ sense. Moreover, and again from the previous theorem, it follows that
$$
\Var h_Z = \lim_{t\to\infty}\Var\cH^{d-1}(t;h) = \pi\frac{\b_{d-2}}{\b_d}\int_{\SSd}\int_{\SSd}\frac{h(x)h(y)}{\ell(x,y)\sin(\ell(x,y))}\,\cH^d(\dint x)\,\cH^d(\dint y)<\infty\,,
$$
provided that $d\ge 3$. Note that if $h$ is positive on an open set,
then this asymptotic variance is infinite for $d=2$.

\subsection{Further variances and covariances}\label{subsec:52FurtherVariancesCovariances}

Our next goal is to compute the variance of $\Sig_j(t;h)$ and, more generally, the covariances of the functionals $\Sig_i(t;h_1)$ and $\Sig_j(t;h_2)$ under the assumption that the  direction distribution is $\nu_{d-1}$. In order to present the result, we need to introduce some notation. First, let us define for $f:[0,t]\to\RR$ and $n\in\NN$ the iterated integral
$$
I^n(f,t) := \int_0^t\int_0^{s_1}\ldots\int_0^{s_{n-1}}f(s_n)\,\dint s_n\ldots \dint s_2\dint s_1 = \frac{1}{(n-1)!}\int_0^t(t-s)^{n-1}\,f(s)\,\dint s\,,
$$
whenever this is well defined. Moreover, for $i,j\in\{0,\ldots,d-1\}$ we put
$$
A_{i,j}(T; h_1,h_2) := \sum_{c\in T}\int_{\SS_{d-1}[c]}\phi_i(c\cap S,h_1)\,\phi_j(c\cap S,h_2)\,\nu_{d-1}(\dint S)\,,\qquad T\in\TT^d\,,
$$
where here and in what follows $h_1,h_2:\SSd\to\RR$ are bounded, measurable functions. We are now in the position to present closed formulas for the variances and covariances in terms of iterated integrals of $\bE A_{i,j}(s;h_1,h_2)$, where $A_{i,j}(s;h_1,h_2):=A_{i,j}(Y_s;h_1,h_2)$. The result is the spherical analogue of \cite[Theorem 1]{STSTITHigher}.

\begin{theorem}\label{thm:Covariances}
Let $k,\ell\in\{0,\ldots,d-1\}$ and $t\geq 0$. Let $h,h_1,h_2:\SSd\to\RR$ be bounded and measurable functions. Suppose that $\kappad=\nu_{d-1}$. Then
\begin{align*}
&\Cov(\Sig_{d-1-k}(t;h_1),\Sig_{d-1-\ell}(t;h_2)) \\
&\qquad = \sum_{m=0}^k\sum_{n=0}^\ell\binom{k+\ell-m-n}{ k-m}\,I^{k+\ell-m-n+1}(\bE A_{d-1-m,d-1-n}(\,\cdot\,;h_1,h_2),t)
\end{align*}
and, in particular,
$$
\Var \Sig_{d-1-k}(t;h) = \sum_{m,n=0}^k \binom{2k-m-n}{ k-m}\,I^{2k-m-n+1}(\bE A_{d-1-m,d-1-n}(\,\cdot\,;h,h),t)\,.
$$
\end{theorem}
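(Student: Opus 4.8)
The plan is to feed the time-augmented martingale of Proposition~\ref{prop:Dynkin3} with the right drift-compensating functions, so that the covariance itself satisfies a clean recursion in the two degrees, and then to solve that recursion. Throughout I write $i:=d-1-k$ and $j:=d-1-\ell$ and set $\phi_1(c):=\phi_i(c,h_1)$, $\phi_2(c):=\phi_j(c,h_2)$ for $c\in\PP^d$; these are bounded and measurable (the spherical intrinsic volumes being bounded by $1$). By Theorem~\ref{thm:Expectation} the functions $b_1(s):=\bE\Sig_i(s;h_1)=\frac{s^{d-i}}{(d-i)!}\frac{\cH^d(h_1)}{\b_d}$ and $b_2(s):=\bE\Sig_j(s;h_2)$ are polynomials, hence lie in $C^1([0,\infty))$, and they satisfy $b_1'=\bE\Sig_{i+1}(\,\cdot\,;h_1)$, $b_2'=\bE\Sig_{j+1}(\,\cdot\,;h_2)$ as well as $b_1(0)=b_2(0)=0$ (since $i,j\le d-1$). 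With these choices $\Psi_{\phi_1,\phi_2}(Y_t,t)=(\Sig_i(t;h_1)-b_1(t))(\Sig_j(t;h_2)-b_2(t))$, so taking expectations in the $\filtration$-martingale $N_t(\Psi_{\phi_1,\phi_2})$ and using $\Psi_{\phi_1,\phi_2}(Y_0,0)=0$ (note $\Sig_i(0;\,\cdot\,)=\phi_i(\SSd,\,\cdot\,)=0$ for $i\le d-1$) yields
\begin{equation*}
\Cov(\Sig_i(t;h_1),\Sig_j(t;h_2))=\int_0^t \bE\big[(\cA\Psi_{\phi_1,\phi_2}(\,\cdot\,,s))(Y_s)+\tfrac{\partial}{\partial s}\Psi_{\phi_1,\phi_2}(\,\cdot\,,s)(Y_s)\big]\,\dint s\,.
\end{equation*}

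\textbf{The recursion.} The integrand I would compute exactly as in the proof of Theorem~\ref{thm:Expectation}. Splitting a cell $c$ by $S$ changes $\Sig_i(\,\cdot\,;h_1)$ by $\phi_i(c\cap S,h_1)$ and $\Sig_j(\,\cdot\,;h_2)$ by $\phi_j(c\cap S,h_2)$, by the valuation property~\eqref{eqvaluation}. Expanding the increment of the product, integrating over $\SS_{d-1}[c]$ (which may be enlarged to $\SS_{d-1}$ by Lemma~\ref{setzero}) and summing over $c\in Y_s$, the Crofton formula~\eqref{eq:CroftonOnSphere2} with $k=d-1$ turns $\sum_{c}\int\phi_i(c\cap S,h_1)\,\nu_{d-1}(\dint S)$ into $\Sig_{i+1}(\,\cdot\,;h_1)$ and likewise for $h_2$, while the purely quadratic term is precisely $A_{i,j}(Y_s;h_1,h_2)$. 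Adding $\tfrac{\partial}{\partial s}\Psi_{\phi_1,\phi_2}$ and taking expectations, the terms carrying $b_1',b_2'$ multiply the mean-zero quantities $\bE[\Sig_j(s;h_2)-b_2(s)]=0$ and $\bE[\Sig_i(s;h_1)-b_1(s)]=0$ and therefore drop out; since centring a single factor already produces a covariance, what remains is
\begin{equation*}
\Cov(\Sig_i(t;h_1),\Sig_j(t;h_2))=\int_0^t\big[\Cov(\Sig_i(s;h_1),\Sig_{j+1}(s;h_2))+\Cov(\Sig_{i+1}(s;h_1),\Sig_j(s;h_2))+\bE A_{i,j}(s;h_1,h_2)\big]\,\dint s\,.
\end{equation*}

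\textbf{Solving the recursion.} Reindexing by $C_{k,\ell}(t):=\Cov(\Sig_{d-1-k}(t;h_1),\Sig_{d-1-\ell}(t;h_2))$ and $a_{k,\ell}:=\bE A_{d-1-k,d-1-\ell}(\,\cdot\,;h_1,h_2)$, and using that $\Sig_d(\,\cdot\,;h)=\cH^d(h)/\b_d$ is deterministic (so $C_{-1,\ell}=C_{k,-1}=0$) together with $C_{k,\ell}(0)=0$, the displayed identity reads $C_{k,\ell}=I^1(C_{k-1,\ell}+C_{k,\ell-1}+a_{k,\ell},\,\cdot\,)$. I would read off the solution combinatorially: a contribution of $a_{m,n}$ with $0\le m\le k$, $0\le n\le\ell$ reaches $C_{k,\ell}$ along any monotone lattice path from $(m,n)$ to $(k,\ell)$; there are $\binom{k+\ell-m-n}{k-m}$ such paths, and each imposes $k+\ell-m-n+1$ successive integrations, that is a factor $I^{k+\ell-m-n+1}(a_{m,n},\,\cdot\,)$. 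This produces exactly the claimed formula, which I would then verify rigorously by induction on $k+\ell$: differentiating the proposed expression (using $\tfrac{\dint}{\dint t}I^{p+1}(f,t)=I^p(f,t)$ and $I^p(f,0)=0$) and comparing the coefficient of each $I^{k+\ell-m-n}(a_{m,n},\,\cdot\,)$ with that occurring in $C_{k-1,\ell}+C_{k,\ell-1}+a_{k,\ell}$ reduces, for the interior indices $m<k$, $n<\ell$, to Pascal's identity $\binom{N-1}{k-m}+\binom{N-1}{k-1-m}=\binom{N}{k-m}$ with $N=k+\ell-m-n$, the boundary cases $m=k$ or $n=\ell$ being immediate. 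Uniqueness of the solution then gives the covariance formula, and the variance formula is the specialisation $k=\ell$, $h_1=h_2=h$.

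\textbf{Main obstacle.} The generator computation and the vanishing of the $b_1',b_2'$-terms are routine once~\eqref{eqvaluation} and~\eqref{eq:CroftonOnSphere2} are in hand. The genuine work lies in solving the two-index recursion: fixing the iterated-integral orders and the binomial weights correctly and confirming through Pascal's rule that the closed form is consistent at every node, including along the vanishing boundary $k=-1$ and $\ell=-1$. That bookkeeping is where I would concentrate the care.
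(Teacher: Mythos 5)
Your proposal is correct and follows essentially the same route as the paper: the time-augmented martingale of Proposition \ref{prop:Dynkin3} with $b_1,b_2$ equal to the polynomial means from Theorem \ref{thm:Expectation}, the valuation property and the Crofton formula to produce the recursion $C_{k,\ell}=I^1(C_{k-1,\ell}+C_{k,\ell-1}+a_{k,\ell},\,\cdot\,)$, and then the closed form. The only difference is that you solve the two-index recursion explicitly (lattice-path count plus induction via Pascal's rule, which checks out including the boundary cases), whereas the paper delegates exactly that step to the cited Euclidean reference \cite{STSTITHigher}.
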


\begin{proof}
For a tessellation $T\in\TT^d$ and $t\ge 0$, we consider the function
$$
g(T,t):=\left[\sum_{c\in T}\phi_i(c,h_1)-\frac{t^{d-i}}{(d-i)!}\frac{\cH^{d}(h_1)}{\beta_d}\right]
\left[\sum_{c\in T}\phi_j(c,h_2)-\frac{t^{d-j}}{(d-j)!}\frac{\cH^{d}(h_2)}{\beta_d}\right] \,,
$$
where the dependence of $g$ on $i,j,h_1,h_2$ is not indicated. Recalling that
$$
\bar\Sigma_i(t;h)=\Sigma_i(t;h)-\bE\Sigma_i(t;h)=\sum_{c\in Y_t}\phi_i(c,h)-\frac{t^{d-i}}{(d-i)!}
\frac{\cH^{d}(h)}{\beta_d}\,,
$$
we have
$$
g(Y_t,t)=\bar\Sigma_i(t;h_1)\bar\Sigma_j(t;h_2)\qquad\text{and}\qquad g(Y_0,0)=0\,.
$$
Furthermore, we get
$$
\frac{\partial g}{\partial s}(\,\cdot\,,s)(Y_s)=-\frac{s^{d-i-1}}{(d-i-1)!}
\frac{\cH^{d}(h_1)}{\beta_d}\bar\Sigma_j(s;h_2)-\frac{s^{d-j-1}}{(d-j-1)!}
\frac{\cH^{d}(h_2)}{\beta_d}\bar\Sigma_i(s;h_1)\,.
$$
Now, applying  Proposition \ref{prop:Dynkin3} with  $b_1(t)=\frac{\cH^d(h_1)}{(d-i)!\beta_d}t^{d-i}$ and $b_2(t)=\frac{\cH^d(h_2)}{(d-j)!\beta_d}t^{d-j}$, the valuation property of the spherical curvature measures $\phi_i,\phi_j$, and the fact that $\bar\Sigma_i(0;h_1)=0=\bar\Sigma_j(0;h_2)$, we obtain that
\begin{align*}
&\bar\Sigma_i(t;h_1)\bar\Sigma_j(t;h_2) -\int_0^t\sum_{c\in Y_s}\int_{\SS_{d-1}[c]}
\left\{\left(\bar\Sigma_i(s;h_1)+\phi_i(c\cap s,h_1)\right)\left(\bar\Sigma_j(s;h_2)+\phi_j(c\cap s,h_2)\right)\right.\\
&\qquad\qquad\qquad\qquad\qquad\qquad\left.
-\bar\Sigma_i(s;h_1)\bar\Sigma_j(s;h_2)\right\}\, \nu_{d-1}(\dint S)\, \dint s \\
&\qquad\qquad+
 \int_0^t \frac{s^{d-i-1}}{(d-i-1)!}
\frac{\cH^{d}(h_1)}{\beta_d}\bar\Sigma_j(s;h_2)+\frac{s^{d-j-1}}{(d-j-1)!}
\frac{\cH^{d}(h_2)}{\beta_d}\bar\Sigma_i(s;h_1)\, \dint s\\
&=\bar\Sigma_i(t;h_1)\bar\Sigma_j(t;h_2)-\int_0^t A_{i,j}(s;h_1,h_2)\, \dint s\\
&\qquad -\int_0^t\left\{\bar\Sigma_i(s;h_1)\sum_{c\in Y_s}\int_{\SS_{d-1}[c]}\phi_j(c\cap S,h_2)\, \nu_{d-1}(\dint S)
\right.\\
&\qquad\qquad\qquad\qquad\left.+\bar\Sigma_j(s;h_2)\sum_{c\in Y_s}\int_{\SS_{d-1}[c]}\phi_i(c\cap S,h_1)\, \nu_{d-1}(\dint S)\right\}\\
&\qquad\qquad +\left\{
\bar\Sigma_i(s;h_1)\frac{s^{d-j-1}}{(d-j-1)!}
\frac{\cH^{d}(h_2)}{\beta_d}
+
\bar\Sigma_j(s;h_2)\frac{s^{d-i-1}}{(d-i-1)!}
\frac{\cH^{d}(h_1)}{\beta_d}
\right\}
\, \dint s
\end{align*}
is a $\filtration$-martingale.
Using again the Crofton formula \eqref{eq:CroftonOnSphere2} for the spherical curvature measures, we conclude that
\begin{equation*}
\begin{split}
\bar\Sig_i(t;h_1)\bar\Sig_j(t;h_2) &- \int_0^t A_{i,j}(s;h_1,h_2)\,\dint s \nonumber\\
&-\int_0^t[\bar \Sig_i(s;h_1)\bar \Sig_{j+1}(s;h_2)+\bar \Sig_{i+1}(s;h_1)\bar \Sig_j(s;h_2)]\,\dint s\nonumber
\end{split}
\end{equation*}
is a $\filtration$-martingale. Taking expectations yields the recursion formula
\begin{align*}
&\Cov(\Sig_i(t;h_1),\Sig_j(t;h_2)) = \int_0^t\bE A_{i,j}(s;h_1,h_2)\,\dint s \\
&\qquad\qquad\qquad +\int_0^t[\Cov(\Sig_i(s;h_1),\Sig_{j+1}(s;h_2))+\Cov(\Sig_{i+1}(s;h_1),\Sig_j(s;h_2))]\,\dint s\,,
\end{align*}
which expresses the covariance of $\Sig_i(t;h_1)$ and $\Sig_j(t;h_2)$ by means of $\bE A_{i,j}(s;h_1,h_2)$ as well as covariances with one index increased by one. Continuing this recursion, one eventually arrives at covariances formally involving $\Sig_d(t;h_i)$, $i\in\{1,2\}$, which is identically zero. This in turn shows that the recursion terminates after finitely many steps. Arguing now exactly as at the beginning of Section 3.1 of \cite{STSTITHigher}, one arrives at the desired formula after a change of variables.
\end{proof}

We notice that Theorem \ref{thm:Covariances} is related to the variance formula from the previous section. Indeed, putting $k=0$ in the formula for the variance in Theorem \ref{thm:Covariances}, we get
\begin{align*}
\Var \Sig_{d-1}(t;h)& = I^1(\bE A_{d-1,d-1}(\,\cdot\,;h,h),t)\\
& = \int_0^t\bE\sum_{c\in Y_s}\int_{\SS_{d-1}[c]}\phi_{d-1}(c\cap S,h)^2\,\nu_{d-1}(\dint S)\,\dint s\,,
\end{align*}
which is just Equation \eqref{eq:VarianceZwischenschritt} for $\kappad=\nu_{d-1}$. A similar remark applies to Corollary \ref{covcor}.

\medbreak

Finally in this section we present two examples illustrating how one can use Theorem \ref{thm:Covariances} for explicit variance and covariance computations. Recall that in Example \ref{rem:d=2Variance} in the previous section we already computed the variance $\Var\cH^1(Z_t)$ of the Hausdorff measure of cell boundaries induced by an isotropic splitting tessellation on $\SS^2$ (which can be combined with  \eqref{basicrelation}). Together with the  following two examples, we thus obtain  a complete and fully explicit second-order description of the spherical intrinsic volumes of isotropic splitting tessellations on the $2$-dimensional unit sphere. This can be regarded a spherical analogue of one of the main results from \cite{STSTITPlane} for STIT tessellations in the plane.

\begin{example}\label{ex5.9}{\rm
Our goal is to compute $\Var\Sig_0(t)$ with $\Sig_0(t)=\sum_{c\in Y_t}V_0(c)$ for an isotropic splitting tessellation $Y_t$ with time parameter $t$ on the $2$-dimensional unit sphere $\SS^2$. Using Theorem \ref{thm:Covariances} we first obtain the representation
\begin{align*}
\Var\Sig_0(t) = 2I^3(\bE A_{1,1}(\,\cdot\,),t)+2I^2(\bE A_{1,0}(\,\cdot\,),t)+I^1(\bE A_{0,0}(\,\cdot\,),t)\,,
\end{align*}
where we use the abbreviation $A_{i,j}(\,\cdot\,):=A_{i,j}(\,\cdot\,;1,1)$. For a tessellation $T\in\TT^2$ with $T\neq\{\SS^2\}$ we have
\begin{align*}
A_{0,0}(T) &= \sum_{c\in T}\int_{\SS_{1}([c])}V_0(c\cap S)^2\,\nu_{1}(\dint S) = \frac{1}{ 4}\sum_{c\in T}\nu_{1}([c])\\
& = \frac{1}{ 4}\sum_{c\in T}2V_1(c) = \frac{1}{ 2}\sum_{c\in T}V_1(c)\,,
\end{align*}
where we used that $V_0(s)=1/2$ for any spherical line segment $s$ to obtain the second equality, and Equation \eqref{nonloc} for the third equality.
This equality also holds if $T=\{\SS^2\}$, since $V_0(S)=0$ for $S\in \SS_1$ and $V_1(\SS^2)=0$ imply that both sides are zero.

Using once more that $V_0(s)=1/2$ for any spherical line segment $s$ together with the spherical Crofton formula and the fact that $V_2(c)=\cH^2(c)/\beta_2$, we also obtain, for $T\in\TT^2$ with $T\neq\{\SS^2\}$,
\begin{align}\label{eq:A10}
A_{1,0}(T) &= \sum_{c\in T}\int_{\SS_{1}([c])}V_0(c\cap S)V_1(c\cap S)\,\nu_{1}(\dint S)
= \frac{1}{2}\sum_{c\in T}\int_{\SS_{1}}V_1(c\cap S)\,\nu_{1}(\dint S)\nonumber\\
&=\frac{1}{2}\sum_{c\in T}V_2(c) = \frac{1}{ 2\beta_2}\sum_{c\in T}\cH^2(c) = \frac{1}{2}\,,
\end{align}
where we replaced $\SS_{1}([c])$ by $\SS_{1}$, which does not change the value of the integral. In addition, we have $A_{1,0}(T)=0$ for $T=\{\SS^2\}$,
since $V_0(S)=0$ for $S\in \SS_1$.

Finally, we note that
\begin{align*}
A_{1,1}(T) = \sum_{c\in T}\int_{\SS_{1}([c])}V_1(c\cap S)^2\,\nu_{1}(\dint S)\,.
\end{align*}
Now we compute the iterated integrals $I^1(\bE A_{0,0}(\,\cdot\,),t)$, $I^2(\bE A_{1,0}(\,\cdot\,),t)$ and $I^3(\bE A_{1,1}(\,\cdot\,),t)$. Using Theorem \ref{thm:Expectation},  we obtain
\begin{align*}
I^1(\bE A_{0,0}(\,\cdot\,),t) = \frac{1}{2}\int_0^t \bE \sum_{c\in Y_s}V_1(c)\,\dint s = \frac{1}{2}\int_0^t s\,\dint s = \frac{1}{ 4}t^2\,.
\end{align*}
Moreover,
\begin{align*}
I^2(\bE A_{1,0}(\,\cdot\,),t) &= \int_0^t (t-s) \left[\frac{1}{2}\bP(Y_s\neq\{\SS^2\})+0\cdot\bP(Y_s=\{\SS^2\})\right]\,\dint s \\
&= \frac{1}{2}\int_0^t(t-s)\left(1-e^{-s}\right)\,\dint s = \frac{1}{2}\left(1-t+\frac{1}{2}t^2-e^{-t}\right)\,.
\end{align*}
To compute $I^3(\bE A_{1,1}(\,\cdot\,),t)$ we use the computations already carried out in the proof of Theorem \ref{varformula}. Using then also
Proposition \ref{prop:Trafo} and the transformation explained in Remark \ref{Remna}, we obtain
\begin{align*}
I^3(\bE A_{1,1}(\,\cdot\,),t) &= \frac{1}{2}\int_0^t (t-s)^2\,\bE \sum_{c\in Y_s}\int_{\SS_{1}}V_1(c\cap S)^2\,\nu_{1}(\dint S)\,\dint s\\
&=\frac{1}{2\beta_1^2}\int_0^t (t-s)^2\int_{\SS_{1}}\int_S\int_S e^{-\frac{\ell(x,y)}{\pi}s}\,\cH^1(\dint x)\,\cH^1(\dint y)\,\nu_1(\dint S)\,\dint s\\
&=\frac{1}{2\pi}\int_0^t(t-s)^2\int_0^\pi e^{-\frac{\varphi}{\pi}s}\, \dint\varphi\, \dint s\\
&=\frac{1}{2}t^2\int_0^1\frac{(1-z)^2\left(1-e^{-tz}\right)}{z}\, \dint z\,.
\end{align*}
Collecting all contributions, we arrive at
\begin{align*}
\Var\Sig_0(t) = t^2\int_0^1\frac{(1-z)^2\left(1-e^{-tz}\right)}{z}\, \dint z +1-t+\frac{3}{4}t^2-e^{-t}\,.
\end{align*}
In terms of the Euler-Mascheroni constant $\gamma$ and the exponential integral $E_1$ (recall their definitions from Example \ref{rem:d=2Variance}) this can also be expressed as
\begin{align*}
\Var\Sig_0(t) = t^2\ln t+t^2\Big(\gamma-\frac{3}{ 4}+E_1(t)\Big)+t\left(1-e^{-t}\right)\sim t^2\ln t\,.
\end{align*}
}
\end{example}

\begin{example}{\rm
We are now going to determine the covariance $\Cov(\Sig_0(t),\Sig_1(t))$, where $\Sig_j(t)=\sum_{c\in Y_t}V_j(c)$, $j\in\{0,1\}$, for an isotropic splitting tessellation $Y_t$ on $\SS^2$. Using the notation introduced in the previous example, Theorem \ref{thm:Covariances} yields
$$
\Cov(\Sig_0(t),\Sig_1(t)) = I^2(\bE A_{1,1}(\,\cdot\,),t)+I^1(\bE A_{0,1}(\,\cdot\,),t)\,.
$$
The iterated integrals can be computed as follows. From \eqref{eq:A10} we first have that
$$
I^1(\bE A_{0,1}(\,\cdot\,),t) = \frac{1}{ 2}\int_0^t 1- e^{-s}\,\dint s=\frac{1}{2}\left(t-1+e^{-t}\right)\,.
$$
Next, we compute $I^2(\bE A_{1,1}(\,\cdot\,),t)$ in the same way as in the previous example (using the transformation formulas from Proposition \ref{prop:Trafo} and Remark \ref{Remna}):
\begin{align*}
I^2(\bE A_{1,1}(\,\cdot\,),t) &= \frac{1}{\beta_1^2}\int_0^t(t-s)\int_{\SS_1}\int_S\int_Se^{-\frac{\ell(x,y)}{\pi}s}\,\cH^1(\dint x)\,\cH^1(\dint y)\,\nu_1(\dint S)\,\dint s\\
&=\int_0^t(t-s)\int_0^1e^{-xs}\, \dint x\, \dint s\\
&=t\int_0^1\frac{(1-z)\left(1-e^{-tz}\right)}{z}\, \dint z\,.
\end{align*}
As a result, we obtain the following explicit covariance formula in terms of the Euler-Mascheroni constant $\gamma$ and the exponential integral $E_1$:
\begin{align*}
\Cov(\Sig_0(t),\Sig_1(t)) &= t\int_0^1\frac{(1-z)\left(1-e^{-tz}\right)}{z}\, \dint z+\frac{1}{2}\left(t-1+e^{-t}\right)\\
&=t\ln t+t\left(\gamma -\frac{1}{2} +E_1(t)\right)+\frac{1}{ 2}\left(1-e^{-t}\right)\sim t\ln t\,.
\end{align*}
}
\end{example}

\subsection{Spherical pair-correlation function}\label{sec:PCF}

The purpose of this section is to compute the $K$-function and the spherical pair-correlation function of the $(d-1)$-dimensional random Hausdorff measure induced by a splitting tessellation on $\SSd$. Before we present our result, we shall first introduce the functions we are interested in. To this end, we use the concept of Palm distributions in homogeneous spaces (see \cite{RotherZaehle}) and introduce the $K$-function and the spherical pair-correlation function as the spherical analogues of corresponding functions of a stationary random measure in $\RR^d$. For an introduction of these concepts in a Euclidean space, background information and applications to spatial statistics, we refer to Ripley's original contribution  \cite{Ripley} and to the discussion included there, to \cite[Chapter 7.2.2]{CSKM} (for general stationary random measures) and to \cite[Chapters 4.5-6]{CSKM} and \cite[Chapters 7.3-10]{BT} (for stationary point processes).

\medskip

Let $\MM(\SSd)$ be the space of finite Borel measures on $\SSd$ and denote by $\cM(\SSd)$ the $\sigma$-field on $\MM(\SSd)$ generated by the evaluation mappings $m\mapsto m(A)$, $m\in\MM(\SSd)$, for each $A\in\cB(\SSd)$. Let $\bM$ be a random measure on $\SSd$, that is, a measurable mapping from an underlying probability space $(\O,\sigmaalgebra,\bP)$ into $\MM(\SSd)$, and denote by $\bP_\bM:=\bP\circ\bM^{-1}$ the distribution of $\bM$. In what follows, we assume that $\bM$ has finite and non-zero intensity measure $\bE[\bM(\,\cdot\,)]$. We call $\mu:=\bE[\bM(\SSd)]\in(0,\infty)$ the intensity of
$\bM$. If $\bM$ is isotropic, then  $\bE[\bM(\,\cdot\,)]=\mu\, \b_d^{-1}\cH^d(\,\cdot\,)$ on $\SSd$.
 As usual, for a set $B\subset \MM(\SSd)$ and $\varrho\in \SO(d+1)$,
we define $\varrho B:=\{\varrho(m):m\in B\}$ and $\varrho(m):=\varrho m:=m\circ\varrho^{-1}$.

\medskip

The unit sphere $\SSd$ is a homogeneous space on which $\SO(d+1)$ acts transitively, and $\SO(d)$ can be interpreted as the stabilizer of the north pole $e:=(0,\ldots,0,1)\in\RRd1$ (or any other fixed point of $\SSd$). Following \cite{RotherZaehle}, for $x\in\SSd$ we put $\Theta_x:=\{\varrho\in\SO(d+1):\varrho e=x\}$, denote by $\nu_e$ the unique Haar probability measure on $\Theta_e$ and define $\nu_x:=\nu_e\circ\varrho^{-1}_x$ for an arbitrary $\varrho_x\in\Theta_x$ (the definition is in fact  independent of the particular choice of $\varrho_x$). We are now prepared to define the Palm distribution of the random measure $\bM$ with respect to $e$ by
\begin{align*}
\bP_\bM^e(B):&=\frac{1}{\mu}\int_{\MM(\SSd)}\int_{\SSd}\int_{\Theta_x}{\bf 1}(\varrho^{-1} m\in B)\,\nu_x(\dint\varrho)
\,m(\dint x)\,\bP_\bM(\dint m)\\
&=\frac{1}{\mu}\bE\int_{\SSd}\int_{\Theta_x}{\bf 1}(\varrho^{-1} \bM\in B)\,\nu_x(\dint\varrho)
\,\bM(\dint x)\,,\qquad B\in\cM(\SSd)\,.
\end{align*}
Intuitively speaking, the Palm distribution of $\bM$ is the conditional distribution of $\bM$ under the condition that $e$ is in the support of $\bM$.
We emphasize that this definition does not require that $\bM$ is isotropic (here we take advantage of the compactness of $\SSd$). Moreover, a routine argument shows that
\begin{equation}\label{eqinvariant}
\bP_\bM^e(B)=\bP_{ \bM}^{\sigma e}(\sigma B)\,,\qquad B\in\cM(\SSd)\,,\sigma\in \SO(d+1)\,.
\end{equation}

Using the concept of Palm distributions, we introduce the spherical $K$-function  as well as the spherical pair-correlation function of $\bM$.

\begin{definition}{\rm
Let $\bM$ be a random measure on $\SSd$ with intensity $\mu\in(0,\infty)$ and Palm distribution $\bP_\bM^e$. The spherical $K$-function of $\bM$ is defined as
$$
K_{\bM}(r) := \frac{1}{\mu}\int_{\MM(\SSd)}m(B(e,r))\,\bP_\bM^e(\dint m)\,,\qquad r\in(0,\pi)\,,
$$
where $B(e,r)=\{x\in\SSd:\ell(x,e)\leq r\}$ is the geodesic ball centred at $e$ and with radius $r$. If $K_{\bM} $ is differentiable, then we call
$$
g_{\bM}(r) := \frac{\b_d}{\b_{d-1}(\sin r)^{d-1}} \, K_{\bM}'(r)\,,\qquad r\in (0,\pi)\,,
$$
the spherical pair-correlation function of $\bM$.}
\end{definition}

The value $K_{\bM}(r)$ of the $K$-function of $\bM$ can be interpreted as the mean measure of $B(e,r)$ under the condition that $e$
is in the support of $\bM$. It follows from \eqref{eqinvariant} that the $K$-function and the pair-correlation function thus defined are
independent of the choice of the reference point $e\in\SSd$.

In the following it is useful to rewrite the $K$-function of $\bM$. This again shows that the preceding definitions are independent of the choice of the reference point $e$. We get
\begin{align}
\nonumber K_{\bM}(r)&=\frac{1}{\mu^2}\bE \int_{\SSd}\int_{\Theta_x} (\varrho^{-1} \bM)(B(e,r))\,  \nu_x(\dint\varrho)\,\bM(\dint x)\\
\nonumber &=\frac{1}{\mu^2} \bE \int_{\SSd} \int_{\Theta_x} \bM(B(x,r))\,  \nu_x(\dint\varrho)\,\bM(\dint x)\\
\nonumber &=\frac{1}{\mu^2} \bE \int_{\SSd}  \bM(B(x,r))\,\bM(\dint x)\\
&=\frac{1}{\mu^2} \bE \int_{(\SSd)^2}{\bf 1}(\ell(x,y)\le r)\, \bM^2(\dint(x,y))\,.\label{eq:KMAlternativeRepresentation}
\end{align}
This representation describes $K_{\bM}(r)$ as the normalized second moment measure of $\bM$ of the set of all pairs $(x,y)\in(\SSd)^2$
having distance at most $r$. Thus $K_{\bM}(r)$ quantifies the chance that two points chosen independently from the support of $\bM$ have distance at most $r$ from each other.

\begin{remark}\rm
In the Euclidean case, the $K$-function is defined in a similar way. However, the pair-correlation function is defined by
$$
g(r) = \frac{1}{ d\kappa_dr^{d-1}}  K'(r)\,,
$$
if $K$ is differentiable.
Since $d\kappa_dr^{d-1}$ is the surface area of a $(d-1)$-sphere of radius $r$ in Euclidean space $\RR^d$, in spherical space  we divide  by $\b_{d-1}(\sin r)^{d-1}$, the $(d-1)$-dimensional Hausdorff measure of the boundary of a geodesic ball at distance $r$ to $e$. The additional factor $\beta_d$ arises, since we are working with the normalized Hausdorff measure.
\end{remark}

As anticipated above, we next compute the spherical $K$-function $K_{d,t}$ and the pair-correlation
function $g_{d,t}(r)$ of the $(d-1)$-dimensional random Hausdorff measure induced by the splitting tessellation $Y_t$ with direction distribution $\kappad=\nu_{d-1}$ on $\SSd$. In other words, we consider the isotropic random measure $\bM=\cH^{d-1}\llcorner Z_t$ with $Z_t$ defined in  \eqref{eq:DefZt}. For general direction distributions, the $K$-function does not reduce to a one-dimensional integral and therefore will only be provided at the end of this section.

\begin{theorem}\label{thm:PCF}
Suppose that $\kappad=\nu_{d-1}$.
If $t>0$ and $r\in(0,\pi)$, then
$$
K_{d,t}(r) =\frac{\b_{d-1}}{\b_d}
\int_0^r \Bigg(1+\pi\frac{\b_{d-2}\b_d}{\b_{d-1}^2}\frac{1-\exp\big(-\frac{t}{\pi}\varphi\big)}{ t^2\varphi\sin\varphi}\Bigg)(\sin\varphi)^{d-1}\,\dint\varphi
$$
and
$$
g_{d,t}(r) = 1 + \pi\frac{\b_{d-2}\b_d}{\b_{d-1}^2}\,\frac{1-\exp(\frac{-tr}{ \pi})}{ t^2r\sin r}\,.
$$
\end{theorem}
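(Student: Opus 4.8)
The plan is to reduce the statement to the expected second moment measure of the random measure $\bM=\cH^{d-1}\llcorner Z_t$ and to feed it into the alternative representation \eqref{eq:KMAlternativeRepresentation} of the $K$-function. First I would record the first- and second-order inputs already available for $\kappad=\nu_{d-1}$: by Corollary \ref{cor:HausdorffMeasure} the intensity is $\mu=\bE[\bM(\SSd)]=\b_{d-1}t$ and the mean measure is $\bE[\bM(\,\cdot\,)]=\b_d^{-1}\b_{d-1}t\,\cH^d$, while by Corollary \ref{covcor} the bilinear form $(h_1,h_2)\mapsto\Cov(\bM(h_1),\bM(h_2))$ is given by integrating $h_1\otimes h_2$ against the kernel $\pi\b_{d-2}\b_d^{-1}(1-e^{-\ell(x,y)t/\pi})/(\ell(x,y)\sin\ell(x,y))$ with respect to $\cH^d\otimes\cH^d$.

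Next I would identify the expected second moment measure $\mathcal{M}_2:=\bE[\bM\otimes\bM]$. For bounded measurable $h_1,h_2$ one has $\mathcal{M}_2(h_1\otimes h_2)=\bE[\bM(h_1)\bM(h_2)]=\Cov(\bM(h_1),\bM(h_2))+\bE[\bM(h_1)]\bE[\bM(h_2)]$, so the two inputs above show that $\mathcal{M}_2$ agrees, on all products $\mathbf 1_A\otimes\mathbf 1_B$, with the finite measure on $(\SSd)^2$ whose $\cH^d\otimes\cH^d$-density is
\[
\rho(x,y)=\Big(\tfrac{\b_{d-1}}{\b_d}t\Big)^2+\pi\frac{\b_{d-2}}{\b_d}\,\frac{1-e^{-\ell(x,y)t/\pi}}{\ell(x,y)\sin\ell(x,y)}.
\]
Since rectangles form a $\pi$-system generating $\cB((\SSd)^2)$ and $\mathcal{M}_2$ is finite for fixed $t$ (as $\bE[\bM(\SSd)^2]=\Var\cH^{d-1}(Z_t)+(\b_{d-1}t)^2<\infty$ by the second-order results already established), a standard uniqueness argument forces $\mathcal{M}_2=\rho\cdot(\cH^d\otimes\cH^d)$. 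In particular $\mathcal{M}_2$ is absolutely continuous, the diagonal carries no mass, and I may legitimately pair $\rho$ with the non-product indicator $\mathbf 1(\ell(x,y)\le r)$.

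Then I would insert this into \eqref{eq:KMAlternativeRepresentation}, so that $K_{d,t}(r)=\mu^{-2}\int_{\SSd}\int_{\SSd}\mathbf 1(\ell(x,y)\le r)\,\rho(x,y)\,\cH^d(\dint x)\,\cH^d(\dint y)$, and collapse the double integral to a radial one via the transformation of Remark \ref{Remna} with $g\equiv 1$, which gives $\int_{\SSd}\int_{\SSd}f(\ell(x,y))\,\cH^d\,\cH^d=\b_d\b_{d-1}\int_0^\pi f(\varphi)(\sin\varphi)^{d-1}\,\dint\varphi$ for radial $f$. Applying this to the two summands of $\mathbf 1(\ell\le r)\rho$ and using $\mu^2=\b_{d-1}^2t^2$, the constants combine so that the first term yields $\tfrac{\b_{d-1}}{\b_d}\int_0^r(\sin\varphi)^{d-1}\dint\varphi$ and the second yields $\tfrac{\b_{d-1}}{\b_d}\cdot\pi\tfrac{\b_{d-2}\b_d}{\b_{d-1}^2}t^{-2}\int_0^r\frac{1-e^{-\varphi t/\pi}}{\varphi\sin\varphi}(\sin\varphi)^{d-1}\dint\varphi$, i.e. exactly the claimed integrand. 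The pair-correlation formula then follows immediately: differentiating the integral representation of $K_{d,t}$ gives $K_{d,t}'(r)=\tfrac{\b_{d-1}}{\b_d}\big(1+\pi\tfrac{\b_{d-2}\b_d}{\b_{d-1}^2}\tfrac{1-e^{-tr/\pi}}{t^2r\sin r}\big)(\sin r)^{d-1}$, and multiplying by $\b_d/(\b_{d-1}(\sin r)^{d-1})$ cancels all prefactors and the power of $\sin r$, leaving the stated $g_{d,t}(r)$.

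The computation is routine once $\rho$ is in hand; the one genuinely nontrivial point is the second paragraph, namely promoting the covariance and mean data—which are supplied only against product test functions—to an identity of measures on $(\SSd)^2$ that can then be paired with the geometric indicator $\mathbf 1(\ell\le r)$. The only things to verify carefully there are the finiteness of $\mathcal{M}_2$ and the integrability of $\rho$ near the diagonal, where $1-e^{-\ell t/\pi}\sim\ell t/\pi$ tames the $1/(\ell\sin\ell)$ singularity so that $\rho\,(\sin\varphi)^{d-1}$ stays bounded as $\varphi\to0$ in every dimension $d\ge2$; both facts are guaranteed by the variance results already proved.
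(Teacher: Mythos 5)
Your proposal is correct and follows essentially the same route as the paper's proof: both obtain the second moment measure of $\cH^{d-1}\llcorner Z_t$ from Corollary \ref{cor:HausdorffMeasure} and Corollary \ref{covcor}, insert it into the representation \eqref{eq:KMAlternativeRepresentation}, reduce to a radial integral via Remark \ref{Remna}, and differentiate. The only difference is that you spell out, via a $\pi$-system argument, the extension from product test functions to the indicator $\mathbf 1(\ell(x,y)\le r)$, a step the paper dispatches with the phrase ``extends to arbitrary bounded measurable functions in the usual way.''
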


\begin{proof}
Using $\bE\cH^{d-1}(Z_t\cap A)=(\beta_{d-1}/\beta_d)\cH^d(A)t$ for Borel sets $A\subset\SSd$, which follows from Corollary \ref{cor:HausdorffMeasure}, and Corollary \ref{covcor},
we obtain for  measurable functions $h_1,h_2:\SSd\to[0,\infty)$ that
\begin{align*}
&\bE \int_{Z_t}\int_{Z_t} h_1(x)h_2(y)\, \cH^{d-1}(\dint x)\,\cH^{d-1}(\dint y)\\[2ex]
&\qquad = \int_{\SSd}\int_{\SSd}{\pi}\frac{\b_{d-2}}{\b_d} \frac{1-\exp\big(-\frac{1}{\pi}
\ell(x,y)t\big)}{\ell(x,y)\sin(\ell(x,y))}h_1(x)h_2(y)\,\cH^d(\dint x)\,\cH^d(\dint y)\\
&\qquad\qquad + \left(\frac{\b_{d-1}}{\b_d}t\right)^2\int_{\SSd}\int_{\SSd} h_1(x)h_2(y)\,\cH^d(\dint x)\,\cH^d(\dint y)\\[2ex]
&\qquad=\left(\frac{\b_{d-1}t}{\b_d}\right)^2
\int_{\SSd}\int_{\SSd}\left(1+{\pi}\frac{\b_{d-2}\b_d}{\b_{d-1}^2} \frac{1-\exp\big(-\frac{1}{\pi}
\ell(x,y)t\big)}{ t^2\ell(x,y)\sin(\ell(x,y))}\right)\\[2mm]
&\hspace{3cm}\times h_1(x)h_2(y)\,\cH^d(\dint x)\,\cH^d(\dint y)\,,
\end{align*}
which extends to arbitrary bounded  measurable functions $(x,y)\mapsto h(x,y)$ in the usual way.

Thus, using   \eqref{eq:KMAlternativeRepresentation} and   $\mu=\bE \cH^{d-1}(Z_t)  =\beta_{d-1}t$, we find that
\begin{align*}
K_{d,t}(r) &=\frac{1}{\b_d^2}\int_{\SSd}\int_{\SSd}\left(1+{\pi}\frac{\b_{d-2}\b_d}{\b_{d-1}^2} \frac{1-\exp\big(-\frac{1}{\pi}
\ell(x,y)t\big)}{ t^2\ell(x,y)\sin(\ell(x,y))}\right)\\[2mm]
&\hspace{3cm}\times{\bf 1}(\ell(x,y)\le r)\,\cH^d(\dint y)\,\cH^d(\dint x)\,.
\end{align*}
Remark \eqref{Remna} implies that
\begin{equation*}
K_{d,t}(r) =\frac{\b_{d-1}}{\b_d}
\int_0^r \Bigg(1+\pi\frac{\b_{d-2}\b_d}{\b_{d-1}^2}\frac{1-\exp\big(-\frac{t}{\pi}\varphi\big)}{ t^2\varphi\sin\varphi}\Bigg)(\sin\varphi)^{d-1}\,\dint\varphi\,.
\end{equation*}
Consequently, $K_{d,t}(r)$ is differentiable with respect to $r$,
$$
g_{d,t}(r) = \frac{\b_d}{\b_{d-1}(\sin r)^{d-1}} K_{d,t}'(r)
= 1 + \pi\frac{\b_{d-2}\b_d}{\b_{d-1}^2}\,\frac{1-\exp\left(-\frac{1}{\pi}tr \right)}{ t^2r\sin r}\,,
$$
and thus the proof is complete.
\end{proof}

For a STIT tessellation in Euclidean space (assuming stationarity and isotropy), it was shown in \cite[Theorem 7.1]{STSecondOrder}
that the pair-correlation function is given by
$$
g_{d,t}^{{\rm euc}}(r)=1+\frac{d-1}{2}\frac{1-\exp\left(-\frac{2\beta_{d-2}}{(d-1)\beta_{d-1}}tr\right)}{t^2r^2}\,.
$$
Although the structure of this formula is similar to the one obtained in the spherical setting, there is a crucial difference.
While  $g_{d,t}^{{\rm euc}}(r)$ is jointly scaling invariant in the sense that $g_{d, t}^{{\rm euc}}(\lambda r)=g_{d,\lambda t}^{{\rm euc}}(r)$ for
all $\lambda, t,r>0$, this  property is not available for $g_{d,t}(r)$. Moreover, in the numerator of the formulas, the factor $r^2$ in the Euclidean case is replaced by $r\sin r$ in the spherical case, which yields an
additional singularity at $r=\pi$.

\begin{remark}\label{remKdtstit} {\rm
Arguing as in the isotropic case and using Corollary \ref{cor:HausdorffMeasure} and Corollary \ref{covcor}, we obtain
for a general regular direction distribution $\kappad$  that
\begin{align*}
K_{d,t}(r)&=\overline{\kappad}^2(\{(x,y)\in (\SSd)^2:\ell(x,y)\le r\})\\
& +\frac{1}{\beta_{d-1}^2}\int_{\SS_{d-1}}\int_{S}\int_{S}\frac{1-\exp\left(-\kappa(x,y)t\right)}{t^2\kappa(x,y)}
\mathbf{1}(\ell(x,y)\le r)\,\cH^{d-1}(\dint x)\, \cH^{d-1}(\dint y)\, \kappad(\dint S)\,.
\end{align*}
In view of this, the simple form of the $K$-function in the isotropic case is remarkable.
}
\end{remark}

\section{Comparison with Poisson great hypersphere tessellations}\label{sec:Comparison}

\subsection{Construction, surface intensity and capacity functional}

\begin{figure}[t]
\begin{center}
\includegraphics[width=0.5\columnwidth]{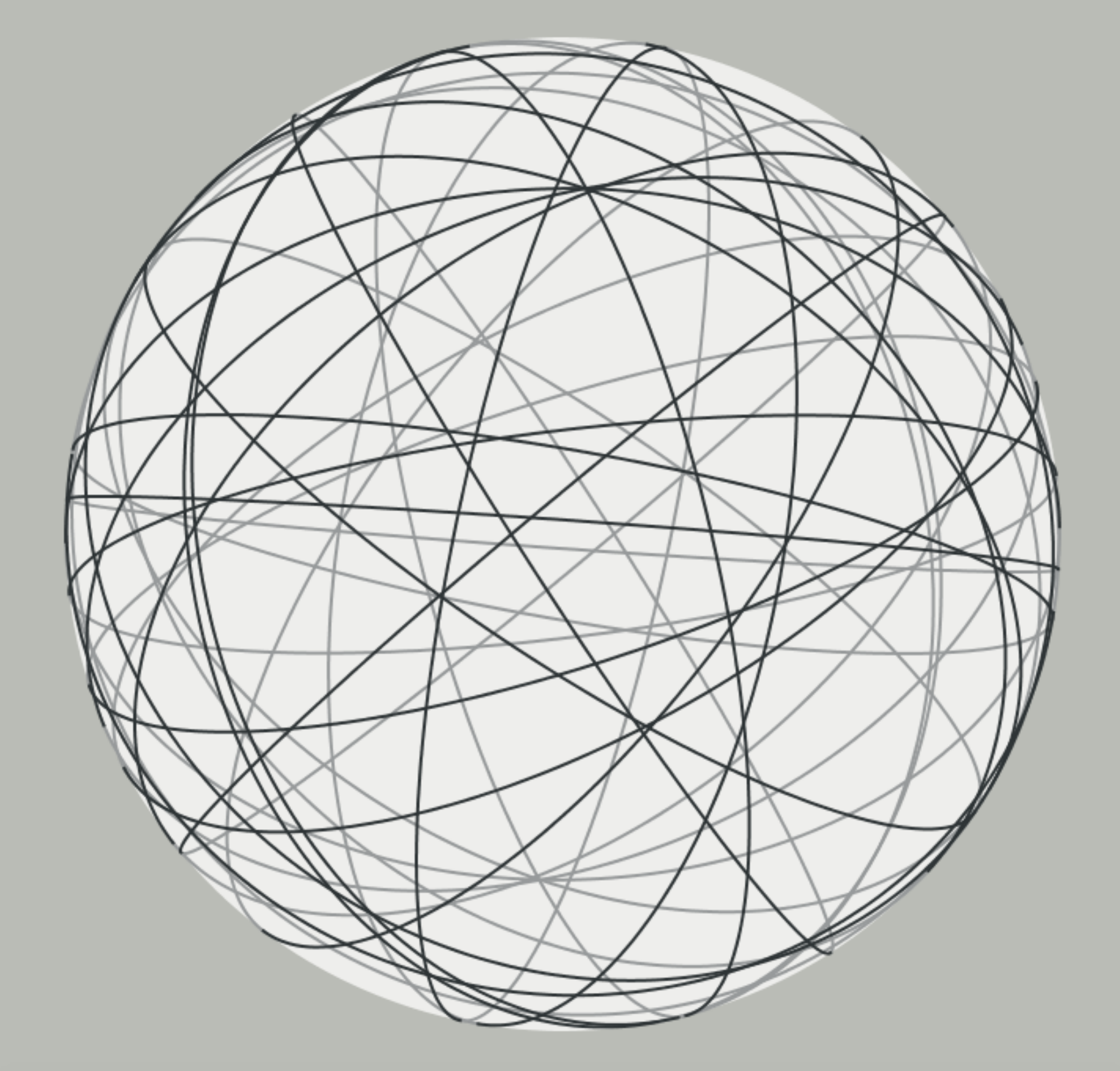}
\end{center}
\caption{Illustration of a Poisson great circle tessellation on $\SS^2$.}
\label{fig:Poisson}
\end{figure}

We now compare the pair-correlation function related to the splitting process $(Y_t)_{t\ge 0}$ at time $t$ (as discussed in Section
\ref{sec:PCF}) to the pair-correlation function of a tessellation induced by a Poisson process of great hyperspheres in $\SSd$ with intensity $t>0$.
Although we shall mainly focus on the isotropic case in this section, we present a general framework involving a general direction distribution in view of what is needed in Section \ref{sec:7TypicalCellsFacesAndDistributions}.

\medskip

To describe the model, let $\eta_t$ be a  Poisson point process on $\SSd$ with intensity measure $t\,\kappad^\circ$ (as in \cite{LP}, we consider $\eta_t$ as a random measure, but still write $x\in\eta_t$ provided that $\eta_t(\{x\})>0$), where $\kappa^\circ$ is a regular symmetric probability measure on $\SS^d$ (cf.~Section \ref{subsec21:BasicSphericalGeometry}). Let $F:\SSd\to\SS_{d-1}$ be given by $F(u):=u^\perp\cap\SSd$ and
 put $\Phi_t:=\eta_t\circ F^{-1}$. Then we denote by $\overline{Y}_t$ the tessellation of $\SSd$ induced by the Poisson process  $\Phi_t$, that is, $\overline{Y}_t$ is a Poisson great hypersphere tessellation. The associated random closed set
$$
\overline{Z}_t:=\bigcup_{u\in\eta_t}F(u)=\bigcup_{u\in\eta_t}(u^\perp\cap\SSd)
$$
is equal to the union of the cell boundaries of $\overline{Y}_t$, see Figure \ref{fig:Poisson} for an illustration. We call $\kappad^\circ$ the direction distribution and $t$ the intensity of $\eta_t$ and of $\Phi_t$ (we also
refer to the regular measure $\kappad$ on $\SS_{d-1}$ associated with $\kappad^\circ$ as the direction distribution of $\Phi_t$). Clearly, $\kappad^\circ$ is rotation invariant (that is, $\kappad^\circ=\beta_d^{-1}\cH^d$)  if and only if one (and then all) of the random objects $\eta_t$, $\Phi_t$, and $\overline{Y}_t$ is isotropic.

\medskip

The next lemma ensures that the great hyperspheres of $\Phi_t$ are almost surely in general position, which is (implicitly) used in some of the combinatorial considerations in the following.

\begin{lemma}\label{genposition}
Let $\kappa^\circ$ be a regular symmetric probability measure on $\SSd$, and let $\overline{Y}_t$ be the Poisson great hypersphere tessellation induced by a Poisson point process $\eta_t$ on $\SSd$ with intensity measure $t\kappa^\circ$ (or from a Poisson great hypersphere process on $\SS_{d-1}$
with intensity measure $t\kappa$). Then any $k$-face of $\overline{Y}_t$ is almost surely contained in precisely $d-k$ great hyperspheres of $\Phi_t$, for $k\in\{0,\ldots,d-1\}$.
Moreover, almost surely any $j$ points of $\eta_t$ are linearly independent (if considered as unit vectors), for $j\in\{1,\ldots,d+1\}$.
\end{lemma}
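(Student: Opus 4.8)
The plan is to establish the second assertion (general position of the points of $\eta_t$) first, and then to deduce the statement about $k$-faces from it together with the combinatorial structure of a great hypersphere arrangement. Since the map $F\colon u\mapsto u^\perp\cap\SSd$ together with the relation \eqref{relme} identifies the Poisson process $\eta_t$ on $\SSd$ with intensity measure $t\kappa^\circ$ and the great hypersphere process $\Phi_t$ on $\SS_{d-1}$ with intensity measure $t\kappa$, it suffices to argue on the level of $\eta_t$ on $\SSd$. Throughout I would only use the regularity of $\kappa^\circ$, i.e.\ that $\kappa^\circ(e^\perp\cap\SSd)=0$ for every $e\in\SSd$.

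For the linear independence, I would fix $j\in\{1,\ldots,d+1\}$ and bound the expected number of ordered $j$-tuples of distinct points of $\eta_t$ that are linearly dependent (as vectors in $\RRd1$). By the multivariate Mecke equation (see \cite{LP}) this expectation equals
\[
t^j\int_{(\SSd)^j}\mathbf{1}\big(u_1,\ldots,u_j\text{ linearly dependent}\big)\,(\kappa^\circ)^{\otimes j}(\dint(u_1,\ldots,u_j))\,,
\]
so it remains to show that this integral vanishes, which I would do by induction on $j$. For $j=1$ a unit vector is never the origin, so there is nothing to integrate. In the inductive step I would split the dependence event: either $u_1,\ldots,u_{j-1}$ are already dependent (contributing zero by the induction hypothesis), or they are independent and $u_j\in\operatorname{lin}(u_1,\ldots,u_{j-1})$. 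In the latter case $\operatorname{lin}(u_1,\ldots,u_{j-1})\cap\SSd$ is a subsphere of dimension at most $j-2\le d-1$, hence contained in some great hypersphere $e^\perp\cap\SSd$; by regularity this set has $\kappa^\circ$-measure zero, so the inner integral over $u_j$ vanishes. Therefore the whole integral is zero, the expected number of linearly dependent tuples is zero, and taking the finite union over $j\in\{1,\ldots,d+1\}$ gives the claim almost surely.

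To derive the statement about faces, I would fix a $k$-face $F$ of $\overline{Y}_t$, write $U:=\operatorname{lin}(F)$ (a $(k+1)$-dimensional subspace of $\RRd1$), and let $I:=\{u\in\eta_t:F\subset u^\perp\cap\SSd\}$ be the set of points whose great hyperspheres contain $F$; equivalently $I=\eta_t\cap(U^\perp\cap\SSd)$, since $F\subset u^\perp$ holds precisely when $u\in U^\perp$. The key geometric input is that, as for any central hyperplane arrangement, the linear hull of a face equals the intersection of the hyperplanes containing it, so $U=\bigcap_{u\in I}u^\perp$ and hence $U^\perp=\operatorname{lin}(I)$. Since $\dim U^\perp=(d+1)-(k+1)=d-k$, the points of $I$ span a $(d-k)$-dimensional space; because $d-k\le d+1$, the general position proved above forces them to be linearly independent, whence $|I|=\dim\operatorname{lin}(I)=\dim U^\perp=d-k$, which is exactly the assertion.

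I expect the main obstacle to be making the face/carrier relationship precise: one must verify that the relative interior of $F$ is open in the subsphere $\bigcap_{u\in I}u^\perp\cap\SSd$, so that $\dim F$ equals the dimension of this subsphere and $U=\operatorname{lin}(F)$ genuinely coincides with $\bigcap_{u\in I}u^\perp$. A related subtlety to address is that $|I|$ cannot a priori be large: if $|I|\ge d+1$, then any $d+1$ of its points would be linearly independent by general position and thus span all of $\RRd1$, contradicting $\operatorname{lin}(I)=U^\perp\neq\RRd1$ (valid since $k\le d-1$); this confirms $|I|\le d$ and legitimises the application of the general position result without circularity.
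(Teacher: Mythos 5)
Your proof is correct and follows essentially the same route as the paper, which disposes of the lemma in one line by citing the factorial moment measure formula for Poisson processes (\cite[Theorem 3.1.3, Corollary 3.2.4]{SW}) together with the regularity of $\kappad^\circ$ — exactly the multivariate Mecke computation you carry out by induction. Your second part, deducing the face statement from general position via the standard fact that the linear hull of a face of a polyhedral cone is the intersection of the arrangement hyperplanes containing it, fills in details the paper leaves implicit (deferring to the analogous Euclidean argument in \cite{S18}), and your handling of the case $|I|\ge d+1$ correctly removes any circularity.
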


\begin{proof}
The assertion is a  consequence of Theorem 3.1.3 and Corollary 3.2.4 in \cite{SW} and the regularity of $\kappad^\circ$.
See \cite[Lemma 1]{S18} for a similar argument concerning hyperplane tessellations in a Euclidean space.
\end{proof}

Next we determine the intensity $\overline{\mu} := \bE\cH^{d-1}(\overline{Z}_t)$ of the  random measure $\cH^{d-1}\llcorner\overline{Z}_t$. This is the surface intensity of the tessellation $\overline{Y}_t$.

\begin{proposition}\label{newprop6.2}
The intensity measure  of the  random measure $\cH^{d-1}\llcorner\overline{Z}_t$ is given by
$$\bE\cH^{d-1}(\overline{Z}_t\cap \ \cdot\ )=t\overkappad(\ \cdot\ )
=\bE\cH^{d-1}( {Z}_t\cap \ \cdot\ )\,,
$$
where $\kappad$ is defined via \eqref{relme} and $Z_t$ is the union of the cell boundaries of a splitting tessellation $Y_t$ which is derived from the regular direction distribution $\kappad$. In particular,  $\overline{\mu}=t\beta_{d-1}=\mu$.

Moreover, $t\,\kappad$ is the intensity measure of $\Phi_t$ and
$$
\overline{U}_t(B):=\bP(\overline{Z}_t\cap B=\emptyset)=\exp\left(-t\,\kappad(\SS_{d-1}\blk B\brk)\right)\,,\quad B\in \cB(\SS^d)\,,
$$
\end{proposition}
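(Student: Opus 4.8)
The plan is to treat the three assertions separately, each as a standard consequence of the Poisson structure of $\eta_t$ together with the correspondence \eqref{relme} between $\kappad^\circ$ and $\kappad$. First I would dispose of the intensity of $\Phi_t$. Since $\Phi_t=\eta_t\circ F^{-1}$, with $F(u)=u^\perp\cap\SSd$, is the image of the Poisson process $\eta_t$ (which has intensity measure $t\,\kappad^\circ$) under the measurable map $F$, the mapping theorem for Poisson processes (see \cite{LP}) shows that $\Phi_t$ is again a Poisson process, with intensity measure $t\,(\kappad^\circ\circ F^{-1})$. By the very definition \eqref{relme} of $\kappad$ we have $\kappad^\circ\circ F^{-1}=\kappad$, so the intensity measure of $\Phi_t$ equals $t\,\kappad$, as claimed.

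Next I would compute the intensity measure of $\cH^{d-1}\llcorner\overline{Z}_t$. Fix $A\in\cB(\SSd)$. By Lemma \ref{genposition} the great hyperspheres of $\Phi_t$ are almost surely pairwise distinct, so any two of them meet in a subsphere of dimension $d-2$, which carries no $\cH^{d-1}$-mass; hence, almost surely,
\[
\cH^{d-1}(\overline{Z}_t\cap A)=\sum_{u\in\eta_t}\cH^{d-1}\big((u^\perp\cap\SSd)\cap A\big).
\]
Applying the Campbell formula for the Poisson process $\eta_t$ (see \cite{LP}) and then rewriting the resulting integral over $\SSd$ by means of \eqref{relme} gives
\[
\bE\,\cH^{d-1}(\overline{Z}_t\cap A)=t\int_{\SSd}\cH^{d-1}\big((u^\perp\cap\SSd)\cap A\big)\,\kappad^\circ(\dint u)=t\int_{\SS_{d-1}}\cH^{d-1}(S\cap A)\,\kappad(\dint S),
\]
which is exactly the quantity $t\,\overkappad(\,\cdot\,)$ appearing in the statement, written in the explicit integral form also recorded in Corollary \ref{cor:HausdorffMeasure}. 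Comparing the two therefore yields $\bE\,\cH^{d-1}(\overline{Z}_t\cap\,\cdot\,)=\bE\,\cH^{d-1}(Z_t\cap\,\cdot\,)$; taking $A=\SSd$, using $\cH^{d-1}(S)=\beta_{d-1}$ for $S\in\SS_{d-1}$ and $\kappad(\SS_{d-1})=1$, gives $\overline{\mu}=t\beta_{d-1}=\mu$.

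Finally, for the capacity functional I would observe that $\overline{Z}_t\cap B=\emptyset$ precisely when no great hypersphere of $\Phi_t$ meets $B$, that is, when $\Phi_t$ places no point in $\SS_{d-1}\blk B\brk=\{S\in\SS_{d-1}:S\cap B\neq\emptyset\}$. Since $\Phi_t$ is a Poisson process with intensity measure $t\,\kappad$ by the first step, the Poisson void probability formula gives
\[
\overline{U}_t(B)=\bP\big(\Phi_t(\SS_{d-1}\blk B\brk)=0\big)=\exp\big(-t\,\kappad(\SS_{d-1}\blk B\brk)\big),
\]
which is the asserted expression.

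I expect the only genuinely delicate point to be the passage from the union $\overline{Z}_t=\bigcup_{u\in\eta_t}(u^\perp\cap\SSd)$ to the sum of individual surface contributions: one has to invoke the general-position statement of Lemma \ref{genposition} to ensure that distinct great hyperspheres overlap only in a $(d-2)$-dimensional set and hence contribute no $\cH^{d-1}$-mass, and to verify that the resulting identity holds simultaneously for all Borel sets $A$, so that the two sides agree as measures. Everything else reduces to the mapping theorem, the Campbell formula and the Poisson void probability, combined with the change of variables \eqref{relme}; matching the normalisation with the splitting intensity is then immediate from Corollary \ref{cor:HausdorffMeasure}.
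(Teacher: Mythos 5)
Your proposal is correct and follows essentially the same route as the paper: Campbell's theorem for $\eta_t$ combined with the change of variables \eqref{relme} for the intensity measure, the mapping theorem for $\Phi_t$, and the Poisson void probability for $\overline{U}_t$. The only (welcome) extra care is your explicit justification, via Lemma \ref{genposition}, that distinct great hyperspheres overlap only in $(d-2)$-dimensional sets so that the union can be replaced by the sum of individual $\cH^{d-1}$-contributions — a step the paper's proof takes for granted.
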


\begin{proof}
For each $A\in\cB(\SSd)$, we obtain from Campbell's theorem \cite[Theorem 3.1.2]{SW} that
\begin{align*}
\bE\cH^{d-1}(\overline{Z}_t\cap A) &= \bE\int_{\SSd}\cH^{d-1}(u^\perp\cap A)\,\eta_t(\dint u)\\
& = t\int_{\SSd}\cH^{d-1}(u^\perp\cap A)\,\kappad^\circ(\dint u)\\
& = t\int_{\SS_{d-1}}\cH^{d-1}(S\cap A )\, \kappad(\dint S)\,,
\end{align*}
where $\kappad$ is defined via \eqref{relme}. The assertion concerning the intensity measures of $\cH^{d-1}\llcorner\overline{Z}_t$ and $\cH^{d-1}\llcorner {Z}_t$ now follows  by an application of Corollary \ref{cor:HausdorffMeasure} and definition \eqref{defoverkappad}.

By the mapping theorem (see \cite[Theorem 5.1]{LP}), $t\,\kappad$ is the intensity measure of $\Phi_t$.  Since  $\overline{Z}_t\cap B=\emptyset$ if and only if $|\{S\in\Phi_t:S\cap B\neq \emptyset\}|=0$, the remaining assertion then follows from the Poisson property
of $\Phi_t$.
\end{proof}

\begin{remark} Comparing the last assertion of Proposition \ref{newprop6.2} and Theorem \ref{prop:Capacity}, it follows that the capacity functionals of $Z_t$ and $\overline{Z}_t$
coincide for spherically convex polytopes,if $\kappad$ is general, or even for compact connected sets  if $\kappad$ is absolutely continuous with respect to
$\nu_{d-1}$. In general, of course, the capacity functionals are different (which can also be seen from Theorem \ref{prop:Capacity2}).
\end{remark}

We have just shown that the intensity $\overline{\mu}$  of the random measure $\cH^{d-1}\llcorner\overline{Z}_t$ coincides with the  intensity of the random measure $\cH^{d-1}\llcorner{Z}_t$, where $Z_t$ is the random set corresponding to a splitting tessellation $Y_t$ with regular direction distribution $\kappad$ on $\SS_{d-1}$ and $\kappad^\circ$ and $\kappad$ are related via \eqref{relme}. In particular,  $\kappad^\circ$  is rotation invariant on $\SS^d$ (that is,   $\kappad^\circ=\beta_d^{-1}\cH^d$) if and only if $\kappad$ is rotation invariant on $\SS_{d-1}$ (that is,  $\kappad=\nu_{d-1}$).

\subsection{$K$-function and pair-correlation function}

Next, we determine the $K$-function $\overline{K}_{d,t}$ of $\cH^{d-1}\llcorner\overline{Z}_t$ as well as the corresponding pair-correla\-tion function $\overline{g}_{d,t}$ in the rotation invariant case. The general case is described briefly in a subsequent remark.

\begin{proposition}\label{meanmeasureofcellboundaries}
For a Poisson great hypersphere tessellation $\overline{Y}_t$ on $\SSd$ induced by a Poisson point process $\eta_t$ on $\SSd$ with intensity measure $t \beta_d^{-1}\cH^d$ (or from a Poisson great hypersphere process
with intensity measure $t\nu_{d-1}$ on $\SS_{d-1}$), the $K$-function and
the pair-correlation function of the random measure $\cH^{d-1}\llcorner\overline{Z}_t$ are given by
$$
\overline{K}_{d,t}(r) = \frac{\b_{d-1}}{\b_d}\int_0^r(\sin\varphi)^{d-1}\,\dint\varphi+\frac{1}{ t}\frac{\b_{d-2}}{\b_{d-1}}\int_0^r(\sin\varphi)^{d-2}\,\dint\varphi\,,\qquad r\in(0,\pi)\,,
$$
and
$$
\overline{g}_{d,t}(r) = 1+\frac{\b_{d-2}\b_d}{\b_{d-1}^2}\frac{1}{ t\sin r}\,,\qquad r\in(0,\pi)\,.
$$
\end{proposition}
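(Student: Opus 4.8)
The plan is to mirror the proof of Theorem~\ref{thm:PCF}, replacing the martingale computation of the second moment measure by the Poisson structure of $\Phi_t$. By Proposition~\ref{newprop6.2} the intensity of $\bM:=\cH^{d-1}\llcorner\overline{Z}_t$ equals $\overline{\mu}=t\beta_{d-1}$, so the representation \eqref{eq:KMAlternativeRepresentation} reduces the computation of $\overline{K}_{d,t}(r)$ to evaluating the second moment measure of $\bM$. Since by Lemma~\ref{genposition} distinct great hyperspheres of $\Phi_t$ meet in a set of $\cH^{d-1}$-measure zero, we may write $\bM=\sum_{u\in\eta_t}\cH^{d-1}\llcorner(u^\perp\cap\SSd)$ almost surely, and hence, for bounded measurable $h_1,h_2\ge 0$,
\begin{align*}
\bE\int_{\SSd}\int_{\SSd}h_1(x)h_2(y)\,\bM(\dint x)\,\bM(\dint y)
&=\bE\sum_{u\in\eta_t}\int\int_{(u^\perp\cap\SSd)^2}h_1(x)h_2(y)\,\cH^{d-1}(\dint x)\,\cH^{d-1}(\dint y)\\
&\quad+\bE\sum_{\substack{u,v\in\eta_t\\ u\neq v}}\int_{u^\perp\cap\SSd}\!h_1\,\dint\cH^{d-1}\int_{v^\perp\cap\SSd}\!h_2\,\dint\cH^{d-1}\,.
\end{align*}

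First I would treat the diagonal term. Campbell's theorem \cite[Theorem~3.1.2]{SW} (the first-order Mecke formula) turns it into $t\int_{\SSd}(\cdots)\,\kappad^\circ(\dint u)$, which by the defining relation \eqref{relme} (with $\kappad^\circ=\beta_d^{-1}\cH^d$ and $\kappad=\nu_{d-1}$) equals $t\int_{\SS_{d-1}}\int_S\int_S h_1(x)h_2(y)\,\cH^{d-1}(\dint x)\,\cH^{d-1}(\dint y)\,\nu_{d-1}(\dint S)$; applying the spherical Blaschke-Petkantschin formula \eqref{eq:BlaPetSphere} rewrites this as $t\frac{\b_{d-2}}{\b_d}\int_{\SSd}\int_{\SSd}h_1(x)h_2(y)\sin(\ell(x,y))^{-1}\,\cH^d(\dint x)\,\cH^d(\dint y)$. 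The off-diagonal term is handled by the second-order Mecke (Mecke--Slivnyak) formula: it equals $t^2\int\int(\cdots)\,\kappad^\circ(\dint u)\,\kappad^\circ(\dint v)$, which factorizes owing to the independence inherent in the Poisson process, and each factor is evaluated by \eqref{eq:IntegrationsformelBeweis} to give $(t\beta_{d-1}/\beta_d)^2\int_{\SSd}h_1\,\dint\cH^d\int_{\SSd}h_2\,\dint\cH^d$.

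Combining these two contributions, dividing by $\overline{\mu}^2=t^2\beta_{d-1}^2$, extending from product functions to the indicator $h(x,y)={\bf 1}(\ell(x,y)\le r)$ in the usual way, and finally reducing both double integrals to one-dimensional ones via Remark~\ref{Remna} with $g\equiv1$ yields
\begin{equation*}
\overline{K}_{d,t}(r)=\frac{\b_{d-1}}{\b_d}\int_0^r(\sin\varphi)^{d-1}\,\dint\varphi+\frac1t\frac{\b_{d-2}}{\b_{d-1}}\int_0^r(\sin\varphi)^{d-2}\,\dint\varphi\,,
\end{equation*}
where the first summand comes from the off-diagonal (product) term and the second from the diagonal term. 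The pair-correlation function then follows by differentiation: multiplying $\overline{K}_{d,t}'(r)=\frac{\b_{d-1}}{\b_d}(\sin r)^{d-1}+\frac1t\frac{\b_{d-2}}{\b_{d-1}}(\sin r)^{d-2}$ by $\b_d/(\b_{d-1}(\sin r)^{d-1})$ gives the claimed $\overline{g}_{d,t}(r)=1+\frac{\b_{d-2}\b_d}{\b_{d-1}^2}\frac1{t\sin r}$.

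The computation is essentially routine once the second moment measure is split correctly; the only points requiring care are (i) the justification, via the general-position Lemma~\ref{genposition}, that $\bM$ may be written as the above sum without overcounting the $(d-1)$-dimensional overlaps, and (ii) the appearance of the singular factor $\sin(\ell(x,y))^{-1}$ in the diagonal term, whose integrability is guaranteed exactly as in the remark following Proposition~\ref{prop:Trafo}. I do not expect a genuine obstacle here, since all the heavy integral-geometric lifting (the Blaschke-Petkantschin transform and the reduction in Remark~\ref{Remna}) has already been carried out for Theorem~\ref{thm:PCF}.
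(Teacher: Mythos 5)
Your proof is correct and takes essentially the same route as the paper: the same diagonal/off-diagonal decomposition of the second moment measure of $\cH^{d-1}\llcorner\overline{Z}_t$ via the Campbell/Mecke formulas, the same evaluation of the off-diagonal term by applying \eqref{eq:IntegrationsformelBeweis} in each variable, and the same reduction to one-dimensional integrals via Remark \ref{Remna}. The only (harmless) deviation is in the diagonal term, where you detour through the Blaschke--Petkantschin formula \eqref{eq:BlaPetSphere} before reducing, whereas the paper simply notes that the inner double integral over $(u^\perp\cap\SSd)^2$ is independent of $u$ and evaluates it over a fixed great hypersphere directly; both yield $t\beta_{d-1}\beta_{d-2}\int_0^r(\sin\varphi)^{d-2}\,\dint\varphi$.
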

\begin{proof}
Starting from \eqref{eq:KMAlternativeRepresentation} we get, for $r\in(0,\pi)$,
\begin{align*}
\overline{\mu}^2\,\overline{K}_{d,t}(r) &= \bE\int_{\overline{Z}_t}\int_{\overline{Z}_t}{\bf 1}(\ell(x,y)\leq r)\,\cH^{d-1}(\dint x)\,\cH^{d-1}(\dint y)\\
&=\bE\int_{\SSd}\int_{\SSd} H(u,v;r)\,\eta_t(\dint u)\,\eta_t(\dint v)
\end{align*}
with
$$
H(u,v;r) := \int_{u^\perp\cap\SSd}\int_{v^\perp\cap\SSd}{\bf 1}(\ell(x,y)\leq r)\,\cH^{d-1}(\dint x)\,\cH^{d-1}(\dint y)\,.
$$
We split the integral into two parts according to whether $u\neq v$ or $u=v$, and denote by $\eta_{t,\neq}^2$ the set of pairs of distinct points of $\eta_t$. Using Mecke's formula for Poisson processes (see \cite[Theorem 3.2.5 and Corollary 3.2.3]{SW} or \cite[Chapter 4, Theorem 4.4]{LP}), we see that
\begin{align*}
\overline{\mu}^2\,\overline{K}_{d,t}(r) &= \bE\int_{(\SS^d)^2}H(u,v;r)\,\eta_{t,\neq}^2(\dint(u,v))+\bE\int_{\SSd}H(u,u;r)\,\eta(\dint u)\\
&=\Big(\frac{t}{\b_d}\Big)^2\int_{\SSd}\int_{\SSd}H(u,v;r)\,\cH^d(\dint u)\,\cH^d(\dint v) + \frac{t}{\b_d}\int_{\SSd}H(u,u;r)\,\cH^d(\dint u)\\
&=\Big(\frac{t}{\b_d}\Big)^2\b_{d-1}^2\int_{\SSd}\int_{\SSd}{\bf 1}(\ell(x,y)\leq r)\,\cH^d(\dint x)\,\cH^d(\dint y)\\
&\qquad\qquad + t\int_{e^\perp\cap\SS^d}\int_{e^\perp\cap\SS^d}{\bf 1}(\ell(x,y)\leq r)\,\cH^{d-1}(\dint x)\,\cH^{d-1}(\dint y)
\end{align*}
with $e=(0,\ldots,0,1)\in\SSd$ (or any other deterministic point on $\SSd$), where we used \eqref{eq:IntegrationsformelBeweis} twice and the fact that $H(u,u;\,\cdot\,)$ is independent of $u\in\SS^d$. By Remark \eqref{Remna}, we deduce that
\begin{align*}
\overline{\mu}^2\,\overline{K}_{d,t}(r) &= \Big(\frac{t}{\b_d}\Big)^2\b_{d-1}^2\b_d\b_{d-1}\int_0^r(\sin\varphi)^{d-1}\,\dint\varphi +t\b_{d-1}\b_{d-2}\int_0^r(\sin\varphi)^{d-2}\,\dint\varphi\,,
\end{align*}
and therefore
$$
\overline{K}_{d,t}(r) = \frac{\b_{d-1}}{\b_d}\int_0^r(\sin\varphi)^{d-1}\,\dint\varphi+\frac{1}{ t}\frac{\b_{d-2}}{\b_{d-1}}\int_0^r(\sin\varphi)^{d-2}\,\dint\varphi\,,\qquad r\in(0,\pi)\,.
$$
From the expression for $\overline{K}_{d,t}(r)$ we finally conclude that the pair-correlation function $\overline{g}_{d,t}(r)$ corresponding to $\overline{K}_{d,t}(r)$ is given by
$$
\overline{g}_{d,t}(r) = 1+\frac{\b_{d-2}\b_d}{\b_{d-1}^2}\frac{1}{ t\sin r}\,,\qquad r\in(0,\pi)\,.
$$
This completes the proof.
\end{proof}

For comparison with the Euclidean case, we recall from \cite[p.~299]{STSecondOrder} that
$$
\overline{g}^{\text{euc}}_{d,t}(r)=1+\frac{\beta_{d-2}}{\beta_{d-1}}\frac{1}{tr}\,.
$$
As in the case of splitting tessellations,  the factor $1/r$ (Euclidean case) replaces the factor $1/\sin r$ (spherical case) and the
scaling invariance is available only in the Euclidean setting.

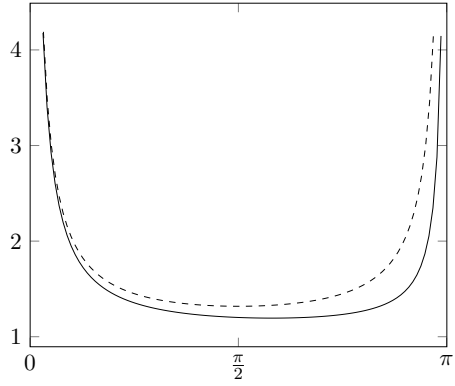
\begin{figure}[t]
\begin{center}
  \begin{tikzpicture}[scale=0.8]
    \begin{axis}[
     clip=false,
     xmin=0,xmax=3.1415,
     xtick={0,1.57,3.14},
     xticklabels={$0$, $\frac{\pi}{2}$,$\pi$}
     ]
      \addplot[domain=0.1:3.1415-0.1,samples=100,black,dashed]{1+(2/(3.1415*2))*1/(sin(deg(x)))};
      \addplot[domain=0.097:3.1415-0.0441,samples=100,black]{1+2/(x*2^2*sin(deg(x)))*(1-exp(-2*x/3.1415))}; 
    \end{axis}
  \end{tikzpicture}
\end{center}
\caption{The spherical pair-correlation functions $g_{2,2}$ (solid curve) and $\overline{g}_{2,2}$ (dashed curve).}
\label{fig}
\end{figure}

It is instructive to rewrite the $K$-function in the form
\begin{equation}\label{okdtr}
\overline{K}_{d,t}(r) = \frac{\cH^d(B(e,r))}{\b_d}+\frac{1}{ t}\frac{\cH^{d-1}(B(e,r)\cap e^\perp_0)}{\b_{d-1}}\,,
\end{equation}
where $e_0\in \SSd\cap e^\perp$ is arbitrarily chosen,
which can be interpreted by means of Slivnyak's theorem for Poisson processes on the sphere. In fact, one concludes from \cite[Corollary 2.35]{Kallenberg2} that the Palm distribution $\bP_\eta^e$ of a Poisson process on $\eta$ on $\SSd$ with distribution $\bP_\eta$ coincides with $\bP_\eta\ast\delta_{\delta_e}$, where $\delta_{\delta_e}$ corresponds to the point process that has $e$ as its single atom and $\ast$ stands for the convolution. Namely, if $e\in\SSd$ is a `typical' point of $\overline{Y_t}$ with respect to the $(d-1)$-dimensional Hausdorff measure on $\overline{Z}_t$ (in the sense of Palm distributions), the value for $\overline{K}_{d,t}(r)$ arises from two contributions. First, one has to take into account the Hausdorff measure on $B(e,r)$ of the single great hypersphere on which the typical point is located. This yields the second term, while the first term reflects the contribution in $B(e,r)$ of an independent Poisson process of great hyperspheres, which is superimposed on the great hypersphere containing the typical point. This should be compared to the corresponding $K$-function
$$
K_{d,t}(r) =  \frac{\cH^d(B(e,r))}{\b_d} + \frac{1}{ t}\frac{\b_{d-2}}{\b_{d-1}}\int_0^r \frac{\pi}{ t\varphi}\left(1-e^{-\frac{t\varphi}{\pi}}\right)\,(\sin \varphi)^{d-2}\,\dint\varphi
$$
for the splitting tessellation $Y_t$, which follows from Theorem \ref{thm:PCF}. Since $1-e^{-t}\le t$, $t \in\RR$, it follows that $K_{d,t}\le \overline{K}_{d,t}$.
For the same reason, we also have $g_{d,t}\le \overline{g}_{d,t}$. While this (in fact strict) inequality might be plausible on an intuitive level, it should be recalled that Proposition \ref{meanmeasureofcellboundaries} shows that the mean measures of the cell boundaries are the same for both types of random tessellations. It is clear that $\overline{g}_{d,t}$ is symmetric with respect to
$\pi/2$ in $(0,\pi)$, which is in contrast to the behaviour of ${g}_{d,t}$. Finally, we observe that
$$
\frac{\overline{g}_{d,t}(r)-1}{{g}_{d,t}(r)-1}=\frac{\frac{tr}{\pi}}{1-\exp\left(-\frac{tr}{\pi}\right)}\,
,\quad r>0\,,
$$
from which the asymptotic behaviour of the ratio, as $tr\to 0$ or $tr\to \infty$, can be seen.
A more detailed study of properties of Poisson great hypersphere tessellations and a comparison to
splitting tessellations is provided in the following section and otherwise will be the subject of a subsequent project.

\medskip

Finally, let us compare the pair-correlation functions $g_{d,t}$ and $\overline{g}_{d,t}$ for the particular case $d=2$.
For the Poisson great circle tessellation $\overline{Y}_t$ on $\SS^2$, we have
$$
\overline{g}_{2,t}(r) = 1 + \frac{2}{ \pi t\,\sin r}\,,\qquad r\in(0,\pi)\,,
$$
while we get
$$
g_{2,t}(r) = 1+\frac{2}{ t^2r\sin r}\left(1-e^{-\frac{rt}{\pi}}\right)\,,\qquad r\in (0,\pi)\,,
$$
for the splitting tessellation $Y_t$, for $t>0$. Figure \ref{fig} shows a plot of these two functions for the special choice $t=2$.

\begin{remark} {\rm
By the same arguments as in the isotropic case, we obtain
for a general regular direction distribution $\kappad^\circ$ and the associated direction distribution $\kappad$ that
\begin{align}
\overline{K}_{d,t}(r)&=\overline{\kappad}^2(\{(x,y)\in (\SSd)^2:\ell(x,y)\le r\})\nonumber\\
&\qquad +\frac{1}{\beta_{d-1}^2\, t}\int_{\SS_{d-1}}\int_{S}\int_{S}
\mathbf{1}(\ell(x,y)\le r)\,\cH^{d-1}(\dint x)\, \cH^{d-1}(\dint y)\, \kappad(\dint S)\,.\label{secsummand}
\end{align}
In this form, a comparison with the result obtained for ${K}_{d,t}(r)$ in Remark \ref{remKdtstit} is convenient. However,  the second summand on the right-hand side of \eqref{secsummand} can be simplified further by the argument in Remark \ref{Remna}. In fact, for this  summand we obtain the expression
$$
\frac{1}{\beta_{d-1}\, t}\int_{\SS_{d-1}}\cH^{d-1}(B(e_S,r)\cap S)\,\kappad(\dint S)=\frac{1}{t}\frac{\cH^{d-1}(B(e,r)\cap e_0^\perp)}{\beta_{d-1}}\,,
$$
where $e,e_0\in\SSd$, $e\perp e_0$, and $e_S\in S$ are arbitrarily chosen, which is independent of the direction distribution (compare \eqref{okdtr} in  the isotropic case). Hence, only the first summand is susceptible to the anisotropy of the tessellation (in contrast to what we have obtained for splitting tessellations),
and  finally we get
$$
\overline{K}_{d,t}(r)=\overline{\kappad}^2(\{(x,y)\in (\SSd)^2:\ell(x,y)\le r\})+
\frac{1}{t}\frac{\cH^{d-1}(B(e,r)\cap e_0^\perp)}{\beta_{d-1}}\,.
$$
As in the case of splitting tessellations, in the isotropic case  an even simpler expression is obtained for the first summand.
}
\end{remark}

\section{Typical cells and faces, and their distributions}\label{sec:7TypicalCellsFacesAndDistributions}

In this section, we describe additional relations between spherical splitting tessellations and tessellations
generated by Poisson processes of great hyperspheres.

\subsection{A dynamic Poisson great hypersphere tessellation process}\label{sec:7.1}

Many of our arguments and results we present below are based on a link between splitting tessellations and Poisson great hypersphere tessellations. To establish this link, we use a continu\-ous-time dynamic version of the latter model and introduce in this section a dynamic Poisson great hypersphere tessellation process which is based on a regular direction distribution $\kappad$ on $\SS_{d-1}$ and the associated
distribution $\kappad^\circ$  on $\SS^d$, which are related to each other by \eqref{relme}.

\begin{definition}
{\rm For a tessellation $T\in\TT^d$ and a great hypersphere $S\in\SS_{d-1}$ we define
$$
\otimes(S,T) := (T\setminus \{c\in T:\text{int}(c)\cap S\neq\emptyset\})\cup\bigcup_{\substack{c\in T \\ {\rm int}(c)\cap S\neq\emptyset}}\{c\cap S^+,c\cap S^-\}\in\TT^d.
$$}
\end{definition}

In other words, the tessellation $\otimes(S,T)$ is obtained from $T$ by dividing by $S$ \textit{all} cells of $T$ whose interior has non-empty intersection with the great hypersphere $S$. This operation is similar to the splitting operation $\oslash(c,S,T)$ in the context of splitting tessellations, where only the single cell $c$ gets divided by $S$ provided $S$ intersects the interior of $c$.

\medskip

We now define the dynamic Poisson great hypersphere tessellation process $(\overline{Y}_t)_{t\geq 0}$.

\begin{definition}\label{Def7.2}
{\rm The Poisson great hypersphere tessellation process $(\overline{Y}_t)_{t\geq 0}$ is the continuous-time Markov process in $\TT^d$ with initial tessellation $\overline{Y}_0=\{\SS^d\}$, whose generator $\overline{\cA}$ is given by
$$
(\overline{\cA}f)(T) = \int_{\SS_{d-1}}[f(\otimes(S,T))-f(T)]\,\kappad(\dint S)\,,\qquad T\in\TT^d\,,
$$
where $f:\TT^d\to\RR$ is any bounded and measurable function.
}
\end{definition}

The probabilistic construction which is equivalent to the generator description in Definition \ref{Def7.2} is much simpler than the one for splitting tessellations. Again we start with a single cell $\overline{Y}_0=\{\SS^d\}$ at time zero. Then, at each step, after an exponential holding time (with parameter 1) a great hypersphere $S$ with distribution $\kappa$ is chosen which dissects all cells of the current tessellation whose interior  is intersected by $S$. In particular, in
contrast to the case of a splitting tessellation, the holding time is   independent of the state of the tessellation process.

The next result clarifies why $(\overline{Y}_t)_{t\geq 0}$ is called a dynamic Poisson great hypersphere tessellation process with direction distribution $\kappad$ (or $\kappad^\circ$).

\begin{proposition}
For any $t>0$ the random tessellation $\overline{Y}_t$ has the same distribution as a Poisson great hypersphere tessellation which is derived  from a Poisson process of great hyperspheres with intensity measure $t\kappad$ on $\SS_{d-1}$ or, equivalently, from a Poisson point process on $\SSd$ with intensity measure $t\kappad^\circ$.  Moreover, $\overline{Y}_t$ is isotropic if and only if  $\kappad^\circ$ and $\kappad$ are rotation invariant.
\end{proposition}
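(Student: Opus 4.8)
The plan is to reduce the continuous-time description in Definition~\ref{Def7.2} to the static Poisson model of Section~\ref{sec:Comparison}. The first observation is that the generator has \emph{constant} total jump intensity: since $\kappad$ is a probability measure,
\[
(\overline{\cA}f)(T)=\int_{\SS_{d-1}}\big[f(\otimes(S,T))-f(T)\big]\,\kappad(\dint S)
\]
is of the form $\lambda(T)((\pi f)(T)-f(T))$ with $\lambda\equiv\kappad(\SS_{d-1})=1$ and jump kernel $\pi(T,\,\cdot\,)=\int_{\SS_{d-1}}\mathbf 1(\otimes(S,T)\in\,\cdot\,)\,\kappad(\dint S)$. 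By the theory of pure jump processes with bounded intensity recalled in Section~\ref{Sec2.3} and Remark~\ref{rembound}, the process $(\overline Y_t)_{t\ge 0}$ then admits the explicit path construction: jumps occur at the epochs of a rate-one Poisson process, so that the number $N_t$ of jumps up to time $t$ is Poisson distributed with mean $t$, and at the $i$-th jump an independent great hypersphere $S_i$ with law $\kappad$ is inserted via $\otimes(S_i,\,\cdot\,)$, starting from $\{\SSd\}$.

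First I would prove the combinatorial identity that iterating $\otimes$ produces the arrangement: for arbitrary $S_1,\ldots,S_n\in\SS_{d-1}$, the tessellation
\[
\otimes(S_n,\otimes(\cdots,\otimes(S_1,\{\SSd\})\cdots))
\]
equals the collection of closures of the connected components of $\SSd\setminus(S_1\cup\cdots\cup S_n)$. This follows by induction on $n$, using that $\otimes(S,T)$ is exactly the common refinement of $T$ by the two closed half-spheres bounded by $S$ (a repeated hypersphere $S_i$ leaves both the iterate and the arrangement unchanged). Consequently $\overline Y_t$ is the tessellation induced by $N_t\sim\mathrm{Poisson}(t)$ independent $\kappad$-distributed great hyperspheres. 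On the other hand, the static Poisson great hypersphere tessellation of Section~\ref{sec:Comparison} is induced by $\Phi_t=\eta_t\circ F^{-1}$, where $\eta_t$ is Poisson on $\SSd$ with intensity $t\kappad^\circ$; as $\kappad^\circ$ is a probability measure, $\eta_t(\SSd)\sim\mathrm{Poisson}(t)$, and conditionally on this count its atoms are i.i.d.\ with law $\kappad^\circ$, so by \eqref{relme} the induced great hyperspheres are i.i.d.\ with law $\kappad$. Both models thus realise the same deterministic function of the same random data, which establishes the distributional identity (the equivalence of the $\SS_{d-1}$- and $\SSd$-descriptions being the change of variables \eqref{relme} already used in Proposition~\ref{newprop6.2}).

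For the isotropy equivalence it suffices, since both models share the same law, to treat the static great hypersphere process $\Phi_t$. If $\kappad^\circ=\b_d^{-1}\cH^d$ (equivalently $\kappad=\nu_{d-1}$), then $t\kappad^\circ$ is rotation invariant, hence $\eta_t$, $\Phi_t$ and the rotation-equivariant arrangement $\overline Y_t$ are all isotropic. For the converse I would recover $\Phi_t$ from $\overline Y_t$ by the rotation-equivariant measurable map $\Psi$ that sends a tessellation to the finite set of great hyperspheres spanned by its $(d-1)$-dimensional faces: by Lemma~\ref{genposition} the great hyperspheres are almost surely in general position, so each $(d-1)$-face lies in exactly one of them, whence $\Phi_t=\Psi(\overline Y_t)$ almost surely (with $\Psi(\{\SSd\})=\varnothing$ covering the event $\{N_t=0\}$). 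Isotropy of $\overline Y_t$ then forces $\Phi_t\overset{d}{=}\varrho\Phi_t$ for all $\varrho\in\SO(d+1)$, and since a Poisson process is rotation invariant precisely when its intensity measure is, the measure $t\kappad$ is rotation invariant, giving $\kappad=\nu_{d-1}$ and $\kappad^\circ=\b_d^{-1}\cH^d$.

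The main obstacle I expect is making the recovery map $\Psi$ in the converse direction fully rigorous, namely verifying its measurability and rotation equivariance and confirming via Lemma~\ref{genposition} that each $(d-1)$-face lies in a single great hypersphere so that $\Phi_t=\Psi(\overline Y_t)$ holds almost surely. An alternative that avoids reconstructing the entire point process is to use the capacity functional $\overline U_t(B)=\exp(-t\kappad(\SS_{d-1}\blk B\brk))$ from Proposition~\ref{newprop6.2}: isotropy yields $\kappad(\SS_{d-1}\blk\varrho B\brk)=\kappad(\SS_{d-1}\blk B\brk)$ for all test sets $B$ and all $\varrho$, and since $\SS_{d-1}\blk\varrho B\brk=\varrho\,\SS_{d-1}\blk B\brk$ one deduces rotation invariance of $\kappad$ once it is checked that sets of this form separate measures on $\SS_{d-1}$.
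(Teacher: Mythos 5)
Your proof is correct, but it runs in the opposite direction from the paper's. The paper starts from the static side: it realises the Poisson model as $\Tess(\eta_{[0,t]})$ for a Poisson process $\eta$ on $[0,\infty)\times\SSd$ with intensity $\cH^1\llcorner[0,\infty)\otimes\kappad^\circ$, computes the generator of this time-indexed family using independence of increments, checks that it equals $\overline{\cA}$, and concludes via the uniqueness theorem for jump processes with a given generator, \cite[Proposition 15.38]{Breiman}. You start from $\overline{\cA}$ itself, note that its total jump intensity is constantly $1$, unfold the explicit sample-path construction (a Poisson$(t)$ number of jump epochs carrying i.i.d.\ $\kappad$-distributed insertions), and match this with the static model through two ingredients the paper does not need in this form: the combinatorial identity that iterating $\otimes$ on $\{\SSd\}$ yields the arrangement of $S_1,\ldots,S_n$, and the conditional i.i.d.\ representation of a Poisson process. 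Both routes rest on the same existence/uniqueness theory for pure jump processes (your appeal to the explicit path construction is exactly where that theorem enters), but the paper only needs the combinatorial statement infinitesimally -- at most one insertion during $[t,t+h]$ up to $o(h)$ -- whereas you need it globally; in exchange, your version makes the constant-intensity structure of $\overline{\cA}$, the feature that distinguishes it from the splitting generator $\cA$, completely explicit. On the isotropy equivalence the paper simply declares the claim clear; your argument (rotation equivariance of the construction for the ``if'' direction, and for ``only if'' the reconstruction of $\Phi_t$ from the great hyperspheres spanned by the $(d-1)$-faces via Lemma~\ref{genposition}, combined with the fact that a Poisson process is rotation invariant exactly when its intensity measure is, and the uniqueness of the Haar probability measure on $\SS_{d-1}$) is a sound way to make it precise; the capacity-functional alternative also works once one verifies that the hitting sets $\SS_{d-1}\blk B\brk$ of spherically convex bodies form a separating class for finite measures on $\SS_{d-1}$.
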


\begin{proof}
Let $\eta$ be a Poisson process on $[0,\infty)\times \SSd$ with intensity measure ${\cH^1}\llcorner[0,\infty)\otimes\kappad^\circ$. We put
$\eta_{[a,b]}:=\eta \llcorner ([a,b]\times\cdot)$, for $0\le a\le b$. Then $\eta_{[a,b]}$ is a simple Poisson process on $\SS^d$ with intensity measure $(b-a)\kappad^\circ$, which is equal in distribution to the Poisson process $\eta_{b-a}$ on $\SS^d$ (recall that we regard $\eta_t$ and $\eta$ as simple point processes and as random collections of points).
Let $\widetilde{Y}_t:=\Tess(\eta_{[0,t]})$ be the random tessellation generated by $\{u^\perp:u\in \eta_{[0,t]}\}$. Moreover, we write $\Tess(T,\eta_{[a,b]})$, $0\le a\le b$, for the tessellation obtained from a
 great hypersphere tessellation $T$ by further intersection with great hyperspheres $u^\perp\cap\SS^d$   with $u \in \eta_{[a,b]}$. Let $t\ge 0$, $h>0$, and let $f:\TT^d\to \RR$ be bounded and measurable. Since $\eta_{[0,t+h]}=\eta_{[0,t]}+\eta_{(t,t+h]}$,
  $\eta_{[0,t]}$ and $\eta_{(t,t+h]}$ are stochastically independent, and $\eta_{(t,t+h]}$ and $\eta_{[0,h]}$ are equal in distribution,   we get
 \begin{align*}
\frac{1}{h}\left[\bE(f(\widetilde{Y}_{t+h})\mid \widetilde{Y}_{t}=T)-f(T)\right] &=\frac{1}{h}\left[\bE(f(\Tess(\eta_{[0,t+h]}))\mid \Tess(\eta_{[0,t]})=T)-f(T)\right]\\
 & =\frac{1}{h}\left[\bE(f(\Tess(T,\eta_{(t,t+h]})))-f(T)\right]\\
& =  \frac{1}{h}\left[\bE(f(\Tess(T,\eta_{[0,h]})))-f(T)\right].
 \end{align*}
Thus, for the generator $\widetilde{\cA}$ of $(\widetilde{Y}_t)_{t\ge 0}$  we obtain
\begin{align*}
(\widetilde{\cA}f)(T)&=\lim_{h\downarrow 0}\frac{1}{h}\left[\bE(f(\Tess(T,\eta_{[0,h]})))-f(T)\right]\\
&=\lim_{h\downarrow 0}\frac{1}{h}\left\{e^{-h}f(T)+he^{-h}\int_{\SS_{d-1}}f(\otimes(S,T))\,\kappad(\dint S)+o(h)-f(T)\right\}\\
&=\int_{\SS_{d-1}}f(\otimes(S,T))\, \kappad(\dint S)-f(T)=(\overline\cA f)(T)\,,
\end{align*}
which yields the equality $\widetilde{\cA}=\overline{\cA}$ , since $\kappad$ is a probability measure. From   \cite[Proposition 15.38]{Breiman} we then deduce that the  tessellation-valued processes $(\widetilde{Y}_{t})_{t\ge 0}$ and $(\overline{Y}_{t})_{t\ge 0}$ are identically distributed. Since $\eta_{[0,t]}$ and $\eta_t$ are  equal in distribution, also $\widetilde{Y}_t$ is equal in distribution to the tessellation generated by $\eta_t$, from which the first assertion follows.

The final statement about the isotropy of $\overline{Y}_t$ is clear.
\end{proof}

\subsection{Relationships for intensity measures}\label{subsec:72Relationships}

We denote by $Y_t$ a splitting tessellation with time parameter $t\geq 0$ and direction distribution $\kappad$. Then we define the random measure $\bM_t$ and
its intensity measure $\sfM_t$ on $\PP^d$ by
$$
\bM_t := \sum_{c\in Y_t}\delta_c\qquad\text{and}\qquad\sfM_t:=\bE\bM_t\,.
$$
Similarly, for a Poisson great hypersphere tessellation $\overline{Y}_t$ with  direction distribution $\kappad^\circ$ and time parameter   $t\geq 0$, we put
$$
\overline{\bM}_t := \sum_{c\in \overline{Y}_t}\delta_c\qquad\text{and}\qquad\overline{\sfM}_t:=\bE\overline{\bM}_t\,.
$$
In the following, we always assume that $\kappad$ and $\kappad^\circ$ are related via \eqref{relme}.
Repeating the proof of \cite[Theorem 1]{STBernoulli} we obtain the following result. We shall nevertheless provide the argument for completeness and to complement some details in \cite{STBernoulli} that have been left out.

\begin{theorem}\label{prop:TypicalCellMeasures}
If $t\geq 0$, then $\sfM_t=\overline{\sfM}_t$.
\end{theorem}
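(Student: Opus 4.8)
The plan is to show that, regarded as functionals acting on bounded measurable test functions $\phi\colon\PP^d\to\RR$, both intensity measures $\sfM_t$ and $\overline{\sfM}_t$ solve one and the same linear integral equation with the same initial value, and then to invoke uniqueness. Throughout I write $\Sig_\phi(T)=\sum_{c\in T}\phi(c)$ as in Proposition \ref{prop:Dynkin}, and abbreviate the single-cell splitting functional by
\[
\Psi_\phi(c):=\int_{\SS_{d-1}[c]}\big[\phi(c\cap S^+)+\phi(c\cap S^-)-\phi(c)\big]\,\kappad(\dint S),\qquad c\in\PP^d,
\]
which is again bounded and measurable with $\sup_c|\Psi_\phi(c)|\le 3\sup_c|\phi(c)|$. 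Taking expectations in the martingale $M_t(\phi)$ of Proposition \ref{prop:Dynkin} and using $\bE\Sig_\phi(Y_t)=\sfM_t(\phi)$ together with $Y_0=\{\SSd\}$ gives $\sfM_t(\phi)=\phi(\SSd)+\int_0^t\bE(\cA\Sig_\phi)(Y_s)\,\dint s$; the interchange of expectation and the $s$-integral is legitimate because $|(\cA\Sig_\phi)(Y_s)|\le 3\alpha|Y_s|$ with $\alpha=\sup|\phi|$ and $\bE\int_0^t|Y_s|\,\dint s<\infty$ by the geometric moment bound from the proof of Proposition \ref{prop:Dynkin}. Since $(\cA\Sig_\phi)(T)=\sum_{c\in T}\Psi_\phi(c)=\Sig_{\Psi_\phi}(T)$, this reads $\sfM_t(\phi)=\phi(\SSd)+\int_0^t\sfM_s(\Psi_\phi)\,\dint s$.

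The decisive step is to show that $\overline{\sfM}_t$ solves the very same equation. Applying Lemma \ref{lem:BasicMartingale} to the dynamic Poisson great hypersphere process of Definition \ref{Def7.2}, localized exactly as in the proof of Proposition \ref{prop:Dynkin} (the intensity of $(\overline Y_t)_{t\ge0}$ is bounded, but $\Sig_\phi$ is not, so a truncation of $\Sig_\phi$ is still needed), one obtains $\overline{\sfM}_t(\phi)=\phi(\SSd)+\int_0^t\bE(\overline{\cA}\Sig_\phi)(\overline Y_s)\,\dint s$, using that the number of cells of $\overline Y_s$ has finite moments of all orders so that all relevant expectations are finite. Now comes the key algebraic identity: because $\Sig_\phi$ is additive over cells and $\Sig_\phi(\otimes(S,T))-\Sig_\phi(T)=\sum_{c\in T,\,{\rm int}(c)\cap S\neq\emptyset}[\phi(c\cap S^+)+\phi(c\cap S^-)-\phi(c)]$, Fubini's theorem (the sum is finite and the integrand bounded) yields
\[
(\overline{\cA}\Sig_\phi)(T)=\sum_{c\in T}\Psi_\phi(c)=(\cA\Sig_\phi)(T),\qquad T\in\TT^d.
\]
Thus the two generators agree on $\Sig_\phi$, and $\overline{\sfM}_t$ satisfies $\overline{\sfM}_t(\phi)=\phi(\SSd)+\int_0^t\overline{\sfM}_s(\Psi_\phi)\,\dint s$.

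Finally I would prove uniqueness. Both $\sfM_t$ and $\overline{\sfM}_t$ are finite measures whose total masses $\bE|Y_t|$ and $\bE|\overline Y_t|$ are finite and bounded on compact time intervals. Setting $D_t:=\sfM_t-\overline{\sfM}_t$ and subtracting the two integral equations gives $D_t(\phi)=\int_0^t D_s(\Psi_\phi)\,\dint s$. Using $|D_s(\psi)|\le\|\psi\|_\infty\,|D_s|(\PP^d)$ together with $\|\Psi_\phi\|_\infty\le 3\|\phi\|_\infty$, and then taking the supremum over all measurable $\phi$ with $\|\phi\|_\infty\le1$, yields $|D_t|(\PP^d)\le 3\int_0^t|D_s|(\PP^d)\,\dint s$. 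Since $s\mapsto|D_s|(\PP^d)$ is bounded on $[0,t]$, Gronwall's inequality (equivalently, iteration producing the bound $\tfrac{(3t)^n}{n!}\sup_{[0,t]}|D_\cdot|(\PP^d)\to0$) forces $|D_t|(\PP^d)=0$ for every $t$, that is, $\sfM_t=\overline{\sfM}_t$.

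I expect the main obstacle to be the second step: simultaneously setting up the Dynkin formula for the Poisson process applied to the unbounded functional $\Sig_\phi$ (which requires repeating the localization of Proposition \ref{prop:Dynkin} and controlling the cell-number moments of $\overline Y_s$) and recognising the clean identity $\overline{\cA}\Sig_\phi=\cA\Sig_\phi$. It is precisely this identity, reflecting that $\Sig_\phi$ only feels the local effect of splitting individual cells, that makes the two models share the same first-order dynamics despite splitting one cell at a time versus all intersected cells simultaneously.
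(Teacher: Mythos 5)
Your proposal is correct and follows essentially the same route as the paper's proof: both derive the identical integral equation for $\sfM_t$ and $\overline{\sfM}_t$ from the martingale formulas of Proposition \ref{prop:Dynkin} (using the interchange of the integrals $\int_{\SS_{d-1}}\sum_{c}$ to identify $\overline{\cA}\Sig_\phi$ with $\cA\Sig_\phi$, exactly as in \eqref{eq:PoissonProofM}), and both conclude by a Gronwall-type iteration in total variation norm; your test-function formulation with $\|\Psi_\phi\|_\infty\le 3\|\phi\|_\infty$ is just the dual phrasing of the paper's bounded operator $\Gamma$ with $\|\Gamma\|_{\rm op}\le 3$.
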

\begin{proof}
Let $\phi:\PP^d\to\RR$ be bounded and measurable. Then Proposition \ref{prop:Dynkin} ensures that
\begin{equation}\label{eq:ProofIMMart1}
\Sig_\phi(Y_t) - \Sig_\phi(Y_0)-\int_0^t\sum_{c\in Y_s}\int_{\SS_{d-1}[c]}[\phi(c\cap S^+)+\phi(c\cap S^-)-\phi(c)]\,\kappad(\dint S)\,\dint s\,,
\end{equation}
$t\geq 0$, is a $\filtration$-martingale. Taking expectations and using Campbell's theorem, we get
\begin{equation}\label{eq:ProofIntMeasure1}
\begin{split}
\int \phi(c)\,\sfM_t(\dint c) = \phi(\SSd)+ \int_0^t\int\int_{\SS_{d-1}[c]}&[\phi(c\cap S^+)+\phi(c\cap S^-)\\
&\qquad-\phi(c)]\,\kappad(\dint S)\,\sfM_s(\dint c)\,\dint s\,.
\end{split}
\end{equation}
Let us denote by ${\MM_{\rm bv}(\PP^d)}$ the Banach space of real-valued Borel measures on $\PP^d$ with the total variation norm $\|\cdot\|_{\rm TV}$. Then the linear operator
$$
\Gamma:\MM_{\rm bv}(\PP^d)\to\MM_{\rm bv}(\PP^d),\quad\mu\mapsto\int\int_{\SS_{d-1}}[\delta_{c\cap S^+}+\delta_{c\cap S^-}-\delta_c]\,\kappad(\dint S)\,\mu(\dint c),
$$
is bounded with operator norm $\|\Gamma\|_{\rm op}\le 3$. As observed in the proof of Proposition \ref{prop:Dynkin}, we have $\|{\sfM}_t\|_{\rm TV}={\bE} |Y_t|\le {\bE}|Y_a|=:c_a<\infty$ if $0\le t\le a$. Then \eqref{eq:ProofIntMeasure1} can equivalently be written in the form
$$
\sfM_t=\delta_{\SSd}+\int_0^t\Gamma(\sfM_s)\, \dint s\,,\qquad t\ge 0\,,
$$
and hence $\|\sfM_{t}-\sfM_r\|_{\rm TV}\le 3c_a|t-r|$ for $0\le r\le t\le a$.

\medskip

Next, we consider the dynamic Poisson great hypersphere tessellation process $(\overline{Y}_t)_{t\ge 0}$ we introduced in Section \ref{sec:7.1}. Proposition \ref{prop:Dynkin},  applied to the Markov process $(\overline{Y}_t)_{t\geq 0}$ with generator $\overline{\cA}$, yields that
\begin{align}
&\Sig_\phi(\overline{Y}_t)- \Sig_\phi(Y_0) - \int_0^t\int_{\SS_{d-1}}\sum_{\substack{c\in\overline{Y}_s\\ {\rm int}(c)\cap S\neq\emptyset}}[\phi(c\cap S^+)+\phi(c\cap S^-)-\phi(c)]\,\kappad(\dint S)\,\dint s\nonumber\\
&=\Sig_\phi(\overline{Y}_t)- \Sig_\phi(Y_0) - \int_0^t\sum_{c\in\overline{Y}_s}\int_{\SS_{d-1}[c]}[\phi(c\cap S^+)+\phi(c\cap S^-)-\phi(c)]\,\kappad(\dint S)\,\dint s\,,\label{eq:PoissonProofM}
\end{align}
$t\geq 0$, is a martingale with respect to the filtration induced by $(\overline{Y}_t)_{t\geq 0}$. {In fact, a localization procedure similar to the one used in the proof of Proposition \ref{prop:Dynkin} first shows that this process is a local martingale. In order to verify that this process is of class DL and hence a proper martingale, one also needs that the moments of $|\overline{Y}_t|$ are finite. However, this is the case since the number of cells can be expressed as a deterministic polynomial of the number of great hyperspheres of $\eta_t$ (see \cite[Lemma 8.2.1]{SW}), which in turn is a Poisson random variable having finite moments of all orders.}
In particular, we have $\|\overline{\sfM}_t\|_{\rm TV}=\bE |\overline{Y}_t|\le \bE |\overline{Y}_a|=:  \overline{c}_a<\infty$ if $0\le t\le a$. Since the right-hand side in \eqref{eq:PoissonProofM} is the same as in \eqref{eq:ProofIMMart1}, but with $Y_s$ replaced by $\overline{Y}_s$,
we also obtain that
$$
\overline{\sfM}_t=\delta_{\SSd}+\int_0^t\Gamma(\overline{\sfM}_s)\, \dint s\,,\qquad t\ge 0\,.
$$
Hence $(\sfM_t)_{t\ge0}$ and $(\overline{\sfM}_t)_{t\ge0}$ solve the same initial value problem. In the current situation  the solution of this problem is unique (see \cite[Section 1]{Deimling}), which implies the assertion. In fact,
let $0\le t\le a$ be arbitrary and put $\tilde{c}_a:=c_a+\overline{c}_a<\infty$. Then $\widetilde{\sfM}_t:=\sfM_t-\overline{\sfM}_t\in {\MM_{\rm bv}(\PP^d)} $, $t\ge 0$, satisfies
$$
\|\widetilde{\sfM}_t\|_{\rm TV}\le\int_0^t\|\Gamma(\widetilde{\sfM}_s)\|_{\rm TV}\, \dint s\le 3\tilde{c}_a t\,\qquad  0\le t\le a\,.
$$
By induction we obtain that
$$
\|\widetilde{\sfM}_t\|_{\rm TV}\le \tilde{c}_a\frac{(3t)^n}{n!}\,,\qquad 0\le t\le a\,,n\in\NN\,.
$$
Indeed, the induction step follows from the estimate
$$
\|\widetilde{\sfM}_t\|_{\rm TV}\le\int_0^t\|\Gamma(\widetilde{\sfM}_s)\|_{\rm TV}\, \dint s\le 3\int_0^t \tilde{c}_a\frac{(3s)^n}{n!}\, \dint s
=\tilde{c}_a\frac{(3t)^{n+1}}{(n+1)!}\,,\qquad 0\le t\le a\,,n\in\NN\,.
$$

Thus, taking the limit as $n\to\infty$, we conclude that $\|\widetilde{\sfM}_t\|_{\rm TV}=0$ for all $0\le t\le a$, which proves the assertion.
\end{proof}

Combining Theorem \ref{prop:TypicalCellMeasures} with Theorems \ref{thm:Expectation} and \ref{thmspaet1}, the following consequence is obtained.

\begin{corollary}\label{corconsequence}
Let $h:\SS^d\to\RR$ be bounded and measurable, and let $t\ge 0$.

If $j\in\{0,\ldots,d\}$ and $\kappad=\nu_{d-1}$, then
$$
\bE\overline{\Sigma}_j(t;h)=\frac{t^{d-j}}{(d-j)!}\frac{\cH^{d}(h)}{\beta_d}\,.
$$
For a general regular direction measure $\kappad$, this holds also for $j=d$, and
$$\bE\overline{\Sigma}_{d-1}(t;h)=t\,\overkappad(h)\,.$$
\end{corollary}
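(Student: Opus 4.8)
The plan is to deduce the corollary directly from the identity of intensity measures established in Theorem \ref{prop:TypicalCellMeasures}, combined with the explicit first-order formulas already obtained for splitting tessellations in Theorems \ref{thm:Expectation} and \ref{thmspaet1}. First I would record that, in complete analogy with $\Sigma_j(t;h)$, one sets $\overline{\Sigma}_j(t;h):=\sum_{c\in\overline{Y}_t}\phi_j(c,h)$, so that the only difference between the two functionals is whether one sums over the cells of the splitting tessellation $Y_t$ or over those of the Poisson great hypersphere tessellation $\overline{Y}_t$.

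The key preliminary step is to verify that the integrand $c\mapsto\phi_j(c,h)$ is a bounded and measurable function on $\PP^d$, so that Campbell's theorem applies to the random measures $\bM_t$ and $\overline{\bM}_t$ and so that Theorem \ref{prop:TypicalCellMeasures} may be invoked with the test function $\phi(c):=\phi_j(c,h)$. Boundedness follows from $|\phi_j(c,h)|\le\|h\|_\infty\,\phi_j(c,\SSd)=\|h\|_\infty\,V_j(c)\le\|h\|_\infty$, using that the spherical intrinsic volumes are bounded by $1$; the measurability of $c\mapsto\phi_j(c,A)$ for a fixed Borel set $A$ is recalled in Section \ref{subsec:22SphericalIntGeo}, and it extends to bounded measurable $h$ by the usual approximation through simple functions.

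With this in place, Campbell's theorem yields
\[
\bE\overline{\Sigma}_j(t;h)=\int_{\PP^d}\phi_j(c,h)\,\overline{\sfM}_t(\dint c),\qquad \bE\Sigma_j(t;h)=\int_{\PP^d}\phi_j(c,h)\,\sfM_t(\dint c).
\]
Since $\sfM_t=\overline{\sfM}_t$ by Theorem \ref{prop:TypicalCellMeasures}, the two expectations coincide, that is $\bE\overline{\Sigma}_j(t;h)=\bE\Sigma_j(t;h)$. It then only remains to substitute the values already computed in Theorem \ref{thm:Expectation} (which covers the case $\kappad=\nu_{d-1}$ for all $j\in\{0,\ldots,d\}$, and the case $j=d$ for a general regular $\kappad$) and in Theorem \ref{thmspaet1} (which covers the case $j=d-1$ for a general regular $\kappad$).

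There is no genuine obstacle here, as the substantive content has been front-loaded into the equality of intensity measures. The only point requiring mild care is the boundedness and measurability of $\phi_j(\,\cdot\,,h)$, which is precisely what is needed to apply Campbell's theorem and to invoke Theorem \ref{prop:TypicalCellMeasures}, whose conclusion is formulated for bounded measurable test functions on $\PP^d$.
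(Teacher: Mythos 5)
Your proposal is correct and follows exactly the route the paper takes: the corollary is deduced by combining the equality of intensity measures $\sfM_t=\overline{\sfM}_t$ from Theorem \ref{prop:TypicalCellMeasures} (via Campbell's theorem applied to the bounded measurable functional $c\mapsto\phi_j(c,h)$) with the explicit expectations from Theorems \ref{thm:Expectation} and \ref{thmspaet1}. Your additional check of boundedness and measurability of $\phi_j(\,\cdot\,,h)$ is a sensible piece of due diligence that the paper leaves implicit.
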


Let $T$ be a (deterministic) splitting tessellation (that is, a tessellation obtained by a successive splitting process).
By a maximal spherical face of dimension $d-1$ of $T$ we mean the separating pieces of great hyperspheres that arise in the construction of the splitting tessellation $T$. Further, by a maximal spherical face of dimension $k\in\{0,\ldots,d-2\}$ of $T$ we understand any $k$-face of a maximal spherical face of dimension $d-1$, which arises due to the insertion of this maximal spherical face. Hence any maximal spherical $k$-face belongs to a uniquely determined maximal $(d-1)$-face. We denote by $\sF_k^\ast(T)$ the collection of these maximal spherical $k$-faces of $T$, which is a subset of the space $\PP_k^d$ of $k$-dimen\-sio\-nal spherical polytopes in $\SSd$. We introduce on $\PP_k^d$ the random  measure $\bF_t^{(k)}$
and its intensity measure $\sfF_t^{(k)}$ by
$$
\bF_t^{(k)} := \sum_{f\in\sF_k^\ast(Y_t)}\delta_{f}\qquad\text{and}\qquad\sfF_t^{(k)} := \bE\bF_t^{(k)}\,.
$$
Similarly, for a great hypersphere tessellation $T$, we understand by a spherical $k$-face of $T$ any $k$-face of a cell of $T$ and denote by $\sF_k(T)$ the collection of all such faces {(each $k$-face is included only once in $\sF_k(T)$, although it arises as a $k$-face of precisely $2^{d-k}$ cells)}.  On $\PP_k^d$ we then define the measures
$$
\overline{\bF}_t^{(k)} := \sum_{f\in\sF_k(\overline{Y}_t)}\delta_f\qquad\text{and}\qquad\overline{\sfF}_t^{(k)} := \bE\overline{\bF}_t^{(k)}\,.
$$
The next theorem is the analogue of \cite[Theorem 2]{STBernoulli}. Again, the proof is basically the same as in the Euclidean case, but we give the argument for the sake of completeness.

\begin{theorem}\label{prop:FaceIntensityMeasures}
If $t\geq 0$ and $k\in\{0,\ldots,d-1\}$, then
$$
\sfF_t^{(k)}=(d-k)2^{d-k-1}\int_0^t \frac{1}{ s}\,\overline{\sfF}_s^{(k)}\,\dint s\,.
$$
\end{theorem}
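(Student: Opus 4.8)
The plan is to run the localized martingale argument underlying Proposition~\ref{prop:Dynkin} on a face-counting functional, reduce the resulting first-order identity to an integral against the cell intensity measure, invoke the equality $\sfM_s=\overline{\sfM}_s$ from Theorem~\ref{prop:TypicalCellMeasures}, and finally translate the Poisson side back into $\overline{\sfF}^{(k)}_s$ by a Mecke-type incidence count that produces both the constant $(d-k)2^{d-k-1}$ and the weight $1/s$.

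Concretely, fix a bounded measurable test function $\psi:\PP_k^d\to\RR$ and set $\Sigma^\ast_\psi(T):=\sum_{f\in\sF_k^\ast(T)}\psi(f)$. The first step is the geometric observation that drives everything: when a cell $c$ is split by a great hypersphere $S$, the only new maximal $(d-1)$-face is $c\cap S$, and the new maximal $k$-faces attached to it are, up to a $\kappa$-null set of exceptional $S$ (controlled by Lemma~\ref{setzero} and the regularity of $\kappa$), precisely the sets $S\cap F'$ with $F'$ ranging over the $(k+1)$-faces $\cF_{k+1}(c)$ of $c$ that $S$ crosses transversally. Feeding this into the localized martingale applied to $\Sigma^\ast_\psi$ --- with the same truncation and class-DL argument as in Proposition~\ref{prop:Dynkin}, now using the moment bounds $\bE|Y_s|^m<\infty$ --- and taking expectations yields, after extending the inner integration from $\SS_{d-1}[c]$ to $\SS_{d-1}$,
\[
\sfF_t^{(k)}(\psi)=\int_0^t\int_{\PP^d}\int_{\SS_{d-1}}\sum_{F'\in\cF_{k+1}(c)}\psi(S\cap F')\,\kappa(\dint S)\,\sfM_s(\dint c)\,\dint s .
\]

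The second step replaces $\sfM_s$ by $\overline{\sfM}_s$ using Theorem~\ref{prop:TypicalCellMeasures}, so the same expression holds with the splitting cells replaced by those of the dynamic Poisson tessellation $\overline{Y}_s$. It then remains to rewrite $\bE\sum_{c\in\overline{Y}_s}\sum_{F'\in\cF_{k+1}(c)}\int_{\SS_{d-1}}\psi(S\cap F')\,\kappa(\dint S)$ in terms of $\overline{\sfF}^{(k)}_s$. Two combinatorial inputs, both justified by the general-position Lemma~\ref{genposition}, are used. First, each $(k+1)$-face of $\overline{Y}_s$ is incident to exactly $2^{d-k-1}$ cells, which turns $\sum_{c}\sum_{F'\in\cF_{k+1}(c)}$ into $2^{d-k-1}\sum_{F'\in\sF_{k+1}(\overline{Y}_s)}$. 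Second, every $k$-face $f$ of $\overline{Y}_s$ lies in exactly $d-k$ great hyperspheres of $\Phi_s$, and for each of them $f=S\cap F'$ for the $(k+1)$-face $F'$ spanned by the remaining ones, so summing $\sum_{F'\in\sF_{k+1}(\overline{Y}_s)}\psi(S\cap F')$ over $S\in\Phi_s$ counts each $k$-face $d-k$ times. Applying the Mecke equation to the process $\Phi_s$ of intensity $s\kappa$ converts the integral over a fresh hypersphere into $s^{-1}$ times the sum over $S\in\Phi_s$, giving $\bE\sum_{F'\in\sF_{k+1}(\overline{Y}_s)}\int_{\SS_{d-1}}\psi(S\cap F')\,\kappa(\dint S)=\frac{d-k}{s}\,\overline{\sfF}^{(k)}_s(\psi)$. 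Combining the two inputs yields exactly $(d-k)2^{d-k-1}\int_0^t s^{-1}\overline{\sfF}^{(k)}_s(\psi)\,\dint s$, and since $\psi$ is arbitrary the measure identity follows.

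I expect the main obstacle to be the rigorous bookkeeping in this last Mecke step: one must check that adding a fresh hypersphere $S$ produces, among the $(k+1)$-faces of $\overline{Y}_s$ it crosses, exactly the same $k$-faces $S\cap F'$ that $S$ would carry as a member of $\Phi_s$ --- taking care that the subdivision of a crossed face $F'$ into $F'\cap S^\pm$ does not double-count the single induced $k$-face $S\cap F'$ --- and that all exceptional configurations (tangencies, coincidences, points lying on a splitting hypersphere) are $\kappa$- or $\kappa^\circ$-null by regularity and Lemmas~\ref{setzero} and~\ref{genposition}. A secondary technical point, handled exactly as in Propositions~\ref{prop:Dynkin} and~\ref{prop:TypicalCellMeasures}, is to confirm that $\Sigma^\ast_\psi$ is a measurable functional of the tessellation lying in the appropriate localized domain and that all interchanges of expectation, summation and integration are legitimate via the finiteness of the moments of $|Y_s|$ and $|\overline{Y}_s|$.
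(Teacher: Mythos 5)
Your proposal is correct, and it rests on the same three pillars as the paper's proof: the generator/martingale identity for the splitting process applied to a face functional, the substitution of $\sfM_s$ by $\overline{\sfM}_s$ via Theorem \ref{prop:TypicalCellMeasures}, and a Mecke-equation computation on the Poisson side. The genuine difference is where the constant $(d-k)2^{d-k-1}$ is produced. The paper works throughout with maximal $(d-1)$-faces via the auxiliary functional $\phi(f)=\sum_h\psi(h)$ (sum over the $k$-faces $h$ of $f$), so its Mecke step is the easy one --- each $(d-1)$-face of $\overline{Y}_s$ lies on exactly one great hypersphere of $\Phi_s$, hence no multiplicity appears --- and the full factor enters only at the very end as the number of spherical $(d-1)$-faces of $\overline{Y}_s$ containing a fixed $k$-face, quoted from Arbeiter--Z\"ahle. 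You instead run the Mecke argument directly at the level of $k$-faces and split the constant into $2^{d-k-1}$ (cells incident to a $(k+1)$-face) times $d-k$ (hyperspheres of $\Phi_s$ containing a $k$-face); this yields a self-contained derivation of the constant from elementary incidence counts, at the price of the more delicate bijection you correctly single out as the main obstacle: in the sum over $S\in\Phi_s$ the faces $F'$ must be the $(k+1)$-faces of the tessellation \emph{without} $S$ (which is exactly what the Mecke functional $g(S,\Phi_s)$ provides), since a literal sum over $F'\in\sF_{k+1}(\overline{Y}_s)$ would hit each $k$-face twice per containing hypersphere through the two adjacent pieces $F'\cap S^{\pm}$. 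Note that your $\Sigma^\ast_\psi$ coincides with the paper's $\Psi_\phi$, so the martingale step is literally the same, and the remaining technical points (measurability, unboundedness of the face-counting functionals, null sets of degenerate hyperspheres) are treated at the same level of detail in both arguments.
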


\begin{proof}
Let $k\in \{0,\ldots,d-1\}$ be fixed.
For a bounded and measurable function $\psi:\PP_k^d\to\RR$ and for $f\in\PP_{d-1}^d$ we define $\phi(f) := \sum_{h}\psi(h)$, where the summation extends over all $k$-faces $h$ of $f$. Applying the martingale property in Lemma \ref{lem:BasicMartingale} to
$$
\Psi_\phi(T):=\sum_{f\in\sF_{d-1}^\ast(T)}\phi(f)\,,
$$
if $T$ is a splitting
tessellation (and  zero otherwise),  and arguing as in the proof of Proposition \ref{prop:Dynkin}, we conclude that the random process
$$
\Psi_\phi(Y_t)-\int_0^t\sum_{c\in Y_s}\int_{\SS_{d-1}[c]}\phi(c\cap S)\,\kappad(\dint S)\,\dint s\,,\qquad t\geq 0\,,
$$
is a $\filtration$-martingale. Taking expectations and using Theorem \ref{prop:TypicalCellMeasures}, we get
\begin{align}
\int \phi(f)\,\sfF_t^{(d-1)}(\dint f) &= \int_0^t\int\int_{\SS_{d-1}[c]}\phi(c\cap S)\,\kappad(\dint S)\,\sfM_s(\dint c)\,\dint s \label{exampleref1}\\
&= \int_0^t\int\int_{\SS_{d-1}[c]}\phi(c\cap S)\,\kappad(\dint S)\,\overline{\sfM}_s(\dint c)\,\dint s\,.\nonumber
\end{align}
Let $\eta_t$ be a Poisson process  on $\SS^d$, as defined in Section \ref{sec:Comparison}. Then, for any $s\in(0,t)$,
by Campbell's theorem (see \cite[Theorem 3.1.2]{SW}) and using the Mecke equation (see \cite[Chapter 4, Theorem 4.4]{LP} or \cite[Theorem 3.2.5 and Corollary 3.2.3]{SW})
we obtain that
\begin{align*}
\int\phi(f)\,\overline{\sfF}_s^{(d-1)}(\dint f)&=\bE\sum_{f\in \sF_{d-1}(\overline{Y}_s)}\phi(f)
=\bE \sum_{u\in\eta_s}\sum_{f\in \sF_{d-1}(\Tess(\eta_s))}\mathbf{1}(f\subset u^\perp)\phi(f)\\
&=\bE \int\sum_{f\in \sF_{d-1}(\Tess(\eta_s))}\mathbf{1}(f\subset u^\perp)\phi(f)\, \eta_s(\dint u)\\
&=\int_{\SSd}\bE\Big[\sum_{f\in \sF_{d-1}(\Tess(\eta_s+\delta_u))}\mathbf{1}(f\subset u^\perp)\phi(f)\Big]s\kappad^\circ(\dint u)\\
&=s\bE\int_{\SS_{d-1}}\int \mathbf{1}(\text{int}(c)\cap S\neq\emptyset)\phi(c\cap S)\,\overline{\bM}_s(\dint c)\, \kappad(\dint S)\\
&=s\int\int_{\SS_{d-1}[c]}\phi(c\cap S)\,\kappad(\dint S)\,\overline{\sfM}_s(\dint c)\,.
\end{align*}
We thus conclude that
\begin{equation}\label{eq:ProofIM2}
\int \phi(f)\,\sfF_t^{(d-1)}(\dint f) = \int_0^t \frac{1}{ s}\int \phi(f)\,\overline{\sfF}_s^{(d-1)}(\dint f)\,\dint s\,.
\end{equation}
Moreover,
$$
\int \phi(f)\,\sfF_t^{(d-1)}(\dint f) = \int \psi(h)\,\sfF_t^{(k)}(\dint h)\,,
$$
since any maximal  spherical $k$-dimensional polytope is a $k$-face of precisely one  maximal  spherical face of dimension $d-1$. On the other hand, we have the identity
$$
\int\phi(f)\,\overline{\sfF}_s^{(d-1)}(\dint f) = (d-k)2^{d-k-1}\int\psi(h)\,\overline{\sfF}_s^{(k)}(\dint h)\,,
$$
because each spherical $k$-face of $\overline{Y}_s$ is a $k$-face of precisely $(d-k)2^{d-k-1}$ spherical $(d-1)$-faces
of $\overline{Y}_s$ (see \cite[Equation (10)]{ArbeiterZaehleMosaics}). Plugging these observations into \eqref{eq:ProofIM2} leads to
$$
\int\psi(h)\,\sfF_t^{(k)}(\dint h) = (d-k)2^{d-k-1}\int_0^t \frac{1}{ s}\int\psi(h)\,\overline{\sfF}_s^{(k)}(\dint h)\,\dint s\,.
$$
Since this identity holds for all bounded and measurable $\psi$, the argument is complete.
\end{proof}

\begin{example}{\rm
Using \eqref{exampleref1} with $\phi(f)=\phi_{d-1}(f,\,\cdot\,)$,  it follows that
$$
\bE \sum_{F\in \sF_{d-1}^\ast(Y_t)}\phi_{d-1}(f,\,\cdot\,)=\int\phi_{d-1}(f,\,\cdot\,)\, \sfF_t^{(d-1)}(\dint f)=t\kappad(\,\cdot\,)\,.
$$
Now we assume that $\kappad=\nu_{d-1}$ and use \eqref{exampleref1} with $\phi(f)=\phi_{j}(f,\cdot)$ for $j\in\{0,\ldots,d-1\}$.
Then, by \eqref{stillgeneral} and Theorem \ref{thm:Expectation}, we obtain
\begin{align*}
\int \phi_j(f,\,\cdot\,)\, \sfF_t^{(d-1)}(\dint f)&=\int_0^t\int\int_{\SS_{d-1}[c]}\phi_j(c\cap S,\,\cdot\,)\, \nu_{d-1}(\dint S)\, \sfM_s(\dint c)\, \dint s\\
&=\frac{t^{d-j}}{(d-j)!}\frac{1}{\beta_d}\cH^d(\,\cdot\,)\,.
\end{align*}
This is a local version in spherical space of \cite[Proposition 3]{STSTITHigher}. The additional lower order terms in \cite[Proposition 3]{STSTITHigher} are due to the interaction with
the boundary of the observation window.}
\end{example}

\subsection{Typical  maximal spherical  faces}\label{subsec73:sphericalmaxfaces}

This section requires the concept of a typical object associated with a random tessellation of $\SSd$ (see also \cite{HugReichenbacher,Last1,Last2,RotherZaehle}). We provide a brief self-contained introduction, which does not require
the assumption of isotropy. Let $\zeta$ be a particle process in $\SSd$, that is, an element of $\sN_s(\KK^d)$. We always assume that $\bE\zeta(\KK^d)\in (0,\infty)$. To define a typical particle of $\zeta$, we require a centre function. By this we mean a measurable map
$z:\KK^d\to \SSd \cup\{o\}$ such that $z(\varrho K)=\varrho z(K)$ for $K\in\KK^d$ and $\varrho\in\SO(d+1)$. An example of such a
centre function is the map which assigns to $K$ the centre of the spherical circumsphere of $K$ if $K\in\KK^d\setminus\cup_{k=0}^{d}\SS_k $ and $o$ otherwise. As in Section \ref{sec:PCF}, we fix a reference point $e\in\SSd$ and
define $\Theta_x$ and $\nu_x$, for $x\in\SSd$. In addition, we put $\Theta_o=\SO(d+1)$ and $\nu_o=\nu$. A typical particle of $\zeta$ is defined as a random spherically convex body with distribution
$$
\bQ^e_\zeta(\ \cdot \ ):=\frac{1}{\bE\zeta(\KK^d)}\,\bE\int_{\KK^d}\int_{\Theta_{z(K)}}\mathbf{1}(\varrho^{-1} K\in \ \cdot \ )\,
\nu_{z(K)}(\dint\varrho)\, \zeta(\dint K)\,.
$$
Clearly,  $\bQ^e_{\zeta}$ is concentrated on the set $\{K\in\KK^d:z(K)\in \{e,o\}\}$. Moreover,
$$
\bQ^e_{\zeta}(A)=\bQ^{\sigma e}_{\zeta}(\sigma A)\,,\qquad A\in\cB(\KK^d)\,, \sigma\in\SO(d+1)\,.
$$
The typical particle of $\zeta$ provides shape information about the particles of $\zeta$, which is obtained
after applying uniform random rotations to the individual particles so that their centres are $e$ or $o$ (depending on whether or not the particle is a great subsphere). Note that in contrast to the Euclidean framework, which admits the choice of unique translations so as to shift the particles to a prescribed common centre, a uniform selection from all possible rotations is the appropriate approach in spherical space (and, more generally, in homogeneous spaces, see \cite{RotherZaehle}). If $A\in \cB(\KK^d)$ is rotation invariant, then we simply have $\bQ^e_\zeta(A)=(\bE\zeta(\KK^d))^{-1}\, \bE\zeta(A)$.

\medskip

Now, we introduce the distribution of several random spherical polytopes as distributions of typical particles of spherical particle processes by specifying $\zeta$. As before, $Y_t$ and $\overline{Y}_t$ are a splitting tessellation and a Poisson great hypersphere tessellation, respectively, with direction distribution $\kappad$ (at time $t>0$) and $e\in\SS^d$ is a fixed reference point. For $t>0$ and  $k\in\{0,\ldots,d-1\}$ (if this applies), the distribution of
\begin{enumerate}
\item[(1)] the typical cell of $Y_t$  is defined by
$\bQ^{(d)}_t:=\bQ^e_{\bM_t}$;
\item[(2)] the typical cell of  $\overline{Y}_t$  is defined by $\overline{\bQ}^{(d)}_t:=\bQ^e_{\overline{\bM}_t}$;
\item[(3)] the typical maximal spherical $k$-face of  $Y_t$ is defined by $ {\bQ}^{(k)}_t:=\bQ^e_{{\bF}_t^{(k)}}$;
\item[(4)] the typical  spherical $k$-face of  $\overline{Y}_t$   is defined by $ \overline{\bQ}^{(k)}_t:=\bQ^e_{\overline{\bF}_t^{(k)}}$.
\end{enumerate}

Our next result yields a representation of ${\bQ}_t^{(k)}$ in terms of $(\overline{\bQ}_s^{(k)})_{s\in [0,t]}$ and is the spherical analogue of \cite[Theorem 3]{STBernoulli}. For this we denote by $N_k(t)=\sfF_t^{(k)}(\PP_k^d)$ the expected number of  maximal spherical $k$-faces of the splitting tessellation $Y_t$ and by $\overline{N}_k(t)=
\overline{\sfF}_t^{(k)}(\PP_k^d) $   the expected number of spherical $k$-faces of the Poisson great hypersphere tessellation $\overline{Y}_t$ (as described above). The computation of $N_k(t)$ or $\overline{N}_k(t)$ is in general rather involved as considerations in \cite{ArbeiterZaehleMosaics,HugSchneider2016} indicate. For this reason we shall carry out explicit computations only for the special case $k=1$ below.

\begin{theorem}\label{thm:LinkSTITPHT}
If $t>0$, then $\bQ_t^{(d)}=\overline{\bQ}_t^{(d)}$ and
$$
\bQ_t^{(k)} = \frac{(d-k)2^{d-k-1}}{ N_k(t)}\int_0^t \frac{\overline{N}_k(s)}{ s}\overline{\bQ}_s^{(k)}\,\dint s
$$
for $k\in\{0,\ldots,d-1\}$. In particular,
$$
N_k(t) = (d-k)2^{d-k-1}\int_0^t\frac{\overline{N}_k(s)}{ s}\,\dint s\,.
$$
\end{theorem}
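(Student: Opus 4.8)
The plan is to prove Theorem \ref{thm:LinkSTITPHT} by applying the definition of the typical particle distribution (via the measures $\bQ^e_\zeta$) to each of the four particle processes and then transferring the already-established relationships between intensity measures into relationships between these distributions. The key identity to exploit is Theorem \ref{prop:FaceIntensityMeasures}, which states that $\sfF_t^{(k)}=(d-k)2^{d-k-1}\int_0^t s^{-1}\,\overline{\sfF}_s^{(k)}\,\dint s$, together with Theorem \ref{prop:TypicalCellMeasures}, which gives $\sfM_t=\overline{\sfM}_t$.

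First I would dispose of the cell case $\bQ_t^{(d)}=\overline{\bQ}_t^{(d)}$. By definition, $\bQ_t^{(d)}=\bQ^e_{\bM_t}$ and $\overline{\bQ}_t^{(d)}=\bQ^e_{\overline{\bM}_t}$, and the map $\zeta\mapsto\bQ^e_\zeta$ depends on $\zeta$ only through its intensity measure $\bE\zeta$ (this is visible from the formula for $\bQ^e_\zeta$, where the outer expectation and integral against $\zeta$ combine into an integral against $\bE\zeta=\sfM_t$). Since Theorem \ref{prop:TypicalCellMeasures} gives $\sfM_t=\overline{\sfM}_t$, the two typical cell distributions agree immediately. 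The same observation reduces the general statement for $k$-faces to a computation with the intensity measures $\sfF_t^{(k)}$ and $\overline{\sfF}_s^{(k)}$.

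Next I would treat the $k$-face case. For a rotation-covariant test set (or more cleanly, for a bounded measurable $\psi$ on $\PP_k^d$ integrated after the uniform rotation), the defining formula gives
\begin{align*}
N_k(t)\,\bQ_t^{(k)}(\,\cdot\,) &= \bE\int_{\PP_k^d}\int_{\Theta_{z(f)}}\mathbf{1}(\varrho^{-1}f\in\,\cdot\,)\,\nu_{z(f)}(\dint\varrho)\,\bF_t^{(k)}(\dint f)\\
&= \int_{\PP_k^d}\int_{\Theta_{z(f)}}\mathbf{1}(\varrho^{-1}f\in\,\cdot\,)\,\nu_{z(f)}(\dint\varrho)\,\sfF_t^{(k)}(\dint f),
\end{align*}
where I have used $N_k(t)=\sfF_t^{(k)}(\PP_k^d)=\bE\bF_t^{(k)}(\PP_k^d)$ and Campbell's theorem to pass from $\bF_t^{(k)}$ to its intensity measure $\sfF_t^{(k)}$. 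Now I would substitute the identity of Theorem \ref{prop:FaceIntensityMeasures} for $\sfF_t^{(k)}$, interchange the (measurable, non-negative) integrations by Fubini, and recognise the inner expression at each time $s$ as exactly $\overline{N}_k(s)\,\overline{\bQ}_s^{(k)}(\,\cdot\,)$, where $\overline{N}_k(s)=\overline{\sfF}_s^{(k)}(\PP_k^d)$. This yields
$$
N_k(t)\,\bQ_t^{(k)}=(d-k)2^{d-k-1}\int_0^t\frac{\overline{N}_k(s)}{s}\,\overline{\bQ}_s^{(k)}\,\dint s,
$$
and dividing by $N_k(t)$ gives the stated mixture representation. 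Taking $\,\cdot\,=\PP_k^d$ (equivalently integrating the constant $1$) collapses both $\bQ_t^{(k)}$ and $\overline{\bQ}_s^{(k)}$ to total mass one and produces the final scalar identity $N_k(t)=(d-k)2^{d-k-1}\int_0^t s^{-1}\overline{N}_k(s)\,\dint s$; note this is also just $\sfF_t^{(k)}(\PP_k^d)$ evaluated in Theorem \ref{prop:FaceIntensityMeasures}, so it is consistent.

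The main obstacle I anticipate is purely bookkeeping rather than conceptual: one must verify that the rotation-averaging operator $\varrho^{-1}f$ together with the centre function $z$ commutes with the time-integral in Theorem \ref{prop:FaceIntensityMeasures}, i.e. that the representation of the typical face is genuinely linear and continuous in the underlying intensity measure so that $\int_0^t\cdots\,\dint s$ can be pulled outside the rotation integral. This hinges on the measurability and rotation-covariance of $z$ (guaranteed by the standing assumptions on centre functions) and on the finiteness $\overline{N}_k(s)<\infty$ for $s\in(0,t]$, which follows since the number of $k$-faces of $\overline{Y}_s$ is a deterministic polynomial in the Poisson count $\eta_s(\SSd)$ (as already noted in the proof of Theorem \ref{prop:TypicalCellMeasures}). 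One should also check integrability of $s\mapsto\overline{N}_k(s)/s$ near $s=0$; but since $\overline{N}_k(s)$ vanishes to the appropriate order as $s\downarrow 0$ (no $k$-faces exist before the first hypersphere arrives, and $\overline{N}_k(s)=O(s)$ for the relevant orders), the integral converges, so no delicate truncation is required.
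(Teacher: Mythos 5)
Your proposal is correct and follows essentially the same route as the paper's own proof: reduce everything to the intensity measures via Campbell's theorem, substitute the identities $\sfM_t=\overline{\sfM}_t$ (Theorem \ref{prop:TypicalCellMeasures}) and $\sfF_t^{(k)}=(d-k)2^{d-k-1}\int_0^t s^{-1}\overline{\sfF}_s^{(k)}\,\dint s$ (Theorem \ref{prop:FaceIntensityMeasures}), interchange the rotation average with the time integral, and evaluate at $\PP_k^d$ for the scalar identity. The technical points you flag (rotation covariance of $z$, finiteness of $\overline{N}_k(s)$, integrability of $\overline{N}_k(s)/s$ near $0$) are handled implicitly in the paper and your remarks about them are accurate.
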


\begin{proof}

First, we note that $N_d(t)=\sfM_t(\PP^d)=\overline{\sfM}_t(\PP^d)=\overline{N}_d(t)$ by Theorem \ref{prop:TypicalCellMeasures}. Then, by the definition of $\bQ_t^{(d)}$, Campbell's theorem and Theorem \ref{prop:TypicalCellMeasures}, we get
\begin{align*}
\bQ_t^{(d)}(\,\cdot\,)
&=\frac{1}{ N_d(t)}\int_{\PP^d}\int_{\Theta_{z(c)}}{\bf 1}(\varrho^{-1}c\in\,\cdot\,)
\,\nu_{z(c)}(\dint\varrho)\,\sfM_t(\dint c)\\
&=\frac{1}{\overline{N}_d(t)}\int_{\PP^d}\int_{\Theta_{z(c)}}{\bf 1}(\varrho^{-1}c\in\,\cdot\,)\,\nu_{z(c)}(\dint\varrho)\,\overline{\sfM}_t(\dint c)
= \overline{\bQ}_t^{(d)}(\,\cdot\,)\,.
\end{align*}
This proves the first assertion.

\medskip

Next, we fix $k\in\{0,\ldots,d-1\}$. Then, using again Campbell's theorem and this time Theorem \ref{prop:FaceIntensityMeasures}, we get
\begin{align}
\nonumber\bQ_t^{(k)}(\,\cdot\,)
\nonumber&= \frac{1}{ N_k(t)}\int_{\PP_k^d}\int_{\Theta_{z(f)}}{\bf 1}(\varrho^{-1}f\in\,\cdot\,)\,\nu_{z(f)}(\dint \varrho)\,\sfF_t^{(k)}(\dint f)\\
\nonumber&= \frac{(d-k)2^{d-k-1}}{ N_k(t)}\int_0^t \frac{1}{ s}\int_{\PP_k^d}\int_{\Theta_{z(f)}}{\bf 1}(\varrho^{-1}f\in\,\cdot\,)\,\nu_{z(f)}(\dint \varrho)\,\overline{\sfF}_s^{(k)}(\dint f)\,\dint s\\
\nonumber&= \frac{(d-k)2^{d-k-1}}{ N_k(t)}\int_0^t \frac{\overline{N}_k(s)}{ s}\,\frac{1}{\overline{N}_k(s)}\int_{\PP_k^d}\int_{\Theta_{z(f)}}{\bf 1}(\varrho^{-1}f\in\,\cdot\,)\,\nu_{z(f)}(\dint \varrho)\,\overline{\sfF}_s^{(k)}(\dint f)\,\dint s\\
&= \frac{(d-k)2^{d-k-1}}{ N_k(t)}\int_0^t \frac{\overline{N}_k(s)}{ s}\,\overline{\bQ}_s^{(k)}(\,\cdot\,)\,\dint s\,.\label{eq:QtFirstMixture}
\end{align}
The second assertion follows from the first, since $\bQ_t^{(k)}(\PP_k^d)=\overline{\bQ}_s^{(k)}(\PP_k^d)=1$ for all $t,s>0$. This completes the proof.
\end{proof}

In the following corollaries, we write $\gamma(a,x):=\int_0^xs^{a-1}e^{-s}\,\dint s$,  where $a,x>0$, for  the   lower incomplete gamma function. The first corollary provides an explicit description of the distribution of the typical maximal spherical  segment {(that is, the typical maximal spherical  $1$-face)} of a splitting tessellation on $\SS^d$ as a mixture of distributions of typical spherical edges {(that is, typical spherical $1$-faces)} in a Poisson great hypersphere tessellation. {Besides the general formulas we treat especially the case $d=2$, which is compared to the results in the Euclidean case at the end of this section.}

\begin{corollary}\label{cor:Kanten}
If $t>0$, then
$$
\bQ_t^{(1)}= \frac{d}{2t^d+d\gamma(d-1,t)}\int_0^t s^{d-2}(2s+e^{-s})\,\overline{\bQ}_s^{(1)}\,\dint s\,.
$$
Especially, if $d=2$, then
$$
\bQ_t^{(1)} = \frac{1}{ t^2+1-e^{-t}}\int_0^t (2s+e^{-s})\,\overline{\bQ}_s^{(1)}\,\dint s\,.
$$
\end{corollary}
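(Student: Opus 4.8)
The plan is to evaluate the two identities from Theorem \ref{thm:LinkSTITPHT} in the case $k=1$, for which the only missing ingredient is an explicit expression for $\overline{N}_1(s)$, the expected number of spherical $1$-faces of the Poisson great hypersphere tessellation $\overline{Y}_s$. Recall that $\overline{Y}_s$ is generated by $\Phi_s$, whose number of great hyperspheres $|\Phi_s|$ is Poisson distributed with mean $s$ by Proposition \ref{newprop6.2}, and that by Lemma \ref{genposition} these great hyperspheres are almost surely in general position. Hence $\overline{N}_1(s)=\bE f_1(|\Phi_s|)$, where $f_1(n)$ denotes the number of edges of an arrangement of $n$ great hyperspheres in general position on $\SSd$; this count is purely combinatorial and in particular independent of the direction distribution, so no isotropy assumption is needed.

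First I would compute $f_1(n)$. By Lemma \ref{genposition}, every $1$-face lies in exactly $d-1$ of the great hyperspheres, whose common intersection is a great circle $G$ (a $1$-dimensional great subsphere). For $n\ge d$ the remaining $n-(d-1)$ great hyperspheres meet $G$ in $2(n-d+1)$ points, cutting $G$ into $2(n-d+1)$ arcs; summing over the $\binom{n}{d-1}$ choices of the $d-1$ defining hyperspheres (each arc is counted once) gives $f_1(n)=2(n-d+1)\binom{n}{d-1}$. For $n=d-1$ the single great circle $G$ is not subdivided and counts as one (degenerate) edge, so $f_1(d-1)=1$, while $f_1(n)=0$ for $n<d-1$. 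Writing $2(n-d+1)\binom{n}{d-1}=\tfrac{2}{(d-1)!}(n)_d$ with the falling factorial $(n)_d=n(n-1)\cdots(n-d+1)$, and using the factorial moment $\bE(|\Phi_s|)_d=s^d$ together with $\bP(|\Phi_s|=d-1)=\tfrac{s^{d-1}}{(d-1)!}e^{-s}$, I obtain
$$
\overline{N}_1(s)=\frac{2s^d}{(d-1)!}+\frac{s^{d-1}e^{-s}}{(d-1)!}=\frac{s^{d-1}}{(d-1)!}\,(2s+e^{-s}).
$$

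It then remains to substitute this into Theorem \ref{thm:LinkSTITPHT}. Since $\overline{N}_1(s)/s=\tfrac{s^{d-2}}{(d-1)!}(2s+e^{-s})$, and $\int_0^t s^{d-2}(2s+e^{-s})\,\dint s=\tfrac{2t^d}{d}+\gamma(d-1,t)$ with the lower incomplete gamma function, the formula for $N_1(t)$ yields
$$
N_1(t)=(d-1)2^{d-2}\int_0^t\frac{\overline{N}_1(s)}{s}\,\dint s=\frac{(d-1)2^{d-2}}{(d-1)!}\Big(\frac{2t^d}{d}+\gamma(d-1,t)\Big).
$$
Dividing the mixture representation $\bQ_t^{(1)}=\tfrac{(d-1)2^{d-2}}{N_1(t)}\int_0^t\tfrac{\overline{N}_1(s)}{s}\,\overline{\bQ}_s^{(1)}\,\dint s$ by this value of $N_1(t)$, the common prefactor $\tfrac{(d-1)2^{d-2}}{(d-1)!}$ cancels and leaves
$$
\bQ_t^{(1)}=\frac{d}{2t^d+d\gamma(d-1,t)}\int_0^t s^{d-2}(2s+e^{-s})\,\overline{\bQ}_s^{(1)}\,\dint s,
$$
as claimed; the case $d=2$ follows by inserting $\gamma(1,t)=1-e^{-t}$, which turns the denominator into $2t^2+2(1-e^{-t})$ and the prefactor into $(t^2+1-e^{-t})^{-1}$.

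The main obstacle is the edge count $f_1(n)$, and specifically the correct treatment of the degenerate case $n=d-1$: the single undivided great circle must be counted as one edge, and it is exactly this term that produces the summand $s^{d-1}e^{-s}$ in $\overline{N}_1(s)$ and hence the factor $e^{-s}$ inside the integrand. Everything else reduces to a routine evaluation of Poisson factorial moments and an elementary integral.
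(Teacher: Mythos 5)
Your proposal is correct and follows essentially the same route as the paper: both rest on Theorem \ref{thm:LinkSTITPHT} with $k=1$ together with the explicit computation of $\overline{N}_1(s)$ via the Poisson distribution of the number of great hyperspheres, followed by the same elementary integration. The only (immaterial) difference is the combinatorial edge count for $n\ge d$ hyperspheres in general position: you count arcs on each of the $\binom{n}{d-1}$ great circles, obtaining $2(n-d+1)\binom{n}{d-1}$, while the paper counts edge--vertex incidences at the $2\binom{n}{d}$ vertices, obtaining $2d\binom{n}{d}$; both equal $\tfrac{2}{(d-1)!}\,n(n-1)\cdots(n-d+1)$ and yield the same $\overline{N}_1(s)$.
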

\begin{proof}
In view of \eqref{eq:QtFirstMixture} we have to compute the mean values $\overline{N}_1(s)$ and $N_1(t)$. Let $\xi(s)$ be the random number of spherical edges of a Poisson great hypersphere tessellation process at time $s>0$. We denote by $P(s)$ the number of great hyperspheres in $\overline{Y}_s$, which is a  Poisson distributed random variable with parameter $s$. If $P(s)\in\{0,1,\ldots,d-2\}$ then $\xi(s)=0$. If $P(s)=d-1$, then $\xi(s)=1$, while for $P(s)\geq d$, we have $\xi(s)=2d\binom{P(s)}{ d}$. Indeed, any collection of $d$ great hyperspheres generating $\overline{Y}_s$ (and which are in general position with probability one by Lemma \ref{genposition}) induces a pair of antipodal vertices. From each such vertex there are precisely $2d$ emanating edges and each edge has two vertices as its endpoints (see also \cite[Equation (16)]{HugSchneider2016} with $d$ replaced by $d+1$ and with $k=2$ there). Thus,
\begin{align}\label{eq:IntensityN1bar}
\overline{N}_1(s)=\bE\xi(s) = \frac{s^{d-1}}{(d-1)!}e^{-s}+\sum_{n=d}^\infty 2d\binom{n}{ d}\frac{s^n}{ n!}e^{-s} = \frac{s^{d-1}}{(d-1)!}(2s+e^{-s})\,.
\end{align}
To compute $N_1(t)$ we apply Theorem \ref{thm:LinkSTITPHT} to deduce that
\begin{align}\label{eq:N_1(t)}
N_1(t) &=  (d-1)2^{d-2}\int_0^t \frac{1}{ s}\,\overline{N}_1(s)\,\dint s
= \frac{2^{d-2}}{(d-2)!}\int_0^t s^{d-2}(2s+e^{-s})\,\dint s \nonumber\\
&= \frac{2^{d-2}}{(d-2)!}\left(\frac{2t^d}{ d}+\gamma(d-1,t)\right)\,.
\end{align}
Plugging this into \eqref{eq:QtFirstMixture} yields
\begin{align*}
\bQ_t^{(1)}(\,\cdot\,) = \frac{d}{ 2t^d+d\gamma(d-1,t)}\int_0^t s^{d-2}(2s+e^{-s})\,\overline{\bQ}_s^{(1)}(\,\cdot\,)\,\dint s
\end{align*}
and proves the first formula. The special representation for $d=2$ and $k=1$ follows, since in this case $\gamma(1,t)=1-e^{-t}$.
\end{proof}

The previous result can be used, in particular, to determine the expected length of the typical maximal spherical  segment of a splitting tessellation $Y_t$ on $\SS^d$. Let us emphasize that in contrast to the expected length of the typical maximal segment in a Euclidean STIT-tessellation (see \cite{MNWIsegments} for the planar case, \cite{TWN} for the spatial case and \cite{NNTW,STBernoulli} for general space dimensions), the expected length of the typical maximal spherical  segment of a splitting tessellation $Y_t$ on $\SS^d$ is universal and does not depend on the underlying direction distribution $\kappa$.

\begin{corollary}\label{cor:MeanLengthTypicalSegment}
The expected length of the typical maximal spherical  segment of a splitting tessellation $Y_t$ on $\SS^d$ with $t>0$ equals
$$
\frac{d}{d-1}\frac{2\pi t^{d-1}}{ 2t^d+d\gamma(d-1,t)}\,.
$$
Especially, if $d=2$, this reduces to
$$
\frac{2\pi t}{ t^2+1-e^{-t}}\,.
$$
\end{corollary}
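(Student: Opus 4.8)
The plan is to express the expected length of the typical maximal spherical segment as $\int_{\PP_1^d}\cH^1(f)\,\bQ_t^{(1)}(\dint f)$ and to reduce the whole computation to the mixture representation in Corollary \ref{cor:Kanten}. First I would note that $\cH^1$ is rotation invariant, so for any of the typical--face distributions $\bQ^e_\zeta$ the averaging over $\Theta_{z(f)}$ drops out and one simply has $\int \cH^1\,\dint\bQ^e_\zeta = (\bE\zeta(\KK^d))^{-1}\,\bE\int \cH^1(f)\,\zeta(\dint f)$. Applying this to $\zeta=\bF_t^{(1)}$ and invoking Corollary \ref{cor:Kanten} gives
$$\int\cH^1\,\dint\bQ_t^{(1)} = \frac{d}{2t^d+d\gamma(d-1,t)}\int_0^t s^{d-2}(2s+e^{-s})\Big(\int\cH^1\,\dint\overline{\bQ}_s^{(1)}\Big)\,\dint s\,.$$

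Thus everything reduces to the expected length $\int\cH^1\,\dint\overline{\bQ}_s^{(1)}$ of the typical spherical edge of the Poisson great hypersphere tessellation $\overline{Y}_s$. By the same rotation--invariance remark this equals $\overline{N}_1(s)^{-1}\,\bE\sum_{f\in\sF_1(\overline{Y}_s)}\cH^1(f)$, and $\overline{N}_1(s)=\frac{s^{d-1}}{(d-1)!}(2s+e^{-s})$ is already available from \eqref{eq:IntensityN1bar}. The key geometric step is to evaluate the expected total edge length. Here I would use Lemma \ref{genposition}: almost surely every $1$-face lies in exactly $d-1$ great hyperspheres of $\Phi_s$, hence on the unique great circle obtained as their common intersection; conversely the remaining great hyperspheres cut each such great circle into arcs that are exactly the $1$-faces lying on it and that partition it, so the edges sharing one great circle have total length $2\pi$. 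Since the number of such great circles is $\binom{P(s)}{d-1}$, where $P(s)$ is the Poisson distributed (parameter $s$) number of great hyperspheres, one gets $\sum_{f\in\sF_1(\overline{Y}_s)}\cH^1(f)=2\pi\binom{P(s)}{d-1}$ almost surely, and the factorial moment $\bE\binom{P(s)}{d-1}=s^{d-1}/(d-1)!$ yields $\bE\sum_f\cH^1(f)=2\pi\,s^{d-1}/(d-1)!$. Consequently $\int\cH^1\,\dint\overline{\bQ}_s^{(1)}=2\pi/(2s+e^{-s})$.

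Substituting this back, the factor $2s+e^{-s}$ cancels, leaving the elementary integral $\int_0^t s^{d-2}\,\dint s=t^{d-1}/(d-1)$, so that
$$\int\cH^1\,\dint\bQ_t^{(1)} = \frac{d}{2t^d+d\gamma(d-1,t)}\cdot\frac{2\pi t^{d-1}}{d-1} = \frac{d}{d-1}\,\frac{2\pi t^{d-1}}{2t^d+d\gamma(d-1,t)}\,,$$
which is the asserted formula; the case $d=2$ then follows by inserting $\gamma(1,t)=1-e^{-t}$, reducing the expression to $2\pi t/(t^2+1-e^{-t})$. Alternatively one can bypass Corollary \ref{cor:Kanten} and argue directly, combining the rotation--invariance reduction with Theorem \ref{prop:FaceIntensityMeasures} for $k=1$ and the total--length computation to obtain $\int\cH^1\,\dint\sfF_t^{(1)}=2^{d-1}\pi t^{d-1}/(d-1)!$ and then dividing by $N_1(t)$ from \eqref{eq:N_1(t)}; both routes give the same answer. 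I expect the main obstacle to be the geometric bookkeeping in the total--length computation, namely justifying that the $1$-faces on a common great circle partition it so that their lengths sum to $2\pi$, and handling the boundary case $P(s)=d-1$ where the single $1$-face is the whole great circle; this is precisely the combinatorics underlying \eqref{eq:IntensityN1bar}, where Lemma \ref{genposition} and the general--position assumption are essential.
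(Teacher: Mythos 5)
Your proposal is correct and follows essentially the same route as the paper: compute the expected total edge length of $\overline{Y}_s$ as $2\pi$ times the expected number of great circles, $2\pi\,\bE\binom{P(s)}{d-1}=2\pi s^{d-1}/(d-1)!$, divide by $\overline{N}_1(s)$ from \eqref{eq:IntensityN1bar} to get $2\pi/(2s+e^{-s})$, and then integrate against the mixture representation of Corollary \ref{cor:Kanten}, where the factor $2s+e^{-s}$ cancels. Your explicit remark that the rotation average in the definition of the typical-face distribution drops out for the rotation-invariant functional $\cH^1$ is a point the paper uses implicitly, but it does not change the argument.
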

\begin{proof}
Let $L(\overline{Y}_s)$ be the expected length of the typical edge in a Poisson great hypersphere tessellation of $\SS^d$ at time  $s>0$, and let $\cL(\overline{Y}_s)$ be the total edge length of $\overline{Y}_s$, that is, the sum of the lengths of all edges of $\overline{Y}_s$. The total length of all edges that are located on the same great circle equals $2\pi$. Moreover, each such great circle almost surely arises as an intersection of $d-1$ great hyperspheres. Thus,
$$
\bE\cL(\overline{Y}_s) = 2\pi\frac{s^{d-1}}{(d-1)!}e^{-s}+\sum_{N=d}^\infty {2\pi} \binom{N}{ d-1}\frac{s^N}{ N!}e^{-s} = \frac{2\pi s^{d-1}}{ (d-1)!}\,.
$$
Hence, using \eqref{eq:IntensityN1bar}, the expected length of the typical edge of $\overline{Y}_s$ equals
$$
L(\overline{Y}_s) = \frac{\bE\cL(\overline{Y}_s)}{\overline{N}_1(s)} = \frac{2\pi}{ 2s+e^{-s}}\,.
$$
Combining this with Corollary \ref{cor:Kanten} yields
\begin{align*}
L(Y_t) = \frac{2\pi d}{ 2t^d+d\gamma(d-1,t)}\int_0^t s^{d-2}\,\dint s = \frac{d}{ d-1}\frac{2\pi t^{d-1}}{ 2t^d+d\gamma(d-1,t)}\,.
\end{align*}
The formula for $d=2$ follows once again from the observation that $\gamma(1,t)=1-e^{-t}$.
\end{proof}

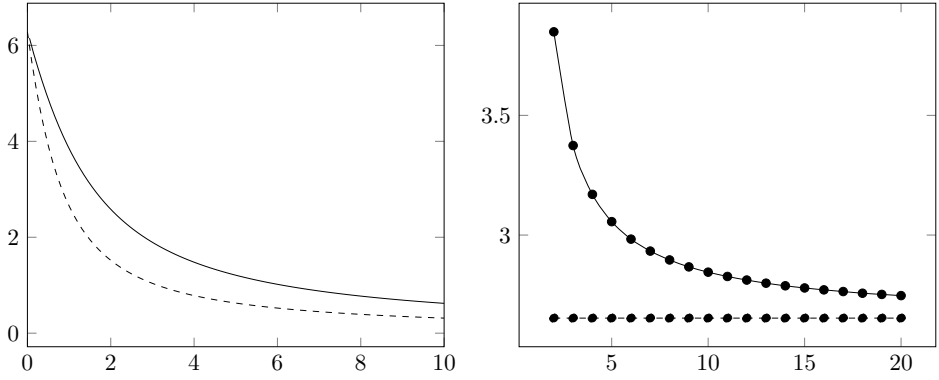
\begin{figure}[t]
\begin{center}
  \begin{tikzpicture}[scale=0.8]
    \begin{axis}[
     clip=false,
     xmin=0,xmax=10,
     xtick={0,2,4,6,8,10},
     xticklabels={$0$, $2$, $4$, $6$, $8$, $10$}
     ]
      \addplot[domain=0.05:10,samples=100,black]{2*3.1415*x/(x^2+1-exp(-x))};
      \addplot[domain=0:10,samples=100,black,dashed]{2*3.1415/(2*x+exp(-x))};
    \end{axis}
  \end{tikzpicture}
  \begin{tikzpicture}[scale=0.8]
  \begin{axis}
      \addplot[smooth,mark=*,black] plot coordinates {
          (2,3.849)
          (3,3.374)
          (4,3.170)
          (5,3.056)
          (6,2.983)
          (7,2.933)
          (8,2.896)
          (9,2.867)
          (10,2.845)
	      (11,2.827)
	      (12,2.812)
	      (13,2.799)
	      (14,2.788)
	      (15,2.779)
	      (16,2.771)
	      (17,2.764)
	      (18,2.757)
	      (19,2.752)
	      (20,2.747)
      };
      \addplot[smooth,mark=*,black,dashed] plot coordinates {
          (2,2.653)
          (3,2.653)
          (4,2.653)
          (5,2.653)
          (6,2.653)
          (7,2.653)
          (8,2.653)
          (9,2.653)
          (10,2.653)
          (11,2.653)
          (12,2.653)
          (13,2.653)
          (14,2.653)
          (15,2.653)
          (16,2.653)
          (17,2.653)
          (18,2.653)
          (19,2.653)
          (20,2.653)
            };
  \end{axis}
  \end{tikzpicture}
\end{center}
\caption{Expected length $L(Y_t)$ of the typical maximal segment of a splitting tessellation $Y_t$ on $\SS^d$ (solid curves) and expected length $L(\overline{Y}_t)$ of the typical edge of a Poisson great circle tessellation $\overline{Y}_t$ on $\SS^d$ (dashed curves). Left: Fixed dimension $d=2$ and variable time $t\in[0,10]$. Right: Fixed time $t=1$ and variable dimension $d\in\{2,3,\ldots,20\}$.}
\label{figLengthSTITvsPLT}
\end{figure}

\begin{example}{\rm Corollary \ref{cor:MeanLengthTypicalSegment} shows that the expected length of the typical maximal spherical  segment of the splitting
tessellation $Y_t$ on $\SSd$ at time $t>0$ depends on the dimension, whereas the expected length $L(\overline{Y}_t)$ of the typical edge of $\overline{Y}_t$
is independent of the dimension, that is,
$$
L(\overline{Y}_t)  = \frac{2\pi}{ 2t+e^{-t}}\,.
$$
Intuition suggest that $L(\overline{Y}_t)\le L(Y_t)$ for all $t\ge 0$ in any dimension. In fact, this follows from our formulas by basic calculus. Moreover,
by partial integration, we obtain
$$
1\le \frac{L(Y_t)}{L(\overline{Y}_t)}=\frac{d}{d-1}\frac{1+\frac{1}{2t}e^{-t}}{1+\frac{d}{d-1}\frac{1}{2t}\left(e^{-t}+\int_0^t\left(\frac{s}{t}\right)^{d-1}e^{-s}\, \dint s\right)}\le \frac{d}{d-1}\frac{1+\frac{1}{2t}e^{-t}}{1+\frac{d}{d-1}\frac{1}{2t}e^{-t}}\,,
$$
hence $L(Y_t)/L(\overline{Y}_t)\to 1$, as $d\to\infty$, for fixed $t>0$. On the other hand, we also have
$$
\frac{d}{d-1}\frac{1+\frac{1}{2t}e^{-t}}{1+\frac{d}{d-1}\frac{1}{2t}}\le\frac{L(Y_t)}{L(\overline{Y}_t)} \le
\frac{d}{d-1}\frac{1+\frac{1}{2t}e^{-t}}{1+\frac{d}{d-1}\frac{1}{2t}e^{-t}}\,,
$$
and hence
$$
\frac{1}{d}c_1\le t\left|\frac{L(Y_t)}{L(\overline{Y}_t)}-\frac{d}{d-1}\right|\le c_2\, ,\qquad t\ge 1\,,
$$
where $c_1,c_2>0$ are absolute constants. This shows that $L(Y_t)/ L(\overline{Y}_t)\to d/(d-1)$, as $t\to\infty$, for fixed dimension $d$. Moreover,
$$
\frac{d}{d-1}\frac{\pi}{t}\frac{1}{1+t^{-d}(d-1)!}\le L(Y_t)\le \frac{d}{d-1}\frac{\pi}{t}\,,\qquad t>0\,,
$$
see also Figure \ref{figLengthSTITvsPLT}. It is also easy to check from these formulas that $L(Y_t),
L(\overline{Y}_t)\to 2\pi$ as $t\to 0$.
}
\end{example}

Finally, we consider the distribution of the `birth time' of the typical maximal spherical  segment of a splitting tessellation $Y_t$ with $t>0$. To define this concept formally, we first observe that for each maximal edge $x\in \bF^{(1)}_t$ of $Y_t$ there is a unique time $s=\beta(x)$ at which $x$ is generated
by the insertion of a new maximal spherical $(d-1)$-face. This gives rise to a marked point process $\xi_t$ on $\PP^d_1$ with mark space $(0,\infty)$ given by
$$
\xi_t=\sum_{x\in \bF^{(1)}_t}\delta_{(x,\beta(x))}=\int_{\PP^d_1} \delta_{(x,\beta(x))}\, \bF^{(1)}_t(\dint x)\,.
$$
The Palm distribution of $\xi_t$ (with respect to a fixed reference point $e\in\SS^d$) is  the probability measure
$$
\bP^{e}_{\xi_t}:=\frac{1}{N_1(t)}\bE\sum_{x\in \bF^{(1)}_t}\int_{\Theta_{z(x)}}\delta_{(\varrho^{-1}x,\beta(x))}\,\nu_{z(x)}(\dint \varrho)
$$
on $\PP^d_1\times (0,\infty)$.
Then the distribution of the birth time $\beta(Y_t)$ of the typical maximal spherical segment is defined as the marginal measure of $\bP^{e}_{\xi_t}$ on the mark space $(0,\infty)$. Hence, we simply have
$$
\bP(\beta(Y_t)\in\cdot)=\frac{1}{N_1(t)}\bE \sum_{x\in\bF^{(1)}_t}
\mathbf{1}(\beta(x)\in\,\cdot\,) \,,
$$
which is independent of the chosen reference point $e\in\SS^d$.
Note that this is the same as the
marginal of the normalized intensity measure of $\xi_t$ on the mark space, that is, the mark distribution of $\xi_t$.

\begin{figure}[t]
\begin{center}
  \begin{tikzpicture}[scale=0.8]
    \begin{axis}[
     clip=false,
     xmin=0,xmax=6,
     xtick={0,2,4,6},
     xticklabels={$0$, $2$, $4$, $6$}
     ]
      \addplot[domain=0.3:6,samples=100,black,dashed]{3.1415/x};
      \addplot[domain=0.1:6,samples=100,black]{2*3.1415*x/(x^2+1-exp(-x))}; 
    \end{axis}
  \end{tikzpicture}
    \begin{tikzpicture}[scale=0.8]
      \begin{axis}[
       clip=false,
       xmin=0,xmax=3,
       xtick={0,1,2,3},
       xticklabels={$0$, $1$, $2$, $3$}
       ]
        \addplot[domain=0:3,samples=100,black,dashed]{2*x/(3*3)};
        \addplot[domain=0:3,samples=100,black]{(2*x+exp(-x))/(3*3+1-exp(-3))}; 
      \end{axis}
    \end{tikzpicture}
\end{center}
\caption{Left: The expected length of the typical maximal spherical  segment of a splitting tessellation $Y_t$ on $\SS^2$ (solid curve) and the expected length of the typical maximal segment of a STIT-tessellation in the plane (dashed curve) as a function of time $t$. Right: The density of the birth time distribution of the typical maximal spherical  segment of a splitting tessellation $Y_t$ on $\SS^2$ (solid curve) and the density of the birth time distribution of the typical maximal segment of a STIT-tessellation in the plane (dashed curve) for $t=3$ as function of $s\in (0,3)$.}
\label{figLength}
\end{figure}
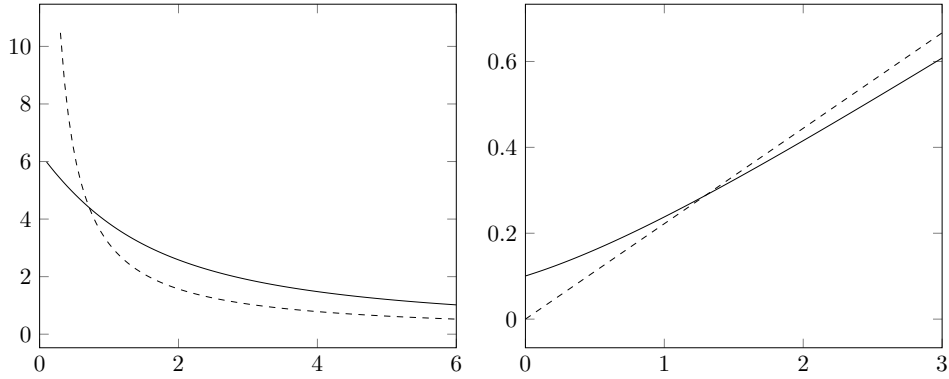

{\begin{corollary}\label{cornew2}
For each $t>0$, the random variable $\beta(Y_t)\in(0,t)$ has the density
$$
s\mapsto \frac{d s^{d-2}(2s+e^{-s})}{r 2t^d+d\gamma(d-1,t)}
$$
with respect to the Lebesgue measure on $(0,t)$. Especially, if $d=2$, this reduces to
$$
s\mapsto \frac{2s+e^{-s}}{ t^2+1-e^{-t}}\,.
$$
\end{corollary}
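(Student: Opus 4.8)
The plan is to identify the birth time of the typical maximal spherical segment with the mixing variable $s$ already appearing in the integral representation of $\bQ_t^{(1)}$ in Corollary \ref{cor:Kanten}. The starting observation is a structural one: every maximal spherical $k$-face $h$ of $Y_t$ lies in a unique maximal spherical $(d-1)$-face $f$ and inherits its birth time, so that $\beta(h)=\beta(f)$, and if $f$ is created at the split that replaces $c\in Y_s$ by $c\cap S^{\pm}$ (with $S\in\SS_{d-1}[c]$), then $f=c\cap S$ and $\beta(f)=s$. Thus the mark $\beta(\cdot)$ is frozen at the moment of insertion and is constant along the history of the face.

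The central step is a birth-time-refined version of Theorem \ref{prop:FaceIntensityMeasures}: for bounded measurable $\phi:\PP_{d-1}^d\to\RR$ and $g:(0,\infty)\to\RR$ I would establish
\[
\bE\sum_{f\in\sF_{d-1}^\ast(Y_t)}\phi(f)\,g(\beta(f)) = \int_0^t g(s)\int\int_{\SS_{d-1}[c]}\phi(c\cap S)\,\kappad(\dint S)\,\sfM_s(\dint c)\,\dint s.
\]
This is proved exactly along the lines of Theorem \ref{prop:FaceIntensityMeasures}: the maximal $(d-1)$-faces present at time $t$ are precisely those created at the successive splits, each persisting once born, so the cumulative sum $\sum_{f:\,\beta(f)\le t}\phi(f)\,g(\beta(f))$ is an additive functional of the splitting process whose compensator is the generator integrand weighted by the factor $g(s)$ at time $s$. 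Taking expectations (after the usual localisation and $\mathrm{DL}$-class verification as in Proposition \ref{prop:Dynkin}) and applying Campbell's theorem yields the identity; equivalently, one reads it off directly from the explicit path-space description in Remark \ref{rem:OtherInitialTessellation}, where the $j$-th jump at time $s_j$ contributes a face with mark $s_j$, and a Fubini argument rearranges the sum. Justifying this compensator statement, i.e. that inserting $g(s)$ into the time integral correctly accounts for the marked creation process, is the only genuinely new point and the main obstacle; everything else is bookkeeping already carried out (unmarked) in the preceding theorems.

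Passing from $(d-1)$-faces to $1$-faces, I would apply the displayed identity with $\phi(f)=\sum_{h}\psi(h)$ summed over the $1$-faces $h$ of $f$, use the birth-time inheritance $\beta(h)=\beta(f)$, and then substitute $\sfM_s=\overline{\sfM}_s$ from Theorem \ref{prop:TypicalCellMeasures} together with the Poisson computation from the proof of Theorem \ref{prop:FaceIntensityMeasures}, namely $\int\int_{\SS_{d-1}[c]}\phi(c\cap S)\,\kappad(\dint S)\,\overline{\sfM}_s(\dint c)=s^{-1}(d-1)2^{d-2}\int\psi(h)\,\overline{\sfF}_s^{(1)}(\dint h)$. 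Choosing $\psi\equiv 1$ and dividing by $N_1(t)$ gives
\[
\bP(\beta(Y_t)\in\,\cdot\,)=\frac{(d-1)2^{d-2}}{N_1(t)}\int_0^t \mathbf{1}(s\in\,\cdot\,)\,\frac{\overline{N}_1(s)}{s}\,\dint s,
\]
so that $\beta(Y_t)$ admits the Lebesgue density $s\mapsto (d-1)2^{d-2}N_1(t)^{-1}\,\overline{N}_1(s)/s$ on $(0,t)$; this is precisely the mixing measure of Corollary \ref{cor:Kanten}, confirming the interpretation of $s$ as the birth time.

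Finally I would insert the explicit values $\overline{N}_1(s)=\frac{s^{d-1}}{(d-1)!}(2s+e^{-s})$ from \eqref{eq:IntensityN1bar} and $N_1(t)=\frac{2^{d-2}}{(d-2)!}\big(\frac{2t^d}{d}+\gamma(d-1,t)\big)$ from \eqref{eq:N_1(t)} and simplify. After the factors $2^{d-2}$ cancel and $(d-1)(d-2)!/(d-1)!=1$, the factor $d$ surviving from the denominator of $N_1(t)$ leaves the density
\[
s\mapsto \frac{d\,s^{d-2}(2s+e^{-s})}{2t^d+d\gamma(d-1,t)},
\]
as claimed, and the case $d=2$ follows at once from $\gamma(1,t)=1-e^{-t}$.
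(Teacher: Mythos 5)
Your proposal is correct and arrives at the right density, but it takes a noticeably heavier route than the paper. The paper's proof is a two-line observation: since a maximal spherical $1$-face is frozen at the instant it is created, the maximal $1$-faces of $Y_t$ with birth time at most $s$ are \emph{exactly} the maximal $1$-faces of $Y_s$; hence the distribution function of $\beta(Y_t)$ is $N_1(s,t)/N_1(t)=N_1(s)/N_1(t)$, and one just differentiates the explicit formula \eqref{eq:N_1(t)}. You instead construct a birth-time-marked refinement of Theorem \ref{prop:FaceIntensityMeasures}, push it through the Poisson comparison, and read off the marginal on the mark space. This buys you more: your weighted identity essentially computes the full Palm distribution $\bP^e_{\xi_t}$ (the joint law of the typical maximal segment and its birth time) and makes rigorous the interpretation of the mixing variable $s$ in Corollary \ref{cor:Kanten} as the birth time, neither of which the paper's shortcut delivers. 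On the other hand, the step you single out as "the only genuinely new point and the main obstacle" — justifying the compensator with the weight $g(s)$ inserted — is in fact immediate and needs no new martingale or DL-class work: taking $g=\mathbf{1}_{[0,s]}$, your identity is precisely the unmarked identity \eqref{exampleref1} evaluated at time $s$ in place of $t$ (again because faces born by time $s$ are the faces of $Y_s$), and the general $g$ then follows by a monotone class argument from the resulting formula for the distribution function. So your argument closes, but the localisation machinery you propose to redo is unnecessary; the paper's observation $N_1(s,t)=N_1(s)$ is the whole content. The final bookkeeping with $\overline{N}_1(s)$ and $N_1(t)$ and the specialisation $\gamma(1,t)=1-e^{-t}$ for $d=2$ agree with the paper.
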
}

{\begin{proof}
By the preceding discussion, for $s< t$ the distribution function of $\beta(Y_t)$ equals
$$
F(s)=\PP(\beta(Y_t)\le s)=\frac{1}{N_1(t)}\bE \big|\big\{x\in\bF^{(1)}_t:\beta(x)\le s\big\}\big|
= \frac{N_1(s,t)}{ N_1(t)}\,,
$$
where $N_1(t)$ is the expected number of edges of $Y_t$ and $N_1(s,t)$ is the expected number of edges whose birth time is less than or equal to $s$. By the continuous time probabilistic construction of $Y_t$ we have that $N_1(s,t)=N_1(s)$, the expected number of edges of $Y_s$. Using the formula \eqref{eq:N_1(t)} for $N_1(t)$ (and also with $t$ replaced by $s$ for $N_1(s)$) and differentiating the resulting expression with respect to $s$ completes the proof.
\end{proof}}

{The results of the Corollaries \ref{cor:MeanLengthTypicalSegment} and \ref{cornew2} might be compared with the corresponding situation for stationary and isotropic STIT-tessellations in $\RR^d$ with time parameter $t>0$. It is known from \cite[Corollary 4]{STBernoulli} that the expected length of the typical maximal segment of such a tessellation is given by
$$
\frac{d}{ d-1}\frac{d\kappa_d}{\kappa_{d-1}}\frac{1}{2t}\,.
$$
For $d=2$ this reduces to $\pi/ t$, see the left panel in Figure \ref{figLength} for a comparison with the splitting tessellation on $\SS^2$. Moreover, it is known from the discussion after \cite[Theorem 3]{STBernoulli} that for a STIT-tessellation in $\RR^d$ with time parameter $t>0$ the birth time distribution of the typical maximal segment has density on $(0,t)$ with respect to the Lebesgue measure given by
$$
s\mapsto \frac{ds^{d-1}}{ t^d}\,,
$$
which is the density of a beta distribution on $(0,t)$ with shape parameters $d$ and $1$. The right panel in Figure \ref{figLength} shows a comparison of this density with that of $\beta(Y_t)$.}

\subsection*{Acknowledgement}
This work was partially supported by the Deutsche Forschungsgemeinschaft (DFG) via RTG 2131 \textit{High-Dimensional Phenomena in Probability - Fluctuations and Discontinuity} and FOR 154  \textit{Geometry and Physics of Spatial Random Systems}.

\end{document}